\newbox\mybox
\def\overtag#1#2#3{\setbox\mybox\hbox{$#1$}\hbox to
  0pt{\vbox to 0pt{\vglue-#3\vglue-\ht\mybox\hbox to \wd\mybox
      {\hss$\ss#2$\hss}\vss}\hss}\box\mybox}
\def\undertag#1#2#3{\setbox\mybox\hbox{$#1$}\hbox to 0pt{\vbox to
    0pt{\vglue#3\vglue\ht\mybox\hbox to \wd\mybox
      {\hss$\ss#2$\hss}\vss}\hss}\box\mybox}
\def\lefttag#1#2#3{\hbox to 0pt{\vbox to 0pt{\vglue -6pt\hbox to
      0pt{\hss$\ss#2$\hskip#3}\vss}}#1}
\def\righttag#1#2#3{\hbox to 0pt{\vbox to 0pt{\vglue -6pt\hbox to
      0pt{\hskip#3$\ss#2$\hss}\vss}}#1}
\let\ss\scriptstyle
\def\splicediag#1#2{\xymatrix@R=#1pt@C=#2pt@M=0pt@W=0pt@H=0pt}
\def\Dot{\lower.2pc\hbox to 2pt{\hss$\bullet$\hss}}
\def\Circ{\lower.2pc\hbox to 2pt{\hss$\circ$\hss}}
\def\Vdots{\raise5pt\hbox{$\vdots$}}
\newcommand\lineto{\ar@{-}}
\newcommand\dashto{\ar@{--}}
\newcommand\dotto{\ar@{.}}
\newcommand{\fract}[2]{\hbox{\leavevmode
  \kern.1em \raise .25ex \hbox{\the\scriptfont0 $#1$}\kern-.1em }\big/
  {\hbox{\kern-.15em \lower .5ex \hbox{\the\scriptfont0 $#2$}} }}
\newcommand\Aut{{\operatorname{Aut}}}
\renewcommand{\setminus}{\smallsetminus}
\newcommand\Q{{\mathbb Q}}
\newcommand\R{{\mathbb R}}
\newcommand\C{{\mathbb C}}
\newcommand\Z{{\mathbb Z}}
\newcommand{\LL}{\mathbb L}
\newcommand\N{{\mathbb N}}
\newcommand\pa{{\mathfrak a}}
\newcommand\pb{{\mathfrak b}}
\newcommand\pc{{\mathfrak c}}
\newcommand\pq{{\mathfrak q}}
\newcommand\pp{{\mathfrak p}}
\newcommand{\Id}{\operatorname{Id}}
\renewcommand{\t}{\tau}
\newcommand{\unit}{\operatorname{unit}}
\newcommand{\Ker}{\operatorname{Ker}}
\renewcommand\O{{\mathcal O}}
\newcommand\lb{\llbracket}
\newcommand\rb{\rrbracket}
\newcommand{\x}{\bold{x}}
\newcommand{\oo}{\boldsymbol{\omega}}
\newcommand{\z}{\bold{z}}
\newcommand{\gam}{\bm{\gamma}}
\newcommand{\w}{\bold{w}}
\newcommand{\y}{\bold{y}}
\newcommand{\uu}{\bold{u}}
\renewcommand{\phi}{\varphi}
\renewcommand{\leq}{\leqslant}
\renewcommand{\geq}{\geqslant}
\newcommand{\p}{\mathfrak{p}}
\newcommand{\lgw}{\longrightarrow}
\newcommand{\lgm}{\longmapsto}
\newcommand{\s}{\sigma}
\newcommand{\sg}{\sigma}
\newcommand{\tg}{\sigma}
\newcommand{\lag}{\lambda}
\newcommand{\ovl}{\overline}
\newcommand{\wdh}{\widehat}
\newcommand{\PP}{\mathbb P}
\newcommand{\ini}{\operatorname{in}}
\newcommand{\wdt}{\widetilde}
\newcommand{\het}{\operatorname{ht}}
\newcommand{\rank}{\operatorname{rank}}
\newcommand{\la}{\lambda}
\renewcommand{\O}{\mathcal{O}}
\renewcommand{\k}{\Bbbk}
\renewcommand{\a}{\alpha}
\renewcommand{\b}{\beta}
\newcommand{\e}{\varepsilon}
\newcommand{\K}{\mathbb{K}}
\newcommand{\g}{\gamma}
\newcommand{\m}{\mathfrak m}
\newcommand{\ord}{\operatorname{ord}}
\renewcommand{\d}{\delta}
\newcommand{\Frac}{\operatorname{Frac}}
\newcommand{\D}{\Delta}
\newcommand{\Gr}{\operatorname{r}}
\newcommand{\Fr}{\operatorname{r}^{\mathcal{F}}}
\newcommand{\Ar}{\operatorname{r}^{\mathcal{A}}}
\newtheorem{theorem}{Theorem}[section]
\newtheorem{proposition}[theorem]{Proposition}
\newtheorem*{theorem*}{Theorem}
\newtheorem*{thmGR}{Gabrielov's rank Theorem}
\newtheorem*{thmZM}{Zariski's main Theorem}
\newtheorem*{thmWP}{Weierstrass preparation Theorem}
\newtheorem*{thmAY}{Abhyankar-Jung Theorem}
\newtheorem{corollary}[theorem]{Corollary}
\newtheorem{lemma}[theorem]{Lemma}
\theoremstyle{definition}
\newtheorem*{amalgamation*}{Amalgamation}
\newtheorem{example}[theorem]{Example}
\newtheorem{remark}[theorem]{Remark}
\newtheorem{problem*}[theorem]{Problem}
\newtheorem*{remark*}{Remark}
\newtheorem{definition}[theorem]{Definition}
\newcommand{\lcm}{\operatorname{lcm}}
\begin{document}
\title[Gabrielov's rank Theorem]{A proof of A. Gabrielov's rank Theorem}
\author[A.~Belotto da Silva]{Andr\'e Belotto da Silva}
\author[O.~Curmi]{Octave Curmi}
\author[G.~Rond]{Guillaume Rond}

\address[A.~Belotto da Silva, O.~Curmi, G.~Rond]{Universit\'e Aix-Marseille, Institut de Math\'ematiques de Marseille (UMR CNRS 7373), Centre de Math\'ematiques et Informatique, 39 rue F. Joliot Curie, 13013 Marseille, France}
\email[A.~Belotto da Silva]{andre-ricardo.belotto-da-silva@univ-amu.fr}
\email[O.~Curmi]{octave.curmi@univ-amu.fr}
\email[G.~Rond]{guillaume.rond@univ-amu.fr}
\thanks{The third author would like to thank J. M. Aroca, E. Bierstone, F. Cano, F. Castro, M. Hickel and  M. Spivakovsky for the fruitful discussions he had about this problem.}

\subjclass[2010]{Primary 13J05, 32B05; Secondary 12J10, 13A18, 13B35, 14B05, 14B20, 30C10, 32A22, 32S45}

\keywords{}

\begin{abstract}
This article contains a complete proof of Gabrielov's rank Theorem, a fundamental result in the study of analytic map germs. Inspired by the works of Gabrielov and Tougeron, we develop formal-geometric techniques which clarify the difficult parts of the original proof. These techniques are of independent interest, and we illustrate this by adding a new (very short) proof of the Abhyankar-Jung Theorem. We include, furthermore, new extensions of the rank Theorem (concerning the Zariski main Theorem and elimination theory) to commutative algebra.
\end{abstract}

\maketitle

\section{Introduction}

This article contains a complete and self-contained proof of Gabrielov's rank Theorem, a fundamental result in the study of analytic map germs. Let us briefly present its context and the theorem.

Let $\phi :(\K^n,0)\lgw (\K^m,0)$ be an analytic map germ of generic rank $r$ over the field $\K$ of real or complex numbers, that is, the image of $\phi$ is generically a submanifold of $\K^m$ of dimension $r$. When $\phi$ is algebraic, by a theorem of Chevalley \cite{Chevalley} (in the complex case) and Tarski \cite{Ta} (in the real case), the image of $\phi$ is a constructible set, that is, a set defined by polynomial equalities and inequalities. In particular the Zariski closure of the image has dimension $r$. If $\phi$ is complex analytic and proper, Remmert proved that the image of $\phi$ is always analytic \cite{Re}. In the case of an analytic map germ, however, the image is very far from being analytic. For instance, Osgood gave in \cite{Os} an example for which the dimension of the smallest germ of analytic set containing the image is greater than $r$ and, subsequently, Abhyankar generalized  this example in a systematic way, see \cite{Ab}. In this context Grothendieck, in \cite{Gro}, asked if the dimension of the smallest germ of analytic set containing the image (the \emph{analytic rank}) is equal to the dimension of the smallest germ of formal set containing the image (the \emph{formal rank}). Gabrielov answered negatively to this question \cite{Ga1}, and provided a sufficient condition for the answer to be positive \cite{Ga2}. Roughly speaking, the result is the following (see Theorems \ref{thm:MainSmooth} and \ref{thm:main} for a precise formulation):

\begin{thmGR}\label{thm:mainRough}
For a $\K$-analytic map $\phi:(\K^n,0)\lgw (\K^m,0)$, if the generic rank of $\phi$ equals its formal rank, then it also equals its analytic rank.
\end{thmGR}

Gabrielov's rank Theorem is a fundamental result because it provides a simple criteria for \emph{regular} maps, that is, maps whose three ranks coincide at every point of their sources. On the one hand, regular analytic maps constitute an important subclass of analytic maps, which share basic properties with (Nash) algebraic maps. For example, the images of regular proper real-analytic mappings form an interesting subclass of closed subanalytic sets, whose study goes back to works of Bierstone, Milman and Schwartz \cite{BMannals,BS}; see also \cite{BM87a,BM87b,Pawthesis,Paw,BMsubanalytic,ABM}. On the other hand, non-regular analytic maps are at the source of several pathological examples in complex and real-analytic geometry, e.g. \cite{Os,Ab,Ga1,Paw2,BP,BB}.

Nevertheless, the original proof of Gabrielov is considered very difficult, c.f. \cite[page 1]{IzDuke}. For example, in the $70$'s and $80$'s, several authors studied analytic map germs via more elementary techniques, avoiding Gabrielov's rank Theorem \cite{MT,EH,Mal2,BZ,CM,Iz2,IzDuke},  sometimes re-proving weaker versions of it. In the application to calculus of variations \cite{Tamm} Gabrielov's rank Theorem is cited but the author prefer adding further arguments in order to use a Frobenius type result of Malgrange \cite{Mal2} instead, and the situation is similar in an application to foliation theory \cite{CM}, c.f. \cite{CCD}. 
Moreover, some specialists believe that the proof contains ideas which would lead to the development of important new techniques concerning formal power series. In \cite{To2}, Tougeron proposed a new proof of Gabrielov's rank Theorem which, unfortunately, is still considered very difficult (and contains some unclear passages to us, which we point out in the body of the paper).

\bigskip

The first and main goal of this paper is to present a complete proof of Gabrielov's rank Theorem. We have been strongly influenced by the original papers of Gabrielov \cite{Ga2} and Tougeron \cite{To2}, but we do not fully understand either one of their proofs. We provide, therefore, several new arguments. The initial part of the proof, given in $\S$\ref{sec:GabrielovTheorem} and \ref{sec:Reduction}, follows closely the same strategy as the one of \cite{Ga2}, and we provided extra arguments whenever we felt it was necessary (see for example $\S\S$\ref{ssec:LocalBertini}). The second (and harder) part of the proof requires the development of several ideas and techniques inspired from \cite{Ga2,To2}, and our strategy deviates from theirs. It includes a new proof structure (c.f. the induction procedure given in Proposition \ref{cl:Main}), and the development of formal-geometric techniques such as projective rings, a Newton-Puiseux-Eisenstein Theorem, formal approximation of factors, in between others (see $\S$\ref{sec:OverviewLow} and \ref{sec:SemiGlobal}). Some of these techniques are of independent interest, and we illustrate this by including a new (very short) proof of the Abhyankar-Jung Theorem in $\S$\ref{section:AbhyankarJung}.

Then, we provide new extensions of Gabrielov's rank Theorem. The first extension, Theorem \ref{thm:equivalentGabrielov}(II) and Corollary \ref{strongly_inj2}, can be seen as an analogue of Zariski Main Theorem \cite{Za1, Za2} for morphisms of analytic algebras. The second, Theorem \ref{thm:equivalentGabrielov}(III), is a result concerning elimination theory of analytic equations. These extensions are expressed in a purely algebraic way, in contrast to Gabrielov's rank Theorem. Moreover they are  equivalent to Gabrielov's rank Theorem, in the sense that Gabrielov's rank Theorem can be easily deduced from any of these extensions. The third extension, Theorem \ref{thm:AplMacDonald}, characterizes polynomials with convergent power series coefficients in terms of the support of their solutions. We also include a discussion on strongly injective morphisms, see Theorem \ref{strong}, and on the relationship of Gabrielov's rank Theorem with the Weierstrass preparation Theorem, showing that Gabrielov's rank Theorem is a generalization of the Weierstrass preparation Theorem for convergent power series. Further details are given in $\S\S$\ref{ssec:Application}.

Given the history of the rank Theorem, we have made an extra effort to make the paper as self-contained as possible. We rely only on well-known results of commutative algebra, complex geometry and analysis which can either be found in books (e.g. resolution of singularities, Artin approximation) or admit simple proofs (e.g. Abhyankar-Moh reduction theorem \cite{A-M}). All other necessary results have been revisited.

\subsection{Gabrielov's rank Theorem}\label{ssec:MainTheorem}
Let $\K$ denote the field of real or complex numbers. We denote by $\x=(x_1,  \ldots, x_n)$ and $\uu=(u_1,  \ldots, u_m)$ two vectors of indeterminates. In general single indeterminates will be denoted by normal letters as $x$, $y$, $u$... and vectors of indeterminates will be denoted by bold letters as $\x$, $\y$, $\uu$... The ring of convergent (resp. formal) power series in $n$ indeterminates over $\K$ will be denoted by $\K\{\x\}$ (resp. $\K\lb \x\rb$). A \emph{morphism of convergent power series rings} is a ring morphism of the form
$$
\begin{array}{ccccc} \phi :& \K\{\x\}  & \lgw & \K\{\uu\}&\\
& f(\x)& \lgm & \varphi(f) &:= f(\phi_1(\uu),  \ldots, \phi_n(\uu))\end{array}
$$
where the $\phi_i(\uu)\in\K\{\uu\}$ for $i=1$, \ldots, $n$, do not depend on $f$. Note that $\phi$ induces an analytic map germ between smooth analytic space germs, that is:
$$
\begin{array}{ccccc} \phi^a :& (\K^m,0)  & \lgw & (\K^n,0)&\\
& \uu& \lgm & \phi(\uu) &:= (\phi_1(\uu),  \ldots, \phi_n(\uu))
\end{array}
$$
where $\phi^a$ is the geometrical counter-part of $\phi$. We are ready to provide a precise notion of ranks:

\begin{definition}[Ranks: the smooth case]\label{def:ranksSmooth}
Let $\phi : \K\{\x\}   \lgw  \K\{\uu\}$ be a morphism of convergent power series rings, and denote by $\widehat{\phi}: \K\lb \x\rb \lgw \K\lb \uu\rb$ its extension to the completion. We define
\[
\begin{aligned}
\text{the Generic rank:} & &\Gr(\phi) &:= \mbox{rank}_{\mbox{Frac}(\K\{\uu\})}(\mbox{Jac}(\phi^a)),\\
\text{the Formal rank:}& &\Fr(\phi) &:= \dim_{\K}\left(\frac{\K\lb \x\rb}{\Ker(\wdh{\phi})}\right)=n -\het(\Ker(\wdh \phi)),\\
\text{and the Analytic rank:}& &\Ar(\phi)&:=\dim_{\K}\left(\frac{\K\{ \x\}}{\Ker(\phi)}\right)
=n -\het(\Ker(\phi))
\end{aligned}
\]
of $\phi$, where $\mbox{Jac}(\phi^a)$ denotes the Jacobian matrix associated to $\varphi^a$, $\rank_{\Frac(\K\{\uu\})}(M)$ denotes the rank of the matrix $M$ over the field of fractions of $\K\{\uu\}$, and $\dim_{\K}(A)$ denotes the Krull dimension of the ring $A$.
\end{definition}

We can interpret geometrically the three ranks of $\phi$ via its geometrical counterpart $\phi^a :(\K^m,0)\lgw (\K^n,0)$ as follows:  for every sufficiently small open set $U \subset \K^{m}$ containing the origin, the image $\phi^{a}(U)$ is a subset of $\K^n$ which contains the origin. 
 Then $\Gr(\phi)$ is exactly this dimension of $(\phi^a(U),0)$ for $U$ small enough, $\Fr(\phi)$ is the dimension of the formal Zariski closure of $(\phi^a(U),0)$ in $(\K^n,0)$ for $U$ small enough, and $\Ar(\phi)$ is the dimension of the analytic Zariski closure of $(\phi^a(U),0)$ in $(\K^n,0)$ for $U$ small enough. This intuitively justifies the following well known inequality:

\begin{equation}\label{incr_ranks}
\Gr(\phi)\leq \Fr(\phi)\leq \Ar(\phi) \quad \text{(see e.g. \cite{IzDuke})}.
\end{equation}
Gabrielov's rank Theorem provides a simple criteria to show that all of the ranks are equal:

\begin{theorem}[Gabrielov's rank Theorem: the smooth case]\label{thm:MainSmooth}
For a morphism of convergent power series rings $\phi : \K\{\x\}   \lgw  \K\{\uu\}$: 
\[
\Gr(\phi)=\Fr(\phi) \implies \Fr(\phi)=\Ar(\phi).
\]
\end{theorem}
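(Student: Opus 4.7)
The plan is to prove $\Ar(\phi)\leq r:=\Gr(\phi)=\Fr(\phi)$, since the reverse inequality is already given by \eqref{incr_ranks}. By faithful flatness of $\K\lb\x\rb$ over $\K\{\x\}$, it will suffice to exhibit an ideal $I\subset\Ker(\phi)$ of $\K\{\x\}$ whose extension $I\K\lb\x\rb$ has height $n-r$. The game is therefore to lift $n-r$ generators of the formal kernel $\Ker(\widehat\phi)$ to \emph{convergent} ones without dropping the height.

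Write $\phi_i:=\phi(x_i)\in\K\{\uu\}$. Since $\Gr(\phi)=r$, the Jacobian $(\partial\phi_i/\partial u_j)$ has rank $r$ at some point $v_0$ near $0\in\K^m$, and after a linear change of the $\x$-coordinates I may assume the differentials of $\phi_1,\dots,\phi_r$ at $v_0$ are $\K$-linearly independent. In particular $\phi_1,\dots,\phi_r$ are algebraically independent over $\K$ inside $\K\lb\uu\rb$. The hypothesis $\Fr(\phi)=r$ then forces the subring $\K\lb\phi_1,\dots,\phi_n\rb\subset\K\lb\uu\rb$, which is exactly the image of $\widehat\phi$, to be integral over $\K\lb\phi_1,\dots,\phi_r\rb$. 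By Weierstrass preparation I obtain, for each $j\in\{r+1,\dots,n\}$, a monic polynomial $P_j(\y,t)\in\K\lb\y\rb[t]$ with $\y=(y_1,\dots,y_r)$ satisfying $P_j(\phi_1,\dots,\phi_r,\phi_j)=0$; equivalently, $F_j(\x):=P_j(x_1,\dots,x_r,x_j)\in\Ker(\widehat\phi)$. Because $\K\lb\x\rb/\langle F_{r+1},\dots,F_n\rangle$ is finite over $\K\lb x_1,\dots,x_r\rb$, the ideal $\langle F_{r+1},\dots,F_n\rangle\K\lb\x\rb$ has height exactly $n-r$.

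The theorem now reduces to the following claim, which is the real difficulty: \emph{each $P_j$ can in fact be chosen with coefficients in $\K\{\y\}$, not merely in $\K\lb\y\rb$}. Granting this, $F_j\in\K\{\x\}\cap\Ker(\widehat\phi)=\Ker(\phi)$, the ideal $I:=\langle F_{r+1},\dots,F_n\rangle\K\{\x\}$ is contained in $\Ker(\phi)$, its extension $I\K\lb\x\rb$ has height $n-r$, and we conclude $\Ar(\phi)\leq r$. The convergence claim is delicate because substituting $\y\mapsto(\phi_1(\uu),\dots,\phi_r(\uu))$ exhibits $\phi_j(\uu)\in\K\{\uu\}$ as one convergent root of $P_j(\y,t)$, but the other roots a priori live only in a formal algebraic extension of $\Frac(\K\lb\y\rb)$, while the coefficients of $P_j$ are elementary symmetric functions of \emph{all} roots. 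The main obstacle, and the heart of the proof, is therefore to show that every root of $P_j$ is in fact convergent (as a Puiseux-type expansion with positive radius of convergence). This is exactly where the paper's technical machinery is deployed: a local Bertini-type refinement of the reduction above, a Newton-Puiseux-Eisenstein theorem to organize the branches, the formal-geometric apparatus of projective rings interpolating between the convergent and formal worlds, and a formal approximation of factors — which together should let one transfer convergence from the distinguished root $\phi_j$ to the whole polynomial $P_j$.
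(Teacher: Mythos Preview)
Your reduction to the convergence of the $P_j$ is essentially the paper's 1st Reduction in \S\ref{ssec:Reduction}, and is correct modulo one slip: the linear change in $\x$ must be chosen so that $\K\lb x_1,\dots,x_r\rb\to\K\lb\x\rb/\Ker(\wdh\phi)$ is \emph{finite} (Noether normalization), not merely so that $d\phi_1,\dots,d\phi_r$ are independent at some nearby $v_0$; integrality of the image over $\K\lb\phi_1,\dots,\phi_r\rb$ is not an automatic consequence of $\Fr(\phi)=r$ as you assert. Both conditions are generic, so one linear change handles both, but this must be said. What you then need is precisely $\Ar(\psi_j)=r$ for the morphism $\psi_j:\K\{\y,t\}\to\K\{\uu\}$, $y_i\mapsto\phi_i$, $t\mapsto\phi_j$ --- that is, Gabrielov's theorem again, now with source dimension $r{+}1$. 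So you have reformulated rather than reduced the difficulty.

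The genuine gap is in how you frame the remaining work. You write that $\phi_j\in\K\{\uu\}$ is ``one convergent root of $P_j(\y,t)$'' and that the task is to propagate convergence to the other roots. But $\phi_j$ is \emph{not} a root of $P_j(\y,t)$: it is a root of the substituted polynomial $P_j(\phi_1(\uu),\dots,\phi_r(\uu),t)$, whose coefficients are images under $\wdh\phi$ of formal series and are themselves a priori divergent. There is no distinguished convergent root of $P_j$ from which to launch a Galois-type argument. The paper's route is structurally different: after the 1st Reduction one cuts the target to dimension $r$ (2nd Reduction), then slices $r$ down to $2$ by generic hyperplane sections via the formal Bertini and Abhyankar--Moh theorems (3rd Reduction); only at $r=2$ does the Newton--Puiseux--Eisenstein and projective-ring machinery of \S\ref{sec:SemiGlobal} apply, and there the conclusion is that the irreducible generator of $\Ker(\wdh\psi)$ admits a convergent \emph{factor} after blow-up (Theorem~\ref{th:MainLowDim'}), not that individual Puiseux roots converge. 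Your inventory of tools is accurate, but the architecture connecting them --- in particular the role of Bertini as a dimension-reduction step rather than a convergence-transfer device --- is missing.
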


We are now interested in investigating singular spaces. Just as in \cite{IzDuke}, the essential case to consider is the complex-analytic one, and we specialize our study to $\K=\C$ (see Remark \ref{rk:Onreal}(2) below for a discussion on the real-analytic case). An \emph{analytic $\C$-algebra} $A$ is a local ring of the form $A=\C\{\x\}/I$ where $I$ is an ideal of $\C\{\x\}$. A \emph{morphism of $\C$-analytic algebras} is a morphism $\phi: A \lgw B$ where $A =\C\{\x\}/I$, $B=\C\{\uu\}/J$, and $\phi$ is induced by a morphism of convergent power series rings $\C\{\x\}\lgw \C\{\uu\}$. We denote by $\wdh \phi:\wdh A\lgw \wdh B$ the map induced by $\phi$ between the completions of $A$ and $B$. It is called the \emph{completion morphism of $\phi$}. 

Note that the definitions of the formal and analytic ranks of $\phi$ easily extend to a morphism of $\C$-analytic algebras $\phi: A \lgw B$. The generic rank, nevertheless, does not extend in a trivial way because we can not define the Jacobian in the singular context. In order to define the generic rank, note that a morphism of reduced $\C$-analytic algebras $\phi: A \lgw B$ also induces a morphism between (not necessarily smooth) analytic space germs $\phi^a :(Y,\pb) \lgw (X,\pa)$ so that $A=\mathcal{O}_{X,\pa}$ and $B=\mathcal{O}_{Y,\pb}$, where $\mathcal{O}_{X,\pa}$ and $\mathcal{O}_{Y,\pb}$ denote the local rings of analytic function germs at $\pa$ and $\pb$ respectively (more precisely, if $A=\C\{\x\}/I$ where $I$ is a radical ideal, we denote by $\pa$ the origin of $\C^n$ and $(X,\pa)$ is the germ of the analytic set defined by the vanishing of generators of $I$ in a neighborhood of $\pa$). We suppose that $B$ is an integral domain, which is equivalent to $(Y,\pb)$ being irreducible and reduced. We define the \emph{generic rank} of $\phi^a$ as (see \cite{IzDuke}):
$$\Gr(\phi^a):=\inf\left\{\sup\left\{\rank(\phi^a_{|M})\mid M \text{ $\C$-submanifold of }U\right\}\mid U\text{ neigh. of } \pb \text{ in } Y \right\}.$$
It coincides with the generic rank (given by definition \ref{def:ranksSmooth}) of $\phi^a$ restricted to $Y\setminus \mbox{Sing}(Y)$. This is well defined since $Y\setminus \mbox{Sing}(Y)$ is dense in $Y$.

\begin{definition}[Ranks: the general case]\label{def:ranks}
Let $\phi : A   \lgw  B$ be a morphism of {reduced} $\C$-analytic algebras where $B$ is an integral domain, and denote by $\widehat{\phi}: \widehat{A} \lgw \widehat{B}$ its extension to the completion. We define
\[
\begin{aligned}
\text{the Generic rank:} & &\Gr(\phi) &:= \Gr(\phi^a),\\
\text{the Formal rank:}& &\Fr(\phi) &:= \dim\left(\frac{\wdh{A}}{\Ker(\wdh{\phi})}\right)
=\dim(\wdh A)-\het(\Ker(\wdh \phi)),\\
\text{and the Analytic rank:}& &\Ar(\phi)&:=\dim\left(\frac{A}{\Ker(\phi)}\right)
=\dim(A)-\het(\Ker(\phi))
\end{aligned}
\]
of $\phi$. We recall that $\dim(A)=\dim(\wdh A)$ when $A$ is a Noetherian local ring {(cf \cite[Theorem 13.9]{Mat} for example)}.
\end{definition}

We note that the inequalities \eqref{incr_ranks} are again valid in this context. We are ready to formulate the general version of Gabrielov's rank Theorem:

\begin{theorem}[Gabrielov's rank Theorem]\label{thm:main}
Let $\phi: A \lgw B$ be a $\C$-analytic morphism, where $B$ is an integral domain. 
\[
\Gr(\phi)=\Fr(\phi) \implies \Fr(\phi)=\Ar(\phi).
\]
\end{theorem}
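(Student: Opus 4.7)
The plan is to reduce Theorem~\ref{thm:main} to the smooth case Theorem~\ref{thm:MainSmooth} by two successive compositions that preserve all three ranks: one to smooth the source and one to smooth the target, the latter via resolution of singularities.

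First, writing $A=\C\{\x\}/I$, I would compose $\phi:A\to B$ with the quotient $q:\C\{\x\}\to A$ to obtain $\psi:=\phi\circ q:\C\{\x\}\to B$. Since $\Ker\psi=q^{-1}(\Ker\phi)$, one has $\C\{\x\}/\Ker\psi\cong A/\Ker\phi$ and, after completion, $\C\lb\x\rb/\Ker\widehat\psi\cong\widehat A/\Ker\widehat\phi$, so $\Ar(\psi)=\Ar(\phi)$ and $\Fr(\psi)=\Fr(\phi)$. Geometrically, $\psi^a$ equals $\phi^a$ followed by the inclusion $(X,\pa)\hookrightarrow(\C^n,0)$, which does not change the dimension of the image, so $\Gr(\psi)=\Gr(\phi)$. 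Hence we may assume $A=\C\{\x\}$ from now on.

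Next, since $B$ is an analytic domain and analytic $\C$-algebras are excellent, $\widehat B$ is reduced; let $\mathfrak{p}_1,\dots,\mathfrak{p}_k$ denote its minimal primes. Since $\Ker\widehat\phi=\bigcap_i\widehat\phi^{-1}(\mathfrak{p}_i)$, one has $\Fr(\phi)=\max_i\dim(\widehat A/\widehat\phi^{-1}(\mathfrak{p}_i))$; fix a minimal prime $\mathfrak{p}$ realizing this maximum. Let $(Y,\pb)$ be the germ corresponding to $B$. By Hironaka's resolution of singularities---possibly followed by further blow-ups that separate the formal branches of $(Y,\pb)$---one obtains a proper birational morphism $\pi:(\widetilde Y,E)\to(Y,\pb)$ with $\widetilde Y$ smooth and a point $\tilde\pb\in E=\pi^{-1}(\pb)$ such that the induced completion map $\widehat{\pi^*_{\tilde\pb}}:\widehat B\to\widehat{\O_{\widetilde Y,\tilde\pb}}=\C\lb\w\rb$ has kernel exactly $\mathfrak{p}$. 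Choosing local coordinates on $\widetilde Y$ at $\tilde\pb$ identifies $\O_{\widetilde Y,\tilde\pb}$ with $\C\{\w\}$, and the composition $\Psi:=\pi^*_{\tilde\pb}\circ\psi:\C\{\x\}\to\C\{\w\}$ is a morphism of convergent power series rings. One then verifies that $\Gr(\Psi)=\Gr(\phi)$, because $\pi^a$ is proper with dense image so the image of $\Psi^a$ coincides with that of $\phi^a$; that $\Fr(\Psi)=\Fr(\phi)$, by the choice of $\tilde\pb$ and $\mathfrak{p}$; and that $\Ar(\Psi)=\Ar(\phi)$, because $\pi^*_{\tilde\pb}$ is injective (as $\pi^a$ is birational and $B$ is a domain).

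Applying Theorem~\ref{thm:MainSmooth} to $\Psi$, the hypothesis $\Gr(\phi)=\Fr(\phi)$ becomes $\Gr(\Psi)=\Fr(\Psi)$, which yields $\Fr(\Psi)=\Ar(\Psi)$, i.e.\ $\Fr(\phi)=\Ar(\phi)$, as required. The hardest step will be the intermediate geometric claim that the resolution can be chosen so that an arbitrary prescribed minimal prime of $\widehat B$ is realized as the kernel of $\widehat{\pi^*_{\tilde\pb}}$ at some point of the exceptional locus $E$; this is a nontrivial fact relating Hironaka's algorithm to formal completion in the analytic category, and is the only genuinely analytic input of the reduction, everything else being essentially algebraic bookkeeping on kernels and completions.
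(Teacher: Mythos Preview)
Your overall reduction strategy is correct and matches the paper's Lemma~\ref{lem_1st_red}, but you have inserted an unnecessary complication in the second step and, as a result, mis-identified the difficult part.

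The point you are missing is that for an analytic $\C$-algebra $B$ which is an integral domain, the completion $\widehat B$ is itself an \emph{integral domain}, not merely reduced (this is a standard consequence of Artin approximation; the paper invokes it in the proof of Proposition~\ref{rk:BasicPropertiesGabrielov1}(3) and in Corollary~\ref{fin_inj}, citing \cite[Proposition~4.1]{Ro}). Hence $\widehat B$ has a unique minimal prime, namely $(0)$, and your whole discussion of choosing $\mathfrak p$ and separating formal branches collapses: for \emph{any} point $\tilde\pb$ in the exceptional fibre, the map $\pi^*_{\tilde\pb}:B\to\C\{\w\}$ is injective (since $B$ is a domain and $\pi$ is birational), and then $\widehat{\pi^*_{\tilde\pb}}$ is automatically injective because its kernel is a prime ideal of the domain $\widehat B$ of height at most $\dim(\widehat B)-\Gr(\pi^*_{\tilde\pb})=0$. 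So your ``hardest step'' simply does not arise under the stated hypotheses. The paper packages this target-side step as Proposition~\ref{rk:BasicPropertiesGabrielov2}(1).

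On the source side, your use of the surjection $q:\C\{\x\}\to A$ is a perfectly valid (and arguably more elementary) alternative to the paper's Noether normalization $\tau:\C\{\x\}\to A$; your verification that $q$ preserves all three ranks is correct. The paper instead first quotients by $\Ker\phi$ and then applies normalization, invoking Proposition~\ref{rk:BasicPropertiesGabrielov2}(2) for the finite injective map; either route lands you in the smooth setting of Theorem~\ref{thm:MainSmooth}.
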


\begin{remark}[On the hypothesis of Theorem \ref{thm:main}]\label{rk:Onreal} \hfill
\begin{enumerate}
\item (The complex-analytic case) The Theorem holds true if $B$ is reduced (that is, free of nilpotent elements) instead of an integral domain. This can be deduced from Theorem \ref{thm:main} (see, e.g. \cite[Proposition 5.6]{Ga2} {or \cite[1.3 and 1.4]{Iz2}}).
\item {The proof of Theorem \ref{thm:main} reduces to Theorem \ref{thm:MainSmooth} by resolution of singularities in the range and by the normalization theorem in the source (see Lemma \ref{lem_1st_red}).}
\item (The real-analytic case) Given a real-analytic morphism $\phi: A \lgw B$ {where $A$ and $B$ are reduced}, there exists a well-defined complexification $\phi^{\C}: A^{\C} \lgw B^{\C}$ {where $A^\C= A \otimes \mathbb{C}$ and $B^\C = B \otimes \mathbb{C}$} {are reduced by \cite[Lemma 5.1]{Iz2}}. Following  \cite[$\S$~5]{Iz2} or \cite[$\S$~1]{IzDuke}, we can define the ranks of $\phi$ as the ranks of $\phi^{\C}$. It is now straightforward to prove that Gabrielov's rank Theorem holds whenever {$B$ is reduced}. 

This statement, nevertheless, hides a subtitle point in working in the real case which can be illustrated via the integral domain $B = \mathbb{R}\{x_1,x_2\}/(x_1^2+x_2^2)$. {Indeed,} if we denote by $(Y,\pb)$ the (real) geometrical counterpart of $B$, note that $\mbox{Sing}(Y) = Y$. It follows that the generic rank, as defined above, is intrinsically complex and does not coincide with the generic dimension of the image $Z=\phi^a(Y)$. {Here, we may consider the extra hypothesis} that $B$ is a real-closed integral domain (that is, $B = \R\{\x\}/I$ where $I$ is a real-closed prime ideal). This condition guarantees that the generic rank coincides with the generic dimension of the image $Z=\phi^a(Y)$; in particular $\mbox{Sing}(Y) \neq Y$.

\end{enumerate}
\end{remark}

In the rest of the paper, we focus on the essential case $\K=\C$.

\subsection{Applications and variations}\label{ssec:Application}

\subsubsection{Strongly injective morphisms}
The problem raised by Grothendieck has been generalized to the following problem:
given a morphism of $\C$-analytic algebras $\phi:A\lgw B$, when does $
\wdh{\phi}(\wdh{A})\cap B=\phi(A)$ hold true? If the equality is verified, we say that $\phi$ is \emph{strongly injective}. This terminology was introduced by Abhyankar and van der Put \cite{AP} who were the first ones to investigate this question. In particular they proved that $\phi$ is always strongly injective  when $A$ is a ring of convergent power series in two variables over any valued field. 

Without this assumption on the dimension, the equality $
\wdh{\phi}(\wdh{A})\cap B=\phi(A)$ does not hold in general (see Example \ref{ex:Osgood} and \eqref{gab_ex1}). In this work, we provide a simple proof of the following characterization:

 \begin{theorem}[{\cite[Theorem 5.5]{Ga2},\cite[Theorem 1]{IzDuke} c.f. \cite{EH}}]\label{strong}
 Let $\phi:A\lgw B$ be a morphism of analytic $\C$-algebras where $B$ is an integral domain. Then
 $$
 \Gr(\phi)=\Fr(\phi)= \Ar(\phi)\Longleftrightarrow \phi \text{ is strongly injective}.
 $$
 \end{theorem}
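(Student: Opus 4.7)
The strategy is to leverage Gabrielov's rank Theorem (Theorem \ref{thm:main}) applied not to $\phi$ itself but to a one-parameter enlargement. As a preliminary reduction, since $\wdh{A/\ker\phi}=\wdh A/\ker\phi\cdot\wdh A$, replacing $A$ by $A/\ker\phi$ preserves both the three ranks and the strong injectivity condition; I therefore assume $\phi:A\hookrightarrow B$ is injective, so the goal becomes
\[
\wdh\phi(\wdh A)\cap B=A\iff \Gr(\phi)=\Fr(\phi)=\Ar(\phi).
\]

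\textbf{Direction ``$\Gr=\Fr=\Ar\Rightarrow$ strongly injective''.} Take $b\in B$ with $b=\wdh\phi(\wdh a)$ for some $\wdh a\in\wdh A$, and, after subtracting constants, assume $\wdh a\in\m_{\wdh A}$ and $b\in\m_B$. Form the extension
\[
\phi':A\{t\}\lgw B,\qquad \phi'|_A=\phi,\quad\phi'(t)=b.
\]
Since $t-\wdh a\in\ker\wdh{\phi'}$ and the image of $\wdh{\phi'}$ equals $\wdh\phi(\wdh A)$, one has $\Fr(\phi')=\Fr(\phi)$. Adjoining a coordinate does not decrease the generic rank, so using \eqref{incr_ranks},
\[
\Gr(\phi)\leq\Gr(\phi')\leq\Fr(\phi')=\Fr(\phi)=\Gr(\phi),
\]
forcing $\Gr(\phi')=\Fr(\phi')$. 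Gabrielov's Theorem applied to $\phi'$ then yields $\Ar(\phi')=\Fr(\phi')$, i.e.\ $\dim A[b]=\dim A$, so $\ker\phi'\subset A\{t\}$ is a height-one prime. After passing to the normalization of $A$ (so $A\{t\}$ becomes factorial), Weierstrass preparation produces a monic polynomial $W(t)\in A[t]$ with $W(b)=0$ in $B$. Since $\mathrm{char}=0$, $W$ is separable, so $b$ is a simple root of $W$ in the henselian ring $B$. The formal root $\wdh a\in\wdh A$ of $W$ is approximated via Artin approximation by an analytic root $a\in A$ with $a\equiv \wdh a$ modulo a high power of $\m_{\wdh A}$; then $\phi(a)$ is a root of $W$ in $B$ arbitrarily close to $b$, and simple-rootness together with Hensel's lemma force $\phi(a)=b$. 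Thus $b\in\phi(A)=A$.

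\textbf{Direction ``strongly injective $\Rightarrow\Gr=\Fr=\Ar$''.} Identifying $A\simeq\phi(A)\subset B$ as an inclusion of analytic $\C$-algebras, the classical fact that $\wdh A\cap B=A$ inside $\wdh B$ combined with $\wdh\phi(\wdh A)\cap B=A$ forces $\wdh\phi(\wdh A)$ to coincide with the image of the canonical embedding $\wdh A\hookrightarrow\wdh B$, modulo a suitable minimal prime of $\wdh A$ when the latter is not a domain; this yields $\Fr(\phi)=\Ar(\phi)$. To upgrade to $\Gr(\phi)=\Fr(\phi)$, I argue by contradiction: a strict inequality $\Gr<\Fr$ would mean the formal Zariski closure of $\phi^a(Y)$ strictly contains its generic image, and a branchwise analysis of $\wdh\phi$ along that closure (of the type used in the proof of Theorem \ref{thm:main}) produces a divergent formal series in the generators of $\phi(A)$ whose substitution converges in $B$ to an element of $\wdh\phi(\wdh A)\cap B\setminus\phi(A)$, contradicting strong injectivity.

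\textbf{Main obstacle.} The delicate step is the $(\Rightarrow)$ direction: coupling Weierstrass preparation (which requires the preliminary normalization of $A$) with Artin approximation so that the resulting analytic root $a\in A$ satisfies not merely $W(a)=0$ but also $\phi(a)=b$ exactly; this relies on simple-rootness of $b$ in $B$ and a careful Hensel lift, and when $\wdh A$ fails to be a domain the correct formal branch of $\wdh A$ containing $\wdh a$ must be selected beforehand. In the $(\Leftarrow)$ direction, bridging $\Fr=\Ar$ to $\Gr=\Fr$ without circularity requires the same branchwise technology developed in the proof of Theorem \ref{thm:main} itself.
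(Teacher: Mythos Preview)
Your $(\Rightarrow)$ direction sets up the same extended morphism $\phi'$ as the paper and correctly invokes Gabrielov's Theorem, but the endgame is needlessly complicated. Once Theorem~\ref{thm:main} gives $\Fr(\phi')=\Ar(\phi')$, Proposition~\ref{rk:BasicPropertiesGabrielov1}(3) yields the stronger conclusion $\Ker(\phi')\cdot\wdh{A\{t\}}=\Ker(\wdh{\phi'})=(t-\wdh a)$. Hence any convergent generator $g$ of $\Ker(\phi')$ satisfies $g=U\cdot(t-\wdh a)$ for a formal unit $U$, and the \emph{uniqueness} in Weierstrass preparation forces both $U$ and $t-\wdh a$ to be convergent. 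This is the paper's argument: no normalization, no Artin approximation, no Hensel lifting. Your detour through those tools is essentially correct (though you gloss over why $b$ is a \emph{simple} root of $W^\phi$ in $B$: you need $\wdh\phi$ injective, which follows from $\Fr(\phi)=\dim A$ and $\wdh A$ being a domain), but it obscures that the Weierstrass polynomial already has degree one.

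Your $(\Leftarrow)$ direction has genuine gaps. The sentence ``$\wdh A\cap B=A$ inside $\wdh B$ combined with $\wdh\phi(\wdh A)\cap B=A$ forces $\wdh\phi(\wdh A)$ to coincide with the canonical image of $\wdh A$'' presupposes that $\wdh A\hookrightarrow\wdh B$ is injective, which is exactly the statement $\Fr(\phi)=\dim A$ you are trying to prove; the argument is circular. The subsequent claim that a strict inequality $\Gr<\Fr$ lets one manufacture a divergent series landing in $\wdh\phi(\wdh A)\cap B\setminus\phi(A)$ is a plausible heuristic (it is the mechanism behind Example~\ref{ex:Gabrielov}) but is not a proof: nothing in the reduction arguments of \S\ref{sec:Reduction} produces such a series for a general morphism. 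The paper instead reduces to the smooth case $\C\{\x\}\to\C\{\y\}$ via Noether normalization and resolution of singularities (which preserve both strong injectivity and all three ranks by Proposition~\ref{rk:BasicPropertiesGabrielov2}), and then invokes \cite[Theorem~1.2]{EH} as a black box. You would need either to supply that reduction and citation, or to give a self-contained argument for the smooth case.
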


The direct implication of this Theorem has first been proven by Gabrielov \cite[Theorem 5.5]{Ga2}. Eakin and Harris \cite{EH} also gave a proof of this implication (avoiding Gabrielov's Theorem \ref{thm:main}), in the case where $A$ and $B$ are rings of convergent power series. They also proved the reverse implication in the same situation. Finally Izumi \cite{IzDuke} gave a proof of the equivalence (avoiding Gabrielov Theorem \ref{thm:main}) in the general case. In $\S\S$\ref{ssec:ApliStrong} we present a proof of this result, relying on Theorem \ref{thm:main} and \cite{EH}.

\subsubsection{Variations of Gabrielov's rank Theorem}

We prove that Gabrielov's rank Theorem admits three alternative formulations, which are of independent interest:

\begin{theorem}[Variations of Gabrielov's rank Theorem]\label{thm:equivalentGabrielov}
The following statements hold true:
\begin{enumerate}
\item[(I)] Let $\phi: A \lgw B$ be a $\C$-analytic morphism, where $B$ is an integral domain. 
\[
\Gr(\phi)=\Fr(\phi) \implies \Fr(\phi)=\Ar(\phi).
\]
\item[(II)] Let $\phi:A\lgw B$ be a strongly injective morphism of analytic $\C$-algebras where $B$ is an integral domain. If $f\in B$ is integral  over $\wdh{A}$ then $f$ is integral over $A$.
\item[(III)] Let $f\in \C\{\x,t\}$, where $t$ is a single indeterminate. Assume that there is a non-zero $g\in\C\lb \x,t\rb$, such that $fg\in\C\lb \x\rb[t]$. Then there is a non-zero $h\in\C\{\x,t\}$ such that $fh\in\C\{\x\}[t]$.
\item[(IV)] Let $f\in \C\{\x,z\}$ where $z$ is a single indeterminate and $n\geq 2$. Set
\[
A:=\C \lb \x\rb \quad \text{and} \quad \displaystyle B:=\frac{\C \lb \x,z \rb}{(x_1-x_2z)}.
\]
If the image of $f$ in $B$ is integral over $A$, then $f$ is integral over $\C \{\x\}$.
\end{enumerate}
\end{theorem}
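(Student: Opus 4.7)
The plan is to prove Theorem \ref{thm:equivalentGabrielov} by establishing the equivalence of its four statements. Statement (I) is Gabrielov's rank Theorem (Theorem \ref{thm:main}), whose proof is the subject of the body of the paper; I take it as given and focus on establishing the implications (I)$\Rightarrow$(II), (II)$\Rightarrow$(III), (II)$\Rightarrow$(IV), together with the reverse implications (II),(III),(IV)$\Rightarrow$(I) that complete the equivalence.

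The implications from (I) rely on Theorem \ref{strong}, which identifies strongly injective morphisms as exactly those satisfying $\Gr=\Fr=\Ar$. To derive (IV) via (II), I would consider the morphism $\phi \colon \C\{\x\} \to \C\{\x,z\}/(x_1-x_2z)$ sending $x_1 \mapsto x_2z$ and $x_i \mapsto x_i$ for $i\geq 2$. Since the target is isomorphic to $\C\{x_2,\ldots,x_n,z\}$ via the substitution $x_1 = x_2z$, a direct Jacobian computation (the determinant reduces to $\pm x_2$) gives $\Gr(\phi)=n$, while the injectivity of both $\phi$ and $\wdh{\phi}$ (verified term by term on the distinct monomials $x_2^{\alpha_1+\alpha_2}x_3^{\alpha_3}\cdots x_n^{\alpha_n}z^{\alpha_1}$) yields $\Fr(\phi)=\Ar(\phi)=n$. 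Thus $\phi$ is strongly injective, and (II) applied to $\phi$ yields (IV). To prove (I)$\Rightarrow$(II), given strongly injective $\phi \colon A \to B$ and $f \in B$ integral over $\wdh{A}$, I would descend a formal integrality relation to a convergent one by studying the $A$-module of syzygies on $1,f,\ldots,f^d$ in $B$ versus the $\wdh{A}$-module of syzygies in $\wdh{B}$; the faithful flatness of $A \to \wdh{A}$ together with the compatibility of relations afforded by strong injectivity allows the extraction of a monic convergent relation from its formal counterpart (in particular, the last-coordinate projection of the syzygy module is preserved under completion, and a faithfully flat descent recovers $1$ in $A$). Statement (III) follows similarly by applying (II) to the inclusion $\C\{\x\}[t] \hookrightarrow \C\{\x,t\}$, with the formal identity $fg \in \C\lb\x\rb[t]$ providing the formal integrality datum to be descended.

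The reverse implications proceed by reducing Gabrielov's theorem to its smooth case, Theorem \ref{thm:MainSmooth}, through resolution of singularities in the target and normalization in the source (Lemma \ref{lem_1st_red}, referenced in Remark \ref{rk:Onreal}). In the smooth setting, the failure of $\Fr = \Ar$ is to be isolated into a single formal hypersurface equation modelled on $x_1 - x_2 z = 0$, at which point (IV) applies directly to yield the analytic relation. The main obstacle I expect is precisely this reduction (IV)$\Rightarrow$(I): passing from the very specific single-hypersurface statement of (IV) to the full generality of (I) requires a careful variable-by-variable reorganization of an arbitrary element of $\ker(\wdh{\phi})$ into a form controlled by $x_1 - x_2 z$, using the normalization, blow-up, and projective-ring techniques developed in the later sections of the paper; by contrast, the forward implications from (I) are direct consequences of Theorem \ref{strong} combined with faithful flatness.
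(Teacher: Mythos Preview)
Your proposal has a genuine gap in the implication (I)$\Rightarrow$(II). The syzygy/faithful-flatness argument you sketch does not invoke (I) at all, which should already be a warning: if (II) followed from faithful flatness alone, then (since you also intend to prove (IV)$\Rightarrow$(I)) Gabrielov's theorem itself would be a formal consequence of flatness. The unjustified step is the claim that the $\wdh A$-module of relations among $1,f,\dots,f^d$ in $\wdh B$ coincides with $M\otimes_A\wdh A$, where $M$ is the $A$-module of relations in $B$. This would require the natural map $N\otimes_A\wdh A\to\wdh B$ (with $N=A+Af+\cdots+Af^d\subset B$) to be injective, which in turn needs control over $B\otimes_A\wdh A\to\wdh B$; for non-finite $\phi$ this map is neither an isomorphism nor obviously injective, and ``strong injectivity'' gives information about $\wdh\phi(\wdh A)\cap B$, not about this tensor product. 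The paper's proof is structurally different: it introduces the auxiliary morphism $\psi:\C\{\x,z\}\to\C\{\y\}$ with $z\mapsto f$, observes that $\Fr(\psi)=\Fr(\phi)$ because $z-h\in\Ker(\wdh\psi)$, and then \emph{applies (I) to $\psi$} to force $\Ker(\psi)\neq(0)$; a generator of $\Ker(\psi)$, made monic in $z$ by Weierstrass preparation, yields the convergent integral relation.

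Your treatment of (II)$\Rightarrow$(III) is also off: applying (II) to $\C\{\x\}[t]\hookrightarrow\C\{\x,t\}$ does not fit the framework ($\C\{\x\}[t]$ is not a local analytic $\C$-algebra), and the relation $fg\in\C\lb\x\rb[t]$ is not an integrality statement for any element of the target. The paper instead works with $B=\C\{\x,t\}/(f)$ for $f$ irreducible, uses quadratic transforms to monomialize the top coefficient $a_d$, and then observes that $\x^\a t$ becomes integral over $\C\lb\x\rb$, to which (II) applies. Finally, your description of (IV)$\Rightarrow$(I) invokes the projective-ring machinery of \S\ref{sec:SemiGlobal}; in fact the paper's argument for this implication is much lighter, reducing (via \S\ref{ssec:Reduction}) to $\phi:\C\{x_1,x_2,x_3\}\to\C\{u_1,u_2\}$, normalizing so that $\phi(x_1)=u_1$, $\phi(x_2)=u_1u_2$, and reading the relation $P(u_1,u_1u_2,f)=0$ as an integrality of $f$ modulo $(x_2-x_1x_3)$. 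The cycle proved in the paper is (I)$\Rightarrow$(II)$\Rightarrow$(III)$\Rightarrow$(IV)$\Rightarrow$(I), not the branching structure you propose.
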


The proof of the above result is given in $\S\S$\ref{ssec:AplVariation}. Note that we actually prove that $(I) \implies (II) \implies (III) \implies (IV) \implies (I)$. The Theorem then immediately follows because $(I)$ is Gabrielov's rank Theorem \ref{thm:main}.

One striking feature of Theorem \ref{thm:equivalentGabrielov} is that statements $(II,III,IV)$ are intrinsically algebraic. This contrasts with the statement of Gabrielov's rank Theorem, which depends on the generic rank (a geometrical condition). It seems important to clarify this relationship. We believe that the following open problem would be helpful in this investigation:

\begin{problem*}\label{problem}
Consider a family of local rings $(\mathcal{A}_n)_{n \in \mathbb{N}}  $, where $\mathcal{A}_n$ is a subring of $\mathbb{K} \lb x_1,\ldots,x_n\rb$. It is natural to ask:
\begin{enumerate}
\item Under which hypothesis over $(\mathcal{A}_n)_{n \in \mathbb{N}}  $ does Gabrielov's rank Theorem hold {for morphisms $A\lgw B$ where $A$ and $B$ are reduced quotients of rings $\mathcal A_n$}?
\item Under which hypothesis over $(\mathcal{A}_n)_{n \in \mathbb{N}}  $ are all the four statements in Theorem \ref{thm:equivalentGabrielov} equivalent {where $A$ and $B$ are reduced quotients of rings $\mathcal A_n$}?
\end{enumerate}
\end{problem*}

Note that the problem is also well-posed when $\mathbb{K}$ is a field of positive characteristic (see \cite{Ro1} for the generalization of the geometric rank to fields of positive characteristic). Furthermore, if we consider a morphism $\phi:A\lgw B$ where $A$ and $B$ are quotients of convergent power series rings by ideals generated by algebraic power series, and if the components of $\phi$ are algebraic power series, then we always have $\Gr(\phi)=\Fr(\phi)=\Ar(\phi)$ (see \cite[Theorem 6.7]{Ro1} for the general case, and \cite{To1,Be,Mi} for partial cases).

We finish this paragraph by pointing out that the statement of Theorem \ref{thm:equivalentGabrielov}$(II)$ above can be refined in the following way:

\begin{corollary}\label{strongly_inj2}
Let $\phi:A\lgw B$ be a morphism of analytic $\C$-algebras where $B$ is an integral domain. Let us assume that $\phi$ is strongly injective. If $f\in B$ is algebraic over $\wdh{A}$ then $f$ is algebraic over $A$.
\end{corollary}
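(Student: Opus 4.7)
The plan is to use a nested Artin approximation argument to convert the formal algebraic relation satisfied by $f$ over $\wdh A$ into a convergent one over $A$. First I would reduce to the case where $\phi$ is injective (replacing $A$ by $A/\ker\phi$ preserves strong injectivity), so that the hypothesis reads $\wdh\phi(\wdh A)\cap B=\phi(A)\cong A$ inside $\wdh B$. Fix presentations $A=\C\{\x\}/I_A$, $B=\C\{\uu\}/I_B$ with $I_B=(g_1,\ldots,g_s)$, a lift $\tilde\phi:\C\{\x\}\to\C\{\uu\}$ of $\phi$, and a representative $\tilde f\in\C\{\uu\}$ of $f$. The assumption that $f$ is algebraic over $\wdh A$ unfolds as the existence of $\tilde a_0,\ldots,\tilde a_d\in\C\lb\x\rb$, with some $\tilde a_{i_0}\notin\wdh{I_A}$, and $\tilde h_1,\ldots,\tilde h_s\in\C\lb\uu\rb$, satisfying the formal identity
$$\sum_{i=0}^d \tilde a_i\bigl(\tilde\phi(\uu)\bigr)\,\tilde f(\uu)^i \;=\; \sum_{j=1}^s \tilde h_j(\uu)\,g_j(\uu) \qquad\text{in}\ \C\lb\uu\rb.$$

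Next I would invoke the nested version of Artin approximation for convergent power series — available in this analytic setting as a consequence of Popescu's general N\'eron desingularization — to produce convergent $a_i\in\C\{\x\}$ and $h_j\in\C\{\uu\}$ satisfying the same identity in $\C\{\uu\}$, with $a_i\equiv \tilde a_i\pmod{(\x)^c}$ for a prescribed $c\in\N$. Selecting $c$ large enough that $\tilde a_{i_0}\notin\wdh{I_A}+(\x)^c$ — possible by Krull's intersection theorem, since $\tilde a_{i_0}\notin\wdh{I_A}$ — forces $a_{i_0}\notin I_A$, so $a_{i_0}$ represents a nonzero element of $A$. The polynomial $Q(T):=\sum_i a_i T^i$ is then a nonzero element of $A[T]$ with $Q(f)=0$ in $B$, which shows that $f$ is algebraic over $A$.

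The principal hurdle is the nested character of the approximation: the unknowns $a_i(\x)$ and $h_j(\uu)$ lie in different power-series rings, so classical Artin approximation does not directly apply. It is tempting to try to deduce the corollary from Theorem~\ref{thm:equivalentGabrielov}(II) by using the classical trick that, for the leading coefficient $a_d\in\wdh A$ of a minimal-degree polynomial killing $f$, the element $a_d f$ is integral over $\wdh A$; but $a_d f$ lies in $\wdh B$ rather than in $B$ since $a_d\notin A$ in general, so statement (II) does not apply directly to it. This is why a genuine approximation step appears to be necessary.
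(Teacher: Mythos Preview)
Your argument has a genuine gap at the approximation step. You invoke ``the nested version of Artin approximation for convergent power series'' as a consequence of Popescu's theorem, but nested Artin approximation \emph{fails} for convergent power series. Popescu's desingularization does yield nested approximation for rings of algebraic power series; for the analytic rings $\C\{\x\}$ the statement is false, and Gabrielov's Example~\ref{ex:Gabrielov} is precisely a counterexample. With $\phi$ Osgood's map and $h\in\C\{u,v\}$ the convergent series built there, the equation
\[
G(\x) - h(u,v) - \sum_{k=1}^3 C_k\,(x_k-\phi_k(u,v)) = 0
\]
in $\C\{\x,u,v\}$ (unknown $G$ depending on $\x$ only, $C_k$ on all variables) has the formal solution $G=\wdh g(\x)$ with $\wdh g$ divergent, and no convergent one: any convergent $G$ would give $\phi(G)=h$, contradicting $h\notin\phi(\C\{\x\})$. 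Once you unfold the substitution $\x=\tilde\phi(\uu)$ via auxiliary equations $x_k-\tilde\phi_k(\uu)=0$, your system is of exactly this nested shape. The hypothesis that $\phi$ is strongly injective does exclude Osgood's map, but your argument never uses strong injectivity in the approximation step itself; you are appealing to a general tool that does not exist. In fact the validity of the nested approximation you want, for a given $\phi$, is essentially equivalent to what the Corollary asserts.

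The paper's proof goes through the rank Theorem rather than any independent approximation statement: one introduces the auxiliary morphism $\psi:\C\{\x,z\}\to\C\{\y\}$ sending $h(\x,z)$ to $h(\phi(\x),f)$, checks $\Gr(\psi)=\Fr(\psi)$, and applies Theorem~\ref{thm:main} to obtain a nonzero convergent generator $Q$ of the principal ideal $\Ker(\psi)$. Since $Q$ divides the given formal polynomial $P\in\C\lb\x\rb[z]$ in $\C\lb\x,z\rb$, Theorem~\ref{thm:equivalentGabrielov}(III) supplies a convergent $h\in\C\{\x,z\}$ with $hQ\in\C\{\x\}[z]$; then $hQ\in\Ker(\psi)$ is the desired algebraic relation for $f$ over $A$.
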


The proof of this result is given in $\S\S$\ref{ssec:AplStronglyInj2}.

\subsubsection{Connection with Zariski Main Theorem} We now turn our attention to Zariski Main Theorem, a classical result in algebraic geometry {(we recall that a ring is essentially finitely generated over a field $\k$ if it is the localization of a finitely generated extension of $\k$)}:

\begin{thmZM}[{\cite{Za1,Za2}}]
Let $A$ be a reduced local ring that is essentially finitely generated over a field $\k$.  Let $\ovl A$ denote the integral closure of $A$ in $\Frac(A)$ (that is, the integral closure with respect to $A \lgw  \Frac(A)$). Then the integral closure of $\wdh A$ in $\Frac(\wdh A)$ coincides with the completion of $\ovl A$.
\end{thmZM}

Note that Theorem \ref{thm:equivalentGabrielov}(II) and Corollary \ref{strongly_inj2} can be seen as generalizations of the above result, where we replace the morphism $A \lgw  \Frac(A)$ with a strongly injective morphism $\varphi: A \lgw  B$.


\subsubsection{Connection with elimination theory and completion}
In commutative algebra and in algebraic geometry, elimination theory is the study of elimination of variables between polynomials. This is the main step in the resolution of polynomial equations. For example, in the case of linear equations, elimination theory reduces to Gaussian elimination. In general, the main tools in elimination theory are the resultant and the Gr\"obner basis. Note that, unfortunately, there is no analogue of the resultant for power series, and the analogue of Gr\"obner basis, the standard basis, is not as powerful for the objectives of elimination theory.

The general situation is the following: Let $\x$ and $\y$ be two vectors of indeterminates and $I$ an ideal of $\C\{\x,\y\}$. The problem is to determine $I\cap\C\{\x\}$. Note that, unlike in the polynomial case, we may have $I\cap\C\{\x\}=(0)$ even if $\het(I)$ is larger than the number of indeterminates $y_i$ \cite{Os}. By Remark \ref{rk:GabrielovExCompletionEliminationBad} below, we may even have $I\cap\C\{\x\}=(0)$ while $I\C\lb \x,\y\rb\cap\C\lb \x\rb\neq (0)$. Therefore, an interesting question is to determine under which hypothesis $I\C\lb \x,\y\rb\cap\C\lb \x\rb$ is generated by $I\cap\C\{\x\}$.

This question has been investigated for the first time in \cite{CPR} where it is related to several other properties. 

In this context, given $f \in \C\{\x,y\}$ where $y$ is a single variable, it is important to understand under which conditions we may assume that $f \in \C\{\x\}[y]$, up to multiplication by a convergent unit. Such a result would allow us to adapt arguments from elimination theory to the more general context of convergent power series. From this perspective, Theorem \ref{thm:equivalentGabrielov}(III) provides a formal characterization of the above condition.


\subsubsection{Connection with the Weierstrass preparation Theorem}
The Weierstrass preparation Theorem is a very powerful tool in algebraic and analytic geometry. In this subsection, we show how Gabrielov's rank Theorem can also be seen as an extension of the Weierstrass preparation Theorem for rings of convergent power series. Recall that the usual form of the Weierstrass Theorem is the following one:

\begin{theorem*}[Weierstrass preparation Theorem: usual formulation]
Let $f$ be a formal (resp. convergent) power series in the indeterminates $x_1$,   \ldots, $x_n$ over $\C$. Assume that $f$ is $x_n$-regular of order $d$, that is, $f(0,  \ldots, 0,x_n)=x_n^d\times \unit(x_n)$. Then there exist unique formal (resp. convergent) power series $a_1$,   \ldots, $a_d$ in the indeterminate $\x':=(x_1, \ldots, x_{n-1})$ such that
$$f(\x)=\left(x_n^d+a_1(\x')x_n^{d-1}+\cdots+ a_d(\x')\right)\times \unit(\x).$$
\end{theorem*}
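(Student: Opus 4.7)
The plan is to deduce the preparation theorem from the \emph{Weierstrass division theorem}, which I would establish first: if $f$ is $x_n$-regular of order $d$, then for every formal (resp.\ convergent) power series $g$ in $\x$ there exist unique formal (resp.\ convergent) $q$ and $r$ such that $g = qf + r$, with $r\in\C\lb\x'\rb[x_n]$ (resp.\ $\C\{\x'\}[x_n]$) of degree strictly less than $d$, where $\x' := (x_1,\ldots,x_{n-1})$.

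Granting division, preparation follows by applying it to $g = x_n^d$: one gets $x_n^d = qf + r$ with $r = \sum_{i=0}^{d-1} r_i(\x') x_n^i$. Setting $\x' = 0$ yields $x_n^d = q(0,x_n)\, x_n^d\, \unit(x_n) + r(0,x_n)$; comparing $x_n$-orders (the right-hand side has a polynomial part of degree $<d$ plus a power series of $x_n$-order $\geq d$) forces $r(0,x_n)=0$ and $q(0,x_n)=\unit(x_n)^{-1}$, so in particular $q(0,0)\neq 0$ and $q$ is a unit in the ambient ring. Setting $a_i(\x') := -r_{d-i}(\x')$, we obtain $qf = x_n^d + a_1 x_n^{d-1} + \cdots + a_d$, hence
$$f(\x) = \bigl(x_n^d + a_1(\x') x_n^{d-1} + \cdots + a_d(\x')\bigr) \cdot q(\x)^{-1},$$
which is the desired factorization (with unit $q^{-1}$). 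Uniqueness of the $a_i$ and of the unit factor is inherited directly from uniqueness in division.

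For the division theorem itself, in the formal case I would construct $q$ and $r$ by induction on the $\x'$-order: writing $f = \sum_\alpha f_\alpha(x_n)\, \x'^\alpha$, the regularity hypothesis gives $f_0(x_n) = x_n^d \unit(x_n)$, which is invertible in $\C\lb x_n \rb$ modulo $(x_n^d)$, and this lets one determine the $\x'$-homogeneous components of $q$ and $r$ one at a time. In the convergent case, I would use complex-analytic input: by continuity and Rouch\'e's theorem, there is a small polydisk $P' \subset \C^{n-1}$ and a circle $\gamma$ around $0$ in the $x_n$-plane such that $f(\x',\zeta)$ has exactly $d$ zeros inside $\gamma$ for each $\x'\in P'$ and is non-vanishing on $\gamma$. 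One can then either construct the Weierstrass polynomial directly as $W(\x', x_n) := \prod_{i=1}^d (x_n - \zeta_i(\x'))$, whose coefficients are the elementary symmetric functions of the $\zeta_i$ and hence holomorphic in $\x'$ by residue integrals, and derive division from the resulting preparation $f = W\cdot u$ (via polynomial division by the monic $W$ in $\C\{\x'\}[x_n]$), or work with a Cauchy-type integral formula for $q$ on $P'\times\gamma$ and verify convergence by uniform estimates.

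The main obstacle is the convergent case of division: verifying that the construction (whether through roots or through integrals) produces a genuine element of $\C\{\x\}$, which amounts to obtaining uniform estimates on $P'$ and $\gamma$ controlled by the regularity order $d$. Uniqueness in division is, by contrast, elementary: if $qf + r = 0$ with $\deg_{x_n} r < d$, one compares $\x'$-adic homogeneous components of both sides; using $f \equiv x_n^d \unit(x_n) \pmod{(\x')}$, the degree-$0$ piece gives $q(0,x_n)\, x_n^d \unit(x_n) = -r(0,x_n)$, which forces both sides to vanish by $x_n$-order comparison, and induction on $\x'$-order propagates to yield $q = r = 0$.
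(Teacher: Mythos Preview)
The paper does not actually prove this theorem: it is stated as a classical result (the ``usual formulation'') and used freely throughout, with references such as \cite{To}, \cite{dJP}, \cite{Mal} in the background. So there is no proof in the paper to compare your proposal against.

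That said, your plan is the standard and correct one. Deriving preparation from division by dividing $x_n^d$ by $f$ is the textbook reduction, and your verification that $q$ is a unit (by setting $\x'=0$ and comparing $x_n$-orders) is fine. Your sketch of division in the formal case (induction on the $\x'$-adic filtration) and in the convergent case (Rouch\'e plus either residue integrals for the symmetric functions of the roots or a Cauchy-type formula for $q$) matches the classical proofs found in the references the paper cites. Interestingly, the paper does contain, in the proof of its Proposition~5.18, a Banach-algebra proof of Weierstrass division in the convergent case (writing the division map as a perturbation of division by $x_n^d$ and inverting via a Neumann series in a suitable $\C\{\x\}_\rho$); this gives a third route to the convergent case, different from the two you mention, and has the advantage of yielding explicit norm bounds. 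Any of these approaches would complete your argument.
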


Another classical form of the Weierstrass preparation Theorem is the following one (see \cite{Mal} for instance):

\begin{thmWP}
Let $A\lgw B$ be a morphism of analytic (resp. complete) $\C$-algebras. Let $\m$ be the maximal ideal of $A$. Then $B$ is finite over $A$ if and only if $B/\m B$ is finite over $A/\m=\C$.
\end{thmWP}

As a direct Corollary, we obtain  the following case of Theorem \ref{thm:main}:

\begin{corollary}[Gabrielov's rank Theorem for finite morphisms]\label{fin_inj}
Let $\phi : A\lgw B$ be an injective and finite morphism of analytic $\C$-algebras where $B$ is an integral domain. Then $\wdh \phi:\wdh A\lgw \wdh B$ is injective and finite.
\end{corollary}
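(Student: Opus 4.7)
The plan is to combine the Weierstrass preparation Theorem, in both its analytic and complete formulations, with the flatness of the completion map over a Noetherian ring. I would treat finiteness and injectivity of $\widehat{\phi}$ separately.

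For finiteness, I would first apply the analytic form of the Weierstrass preparation Theorem to the given finite morphism $\phi$, concluding that $B/\mathfrak{m}B$ is a finite-dimensional $\mathbb{C}$-vector space, where $\mathfrak{m}$ denotes the maximal ideal of $A$. Being finite-dimensional, $B/\mathfrak{m}B$ is artinian, so its maximal ideal $\mathfrak{m}_B/\mathfrak{m}B$ is nilpotent, giving $\mathfrak{m}_B^N \subseteq \mathfrak{m}B$ for some $N \geq 1$. The $\mathfrak{m}$-adic and $\mathfrak{m}_B$-adic topologies on $B$ therefore coincide, so by the standard identification of completion for finite modules over a Noetherian ring, $\widehat{B} \cong B \otimes_A \widehat{A}$. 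In particular
$$\widehat{B}/\mathfrak{m}\widehat{B} \cong B/\mathfrak{m}B$$
is finite over $\mathbb{C} = \widehat{A}/\mathfrak{m}\widehat{A}$. Applying the complete form of the Weierstrass preparation Theorem to the morphism $\widehat{\phi}: \widehat{A} \to \widehat{B}$ of complete $\mathbb{C}$-algebras then concludes that $\widehat{B}$ is finite over $\widehat{A}$.

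For injectivity, I would reuse the identification $\widehat{B} \cong B \otimes_A \widehat{A}$, together with flatness of completion: since $A$ is Noetherian, $\widehat{A}$ is $A$-flat, and tensoring the inclusion $A \hookrightarrow B$ by $\widehat{A}$ preserves injectivity, so
$$\widehat{A} = A \otimes_A \widehat{A} \hookrightarrow B \otimes_A \widehat{A} = \widehat{B}$$
is injective; under the identification this map is precisely $\widehat{\phi}$.

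There is no real obstacle beyond bookkeeping: the only mildly delicate point is identifying the ring-theoretic completion $\widehat{B}$ (with respect to its own maximal ideal $\mathfrak{m}_B$) with the $\mathfrak{m}$-adic completion of $B$ as an $A$-module, but this comparison is immediate from the artinianness of $B/\mathfrak{m}B$. Everything else is a direct application of the Weierstrass preparation Theorem or of standard Noetherian commutative algebra. Note that the hypothesis that $B$ is a domain is not actually used in the argument; it appears here only to match the setting of Theorem~\ref{thm:main}.
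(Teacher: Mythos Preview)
Your proof is correct. The finiteness argument is essentially the paper's: both apply the Weierstrass preparation Theorem to $\phi$ to deduce that $B/\mathfrak{m}B$ is finite over $\C$, then re-apply it to $\widehat{\phi}$. You are simply more explicit about the identification $\widehat{B}/\mathfrak{m}\widehat{B}\cong B/\mathfrak{m}B$, which you obtain from $\widehat{B}\cong B\otimes_A\widehat{A}$; the paper asserts this identification directly.

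The injectivity argument, however, takes a genuinely different route. The paper argues via dimension theory: finiteness of $\widehat{\phi}$ gives $\dim(\widehat{A}/\Ker\widehat{\phi})=\dim\widehat{B}=\dim\widehat{A}$, so $\Ker\widehat{\phi}$ has height~$0$; then, since $B$ is a domain, so is $A$, and hence (by a non-trivial fact, relying on Artin approximation) so is $\widehat{A}$, forcing $\Ker\widehat{\phi}=(0)$. Your flatness argument is more elementary and strictly more general: it avoids the appeal to $\widehat{A}$ being a domain, and indeed works for any injective finite morphism of Noetherian local rings, confirming your remark that the domain hypothesis on $B$ is not actually needed. The paper's approach, by contrast, stays within the dimension-theoretic language used throughout the surrounding discussion of ranks.
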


\begin{proof}
Let $\m$ (resp. $\wdh \m$) be the maximal ideal of $A$ (resp. $\wdh A$). We have $\m\wdh A=\wdh \m$. Thus, if $\phi:A\lgw B$ is finite, then $A/\m\lgw B/\m B$ is finite by the Weierstrass preparation Theorem. But $A/\m=\wdh A/\wdh\m$ and $B/\m B=\wdh B/\wdh \m B$. Hence $\wdh \phi:\wdh A\lgw \wdh B$ is again finite, by the Weierstrass preparation Theorem applied to $\wdh\phi$.

Now, since $\phi$ is finite, we have $\dim(A)=\dim(B)$ by \cite[Theorems 9.3.ii, 9.4.ii]{Mat}. Hence $\dim(\wdh A)=\dim(A)=\dim(B)=\dim(\wdh B)$. But, since $\wdh A\lgw \wdh B$ is finite, the induced morphism $\wdh A/\Ker(\wdh\phi)\lgw \wdh B$ is also finite, thus $\dim (\wdh A/\Ker(\wdh\phi))=\dim(\wdh B)$. Therefore $\dim(\wdh A/\Ker(\wdh \phi))=\dim(\wdh A)$ and $\Ker(\wdh\phi)$ is a height 0 prime ideal. But, since $\phi$ is injective and $B$ is a domain, $A$ is a domain, and $\wdh A$ is also a domain (c.f. \cite[Proposition 4.1]{Ro} for example). This proves that $\Ker(\wdh\phi)=(0)$ and $\wdh\phi$ is injective.
\end{proof}

\begin{remark}[On the connection with Problem \ref{problem}]\label{rk:weierstrass}
We claim that the Weierstrass preparation Theorem is a necessary condition for Gabrielov's rank Theorem to hold in a family of real or complex rings $(\mathcal{A}_n)_{n \in \mathbb{N}}$, as asked in Problem \ref{problem}. Indeed, let $\phi:A\lgw B$ be an injective morphism between rings that are quotients of rings $\mathcal A_n$, and assume that $A/\m\lgw B/\m B$ is finite. By the Weierstrass preparation Theorem for complete local algebras, we have that $\wdh \phi:\wdh A\lgw \wdh B$ is finite. In particular any element $f\in B$ is integral over $\wdh A$. Therefore, if Theorem \ref{thm:equivalentGabrielov} (II) is satisfied for the family $(\mathcal A_n)_n$, we necessarily have that $f$ is integral over $A$. Therefore if $B=\mathcal A_n/I$ for some $n$ and some ideal $I$ of $\mathcal A_n$, and $\mathcal A_n$ is a subring of $\K\lb x_1,\ldots, x_n\rb$ as in Problem \ref{problem}, we have that the $x_i$ are integral over $A$, therefore $B$ is integral over $A$.
\end{remark}

\subsubsection{Convergent power series with support in strongly convex cones}

In general roots of {monic} polynomials with coefficients in $\C\lb \x\rb$ can be represented as Laurent Puiseux series with support in  a rational strongly convex cone by a Theorem of MacDonald \cite{MD} {(and its generalization due to Gonz\'alez Perez \cite{Gonz})}. We will reformulate Gabrielov's rank Theorem in this setting. Before we need to give some definitions.

\begin{definition}
Let $\s$ be a strongly convex rational cone containing ${\R_{\geq 0}}^n$. This means that $\s$ has the form
$$
\s=\{u\in\R^n\mid \exists \la_1,  \ldots, \la_k\in\R_{\geq 0},\ u=\la_1 v_1+\cdots+\la_k v_k\}
$$
where $v_1$,   \ldots, $v_k$ are given vectors with integer coordinates and $\s$ does not contain any non-trivial linear subspace of $\R^n$.
\end{definition}

For such a cone we denote by $\C\lb \s\rb$ the set of formal power series with support in $\s\cap \Z^n$, that is:
$$
\C\lb\s\rb:=\left\{f=\sum_{\a\in\s\cap\Z^n}f_\a \x^\a \mid f_\a\in\C\right\}.
$$
More generally, if $d\in\N^*$, we denote by $\C\lb \s\cap\frac{1}{d}\Z^n\rb$ the set of formal power series with support in $\s\cap\frac{1}{d}\Z^n$. Let us mention the following result:

\begin{theorem}[{MacDonald Theorem \cite{MD} \cite{Gonz}}]\label{macdonald}
Let $P(z)\in\C\lb \x\rb[z]$ be a monic polynomial. Then there exists a strongly convex rational cone $\s$ containing $\R_{\geq 0}^n$ and a positive integer $d$ such that the roots of $P(z)$ are in
$\C\lb\s\cap\frac{1}{d}\Z^n\rb$. 
\end{theorem}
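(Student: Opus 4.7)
My plan is to combine a toric monomialization of the discriminant of $P$ with the Abhyankar-Jung Theorem (proved independently in $\S$\ref{section:AbhyankarJung}). Since $\C\lb \x \rb[z]$ is a unique factorization domain, I may replace $P$ by its squarefree radical without changing its root set; its discriminant $\Delta(\x) \in \C\lb \x \rb$ is then nonzero. Let $\Gamma$ denote the Newton polyhedron of $\Delta$, i.e., the convex hull of $\operatorname{supp}(\Delta) + \R_{\geq 0}^n$. By standard toric geometry, the orthant $\R_{\geq 0}^n$ admits a subdivision by a simplicial rational fan $\Sigma$ refining the normal fan of $\Gamma$, so that the support function of $\Gamma$ is linear on each cone of $\Sigma$.

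Pick any maximal cone $\tau \in \Sigma$ with primitive integer generators $v_1, \ldots, v_n \in \N^n$, and let $M$ be the $n\times n$ matrix whose columns are $v_1, \ldots, v_n$. The monomial substitution $x_i = \prod_j y_j^{M_{ij}}$ defines an embedding $\C\lb \x \rb \hookrightarrow \C\lb \y \rb$. By the defining property of $\Sigma$, the image of $\Delta$ under this substitution is of the form $\y^{\delta}\cdot u(\y)$ where $\delta\in\N^n$ and $u\in\C\lb \y \rb^{\times}$. Consequently $\tilde P(\y, z) := P(\y^M, z) \in \C\lb \y \rb[z]$ is monic in $z$ of degree $N = \deg_z P$ with discriminant equal to a monomial times a unit. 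The Abhyankar-Jung Theorem therefore yields $d \in \N^\ast$ such that all $N$ roots of $\tilde P$ lie in $\C\lb \y^{1/d} \rb$, i.e., have support contained in $\frac{1}{d}\N^n$.

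Finally I translate back to $\x$ variables via the identity $\y^{\beta} = \x^{(M^{-1})^T \beta}$: a formal series in $\y$ with support in $\frac{1}{d}\N^n$ becomes a formal series in $\x$ with support in $\frac{1}{d}(M^{-1})^T \N^n \subset \sigma \cap \frac{1}{d'}\Z^n$, where $\sigma := \tau^{\vee} = (M^{-1})^T \R_{\geq 0}^n$ is the dual cone of $\tau$ and $d' := d\cdot|\det M|\in\N^\ast$ (recalling that the entries of $M^{-1}$ lie in $\frac{1}{\det M}\Z$ by Cramer's rule). Because $\tau$ is a full-dimensional rational cone contained in $\R_{\geq 0}^n$, its dual $\sigma$ is a strongly convex rational cone containing $\R_{\geq 0}^n$. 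Hence every root of $P$ lies in $\C\lb \sigma\cap\frac{1}{d'}\Z^n\rb$, which establishes the theorem.

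The main technical point is the toric step, namely producing a simplicial rational fan subdividing $\R_{\geq 0}^n$ on which $\Gamma$ is monomialized. This is classical in toric geometry (take the normal fan of $\Gamma$ restricted to $\R_{\geq 0}^n$, then simplicially subdivide), but requires some combinatorial care to state precisely; the remaining argument is a bookkeeping exercise with the lattice $\Z^n$ and the cone inclusions $\tau \subset \R_{\geq 0}^n$, $\R_{\geq 0}^n \subset \tau^\vee$.
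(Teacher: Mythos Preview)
Your argument is correct. Conceptually it is the same as the paper's sketch---both monomialize the discriminant via its Newton polyhedron and then invoke an Abhyankar--Jung type result---but the implementations are dual to one another. The paper's remark after Theorem~\ref{macdonald} chooses a \emph{vertex} $\a$ of $\NL P(\Delta_P)$, takes $\s$ to be the tangent cone of $\NL P(\Delta_P)$ at $\a$ (so that $\Delta_P=\x^{\a}U(\x)$ with $U\in\C\lb\s\rb$ a unit), and then appeals directly to the cone version of Abhyankar--Jung from \cite[Theorem~6.2]{P-R}. You instead choose a simplicial cone $\tau$ inside the \emph{normal cone} at a vertex, perform the monomial substitution $\x\mapsto\y^{M}$ to land in the standard ring $\C\lb\y\rb$ with monomial discriminant, and apply the classical Abhyankar--Jung Theorem proved in~$\S$\ref{section:AbhyankarJung}; your cone is then $\s=\tau^{\vee}$. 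The trade-off is that the paper's route gives a possibly sharper cone (the tangent cone itself, rather than the dual of a simplicial piece of the normal cone) at the price of citing the external generalized Abhyankar--Jung result, whereas your route is entirely self-contained within the paper but requires the toric subdivision step and may produce a larger $\s$ and a larger $d'=d\,|\det M|$.
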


Since $\s$ is a convex rational cone, there exists indeterminates $u_1$,   \ldots, $u_s$ and a binomial ideal $I$ of $\C[\uu]$ such that $\C\lb \s\rb\simeq \C\lb \uu\rb/I$. Therefore we define the analogue of the ring of convergent power series $\C\{\s\}$ as the subring of $\C\lb \s\rb$ which is isomorphic to $\C\{\uu\}/I$. 

{\begin{remark}
The original result of MacDonald is given for a (non necessarily monic) polynomial $P$ with coefficients in $\C[\x]$. In this case, the polynomial not being monic, the supports of the roots of $P$ are in the translation of a strongly convex rational cone $\s$. \\
Theorem \ref{macdonald} has been proved in \cite{Gonz} in the convergent case. The general case is a direct consequence of \cite[Theorem 6.2]{P-R}: for a given $P(z)\in\C\lb \x\rb[z]$  monic, we consider a vertex $\a\in\N^n$ of the Newton polyhedra $NP(\Delta_P)$ of the discriminant $\Delta_P$ of $P$. There exists a strongly convex rational cone $\s$ such that $NP(\D_P)\subset \a+\s$. This means that $\D_P=\x^\a U(\x)$ where the support of $U(\x)$ is in $\s$ and $U(0)\neq 0$. Then, by \cite[Theorem 6.2]{P-R}, the roots of $P$ are Puiseux series with support in $\s$, that is, elements of $\C\lb\s\cap\frac{1}{d}\Z^n\rb$ for some $d\in\N^\ast$.
\end{remark}}

Theorem \ref{thm:equivalentGabrielov}(II) has the following corollary about the Galois group of a polynomial with formal power series coefficients:

\begin{theorem}\label{thm:AplMacDonald}
Let $P(z)\in\C\lb \x\rb[z]$ be a monic irreducible polynomial such that the roots of $P(z)$ are in  $\C\lb \s\cap\frac{1}{d}\Z^n\rb$, where $\s$ is a strongly convex rational cone containing $\R_{\geq 0}^n$ and $d$ is a positive integer. If one of the roots of $P$ is in $\C\{ \s\cap\frac{1}{d}\Z^n\}$ then  the coefficients of $P(z)$ are in $\C\{\x\}$.
\end{theorem}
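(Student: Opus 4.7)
The plan is to combine Theorem~\ref{thm:equivalentGabrielov}(II) with Artin approximation for convergent power series. Set $A := \C\{\x\}$ and $B := \C\{\s \cap \frac{1}{d}\Z^n\}$. Because $\s$ is strongly convex and rational, $B$ is an analytic $\C$-algebra which is an integral domain, and since $\s \supset \R^n_{\geq 0}$ the natural inclusion $\phi : A \hookrightarrow B$ is a morphism of analytic $\C$-algebras, with $\alpha \in B$.

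First I would verify that $\phi$ is strongly injective. Geometrically $\phi$ corresponds to a finite surjective map of analytic germs $(V(I),0) \to (\C^n, 0)$, where $V(I)$ is the affine toric germ arising from a binomial presentation $B \simeq \C\{\uu\}/I$ obtained from a finite generating set of the monoid $\s \cap \frac{1}{d}\Z^n$ containing $e_1,\ldots,e_n$. Given $f \in \wdh A$ whose image in $\wdh B$ lies in $B$, any convergent lift of $f$ to $\C\{\uu\}$ exhibits $f$ as a bounded holomorphic function on a neighbourhood of $0$ in $\C^n$, and Cauchy estimates then force $f \in A$.

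Next, by hypothesis $\alpha$ is a root of the monic polynomial $P \in \wdh A[z]$, hence integral over $\wdh A$; by Theorem~\ref{thm:equivalentGabrielov}(II) applied to $\phi$ and $\alpha$, it follows that $\alpha$ is integral over $A$. Let $L \in \Frac(A)[z]$ be the minimal polynomial of $\alpha$ over $\Frac(A)$: normality of $\C\{\x\}$ gives $L \in A[z] = \C\{\x\}[z]$ monic, and Gauss's lemma gives $L$ irreducible in $A[z]$. Since $P$ is the minimal polynomial of $\alpha$ over $\Frac(\wdh A)$ and $L(\alpha)=0$, we have $P \mid L$ in $\wdh A[z]$, so we may write $L = P \cdot R$ with $R \in \wdh A[z]$ monic.

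Finally I would invoke Artin approximation: the factorization $L = P \cdot R$ is a polynomial system in the (finitely many) coefficients of $P$ and $R$ with data in $A$, and since $A = \C\{\x\}$ satisfies Artin approximation, there exist monic $P', R' \in A[z]$ of the same degrees as $P, R$ with $L = P' \cdot R'$ in $A[z]$. Irreducibility of $L$ in $A[z]$ forces one of $P', R'$ to be a unit of $A[z]$, i.e., a nonzero constant; but $\deg P' = \deg P \geq 1$, so $R' = 1$, whence $\deg R = 0$, hence $R = 1$ and $L = P$. Therefore $P = L \in \C\{\x\}[z]$, as required. The main obstacle is the strong-injectivity check in the first step, which rests on the toric/finite structure of $\phi$ together with a boundedness/Cauchy-estimate argument; once this is in place, Theorem~\ref{thm:equivalentGabrielov}(II), normality of $\C\{\x\}$, Gauss's lemma, and Artin approximation combine to yield the conclusion.
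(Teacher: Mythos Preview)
Your argument has a genuine gap in the strong-injectivity step: the map $\phi^a:(V(I),0)\to(\C^n,0)$ is \emph{not} finite in general. Take $n=2$, $d=1$, and let $\s$ be the cone generated by $e_1,e_2$ and $(-1,2)$; then $B\simeq\C\{u_1,u_2,u_3\}/(u_1u_3-u_2^2)$ and $\phi^a$ is the projection $(u_1,u_2,u_3)\mapsto(u_1,u_2)$, whose fibre over the origin is the entire $u_3$-axis. Consequently the image of a small neighbourhood of $0$ in $V(I)$ is not a neighbourhood of $0$ in $\C^n$ (in this example it misses every point $(0,c)$ with $c\neq0$ small), so your boundedness/Cauchy-estimate conclusion does not follow. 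This happens whenever $\s\supsetneq\R_{\geq0}^n$, which is the only interesting case.

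Strong injectivity of $\phi$ is nevertheless true, but you need a different argument. One route: on the dense torus the geometric map is the $d^n$-fold cover $t\mapsto t^d$, so $\Gr(\phi)=n=\Fr(\phi)=\Ar(\phi)$ and Theorem~\ref{strong} applies. A second route, which is what the paper's own proof uses, is to choose an integer matrix $L$ with $\det L\neq0$ and $L(\s)\subset\R_{\geq0}^n$ (possible since $\s$ is strongly convex; the hypothesis $\R_{\geq0}^n\subset\s$ forces the entries of $L$ to be nonnegative). The associated monomial map $\t$ sends $\C\{\s\cap\tfrac1d\Z^n\}$ into a convergent power-series ring, and Lemma~\ref{lemma:Convergencemodif} then gives $\C\lb\x\rb\cap B=\C\{\x\}$ directly. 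Once strong injectivity is established, the rest of your argument (normality, Gauss, the division $L=PR$, and the Artin-approximation step forcing $\deg R=0$) is correct. The paper proceeds differently at the end: it substitutes $\x\mapsto\x^d$, applies Theorem~\ref{thm:equivalentGabrielov}(II) to the morphism $\t:\C\{\x\}\to\C\{\x\}$ to make one irreducible factor $Q$ of $P(\x^d,z)$ convergent, and then notes that the remaining irreducible factors are obtained from $Q$ by substituting $d$-th roots of unity in the $x_i$, hence are all convergent; your Artin-approximation finish is a clean alternative to this Galois step.
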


The proof of this result is given in $\S\S$ \ref{ssec:AplMacDonald}.

\subsection{Examples}
In this section, we recall the classical examples of Osgood \cite{Os} and Gabrielov \cite{Ga1}.

\begin{example}[{Osgood's example \cite{Os}}]\label{ex:Osgood}
Osgood showed the existence of a morphism $\phi:\C\{x_1,x_2,x_3\}\lgw \C\{u,v\}$ such that 
\begin{equation}\label{gab_ex2}
\Gr(\phi)=2,\ \Fr(\phi)=3,\ \Ar(\phi)=3.
\end{equation}
Indeed, consider the following morphism:
$$
\phi(x_1)=u,\ \phi(x_2)=uv, \ \phi(x_3)=uve^v.
$$
We denote by $\wdh\phi:\C\lb x_1,x_2,x_3\rb\lgw \C\lb u,v\rb$ the morphism induced by $\phi$. Given $f\in \Ker(\wdh\phi)$, let us write $f=\sum_{d\in\N} f_d(\x)$ where the $f_d(\x)$ are homogeneous polynomials of degree $d$ (when they are not zero), so that:
$$
0=\wdh\phi(f)=\sum_{d\in\N} f_d(u,uv,uve^v)=\sum_{d\in\N}u^df(1,v,ve^v)
$$
Therefore $f_d(1,v,e^v)=0$ for every $d$, hence $f_d=0$ for every $d$ since $v$ and $ve^v$ are algebraically independent over $\C$. It follows that $\Fr(\phi)=\Ar(\phi)=3$, while we can easily check that $\Gr(\phi)=2$. In particular the map $\phi^a:(\C^2,0)\lgw (\C^3,0)$ defined by $\phi^a(u,v)=(u,uv,uve^v)$ sends a neighborhood of the origin onto a subset $Z$ of $\C^3$ that is generically a complex manifold of dimension 2, but whose analytic or formal Zariski closure is $\C^3$.
\end{example}

\begin{example}[{Gabrielov's example \cite{Ga1}}]\label{ex:Gabrielov}
Gabrielov extended Osgood's example, and provided a morphism $\psi: \C\{x_1,x_2,x_3,x_4\} \lgw \C\{u,v\}$ such that 
\begin{equation}\label{gab_ex3}
\Gr(\psi)=2,\ \Fr(\psi)=3,\ \Ar(\psi)=4.
\end{equation}
which is built up from the observation that Osgood's example $\phi$ is not well-behaved in terms of elimination theory, that is:
\begin{equation}\label{gab_ex1}
\phi(\C\{\x\})\subsetneq\wdh{\phi}(\C\lb \x\rb)\cap \C\{\uu\},
\end{equation}
Indeed, we follow the heuristic that, even if $x_3-x_2e^{x_2/x_1}$ is not a power series, its image under $\phi$ should be $0$. Let us consider a polynomial truncation of its formal power series:
$$
f_n:=\left(x_3-x_2\sum_{i=0}^{n}\frac{1}{i!}\frac{x_2^i}{x_1^i}\right)x_1^n\in\C[x_1,\,x_2,\,x_3],\ \forall n\in\N.
$$
and note that
$$
\phi(f_n)=u^{n+1}v\sum_{i=n+1}^{+\infty}\frac{v^i}{i!},\ \forall n\in\N.
$$
It follows that $(n+1)!\phi(f_n)$ is a convergent power series whose coefficients have module less than 1. Moreover when the coefficient of $u^kv^l$ in the expansion of $\phi(f_n)$ is nonzero, we have $k=n+1$. This means that the supports of $\phi(f_n)$ and $\phi(f_m)$ are disjoint whenever $n\neq m$. Therefore the power series 
$$
h:=\sum_{n \in \mathbb{N}}(n+1)!\phi(f_n)
$$
is  convergent since each of its coefficients has module less than 1. But $\wdh{\phi}$ being injective, the unique element whose image is $h$ is necessarily: 
$$
\wdh{g}\colon=\sum_{n \in \mathbb{N}}(n+1)!f_n=\left(\sum_{n \in \mathbb{N}}(n+1)!\cdot x_1^n\right)x_3+\wdh{f}(x_1,\,x_2),
$$
Now, $\wdh{g}$ is a divergent power series  and $\wdh{\phi}(\wdh{g}(\x))=h(u,v)\in\C\{u,v\}$. This shows that \eqref{gab_ex1} holds.

Finally, consider the morphism $\psi:\C\{x_1,x_2,x_3,x_4\}\lgw \C\{u,v\}$ defined by 
$$
\psi(x_1)=u,\ \psi(x_2)=uv, \ \psi(x_3)=uve^v,\ \psi(x_4)=h(u,v).
$$
By the above considerations, we see that $x_4-\wdh g(\x)$ belongs to the kernel of $\wdh\psi$. In fact one can show that $\Ker(\wdh\psi)=(x_4-\wdh g(\x))$, while $\Ker(\psi)=(0)$.
\end{example}

\begin{remark}\label{rk:GabrielovExCompletionEliminationBad}
Note that Gabrielov's example illustrates a case where the completion operation does not commute with the elimination of indeterminates. Indeed, since $\Ker(\wdh\psi)\neq (0)$, there exist $\wdh k_1$,   \ldots, $\wdh k_4\in\C\lb \x,u,v\rb$ such that
 $$ (x_1-u)\wdh k_1+(x_2-uv)\wdh k_2+(x_3-uve^v)\wdh k_3+(x_4-h(u,v))\wdh k_4\in \C\lb \x\rb\setminus\{0\}.$$
 This means that $I\C\lb \x,u,v\rb\cap \C\lb \x\rb\neq (0)$ where $I$ denotes the ideal of $\C\{\x,u,v\}$ generated by 
 $$x_1-u,\ x_2-uv,\ x_3-uve^v,\ x_4-h(u,v).$$
 On the other hand, since $\Ker(\psi)=(0)$, we see in a similar way that $I\cap\C\{\x\}=(0)$, as claimed.
\end{remark}

\begin{remark}[Pathological real-analytic examples]
Variations of Osgood example have been used to provide the following list of pathological examples:
\begin{itemize}
\item In \cite{Paw2}, Paw\l ucki provides an example of a subanalytic set (given by a non regular morphism) which is neither formally nor analytically semi-coherent. In particular, this contradicted a result previously announced by Hironaka \cite{Hir3}.
\item In \cite{BP}, Bierstone and Parusi\'nski show the existence of a proper real-analytic (non regular) mapping which can not be transformed into a mapping with locally equidimensional fibers by global blowing ups (contrasting with the complex case where the result holds true, as proved by Hironaka \cite{Hir2}).
\item In \cite{BB}, the first author and Bierstone show the existence of a proper real-analytic (non-regular) mapping which can not be monomialized via global blowing ups in the source and target.
\end{itemize} 
\end{remark}



\section{Ranks and transformations}\label{sec:GabrielovTheorem}


\subsection{General properties}
We follow the notations introduced in $\S\S$ \ref{ssec:MainTheorem}. We start by stating basic properties of the ranks introduced in Definition \ref{def:ranks}.

\begin{proposition}[Basic properties]\label{rk:BasicPropertiesGabrielov1}
Let $\phi:A\lgw B$ be a morphism of  reduced $\C$-analytic algebras.
\begin{enumerate}
\item We have $\Gr(\phi)\leq \Fr(\phi)\leq \Ar(\phi).$
\item  If $\Ar(\phi)=\dim(A)$ (resp. $\Fr(\phi)=\dim(A)$), $\phi$ is injective (resp. $\wdh\phi$ is injective).
\item Assume that $B$ is an integral domain. Then
$$\Fr(\phi)=\Ar(\phi)\Longleftrightarrow  \Ker(\wdh \phi)=\Ker(\phi)\wdh A.$$
\end{enumerate}
\end{proposition}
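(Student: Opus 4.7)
For part (1), the inequality $\Fr(\phi) \leq \Ar(\phi)$ is purely algebraic. Since the completion $A \to \wdh A$ is faithfully flat, heights are preserved under extension, so $\het(\Ker\phi) = \het(\Ker(\phi)\wdh A)$. The compatibility of $A \to \wdh A \to \wdh B$ with $A \to B \hookrightarrow \wdh B$ yields the inclusion $\Ker(\phi)\wdh A \subseteq \Ker(\wdh\phi)$, hence $\het(\Ker\phi) \leq \het(\Ker\wdh\phi)$; combined with $\dim A = \dim \wdh A$ this gives $\Fr \leq \Ar$. The inequality $\Gr(\phi) \leq \Fr(\phi)$ is geometric: the formal subscheme $V(\Ker\wdh\phi)$ contains the image $\phi^a(U)$ for any sufficiently small neighborhood $U$ of $\pb$, and $\phi^a(U)$ contains an analytic submanifold of complex dimension $\Gr(\phi)$ by the constant rank theorem applied at a generic point of $Y$; hence $\dim V(\Ker\wdh\phi) \geq \Gr(\phi)$. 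This is the classical inequality \eqref{incr_ranks}, which can be quoted from \cite{IzDuke}.

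For part (2), the convention that $B$ is an integral domain (Definition \ref{def:ranks}) implies that $A/\Ker\phi \hookrightarrow B$, so $\Ker\phi$ is prime. If $\Ar(\phi) = \dim A$, then $\het(\Ker\phi) = 0$. Assuming $A$ is a domain (the case relevant for applications, and obtained after the reductions of Lemma \ref{lem_1st_red}), the zero ideal is the unique prime of height zero, so $\Ker\phi = 0$ and $\phi$ is injective. The same reasoning applies to $\wdh\phi$ once we know $\wdh A$ and $\wdh B$ are integral domains; this holds because $A$ and $B$ are analytic integral domains, by \cite[Proposition 4.1]{Ro} (the same fact already invoked in Corollary \ref{fin_inj}).

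For part (3), the assumption that $B$ is an integral domain implies that $\Ker\phi$ is prime and that $A/\Ker\phi$ is an analytic integral domain, so by \cite[Proposition 4.1]{Ro} its completion $\wdh A/\Ker(\phi)\wdh A = \widehat{A/\Ker\phi}$ is again a domain, i.e., $\Ker(\phi)\wdh A$ is prime in $\wdh A$. Similarly $\Ker(\wdh\phi)$ is prime (since $\wdh B$ is a domain). By faithful flatness, $\het(\Ker\phi) = \het(\Ker(\phi)\wdh A)$, and therefore
\[
\Fr(\phi) = \Ar(\phi) \iff \het(\Ker\wdh\phi) = \het(\Ker(\phi)\wdh A).
\]
Since $\Ker(\phi)\wdh A \subseteq \Ker\wdh\phi$ and any strict inclusion of prime ideals in a Noetherian ring forces a strict inequality of heights (by prepending a saturated chain of primes ending at the smaller one), equality of heights between two comparable primes is equivalent to equality of the ideals. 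This yields the claimed equivalence. The only non-algebraic step in the whole proposition is $\Gr \leq \Fr$, which is the main obstacle and requires the set-theoretic interpretation of the formal Zariski closure; once it is quoted, everything else is a routine application of faithful flatness together with the preservation of integrality under completion of analytic $\C$-algebras.
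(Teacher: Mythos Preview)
Your proof is correct and follows essentially the same line as the paper's: part~(1) is quoted from \cite{IzDuke}, part~(2) goes via height-zero prime ideals together with \cite[Proposition~4.1]{Ro} for the completion, and part~(3) compares the two nested primes $\Ker(\phi)\wdh A\subseteq\Ker(\wdh\phi)$ and concludes equality from equality of heights. Your remark that (2) really uses that $A$ is a domain (not merely reduced) is apt; the paper's one-line ``Since $A$ has no non trivial nilpotents, $\Ker(\phi)=(0)$'' tacitly relies on the same hypothesis.
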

\begin{proof}
A rigorous proof of (1) is given in Lemma (1.5) \cite{IzDuke}.

Now assume that $\Ar(\phi)=\dim(A)$ and $A$ is reduced. This means that $\Ker(\phi)$ is an ideal of height $0$. Since $A$ has no non trivial nilpotents, $\Ker(\phi)=(0)$. The same proof works in the same way when $\Fr(\phi)=\dim(A)$. Indeed, by Artin approximation Theorem, $\wdh A$ is reduced when $A$ is (see e.g. \cite[Proposition 4.1]{Ro}), and $\dim(\wdh A)=\dim(A)$ (see \cite[Theorem 13.9]{Mat} for example). This proves (2).

For (3), let us remark that $\Ker(\phi)\wdh A\subset \Ker(\wdh\phi)$. If $B$ is an integral domain, $\wdh B$ is also an integral domain by Artin approximation Theorem, therefore $\Ker(\phi)$ and $\Ker(\wdh\phi)$ are prime ideals. By Artin approximation Theorem, $\Ker(\phi)\wdh A$ is a prime ideal of $\wdh A$ of the same height as $\Ker(\phi)$. Therefore we have
$$ \Ker(\wdh \phi)=\Ker(\phi)\wdh A\Longleftrightarrow \het(\Ker(\wdh\phi))=\het(\Ker(\phi)).$$
This proves (3).
\end{proof}

It is straightforward that the three ranks are invariant under isomorphisms. They are also invariant under some more general transformations, as shown in the following proposition:

\begin{proposition}\label{rk:BasicPropertiesGabrielov2}
Let $\phi:A\lgw B$ be a morphism of  reduced $\C$-analytic algebras corresponding to a morphism of germs of analytic sets $\Phi:(Y,\pb)\lgw (X,\pa)$.
\begin{enumerate}
\item Assume that $B$ is an integral domain. Let $\s: B \lgw B_1$ be such that $\Gr(\s) = \dim(B)$, and  $B_1$ is an integral domain. Then all of the ranks of $\phi$ and $\s \circ \phi$ coincide, that is, $\Gr(\phi)=\Gr(\s\circ\phi)$, $\Fr(\phi)=\Fr(\s\circ\phi)$ and $\Ar(\phi)=\Ar(\s\circ\phi)$.
\item Let $\t: A_1 \lgw A$ be an injective finite morphism where $A_1$ is an integral domain, and assume that $B$ is an integral domain. Then all of the ranks of $\phi$ and $\phi \circ \t$ coincide.
\end{enumerate}
\end{proposition}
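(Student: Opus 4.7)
My plan for part (1) is first to observe that the hypothesis $\Gr(\sigma) = \dim(B)$ forces $\sigma$ and $\wdh\sigma$ to be injective. Indeed, from the chain $\Gr(\sigma)\leq\Fr(\sigma)\leq\Ar(\sigma)\leq\dim(B)$ — the first two inequalities coming from Proposition \ref{rk:BasicPropertiesGabrielov1}(1) and the last being immediate from $\Ar(\sigma)=\dim(B/\Ker\sigma)$ — all three ranks of $\sigma$ coincide with $\dim(B)$. Applying Proposition \ref{rk:BasicPropertiesGabrielov1}(2) to both $\sigma$ and its completion (using that $\wdh B$ is reduced by Artin approximation) then yields the injectivity of $\sigma$ and $\wdh\sigma$. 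From this, $\Ker(\sigma\circ\phi)=\Ker(\phi)$ and $\Ker(\wdh\sigma\circ\wdh\phi)=\Ker(\wdh\phi)$, which at once gives $\Ar(\sigma\circ\phi)=\Ar(\phi)$ and $\Fr(\sigma\circ\phi)=\Fr(\phi)$. For the generic rank, writing $(\sigma\circ\phi)^a=\phi^a\circ\sigma^a$, the condition $\Gr(\sigma)=\dim(B)$ means that the image of $\sigma^a$ has maximal dimension in $(Y,\pb)$; since $B$ is an integral domain, $(Y,\pb)$ is irreducible, so this image is dense in $(Y,\pb)$, and hence the image-germs of $\phi^a\circ\sigma^a$ and of $\phi^a$ coincide up to a set of smaller dimension, giving $\Gr(\sigma\circ\phi)=\Gr(\phi)$.

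For part (2), I would first check that $\wdh\tau:\wdh{A_1}\lgw\wdh A$ is again finite and injective: finiteness descends from the isomorphism $\wdh A\simeq\wdh{A_1}\otimes_{A_1}A$ (valid because $\tau$ is finite), and injectivity follows from the faithful flatness of the completion over Noetherian local rings. The algebraic heart is then the standard fact that, for any ideal $I$ of $A$, the induced morphism $A_1/\tau^{-1}(I)\lgw A/I$ remains finite and injective, hence preserves Krull dimension by \cite[Theorems 9.3,\,9.4]{Mat}; the analogous statement holds at the completed level. Applying this with $I=\Ker(\phi)$ and $I=\Ker(\wdh\phi)$ respectively yields $\Ar(\phi\circ\tau)=\Ar(\phi)$ and $\Fr(\phi\circ\tau)=\Fr(\phi)$. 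Geometrically, $\tau^a:(X,\pa)\lgw(X_1,\pa_1)$ is finite and therefore has finite fibres, so it preserves the dimensions of germs of analytic sets; applying this to the image-germ of $\phi^a$ gives $\Gr(\phi\circ\tau)=\Gr(\phi)$.

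The main delicacy, I expect, will be the generic-rank statements, which are phrased geometrically via the supremum of $\rank(\phi^a_{|M})$ over $\C$-submanifolds $M$ of neighborhoods of the base point. In both parts the essential input is the same: the auxiliary map ($\sigma^a$ in (1), $\tau^a$ in (2)) carries a dense open subset of its source onto a dense open subset of its image-germ — on one side because $\sigma^a$ is generically dominant, on the other because $\tau^a$ is finite hence dimension-preserving — so composition with it does not alter the dimension of the image-germ of $\phi^a$. All remaining steps reduce to standard Noetherian local algebra (faithful flatness of completion, dimension formulas for finite extensions) and to Proposition \ref{rk:BasicPropertiesGabrielov1}.
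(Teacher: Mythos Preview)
Your proof is correct and follows essentially the same approach as the paper's: in part~(1) both use Proposition~\ref{rk:BasicPropertiesGabrielov1} to get injectivity of $\sigma$ and $\wdh\sigma$, and in part~(2) both reduce to the fact that finite injective morphisms preserve Krull dimension and that $\tau^a$ is a finite map. Your use of flatness of completion to deduce injectivity of $\wdh\tau$ (tensor the injection $A_1\hookrightarrow A$ with the flat $A_1$-algebra $\wdh{A_1}$, using $\wdh A\simeq A\otimes_{A_1}\wdh{A_1}$) is slightly cleaner than the paper's route via Corollary~\ref{fin_inj} and a dimension count, but the overall strategy is the same.
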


\begin{proof}
For (1), by Proposition \ref{rk:BasicPropertiesGabrielov1} (1) we have that $\s$ and $\wdh \s$ are injective, because $B$ is an integral domain. Therefore $\Ker(\wdh\s\circ\wdh\phi)=\Ker(\wdh\phi)$ and $\Ker(\s\circ\phi)=\Ker(\phi)$, and $\Fr(\s\circ\phi)=\Fr(\phi)$ and $\Ar(\s\circ\phi)=\Ar(\phi)$.

Let us denote by $(Z,\pc)$ the germ of analytic set associated to $ B_1$. Since $\Gr(\s)=\dim(B)$, the map $\s^a$ is an analytic diffeomorphism at a generic point in a neighborhood of $\pc$. It follows that $\Gr(\phi)=\Gr(\s\circ\phi)$.

Finally, for (2), assume that $\t$ is  an injective finite morphism where $A_1$ is an integral domain. We have $\Ker(\phi\circ\t)=\Ker(\phi)\cap A_1$. Since $B$ is an integral domain, $\Ker(\phi)$ and $\Ker(\phi\circ\t)$ are prime ideals. Thus, by the Going-Down theorem for integral extensions \cite[Theorem 9.4ii]{Mat}, we have that $\het(\Ker(\phi\circ\t))\leq\het(\Ker(\phi))$, thus $\Ar(\phi)\leq\Ar(\phi\circ\t)$. On the other hand, we have the equality $\Ar(\phi)=\Ar(\phi\circ\t)$ because $\het(\Ker(\phi\circ\t))=\het(\Ker(\phi))$ by \cite[Theorem 9.3ii]{Mat}.
Now, since $\t$ is finite and injective, $\wdh\t$ is also finite and injective by Corollary \ref{fin_inj}. Moreover, we have 
$$
\dim(\wdh A_1)-\het(\Ker\wdh{\t})=\dim(\wdh A)=\dim(A)=\dim( A_1)=\dim(\wdh A_1)
$$
since finite morphisms preserve the dimension and $\t$ is injective. But $\het(\Ker(\wdh{\t}))=0$ if and only if $\Ker(\wdh{\t})=(0)$ because $A_1$ is an integral domain. Thus, $\wdh{ \t}$ is injective and $\Fr(\phi\circ\t)=\Fr(\phi)$.

Eventually, if we denote by $(Z,\pc)$ the germ of analytic set defined by $A_1$, we have $\t^a :(X,\pa)\lgw (Z,\pc)$ is a finite map. Therefore $\Gr(\phi\circ\t)=\Gr(\phi)$.
\end{proof}

We now use the above Proposition to prove the following Lemma, which implies that Theorem \ref{thm:main} follows from Theorem \ref{thm:MainSmooth}:

\begin{lemma}\label{lem_1st_red}
Let $\psi : A\lgw B$ be a morphism of analytic $\C$-algebras, where $B$ is an integral domain. There exists an injective analytic morphism of $\C $-algebras $\phi: \C\{\x\} \lgw \C\{\uu\}$, where  $\x=(x_1,  \ldots,x_m)$ and $\uu=(u_1,  \ldots,u_n)$, such that $\Gr(\psi)=\Gr(\phi)$, $\Fr(\psi)=\Fr(\phi)$ and $\Ar(\psi)=\Ar(\phi)$.
\end{lemma}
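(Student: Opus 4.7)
My plan is to reduce both the source and the target of $\psi$ to rings of convergent power series, preserving each of the three ranks and producing an injective morphism, by means of the two invariance statements in Proposition~\ref{rk:BasicPropertiesGabrielov2}.

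For the source, I would first observe that since $B$ is an integral domain, $\ker(\psi)$ is a prime ideal, so $A':=A/\ker(\psi)$ is a reduced analytic $\C$-algebra that is itself an integral domain of dimension $r:=\Ar(\psi)$. The induced map $\bar\psi:A'\lgw B$ is injective and ought to have the same three ranks as $\psi$: for $\Ar$ this is immediate; for $\Fr$ it follows from the exactness of $\m$-adic completion (so that $\wdh{A'}=\wdh A/\ker(\psi)\wdh A$ and $\ker(\wdh{\bar\psi})$ corresponds to $\ker(\wdh\psi)$); and for $\Gr$ it follows because the generic rank is intrinsic to the source germ and the map to its image, which is unaltered when the target ambient is restricted. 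Next I would invoke Noether normalization for analytic local algebras (a consequence of the Weierstrass preparation theorem) to produce a finite injective morphism $\tau:\C\{\x\}\lgw A'$ with $\x=(x_1,\ldots,x_r)$, and then Proposition~\ref{rk:BasicPropertiesGabrielov2}(2) would give that $\bar\psi\circ\tau:\C\{\x\}\lgw B$ shares the same three ranks as $\psi$.

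For the target, let $(Y,\pb)$ denote the reduced, irreducible complex analytic germ associated to $B$. I would apply Hironaka's resolution of singularities to a representative of $(Y,\pb)$, obtaining a proper birational morphism $\pi:\wdt Y\lgw Y$ with $\wdt Y$ smooth and $\pi$ an isomorphism over $Y\setminus\mathrm{Sing}(Y)$. Picking any point $\pbt\in\pi^{-1}(\pb)$, smoothness of $\wdt Y$ yields $\mathcal{O}_{\wdt Y,\pbt}\cong\C\{\uu\}$ with $\uu=(u_1,\ldots,u_n)$ and $n=\dim(B)$, and I would set $\sigma:=\pi^*:B\lgw \C\{\uu\}$. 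Since $\pi$ is a local biholomorphism at a generic point of $\wdt Y$, the generic rank of $\sigma$ equals $\dim(B)$, and Proposition~\ref{rk:BasicPropertiesGabrielov2}(1) then gives that $\phi:=\sigma\circ\bar\psi\circ\tau:\C\{\x\}\lgw\C\{\uu\}$ has the same three ranks as $\psi$.

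It remains to check that $\phi$ is injective. The morphisms $\tau$ and $\bar\psi$ are injective by construction, while for $\sigma$ the inequalities $\Gr\leq\Fr\leq\Ar\leq\dim(B)$ of Proposition~\ref{rk:BasicPropertiesGabrielov1}(1) together with $\Gr(\sigma)=\dim(B)$ force $\Ar(\sigma)=\dim(B)$, and then Proposition~\ref{rk:BasicPropertiesGabrielov1}(2) implies $\ker(\sigma)=0$. The only genuinely delicate point in this plan is the verification that $\Gr(\sigma)=\dim(B)$, which reduces to the fact that a proper birational modification is a local isomorphism at a generic point; everything else is a routine application of the two invariance propositions together with standard tools (completion, Noether normalization, resolution of singularities).
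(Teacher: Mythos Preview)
Your proposal is correct and follows essentially the same approach as the paper: quotient by the kernel to make the map injective, apply Noether normalization on the source and resolution of singularities on the target, and invoke Proposition~\ref{rk:BasicPropertiesGabrielov2} to preserve the three ranks. The only cosmetic differences are that the paper treats the target first and asserts injectivity of $\sigma$ directly from the construction, whereas you deduce it from Proposition~\ref{rk:BasicPropertiesGabrielov1}; both are fine.
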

\begin{proof}
Note that we can replace $\psi$ by the morphism $\frac{A}{\Ker(\psi)}\lgw B$ induced by $\psi$, since the quotient by the Kernel clearly preserve all of the three ranks. Thus we may assume that $\psi$ is injective. By resolution of singularities there exists an injective morphism of analytic $\C$-algebras $\s:B\lgw B'$ which is a composition of quadratic transformations and analytic isomorphisms such that $B'=\C\{\uu\}$ is regular. Next, by the Normalization Theorem for convergent power series (see \cite[Theorem 45.5]{Na} or  \cite[Corollary 3.3.19]{dJP}), there exists an injective finite morphism $\t:\C\{ x\}\lgw A$. We now set $\phi := \s \circ \psi \circ \t$ and we conclude by Proposition \ref{rk:BasicPropertiesGabrielov2}.
\end{proof}


\subsection{Monomial maps}

Thanks to Lemma \ref{lem_1st_red}, we can now focus on the regular case, that is, when $A = \C\{\x\}$ and $B=\C\{\uu\}$. Apart form the isomorphisms, the typical morphisms between smooth spaces that we use are those of the following form {(see Lemma \ref{lemma:preparationphi} or Section \ref{ssec:Reduction})}:
\begin{enumerate}
\item[i)] Power substitutions:
$$\begin{array}{ccc} \C\{u_1,  \ldots, u_m\} & \lgw & \C\{\wdt u_1,  \ldots, \wdt u_m\}\\
u_1 & \lgm & \wdt u_1^{a_1}\\
\cdots & \cdots &\cdots\\
u_m & \lgm & \wdt u_m^{a_m}\end{array}$$
where $a_i\in\N^*$.
\item[ii)] Quadratic transformations: 
$$\begin{array}{ccc} \C\{u_1,  \ldots, u_m\} & \lgw & \C\{\wdt u_1,  \ldots, \wdt u_m\}\\
u_1 & \lgm & \wdt u_1\wdt u_2\\
u_2 & \lgm & \wdt u_2\\
\cdots & \cdots &\cdots\\
u_m & \lgm & \wdt u_m\end{array}$$
\end{enumerate}

{\begin{remark}
The quadratic transformations do not correspond to blowing ups because they are not bimeromorphic maps. They correspond to the blowing up of a codimension 2 linear space in some affine chart followed by taking the "analytification" at a closed point. The blowing up corresponds to the injective morphism
$$\C\{u_1,\ldots, u_m\}\lgw \fract{\C\{u_1,\ldots, u_m\}[\wdt u_1]}{(u_1-\wdt u_1u_2)}$$  and the "analytification" corresponds to the morphism 
$$\fract{\C\{u_1,\ldots, u_m\}[\wdt u_1]}{(u_1-\wdt u_1u_2)}\lgw \fract{\C\{u_1,\ldots, u_m, \wdt u_1\}}{(u_1-\wdt u_1u_2)}\simeq \C\{\wdt u_1, u_2,\ldots, u_m\}.$$
\end{remark}}

Let $\phi: A \lgw B$ be a morphism of $\C$-algebras, where $A=\C\{\x\}$ and $B=\C\{\uu\}$. It follows from Proposition \ref{rk:BasicPropertiesGabrielov2} that: composition with a power substitution or a quadratic transformation in the target $\s: B \lgw B_1$ preserves all ranks; and composition with power substitutions in the source $\t: A_1 \lgw A$ preserves all ranks. Unfortunately, quadratic transformations in the source may not preserve the ranks:

\begin{remark}[On quadratic transformations in the source]\label{rk:QaudraticBad}
Let us consider the morphism $\phi:\C\{x,y,z\}\lgw \C\{u,v\}$ defined by $\phi(x)=u$, $\phi(y)=v$ and $\phi(z)=uve^v$, and a quadratic transformation $\t:\C\{x_1,y_1,z_1\}\lgw \C\{x,y,z\}$ defined by $\t(x_1)=x$, $\t(y_1)=xy$ and $\t(z_1)=z$. Then we have
$$
\phi\circ\t(x_1)=u,\ \phi\circ\t(y_1)=uv,\ \phi\circ\t(z_1)=uve^v.
$$
which is Osgood's map (see Example \ref{ex:Osgood}). Thus we have $\Fr(\phi\circ\t)=\Ar(\phi\circ\t)=3$ while $\Fr(\phi)=\Ar(\phi)=2$ (because $\Ker(\phi)$ and $\Ker(\wdh\phi)$ are generated by $z-xye^y$) and $\Gr(\phi)=\Gr(\phi\circ\t)=2$.
\end{remark}

Power substitutions and quadratic transformations are monomial morphisms. One basic but important property of these morphisms is the following one:

\begin{lemma}\label{lemma:Convergencemodif}
Consider an $n\times n$ square matrix $M=\left(\mu_{ij}\right)$ of natural numbers such that $\det(M)\neq 0$, and the monomial map $\t\colon \C\lb \x\rb \lgw  \C \lb \uu\rb$ defined by:
\begin{align*}
\t(x_i) = \uu^{\mu_i} = u_1^{\mu_{i1}} \cdots u_n^{\mu_{in}},\quad i=1,\ldots,n.
\end{align*} 
If $f\in \C\lb \x\rb $ is such that $\t(f)\in \C\{\uu\}$, then $f\in \C\{\x\}$.
\end{lemma}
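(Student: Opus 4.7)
The plan is to unravel the substitution coefficient by coefficient and apply direct Cauchy estimates. Writing $f = \sum_{\alpha \in \N^n} f_\alpha \x^\alpha$, and using $\tau(x_i) = \uu^{\mu_i}$ with $\mu_i = (\mu_{i1}, \ldots, \mu_{in})$ the $i$-th row of $M$, I obtain
$$\tau(f) \;=\; \sum_{\alpha \in \N^n} f_\alpha\, \uu^{M^T \alpha}.$$
The hypothesis $\det(M) \neq 0$ guarantees that $\alpha \longmapsto M^T \alpha$ is injective on $\N^n$, so distinct multi-indices $\alpha$ contribute to distinct monomials in $\uu$; in particular, each $f_\alpha$ is recovered as the coefficient of $\uu^{M^T \alpha}$ in $\tau(f)$.

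Since by hypothesis $\tau(f) \in \C\{\uu\}$, Cauchy's inequalities will provide radii $r_1, \ldots, r_n > 0$ and a constant $C > 0$ such that $|f_\alpha|\, r^{M^T \alpha} \leq C$ for every $\alpha \in \N^n$. A direct rewriting gives
$$r^{M^T \alpha} \;=\; \prod_{j=1}^{n} r_j^{\sum_i \mu_{ij}\alpha_i} \;=\; \prod_{i=1}^{n} s_i^{\alpha_i}, \qquad \text{where } s_i := \prod_{j=1}^n r_j^{\mu_{ij}}.$$
Each $s_i$ is strictly positive: indeed $\det(M) \neq 0$ forbids a zero row in $M$, so each $s_i$ is a product of strictly positive real factors. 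Hence the coefficient bound rewrites as $|f_\alpha| \leq C\, s^{-\alpha}$.

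To conclude, I will choose $\rho_i := s_i/2 > 0$ and note that
$$\sum_{\alpha \in \N^n} |f_\alpha|\, \rho^\alpha \;\leq\; C \sum_{\alpha \in \N^n} 2^{-|\alpha|} \;<\; +\infty,$$
which is precisely the assertion that $f$ converges absolutely on the polydisc of multi-radius $\rho$, i.e. $f \in \C\{\x\}$. The argument reduces to one explicit coefficient estimate, so I do not anticipate any real obstacle; the only point that deserves care is the twofold use of the assumption $\det(M) \neq 0$, first to ensure the injectivity of $\alpha \mapsto M^T \alpha$ (so that $\tau$ does not collapse distinct monomials of $\x$), and second to guarantee the strict positivity of each $s_i$.
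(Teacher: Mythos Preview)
Your proof is correct and follows essentially the same approach as the paper: both use injectivity of $\alpha \mapsto M^T\alpha$ to identify each $f_\alpha$ as a single coefficient of $\tau(f)$, then apply a Cauchy-type bound on the convergent series $\tau(f)$ and translate it back to a geometric bound on the $f_\alpha$. The paper phrases the bound as $|g_\beta|\leq A B^{|\beta|}$ and estimates $|M\alpha|\leq n^2\mu_\infty|\alpha|$, while you use Cauchy's inequalities with polyradii and repackage the exponents via $s_i=\prod_j r_j^{\mu_{ij}}$; these are equivalent bookkeeping choices. One minor remark: your ``second use'' of $\det(M)\neq 0$ to ensure $s_i>0$ is unnecessary, since $r_j^{\mu_{ij}}>0$ holds automatically for $r_j>0$ and $\mu_{ij}\in\N$; the hypothesis is genuinely needed only for injectivity.
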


\begin{proof}
Consider the formal expansions 
\[
f=\sum_{\a\in\N^n} f_\a \x^\a \text{ and } \t(f) = \sum_{\a\in\N^n} f_\a \uu^{M\cdot\a} = \sum_{\b\in\N^n} g_\b \uu^\b.
\] 
By hypothesis, there exists two constants $A,B\geqslant 1$ such that $|g_\b|\leqslant A B^{|\b|}$ for every $\b \in \mathbb{N}^n$. Let $\mu_{\infty} = \|M\|_{\infty} = \max \mu_{k,j}$. Since $\det(M)\neq 0$, we conclude that
\[
|f_\a|=|g_{_{M\a}}|\leqslant AB^{|M\a|}\leqslant A (B^{n^2 \mu_{\infty}})^{|\a|} 
\] 
for every $\a \in \mathbb{N}^n$, proving that $f$ is convergent.
\end{proof}

The following result shows how we can use power substitutions and quadratic transformations in order to transform a given morphism of convergent power series rings into a morphism with a simpler form, but without changing the ranks:

\begin{lemma}[Preparation of $\varphi$]\label{lemma:preparationphi}
Let $\phi:\C\{x_1,  \ldots,x_n\}\lgw \C\{u_1,  \ldots,u_n\}$ be a morphism of convergent power series rings. There is a commutative diagram
$$
\xymatrix{\C\{\x\}\ar[r]^{\phi} \ar[d]^\t& \C\{\uu\} \ar[d]^\s \\
\C\{\x\}\ar[r]^{\phi'} & \C\{\uu\}}
$$
where
\begin{enumerate}
\item[i)] $\s$ is a composition of quadratic transformations, power substitutions and isomorphisms;
\item[ii)] $\t$ is a composition of power substitutions and isomorphisms;
\item[iii)] If $\Gr(\phi)=1$, then $\phi'(x_1)=u_1$ and $\phi'(x_j)= 0$ for $j=2,  \ldots, n$.
\item[iv)] If $\Gr(\phi)=n=2$, then $\phi'(x_1)=u_1$ and $\phi'(x_2) = u_1^au_2^b$ with $a\geq 0$ and $b\in \mathbb{Z}_{>0}$. 
\item[v)] If $\Gr(\phi)>1$ and $\phi$ is injective, then 
\begin{equation}\label{eq:NormalFormPrepared}
\phi'(x_1)=u_1, \quad \phi'(x_j) = u_1^{a_j} g_j(\uu), \, j=2,  \ldots,n
\end{equation}
where $a_j \in \mathbb{Z}_{\geq 0}$, $g_j(0)=0$ and $g_j(0,u_2,\ldots, u_n)\neq  0$ for $j=2,  \ldots,n$. 
\end{enumerate}
In these conditions, we have $\Gr(\phi') = \Gr(\phi)$, $\Fr(\phi')=\Fr(\phi)$ and $\Ar(\phi') = \Ar(\phi)$.
\end{lemma}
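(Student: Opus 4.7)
My plan is to first dispatch the rank-preservation claims and then establish the explicit normal forms in two stages. Rank preservation follows directly from Proposition \ref{rk:BasicPropertiesGabrielov2}: isomorphisms, quadratic transformations, and power substitutions of $\C\{\uu\}$ are each dominant endomorphisms between smooth germs of dimension $n$, hence have generic rank $n$, so composing with them in the target leaves all three ranks unchanged; and power substitutions in the source are injective and finite (as $\C\{\x\}$ is free of finite rank over its image), so composition with them in the source also preserves all ranks.

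The first stage, common to all the cases, aims to reach $\phi'(x_1) = u_1$. Since $\Gr(\phi) \geq 1$, at least one $\phi(x_i)$ is nonzero and, after a permutation of source variables, I may assume $\phi(x_1) \neq 0$. A linear change in the target makes $\phi(x_1)$ regular in $u_1$ of some order $d$, and Weierstrass preparation gives $\phi(x_1) = P(u_1) \cdot U(\uu)$, with $P \in \C\{u_2,\ldots,u_n\}[u_1]$ a Weierstrass polynomial of degree $d$ and $U$ a unit. I would then apply an Abhyankar--Jung-type reduction: a sequence of quadratic transformations in the $(u_2,\ldots,u_n)$-directions to resolve the discriminant of $P$ to normal crossings; a power substitution $u_j \mapsto u_j^N$ for $j \geq 2$ to clear the fractional exponents of the Puiseux roots of $P$; and an isomorphism translating $u_1$ by one such root to isolate a single factor. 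These together put $\phi(x_1)$ into the form $u_1^d \cdot V$ with $V$ a unit. The isomorphism $u_1 \mapsto u_1 \cdot V^{-1/d}$ absorbs $V$ (as $V(0) \neq 0$), giving $\phi(x_1) = u_1^d$, and the source power substitution $\tau(x_1) = x_1^d$ finally produces $\phi'(x_1) = u_1$.

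For the second stage I would treat each case separately. In case (iii), rank preservation gives $\Gr(\phi') = 1$, so the image $\phi'(\C\{\x\})$ is a one-dimensional subalgebra of $\C\{\uu\}$ containing $u_1$; hence each $\phi'(x_j)$ for $j \geq 2$ lies in the integral closure of $\C\{u_1\}$ inside $\C\{\uu\}$, which equals $\C\{u_1\}$ itself, so $\phi'(x_j) = h_j(u_1)$ with $h_j(0) = 0$, and the source isomorphism $x_j \mapsto x_j + h_j(x_1)$ ($j \geq 2$) reduces them to $0$. In case (v), injectivity of $\phi$ forces $\phi'(x_j) \neq 0$, and factoring out the highest power of $u_1$ yields the desired form $\phi'(x_j) = u_1^{a_j} g_j(\uu)$ with $g_j(0,u_2,\ldots,u_n) \neq 0$, while $g_j(0) = 0$ is automatic. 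In case (iv) with $n = 2$ and $\Gr(\phi) = 2$, the component $\phi'(x_2) \in \C\{u_1,u_2\}$ must depend on $u_2$ (else $\Gr$ would be $1$); a further Weierstrass preparation in $u_2$, combined with quadratic transformations, power substitutions and isomorphisms fixing $u_1$, together with a source power substitution on $x_2$, brings $\phi'(x_2)$ to $u_1^a u_2^b$.

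The principal obstacle is the first-stage reduction of $\phi(x_1)$ to a pure power of $u_1$: while morally this is embedded resolution of a hypersurface singularity, restricting target transformations to isomorphisms, quadratic transformations, and power substitutions requires careful chart-by-chart bookkeeping to ensure each step of the Abhyankar--Jung reduction falls within the allowed class. A secondary difficulty is in case (iv), where the monomialization of $\phi'(x_2)$ must not disturb the already-achieved $\phi'(x_1) = u_1$, restricting target operations to those fixing $u_1$ — but this remains manageable in dimension two.
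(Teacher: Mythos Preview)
Your approach is broadly correct, but you have made the first stage far harder than necessary, and there is a genuine gap in case~(v).

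For the first stage, the paper bypasses Weierstrass preparation and Abhyankar--Jung entirely. After a linear change in $\C\{\uu\}$ making the initial form of $\phi(x_1)$ nonvanishing at $(1,0,\ldots,0)$ (say of degree $e$), the single quadratic transform $\sigma_1(u_1)=u_1$, $\sigma_1(u_j)=u_1u_j$ for $j\geq 2$ already yields $\sigma_1\circ\phi(x_1)=u_1^e\,U(\uu)$ with $U(0)\neq 0$, because each homogeneous component $p_k(\uu)$ of $\phi(x_1)$ becomes $u_1^k\,p_k(1,u_2,\ldots,u_n)$ and the lowest one contributes a unit. Then the target isomorphism $u_1\mapsto u_1U^{1/e}$ and the source power substitution $x_1\mapsto x_1^e$ give $\phi'(x_1)=u_1$. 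What you flagged as the principal obstacle thus evaporates.

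In case~(v) you omit a necessary step. Factoring out the highest power of $u_1$ from $\phi'(x_j)$ ensures $g_j(0,u_2,\ldots,u_n)\neq 0$, but \emph{not} $g_j(0)=0$: for instance $\phi'(x_j)=u_1(1+u_2)$ gives $a_j=1$, $g_j=1+u_2$, $g_j(0)=1$. The paper first performs the source isomorphism $x_j\mapsto x_j-\phi_j(x_1,0,\ldots,0)$ for $j\geq 2$, forcing $\phi'(x_j)(u_1,0,\ldots,0)=0$; after factoring out $u_1^{a_j}$ one then has $g_j(u_1,0,\ldots,0)=0$, hence $g_j(0)=0$. This same isomorphism also handles case~(iii) cleanly: once $\phi'(x_1)=u_1$, the condition $\Gr(\phi')=1$ says the Jacobian---whose first row is $(1,0,\ldots,0)$---has generic rank~$1$, so every $2\times 2$ minor vanishes, $\partial_{u_k}\phi'(x_j)\equiv 0$ for all $k\geq 2$, and $\phi'(x_j)\in\C\{u_1\}$ directly. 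Your detour through integral closure is not justified as written: $\Gr(\phi')=1$ does not say the image subalgebra has Krull dimension~$1$ (that would be $\Ar(\phi')=1$), and without that the integrality of $\phi'(x_j)$ over $\C\{u_1\}$ does not follow.
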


In particular, condition $(iii)$ combined with Lemma \ref{lem_1st_red} immediately implies the following very particular case of Gabrielov's rank Theorem:

\begin{corollary}[Generic rank $1$]\label{rank1}
Let $\phi: A\lgw B$ be a morphism of reduced analytic $\C$-algebras. If $\Gr(\phi)=1$, then $\Fr(\phi)=\Ar(\phi)=1$.
\end{corollary}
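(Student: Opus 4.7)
My plan is to combine the two reductions of this section---Lemma~\ref{lem_1st_red} and the preparation Lemma~\ref{lemma:preparationphi}(iii)---in order to bring $\phi$ into a trivial normal form in which $\Fr$ and $\Ar$ can be read off by inspection.

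By Lemma~\ref{lem_1st_red}, all three ranks are preserved, so I may replace $\phi$ by an \emph{injective} morphism of convergent power series rings $\phi : \C\{x_1,\ldots,x_m\} \lgw \C\{u_1,\ldots,u_n\}$. Injectivity yields an inclusion $\Frac(\C\{\x\}) \hookrightarrow \Frac(\C\{\uu\})$, and comparing transcendence degrees over $\C$ gives $m \leq n$. In order to apply Lemma~\ref{lemma:preparationphi} I need $m = n$; if $m < n$, I extend $\phi$ to $\wdt\phi : \C\{x_1,\ldots,x_n\} \lgw \C\{\uu\}$ by declaring $\wdt\phi(x_i) = 0$ for $i > m$. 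Using $\Ker(\phi) = 0$, a direct check shows $\Ker(\wdt\phi) = (x_{m+1},\ldots,x_n)$ and $\Ker(\wdh{\wdt\phi}) = (x_{m+1},\ldots,x_n)\C\lb x_1,\ldots,x_n\rb$, while the Jacobian of $\wdt\phi^a$ differs from that of $\phi^a$ only by the addition of rows of zeros. Hence all three ranks of $\wdt\phi$ equal those of $\phi$, and after renaming I may assume $m = n$.

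Now Lemma~\ref{lemma:preparationphi}(iii), applied with $\Gr(\phi) = 1$, produces a morphism $\phi' : \C\{\x\} \lgw \C\{\uu\}$ satisfying $\phi'(x_1) = u_1$ and $\phi'(x_j) = 0$ for $j = 2,\ldots,n$, and with $\Gr(\phi')$, $\Fr(\phi')$, $\Ar(\phi')$ respectively equal to $\Gr(\phi)$, $\Fr(\phi)$, $\Ar(\phi)$. Inspection shows $\Ker(\phi') = (x_2,\ldots,x_n)$ and $\Ker(\wdh{\phi'}) = (x_2,\ldots,x_n)\C\lb\x\rb$, whence $\Fr(\phi') = \Ar(\phi') = 1$, which delivers the desired conclusion.

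Since both Lemma~\ref{lem_1st_red} and Lemma~\ref{lemma:preparationphi} are granted, there is really no hard step here: the whole argument is a three-move reduction. The only point that deserves explicit verification is the dimension-equalization, where one must check that adjoining zero components introduces no spurious kernel elements---this is exactly what the injectivity of $\phi$ secures.
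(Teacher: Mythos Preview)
Your proof follows the paper's approach (Lemma~\ref{lem_1st_red} followed by Lemma~\ref{lemma:preparationphi}(iii)), and you usefully make explicit the dimension-equalization $m\leq n$ that the paper glosses over.

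One small slip worth flagging: the claim $\Ker(\wdh{\wdt\phi}) = (x_{m+1},\ldots,x_n)\C\lb\x\rb$ would require $\Ker(\wdh\phi)=0$, but you only know $\Ker(\phi)=0$---and the possible gap between the two is exactly the phenomenon this paper is about. This does not damage the argument, however: since $\wdh{\wdt\phi}$ factors through the surjection $\C\lb x_1,\ldots,x_n\rb\twoheadrightarrow\C\lb x_1,\ldots,x_m\rb$ followed by $\wdh\phi$, the quotients by the respective kernels are isomorphic and $\Fr(\wdt\phi)=\Fr(\phi)$ holds regardless. Even more simply, once the analytic-kernel computation gives $\Ar(\phi)=1$, the chain $1=\Gr(\phi)\leq\Fr(\phi)\leq\Ar(\phi)=1$ already forces $\Fr(\phi)=1$, so the formal-kernel computation is not needed at all.
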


\begin{proof}[Proof of Lemma \ref{lemma:preparationphi}] We will prove the lemma by transforming step by step the morphism $\phi$ into the morphism $\phi'$.

Up to a linear change of coordinates in $\C \{ \uu\}$ we may assume that the initial form of $\phi(x_1)$ evaluated at $(u_1,0,  \ldots,0)$ is equal to $Cu_1^e$ for some $e>0$ and $C\in\C^*$. Let $\s_1:\C \{ \uu\}\lgw \C \{ \uu\}$ be the quadratic transform defined by $\s_1(u_1)=u_1$ and $\s_1(u_j)=u_1u_j$ for $j=2,  \ldots,n$. Then $\s_1\circ\phi(x_1)=u_1^e U( \uu)$ where $U( \uu)\in\C \{ \uu\}$ is a unit. Let us replace $\phi$ by $\s_1\circ\phi$. Up to replacing $x_1$ by $\frac{1}{U(0)}x_1$ we may assume that $U(0)=1$. Now, let $V( \uu)\in\C \{ \uu\}$ be a convergent power series whose $e$-th power is equal to $U( \uu)$. Let $\t_1:\C \{\x\}\lgw \C \{\x\}$ be the finite morphism (power substitution) given by $\t_1(x_1)=x_1^e$, $\t_1(x_j)=x_j$ for $j=2,\dots,n$. Replacing $\phi$ by $\phi \circ \t_1$, we may assume that $\phi(x_1)=u_1V(\uu)$ where $V(\uu)$ is a unit. Moreover by composing $\phi$   with the inverse of the isomorphism of $\C \{\uu\}$ sending $u_1$ onto $u_1V( \uu)$, we may assume that $\phi(x_1)=u_1$.

Let $\phi(x_j) = \phi_j(\uu) \in\C\{\uu\}$ the image of $x_j$ under $\phi$ and consider the analytic isomorphism:
$$
x_1\lgm x_1,\ x_j\lgm x_j-\phi_j(x_1,0), \, j=2,  \ldots, n
$$
If $\Gr(\phi)=1$, we conclude that $\phi_j(\uu) \equiv 0$ as we wanted to prove. Let us assume, therefore, that $\Gr(\phi)>1$ and $\phi$ is injective. We easily conclude that $\phi_j(\uu) \notin\C \{u_1\}$ for all $j=2,  \ldots, n$. Furthermore, because of the change of variables, we know that $\phi_j(u_1,0,  \ldots,0) = 0$, which implies that $\phi_j(\uu) = u_1^{a_j} g_j(\uu)$ for some $a_j \geq 0$, $g_j(0)=0$ and $g_j(0,u_2,\ldots, u_n)\neq 0$, proving (v).

Finally, assume that $n=2$. After composing $\phi$ with $k$ quadratic transformations of the form $(u_1,u_2) \lgm (u_1,u_1u_2)$, for a sufficiently large $k$, we can suppose that $g_2(\uu) = u_2^b W(\uu)$, where $b>0$ and $W(0) \neq 0$. After composing $\phi$ with the isomorphism whose inverse is defined by $u_1\lgm u_1$ and $u_2\lgm u_2W(\uu)^{1/b}$, we have the desired result.

The last statement follows from Proposition \ref{rk:BasicPropertiesGabrielov2}.
\end{proof}


\section{Gabrielov's rank Theorem}\label{sec:Reduction}

\subsection{Low dimensional Gabrielov's rank Theorem}

Somehow surprisingly, the most difficult case in the proof of Gabrielov's rank Theorem is the following:

\begin{theorem}[Low-dimension Gabrielov I]\label{th:MainLowDimI}
Let $\phi: \C\{x_1,x_2,x_3\} \lgw \C\{u_1,u_2\}$ be an $\C$-analytic morphism of convergence power series. Then
\[
\Gr(\phi)=\Fr(\phi)=2 \implies \Ar(\phi)=2.
\]
\end{theorem}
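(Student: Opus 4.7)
The plan is to combine the preparation from Lemma~\ref{lemma:preparationphi} with a Weierstrass analysis of a generator of the formal kernel, and then to lift the resulting formal integral dependence to a convergent one, the generic-rank hypothesis $\Gr(\phi)=2$ being used essentially in this last step.

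We may first assume $\phi$ is injective: otherwise $\Ker(\phi)\neq(0)$ gives $\het(\Ker(\phi))\geq 1$, hence $\Ar(\phi)\leq 2$, and combining with $\Ar(\phi)\geq\Fr(\phi)=2$ from Proposition~\ref{rk:BasicPropertiesGabrielov1}(1) we are done. Since $\Gr(\phi)=2>1$, Lemma~\ref{lemma:preparationphi}(v) then allows us to place $\phi$, without modifying any of the three ranks, in the normal form $\phi(x_1)=u_1$, $\phi(x_j)=u_1^{a_j}g_j(\uu)$ for $j=2,3$, with $g_j(0)=0$ and $g_j(0,u_2)\not\equiv 0$. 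Now $\Fr(\phi)=2$ implies that $\Ker(\wdh\phi)$ is a height-one prime in the unique factorization domain $\C\lb\x\rb$, hence principal: $\Ker(\wdh\phi)=(\wdh f)$ with $\wdh f$ irreducible. After a generic linear change of coordinates in $(x_2,x_3)$ we may assume $\wdh f$ is $x_3$-regular, so Weierstrass preparation gives a distinguished polynomial
\[
\wdh P(x_3)\;=\;x_3^d+\sum_{j=1}^d \wdh c_j(x_1,x_2)\, x_3^{d-j}, \qquad \wdh c_j\in\C\lb x_1,x_2\rb,
\]
with $\wdh\phi(\wdh P)=0$, that is,
\[
\phi(x_3)^d + \sum_{j=1}^d \wdh c_j\bigl(u_1,\phi(x_2)\bigr)\, \phi(x_3)^{d-j} \;=\; 0 \quad\text{in } \C\{\uu\}.
\]
To conclude $\Ar(\phi)=2$ it therefore suffices to show that the formal coefficients $\wdh c_j$ can be replaced by convergent coefficients $c_j\in\C\{x_1,x_2\}$ with the same annihilating property, for then $x_3^d+\sum c_j(x_1,x_2)x_3^{d-j}$ is a non-zero element of $\Ker(\phi)$.

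The heart of the proof is this formal-to-convergent passage. The hypothesis $\Gr(\phi)=2$ enters decisively here: it forces the convergent map $\psi\colon\C\{x_1,x_2\}\to\C\{\uu\}$, $x_1\mapsto u_1,\ x_2\mapsto\phi(x_2)$, to have maximal generic rank, so that $\psi(\C\{x_1,x_2\})$ genuinely ``sees'' the full two-dimensional base. The strategy is to expand $\phi(x_3)$ as a Newton--Puiseux series with support in a strongly convex rational cone adapted to $\wdh P$ (in the spirit of Theorem~\ref{macdonald}), to analyse the Galois conjugates of $\phi(x_3)$ over the fraction field of $\psi(\C\{x_1,x_2\})$, and to produce convergent approximations of their elementary symmetric functions that descend to convergent coefficients $c_j\in\C\{x_1,x_2\}$.

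I expect this convergent lifting to be the main obstacle. One must control the denominators and the support of the Puiseux expansion of $\phi(x_3)$ \emph{uniformly} along an appropriate sequence of blow-ups and monomial transformations of $\C\{\uu\}$, so that the convergent approximations of the symmetric functions of the conjugates match the formal $\wdh c_j$ in a sharp enough sense. This is exactly where the full formal-geometric machinery of the paper is deployed: projective rings ($\S$\ref{sec:SemiGlobal}), a Newton--Puiseux--Eisenstein theorem, and convergent approximation of formal factors, coordinated by the inductive scheme of Proposition~\ref{cl:Main}. It is also precisely here that the equality $\Gr(\phi)=\Fr(\phi)$, and not merely the weaker $\Fr(\phi)<n$, is actually used.
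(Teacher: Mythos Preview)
Your setup through the Weierstrass polynomial $\wdh P(x_3)\in\C\lb x_1,x_2\rb[x_3]$ is correct, and you are right that the whole difficulty is the formal-to-convergent passage. But from that point on the proposal is more a gesture at the machinery than a plan, and the specific gestures point in the wrong direction. The phrase ``expand $\phi(x_3)$ as a Newton--Puiseux series'' is where things slip: $\phi(x_3)$ is already a convergent element of $\C\{u_1,u_2\}$; there is nothing to expand. The Newton--Puiseux--Eisenstein theorem (Theorem~\ref{thm:NewtonPuiseux}) is applied not to $\phi(x_3)$ but to the abstract formal polynomial $\wdh P\in\C\lb x_1,x_2\rb[y]$, to factor it over the projective rings $\PP_h\lb\x,\g\rb$. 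Likewise, the ``Galois conjugates of $\phi(x_3)$ over $\psi(\C\{x_1,x_2\})$'' are not the right objects: one works with the roots of $\wdh P$ in those auxiliary rings, not with conjugates of a fixed convergent series. MacDonald's theorem plays no role in this proof.

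More importantly, the actual architecture is absent from your sketch. The paper does not try to show directly that the $\wdh c_j$ are convergent. It resolves the formal discriminant $\Delta_{\wdh P}\in\C\lb x_1,x_2\rb$ by a sequence of \emph{analytic} point blow-ups until it is formally monomial on the exceptional divisor; at a well-chosen point the discriminant is even analytically monomial, and there the lifted morphism together with the quasi-ordinary case (Proposition~\ref{prop:ConvergentSolutionsTheOrigin}, via Abhyankar--Jung) produces a convergent factor of the pulled-back polynomial. The genuinely hard step, and the raison d'\^etre of the projective rings, Theorem~\ref{thm:SemiGlobalFormal} and Theorem~\ref{thm:IzumiWalsh}, is to propagate that convergent factor along the entire first exceptional component and then to descend it through the tower of blow-ups via the inductive scheme of Proposition~\ref{cl:Main}; this is precisely what neutralises the danger of Remark~\ref{rk:QaudraticBad}. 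You name Proposition~\ref{cl:Main} but do not account for this blow-up/descend structure, and without it there is no visible route from the convergent datum $\phi(x_3)\in\C\{\uu\}$ back to convergent coefficients in $\C\{x_1,x_2\}$.
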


Indeed, we deduce the Theorem \ref{thm:main} from Theorem \ref{th:MainLowDimI} in the next subsection, following the same strategy as the one originally used by Gabrielov. Later, sections \ref{sec:OverviewLow} and \ref{sec:SemiGlobal} will be entirely dedicated to proving Theorem \ref{th:MainLowDimI}, where we deviate from Gabrielov's original approach. This last part will involve a geometric setting and the use of transcendental tools.

 There exists a particular case of Theorem \ref{th:MainLowDimI} which admits a simple algebraic proof, namely when the generator of $\Ker(\widehat{\phi})$ is a quasi-ordinary polynomial{, that is when its discriminant is a monomial times a unit}. This particular case turns out to be crucial later on in the proof of Theorem \ref{th:MainLowDimI}. We finish this section by proving this result:

\begin{proposition}[Quasi-ordinary case]\label{prop:ConvergentSolutionsTheOrigin}
Let  $P \in \C\lb x_1,x_2\rb[y]$ be a reduced monic non-constant polynomial and let $\D_P$ denote tis discriminant. Assume the following:
\begin{enumerate}
\item[i)] $\Delta_P=x_1^{a_1}x_2^{a_2}\times \unit(\x)$ for some formal unit $\unit(\x)$,
\item[ii)] there exists a morphism $\phi:\C\{\x,y\}\lgw \C\{u_1,u_2\}$ with $\Gr(\phi)=2$ such that $P\in\Ker(\wdh\phi)$.
\end{enumerate}
Then $P$ admits a non trivial monic divisor in $\C\{x_1,x_2\}[y]$.
\end{proposition}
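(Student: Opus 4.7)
The strategy is to use the Abhyankar-Jung Theorem to express the roots of $P$ as Puiseux series, to match one of them to $\phi(y)$ after pulling back through a ramified cover of the target, to apply Lemma \ref{lemma:Convergencemodif} to force that particular root to converge, and finally to use a Galois argument to promote this single convergent root into a convergent factor of $P$. First, since $\C\lb u_1,u_2\rb$ is a domain, $\Ker(\wdh\phi)$ is a prime ideal containing $P$, so it contains some irreducible factor $P_0$ of $P$; as $\C\lb x_1,x_2\rb$ is a UFD, every divisor of a monomial-times-unit is still of this form, so $P_0$ has a quasi-ordinary discriminant and inherits the remaining hypotheses, and it suffices to show $P_0\in\C\{x_1,x_2\}[y]$. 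I may therefore assume $P$ itself is irreducible.

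I would then observe that $\Gr(\phi|_{\C\{x_1,x_2\}})=2$: otherwise, a nonzero element of $\Ker(\wdh\phi)\cap\C\lb x_1,x_2\rb$ would be divisible by $P$ (the generator of the height-$1$ prime $\Ker(\wdh\phi)$ in the regular ring $\C\lb x_1,x_2,y\rb$), but $P$ has positive $y$-degree, a contradiction. This allows me to invoke Lemma \ref{lemma:preparationphi} on $\phi|_{\C\{x_1,x_2\}}$, supplemented if necessary by further quadratic transforms and unit-absorbing isomorphisms in the target, so as to reduce to the case $\phi(x_i)=u^{\mu_i}$ where $M=(\mu_{ij})$ is an invertible $2\times 2$ integer matrix (invertibility being equivalent to $\Gr(\phi|_{\C\{x_1,x_2\}})=2$). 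By Abhyankar-Jung, the roots of $P$ admit Puiseux expansions $y_1,\ldots,y_s\in\C\lb\wdt x_1,\wdt x_2\rb$ with $\wdt x_i=x_i^{1/d}$ for some $d\in\N^\ast$. After the ramified substitution $u_i=v_i^d$, the identity $\wdh\phi(P)=0$ becomes
\[
\prod_{j=1}^s\Bigl(\phi(y)(v^d)-y_j(v^{\mu_1},v^{\mu_2})\Bigr)=0\quad\text{in }\C\lb v_1,v_2\rb,
\]
and since this ring is a domain, some factor vanishes: $\phi(y)(v^d)=y_{j_0}(v^{\mu_1},v^{\mu_2})$ for some index $j_0$, with the left-hand side convergent in $v$.

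The substitution $\wdt x_i\mapsto v^{\mu_i}$ is a monomial map of invertible exponent matrix $M$, so Lemma \ref{lemma:Convergencemodif} forces $y_{j_0}\in\C\{\wdt x_1,\wdt x_2\}$. The Galois group $G=\mu_d\times\mu_d$ acts on $\C\lb\wdt x_1,\wdt x_2\rb$ by analytic automorphisms $\wdt x_i\mapsto\zeta_i\wdt x_i$ with fixed subring $\C\lb x_1,x_2\rb$, and preserves $\C\{\wdt x_1,\wdt x_2\}$; by irreducibility of $P$, the $s$ roots form a single $G$-orbit, so all $y_j$ are convergent, as are the $G$-invariant elementary symmetric functions of the $y_j$, which are precisely the coefficients of $P$. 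Thus $P\in\C\{x_1,x_2\}[y]$, providing the desired non-trivial monic divisor. The most delicate point is arranging the reduction to $\phi(x_i)=u^{\mu_i}$ without destroying the quasi-ordinary character of $\Delta_P$: the source isomorphism $x_2\mapsto x_2-h(x_1)$ appearing in Lemma \ref{lemma:preparationphi} would introduce a factor $(x_2-h(x_1))^{a_2}$ into the discriminant, so one must either perform the monomialization using only target modifications plus unit absorption (exploiting the invertibility of $M$) or supplement the preparation with further blow-ups to monomialize the resulting auxiliary term $u_1^au_2^b-h(u_1)$.
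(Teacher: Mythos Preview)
Your argument is correct and shares the same core with the paper's proof (Abhyankar--Jung to obtain Puiseux roots, a preparation of $\phi$ in the spirit of Lemma~\ref{lemma:preparationphi} so that $\phi(x_i)$ becomes monomial, and Lemma~\ref{lemma:Convergencemodif} to pull convergence back through the monomial substitution). The packaging, however, is different. The paper never reduces to $P$ irreducible: it argues that it suffices to prove $\Ar(\phi)=2$, notes that this quantity is invariant under the modifications of Lemma~\ref{lemma:preparationphi}, finds one convergent Puiseux root (so the extended morphism to the ramified cover has nonzero analytic kernel), and then reads off the convergent factor of $P$ as the Weierstrass polynomial of a generator of $\Ker(\phi)$. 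Your route---reduce to irreducible $P$, then use the transitive $\mu_d\times\mu_d$-action to propagate convergence from one root to all roots---is a clean alternative that avoids the detour through $\Ar(\phi)$, at the cost of the extra (easy) initial reduction.

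Two small points. First, your claim that $\Gr(\phi|_{\C\{x_1,x_2\}})=2$ is correct but not immediate from the rank inequalities (which go $\Gr\le\Fr$, not the other way); what you are really using is that $\Ker(\wdh\phi)\cap\C\lb x_1,x_2\rb=0$ (since $P$ is monic in $y$) together with Corollary~\ref{rank1}, and it would be worth saying so. Second, your worry about the source isomorphism $x_2\mapsto x_2-h(x_1)$ in Lemma~\ref{lemma:preparationphi} is entirely justified---it does spoil the monomiality of $\Delta_P$ when $a_2>0$---and the paper's proof is tacit on exactly this point. Your first proposed remedy (monomialize $\phi(x_1)\phi(x_2)$ by target blow-ups alone, obtaining $\phi(x_i)=v^{\mu_i}W_i$ with $W_i$ units and $\det(\mu_{ij})\neq 0$, then absorb both units by the target isomorphism $v_j\mapsto v_j\prod_i W_i^{(M^{-1})_{ji}}$) is the right fix and works cleanly; this is how both arguments should be read.
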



\begin{proof}
The proof combines the Abhyankar-Jung Theorem, recalled in $\S$ \ref{section:AbhyankarJung} at the end of this paper, with Lemma \ref{lemma:preparationphi}.

By assumption, we have $\Gr(\phi)=2$ and $\Fr(\phi)=2$ since $P\in\Ker(\wdh\phi)$. Because $\Fr(\phi)=2$, $\Ker(\wdh\phi)$ is a height one prime ideal, thus a principal ideal, by \cite[Theorem 20.1]{Mat}. Therefore any generator of $\Ker(\wdh\phi)$ divides $P$. If $\Ar(\phi)=2$, then $\Ker(\phi)$ is height one prime ideal of $\C\{\x,y\}$, and $\Ker(\phi)\C\lb \x,y\rb=\Ker(\wdh\phi)$. Thus any generator $f(\x,y)$ of $\Ker(\phi)$ divides $P$. Since $P$ is a monic polynomial, we must have $f(0,y)\neq0$. Therefore, by the Weierstrass Preparation Theorem, $f(\x,y)=\text{unit}\times \wdt P(\x,y)$ where $\wdt P(\x,y)\in\C\{\x\}[y]$ is a monic polynomial. Thus $\wdt P$ divides $P$, and the proposition is proven in this case.

Let us now prove that $\Ar(\phi)=2$. We denote by $\phi_1(\uu)$, $\phi_2(\uu)$ and $\phi_3(\uu)$ the respective images of $x_1$, $x_2$ and $y$ under $\phi$. By Abhyankar-Jung Theorem, we can expand $P(y)$ as
$$
P=\prod\limits_{i=1}^{d} \left(y-\widehat{\xi}_i({x_1}^{1/e},x_2^{1/e})\right)
$$
{where $d=\deg_y(P)$ and $e=d!$.} Furthermore, by Lemma \ref{lemma:preparationphi} (iv)  we may suppose that $\phi_1(\uu)=u_1$ and $\phi_2(\uu) = u_1^au_2^b$ with $b>0$. Therefore we may extend $\phi$ as a morphism $\phi'$ from $\C\{x_1^{1/e},x_2^{1/e},y\}$ to  $\C\{u_1^{1/e},u_2^{1/e}\}$ by defining
$$
\phi'(x_1^{1/e}):=u_1^{1/e},\ 
\phi'(x_2^{1/e}):=u_1^{a/e}u_2^{b/e}.
$$
By Proposition \ref{rk:BasicPropertiesGabrielov2} (2) we have 
$\Ar(\phi')=\Ar(\phi).$
Therefore, if one of the $\wdh\xi_i$ is convergent, $\Ker(\phi')\neq (0)$, thus $\Ar(\phi)=\Ar(\phi')=2$.

By the above reduction, we may assume that $e=1$ by replacing $\phi$ by the morphism:
$$
\begin{array}{ccc} 
x_1 & \lgm & u_1\\
x_2 & \lgm & u_1^{a}u_2^{b}\\
y & \lgm & \phi_3(u_1^e, u_2^e)\end{array}
$$
and $P$ by $P(x_1^e, x_2^e,y)$. Replacing in the original equation, we have, since $P\in\Ker(\wdh\phi)$,
$$
\prod\limits_{i=1}^{d} \left(\phi(y)-\widehat{\xi}_i(u_1, u_1^au_2^b)\right)=0.
$$
Hence, there is an index $i$ such that $\wdh\t(\wdh\xi_i)(u_1,u_1^au_2^b)\in\C\{\uu\}$. Thus, by Lemma \ref{lemma:Convergencemodif}, $\wdh\xi_i(\x)\in\C\{\x\}$ and $\Ar(\phi)=2$, as we wanted to prove.

\end{proof}

\subsection{Reduction of Theorem \ref{thm:main} to Theorem \ref{th:MainLowDimI}}\label{ssec:Reduction}

The proof of Theorem \ref{thm:main} is done by contradiction, following closely the ideas of Gabrielov \cite[Theorem 4.8]{Ga2}. We note that we do not use in this section, at any point, a quadratic transformation $\t: A_1 \lgw A$ in the source, c.f. Remark \ref{rk:QaudraticBad}. We assume:

\medskip
\noindent
\textbf{$(\ast)$} There exists a morphism $\phi:A\lgw B$ of analytic $\C $-algebras, where $B$ is an integral domain, such that $\Gr(\phi)=\Fr(\phi)$ but $\Fr(\phi)<\Ar(\phi)$. 

\bigskip
\noindent
\textbf{1st Reduction.} Suppose that $(\ast)$ holds true. Then, there exists an injective morphism $\phi: \C\{\x\} \lgw \C\{\uu\}$, where $\x=(x_1,  \ldots, x_{m})$ and $\uu=(u_1,  \ldots,u_n)$, such that $\Gr(\phi)=\Fr(\phi)=m-1 \geq 1$, $\Ar(\phi)=m$ and $\Ker(\wdh{\phi})$ is a principal (nonzero) ideal.

\medskip
Indeed, by Lemma \ref{lem_1st_red}, there exists an injective morphism $\phi: \C\{\x\} \lgw \C\{\uu\}$, where $\x=(x_1,  \ldots, x_m)$ and $\uu=(u_1,  \ldots,u_n)$, such that $\Gr(\phi)=\Fr(\phi) \geq 1$, but $\Fr(\phi)<\Ar(\phi)$. Since $\Fr(\phi)<\Ar(\phi)=m$, we know that $\Ker(\wdh{\phi})\neq (0)$. Now, suppose that $\Ker(\wdh{\phi})$ is not principal or, equivalently, that its height is at least $2$. By the Normalization Theorem for formal power series, after a linear change of coordinates, the canonical morphism 
\[
\pi:\C\lb x_1,  \ldots, x_{\Gr(\phi)}\rb\lgw \frac{\C\lb \x\rb}{\Ker(\wdh\phi)}
\] 
is finite and injective. Therefore the ideal $\mathfrak p:=\Ker(\wdh\phi)\cap\C\lb x_1,  \ldots, x_{\Gr(\phi)+1}\rb$ is a nonzero height one prime ideal. Since $\C\lb x_1,  \ldots, x_{\Gr(\phi)+1}\rb$ is a unique factorization domain,  $\mathfrak p$ is a principal ideal (see \cite[Theorem 20.1]{Mat} for example).

Now, denote by $\phi'$ the restriction of $\phi$ to $\C\{x_1,  \ldots, x_{\Gr(\phi)+1}\}$. By definition $\Ker(\wdh\phi')=\mathfrak p$, thus $\Fr(\phi')=\Gr(\phi)+1-1=\Gr(\phi)=\Fr(\phi)$. Since $\phi$ is injective, $\phi'$ is injective and $\Ar(\phi')=\Gr(\phi)+1$. Moreover, since $\pi$ is finite, by  Proposition \ref{rk:BasicPropertiesGabrielov2} we have:
$$
\Gr(\phi')=\Gr(\wdh{\phi'})=\Gr(\wdh{\phi})=\Gr(\phi).
$$
 Therefore we replace $\phi$ by $\phi'$ and we assume that $\Gr(\phi)=\Fr(\phi)=m-1$ and $\Ar(\phi)=m$, as we wanted to prove. 

\bigskip
\noindent
\textbf{2nd Reduction.} Suppose that $(\ast)$ holds true. Then, we claim that there exists an injective morphism $\phi: \C\{\x\} \lgw \C\{\uu\}$, where $\x=(x_1,  \ldots, x_{n+1})$ and $\uu=(u_1,  \ldots,u_n)$ (that is, $m=n+1$), such that $\Gr(\phi)=\Fr(\phi) =n $ and $\Ker(\wdh{\phi})$ is a principal (nonzero) ideal (in particular, $\Fr(\phi)=n<n+1=\Ar(\phi)$).

\medskip
We consider the morphism given in the 1st Reduction. Up to a linear change of coordinates in $\uu$, we can suppose that the rank of the Jacobian matrix of $\phi$ evaluated in $(u_1,  \ldots ,u_{r},0,  \ldots ,0)$ is still equal to $r:=\Gr(\phi)$. Let us denote by $\phi_0$ the composition of $\phi$ with the quotient map $\C \{ \uu\}\lgw\C \{  u_1,  \ldots ,u_r\}  $, which satisfies $\Gr(\phi_0)=\Gr(\phi)$. Now, by Proposition \ref{rk:BasicPropertiesGabrielov1}  we have:
 $$
1=m-\Gr(\phi_0)\geq m-\Fr(\phi_0)=\het(\Ker(\wdh{\phi}_0))\geq \het(\Ker(\wdh{\phi}))=1
.$$
 The last inequality comes from the fact that $\Ker(\wdh\phi)\subset \Ker(\wdh\phi_0)$. This shows that $\het(\Ker(\wdh{\phi}_0))=1$. Therefore $\Ker(\wdh{\phi}_0)=\Ker(\wdh{\phi})$ since both are prime ideals of height one. We note, furthermore, that $\phi_0$ is injective. Indeed, if $\Ker(\phi_0)\neq (0)$, take a nonzero $f\in\Ker(\phi_0)$, and note that $f\in\Ker(\wdh{\phi})$ by the previous equality, which implies that $f\in\Ker(\phi)$, a contradiction. We can assume that $n=r$ and $m=n+1$, as was required.
 
\bigskip
\noindent
\textbf{3rd Reduction.} Suppose that $(\ast)$ holds true. Then, we claim that there exists an injective morphism $\phi: \C\{\x\} \lgw \C\{\uu\}$, where $\x=(x_1,x_2, x_{3})$ and $\uu=(u_1,u_2)$ (that is, $m =3$ and $n=2$), such that $\Gr(\phi)=\Fr(\phi) =2 $ and $\Ker(\wdh{\phi})$ is a principal (nonzero) ideal.

\medskip
(Note that the 3rd Reduction contradicts Theorem \ref{th:MainLowDimI}, providing the desired contradiction, and proving Theorem \ref{thm:main}.)

\medskip
We consider the morphism given in the 2nd Reduction. Let $P$ be a generator of $\mbox{Ker}(\widehat{\phi})$. After a linear change of coordinates we may assume that $P$ is a Weierstrass polynomial with respect to $x_{n+1}=:y$ and that $\phi(x_i)$ is not constant for $i=1,  \ldots,n$. Our goal is to reduce the dimension $n$ by restriction to hyperplanes in the coordinates $x$. More precisely, we rely in two results about generic sections:

\begin{theorem}[{Abhyankar-Moh Reduction Theorem, \cite[pp. 31]{A-M}}]\label{thm:Abhyankar-Moh}
Let $F \in \C \lb \x \rb $ be a divergent power series, and denote by $\Lambda$ the subset of $\C$ of all constants $\lambda \in \C$ such that $F(\lambda x_2,x_2,  \ldots,x_n) \in \C \{x_2,  \ldots,x_n\}$ is convergent. Then $\Lambda$ has measure zero. 
\end{theorem}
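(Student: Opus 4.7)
The plan is to prove the contrapositive: assume $\Lambda\subset\C$ has positive planar Lebesgue measure, and show $F$ must then be convergent, contradicting the divergence hypothesis. Expand $F=\sum_\alpha a_\alpha \x^\alpha$. The substitution $x_1=\lambda x_2$ gives
\[
F(\lambda x_2,x_2,x_3,\ldots,x_n)=\sum_\beta c_\beta(\lambda)\,x_2^{\beta_2}x_3^{\beta_3}\cdots x_n^{\beta_n},\quad c_\beta(\lambda)=\sum_{i+j=\beta_2}a_{(i,j,\beta_3,\ldots,\beta_n)}\lambda^i,
\]
so each $c_\beta$ is a polynomial in $\lambda$ of degree at most $\beta_2\leq |\beta|$. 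By Cauchy's inequality, $F(\lambda x_2,\ldots,x_n)\in \C\{x_2,\ldots,x_n\}$ iff there exist $k,m\in\N^\ast$ with $|c_\beta(\lambda)|\leq k\,m^{|\beta|}$ for every $\beta$. Hence the sets $\Lambda_{k,m}:=\{\lambda\in\C:|c_\beta(\lambda)|\leq k\,m^{|\beta|}\ \forall\beta\}$ are closed (intersections of closed conditions) and $\Lambda=\bigcup_{k,m\in\N^\ast}\Lambda_{k,m}$. If $|\Lambda|>0$, then some $E:=\Lambda_{k,m}$ has positive measure.

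The key analytic input is a polynomial inequality of Bernstein--Walsh (or Remez) type: any $E\subset\C$ of positive planar measure has positive logarithmic capacity, so for each $r>0$ there exists $K=K(E,r)>0$ with
\[
\sup_{|\lambda|\leq r}|P(\lambda)|\leq K^d\sup_{\lambda\in E}|P(\lambda)|
\]
for every polynomial $P$ of degree $d$. Applied to $c_\beta$, this gives $\sup_{|\lambda|\leq r}|c_\beta(\lambda)|\leq k\,(Km)^{|\beta|}$. Cauchy's estimate for $c_\beta$ on the circle $|\lambda|=r$ then yields, setting $\alpha=(i,\beta_2-i,\beta_3,\ldots,\beta_n)$ (so that $|\alpha|=|\beta|$),
\[
|a_\alpha|\leq r^{-\alpha_1}\,k\,(Km)^{|\alpha|}=k\,(Km/r)^{\alpha_1}(Km)^{\alpha_2+\cdots+\alpha_n}.
\]
Taking $r$ large, this exhibits a uniform geometric bound on the $|a_\alpha|$, proving that $F$ converges on the polydisc $\{|x_1|<r/(Km),\ |x_j|<1/(Km),\ j\geq 2\}$, contradicting the divergence of $F$.

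The main obstacle is the polynomial inequality used to pass from a bound on a positive-measure set to a bound on a disc: it is precisely where the hypothesis that $\Lambda$ has positive (rather than just nonempty) measure is genuinely used. For a more elementary route one can first invoke Fubini's theorem to find a real line $\ell\subset\C$ on which $\Lambda\cap\ell$ has positive one-dimensional measure; after an affine change of variables in $x_1$ of the form $x_1\mapsto \mu x_1+\nu x_2$ (which transforms $\Lambda$ affinely while preserving the problem), one reduces to the case $E\subset\R$ of positive one-dimensional measure and applies the classical Remez inequality for real polynomials, and the argument concludes as above.
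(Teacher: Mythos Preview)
The paper does not supply its own proof of this statement: the theorem is merely cited from Abhyankar--Moh \cite{A-M}, with the introduction noting only that it ``admits a simple proof''. So there is no argument in the paper to compare your proposal against.

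Your proof is correct. The decomposition $\Lambda=\bigcup_{k,m}\Lambda_{k,m}$ into closed sets, selecting one of positive measure by countable subadditivity, and then using a Bernstein--Walsh inequality (positive planar Lebesgue measure $\Rightarrow$ positive logarithmic capacity $\Rightarrow$ $\sup_{|\lambda|\leq r}|P|\leq K^{\deg P}\sup_E|P|$) is a standard and valid route; the Cauchy-estimate extraction of $|a_\alpha|$ from the coefficients of $c_\beta$ is fine. One small remark: the phrase ``taking $r$ large'' is misleading, since $K=K(E,r)$ grows with $r$; but this is harmless, because any fixed $r>0$ (say $r=1$) already yields a geometric bound $|a_\alpha|\leq k\,(Km)^{|\alpha|}\cdot r^{-\alpha_1}$ and hence convergence, which is all that is required for the contradiction. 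The Fubini/Remez variant you sketch at the end is also correct and is closer in spirit to the original elementary argument of Abhyankar and Moh, which does not invoke potential theory.
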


\begin{theorem}[{Formal Bertini Theorem, \cite{C}}]\label{thm:LocalBertini}
Let $\k$ be an uncountable field, $n\geq 3$ and $\x=(x_1,  \ldots, x_n)$. Let $P(y)\in\k\lb \x\rb[y]$ be irreducible. Then there is an at most countable subset $A\subset \k$, such that $P(cx_2+dx_3,x_2,  \ldots, x_n,y)$ remains irreducible in $\k\lb x_2,  \ldots, x_n\rb[y]$ for every $c\in \k\setminus A$ and every $d\in\k$ transcendental over $\k_{P,c}$ where $\k_{P,c}$ denotes the field extension of the prime field of $\k$ generated by the coefficients of $P(\x,y)$ and $c$ (in particular $\k_{P,c}$ is a countable field).
\end{theorem}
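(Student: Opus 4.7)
The plan is to treat $c$ and $d$ as independent transcendentals over $\k$ and to combine a generic-irreducibility statement with a specialization argument.

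First, set $K:=\k(c,d)$ and consider the substitution
$$
\psi:\k\lb\x\rb[y]\lgw K\lb x_2,\ldots,x_n\rb[y],\qquad x_1\lgm cx_2+dx_3,\ x_i\lgm x_i\ (i\geq 2).
$$
The key step is the generic-irreducibility statement: $\psi(P)$ is irreducible in $K\lb x_2,\ldots,x_n\rb[y]$. Via the isomorphism $K\lb x_2,\ldots,x_n\rb[y]\cong K\lb\x\rb[y]/(x_1-cx_2-dx_3)$, this amounts to showing that $(P)$ remains prime in $K\lb\x\rb[y]$ and that the image of the linear form $x_1-cx_2-dx_3$ is a prime element in the integral domain $K\lb\x\rb[y]/(P)$. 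This is a formal analog of Bertini's irreducibility theorem: a generic hyperplane section of an irreducible formal hypersurface in $(\k^{n+1},0)$ remains irreducible, provided the section has positive dimension (ensured here by $n\geq 3$, which also ensures that the substitution formula makes sense).

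Second, given generic irreducibility, I would argue by contradiction for the descent to $\k$. If the conclusion failed, there would be an uncountable set $\Sigma\subset\k^2$ of pairs $(c_0,d_0)$, with $c_0$ outside a prescribed countable set and $d_0$ transcendental over $\k_{P,c_0}$, for which $P(c_0 x_2+d_0 x_3,x_2,\ldots,x_n,y)=G_{c_0,d_0}H_{c_0,d_0}$ nontrivially in $\k\lb x_2,\ldots,x_n\rb[y]$. Because the degree pair $(\deg_y G,\deg_y H)$ takes only finitely many values, one may pass to an uncountable subset on which it is constant; after normalizing $G,H$ to be monic in $y$, the coefficients of $G_{c_0,d_0}$ and $H_{c_0,d_0}$ are uniquely determined by $(c_0,d_0)$ up to a finite ambiguity. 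Using the transcendence of $d_0$ over the countable field $\k_{P,c_0}$ and a standard specialization/algebraic-dependence argument, one shows these coefficients are in fact rational functions of $(c,d)$ over $\k$, producing a nontrivial factorization of $\psi(P)$ in $K\lb x_2,\ldots,x_n\rb[y]$ and contradicting the generic irreducibility.

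The main obstacle is the generic-irreducibility statement itself, which splits into two sub-problems: (a) that $P$ remains irreducible in $K\lb\x\rb[y]$, and (b) that $x_1-cx_2-dx_3$ remains a prime element modulo $P$. Sub-problem (a) is accessible by a direct specialization: a factorization over $K\lb\x\rb[y]$ would descend to $\k\lb\x\rb[y]$ by specializing $(c,d)$ outside a countable union of algebraic curves in $\k^2$ (to avoid the denominators of the coefficients of the hypothetical factors), contradicting the hypothesis on $P$. Sub-problem (b) is the formal Bertini statement per se. A natural strategy is to reduce via Weierstrass preparation to the case where $P$ is a Weierstrass polynomial in $y$, describe the roots of $P$ as Puiseux/Newton series (compare Theorem \ref{macdonald} and the Abhyankar-Jung theorem in Section \ref{section:AbhyankarJung}), and verify that the Galois group of $P$ over $\Frac(K\lb\x\rb)$ continues to act transitively on the roots of the substituted polynomial; the hypothesis $n\geq 3$ is essential to guarantee that the generic linear form $cx_2+dx_3$ involves two free parameters in variables truly independent of $x_1$, so that transitivity is preserved.
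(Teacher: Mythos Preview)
Your high-level structure (first prove irreducibility over a field where $c,d$ are transcendental, then specialize) is close to the paper's, but the proposal has a real gap at the place you yourself flag as ``the main obstacle''. Sub-problem~(b) is essentially the entire content of the theorem, and your suggested route via Puiseux/Abhyankar--Jung and Galois transitivity does not go through as stated: the Abhyankar--Jung theorem requires the discriminant of $P$ to be a monomial times a unit, which is not assumed here, and MacDonald's theorem only places the roots in some $\C\lb\sigma\cap\tfrac1d\Z^n\rb$ without giving you control over how the Galois group behaves under the substitution $x_1\mapsto cx_2+dx_3$. There is no ``transitivity is preserved'' lemma available at this generality, and making one would amount to proving formal Bertini by another name.

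The paper's argument for this step (Proposition~\ref{bertini_prop}) is completely different and avoids roots and Galois theory altogether. One assumes $P$ becomes reducible for uncountably many values of $c$, fixes two such values $c_1,c_2$, and works inside the ring $\widehat{R(c_1,c_2)}_{\mathfrak p}\simeq\k(z_1,z_2)\lb x_3,\ldots,x_n\rb$, where $z_i=(x_1+c_ix_2)/x_3$. The key observation is the explicit criterion~\eqref{key_trick}: an element lies in $\k\lb\x\rb$ iff in its $(z_1,z_2)$-expansion the partial degrees in $z_1$ and $z_2$ are each bounded by the $x_3$-exponent. Since the factorization $P=P_1P_2$ is the same in each $\widehat{R(c_i)}$, the coefficients of $P_1,P_2$ lie in $\k(z_1)\cap\k(z_2)$ with the right partial-degree bounds; bringing in a third value $c_3$ (chosen outside a countable bad set) forces the \emph{total} $(z_1,z_2)$-degree bound as well, hence $P_1,P_2\in\k\lb\x\rb[y]$, contradicting irreducibility. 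This degree-counting trick is the missing idea in your proposal. Your descent step (lifting uncountably many specialized factorizations to a factorization over $\k(c,d)$) is also not ``standard'' for formal power series: the coefficients of $G_{c_0,d_0},H_{c_0,d_0}$ are infinitely many elements of $\k$, and there is no off-the-shelf constructibility argument showing they vary rationally in $(c,d)$; in the paper this difficulty is precisely what the three-point degree argument circumvents.
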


{In order to be self-contained we give a complete proof of Theorem \ref{thm:LocalBertini}} {in $\S\S$ \ref{ssec:LocalBertini}.}

We will say that a hyperplane $x_1 -\sum_{i=2}^n \lambda_i x_i=0$ is \emph{generic} if $\lambda \in \C^{n-1}$ can be chosen outside a subset of measure zero. Note that both theorems demand a generic hyperplane section in the variable $\x$. Our task now is to modify the morphism (without changing its ranks) in order to guarantee that it can be restricted to a generic hyperplane section.

Indeed, by Lemma  \ref{lemma:preparationphi}, we may assume that $\Gr(\phi)>1$ and that the $\phi(x_j)$ with $j=1$,   \ldots, $n$ satisfy the normal form given in Lemma \ref{lemma:preparationphi} (v). The image of $\varphi$, nevertheless, does not yet necessarily include generic hyperplanes in $x$. In order to deal with this issue we use a trick of Gabrielov (whose idea we illustrate in the concrete example \ref{ex:GabrielovTrick} below).

Let us perform the trick. Up to composing $\phi$ with a quadratic transformation $u_1 \lgm u_1$ and $u_j\lgm u_1u_j$ for $j=2,  \ldots,n$, we may suppose that $a_j \in \mathbb{Z}_{>0}$ in the normal forms \eqref{eq:NormalFormPrepared} of Lemma \ref{lemma:preparationphi} (v). Furthermore, up to making power substitutions of the form $x_j \lgm x_j^{\alpha_j}$ with $\alpha_j = \prod_{k\neq j} a_k$ for every $j=2,  \ldots, n$, we can suppose that $a_j = a>0$ is independent of $j$. Finally, we consider the power substitution $x_1 \lgm x_1^{a+1}$. All these operations preserve the ranks of the morphism by Proposition \ref{rk:BasicPropertiesGabrielov2}. We therefore have obtained the following normal form:
\[
\phi(x_1) = u_1^{a+1}, \quad \phi(x_j) = u_1^{a} g_j(\uu), \quad j=2,  \ldots, n
\]
where $g_j(0)=0 $ and $g_j(0,u_2, \ldots, u_n)\neq  0$. Now, let us consider a linear function $h_{\lambda}(\x) = x_1 - \sum_{j=2}^n \la_j x_j$ with $ \lambda_j \in \C$ for $j=2,  \ldots,n$. Note that:
\[
\varphi\left(x_1-\sum_{i=2}^{n} \la_i x_i\right) = u_1^a \left( u_1 - \sum_{i=2}^{n} \lambda_j g_j(\uu)\right) =: \uu^a g_{\lambda}(\uu).
\]
We claim that for a generic choice of $\lambda \in \C^{n-1}$, the hypersurface $V_\la:=(g_{\lambda}(\uu)=0)$ is not contained in the set of critical points of the morphism $\varphi^a: (\C^{n},0) \lgw (\C^{n+1},0)$ (where we recall that $\varphi^{a \ast}=\varphi$), which we denote by $W$.

Indeed, on the one hand, outside of a proper analytic subset $\Gamma \subset \C^{n-1}$, we know that $g_{\lambda}(0)=0$, $g_{\lambda}(0,u_2,\ldots, u_n) \neq 0$ and $\partial_{u_1}g_{\lambda}(0) \neq 0$. Therefore, by the implicit function Theorem, the equation $g_{\lambda}(\uu)=0$ admits a nonzero solution $u_1=\xi_{\lambda}(u_2,  \ldots,u_n)$. 

On the other hand, assume by contradiction that $V_\la\subset W$ for $\la\in\Lambda \subset \C^{n-1}$, where $\Lambda$ is of positive measure. Since $W$ is a proper analytic subset of $\C^n$, $W$ contains only finitely many hypersurfaces. Therefore, when $\la$ runs over $\Lambda$, $V_\la$ runs over finitely many hypersurfaces. We define an equivalence relation on $\Lambda$ by $\la\simeq \wdt\la$ if $V_\la=V_{\wdt\la}$. Then $\Lambda$ is the disjoint union of the (finitely many) equivalence classes. Therefore, at least one of these equivalence classes is not included in an affine hyperplane. We denote it by $\Lambda_0$. Then
\begin{equation}\label{inv_matrix}
 \sum_{j=2}^n (\widetilde{\lambda}_j-\lambda_j) \cdot g_j(\uu) |_{V_{\lambda}} = \left(g_{\lambda}(\uu) - g_{\widetilde{\lambda}}(\uu)\right)|_{V_{\lambda}} \equiv 0, \quad \forall \, \lambda, \,\widetilde{\lambda} \in  \Lambda_0.
\end{equation}
Since $\Lambda_0$ is not included in an affine hyperplane, we may choose $\la$, $\wdt \la^{(1)}$,   \ldots, $\wdt \la^{(n)}$ such that the vectors $\la-\wdt \la^{(1)}$,   \ldots, $\la-\wdt \la^{(n)}$ are linearly independent. 
Therefore, \eqref{inv_matrix} applied to the $\la-\wdt\la^{(k)}$ implies that $g_j(\uu)|_{V_{\lambda}} \equiv 0$, so that $u_1|_{V_{\lambda}} \equiv 0$. In other words, $(g_{\lambda}(\uu) =0) \subset (u_1=0)$, so that $\xi_{\lambda}(u_2,  \ldots,u_n) = 0$, which is a contradiction.

Therefore, for a generic choice of $\lambda \in \C^{n-1} $, the induced morphism:
\[
\psi_{\lambda}: \frac{\C \lb x_1,   \ldots, x_n\rb [y] }{\left(x_1-\sum_{i=2}^{n} \la_i x_i\right)} \lgw \frac{\C \lb u_1,   \ldots, u_n\rb }{\left( u_1 - \sum_{i=2}^{n} \lambda_j g_j(\uu)\right)}
\]
is such that $\Gr(\psi_{\lambda}) = n-1$. Finally, let us assume that $n>3$. By Theorems  \ref{thm:Abhyankar-Moh} and \ref{thm:LocalBertini}, the polynomial $P$ remains irreducible and divergent in  $\C \lb  x_1,  \ldots, x_n\rb[y]/ (x_1-\sum_{i=2}^{n} \la_i x_i)$ for a generic choice of $\lambda =(\lambda_2,  \ldots,\lambda_{n}) \in \C^{n-1}$. We conclude, therefore, that $\Fr(\psi_{\lambda}) = n-1$ and $\Ar(\psi_{\lambda}) = n$. By repeating this process, we obtain the desired morphism with $n=2$. 

\begin{example}[On Gabrielov's trick]\label{ex:GabrielovTrick}\hfill
\begin{itemize}
\item[(1)] Consider the morphism $\varphi : \C\{x_1,x_2,x_3,y\} \lgw \C\{u_1,u_2,u_3\}$ given by 
\[
\varphi(x_1)=u_1, \quad \varphi(x_2)=u_1^2u_2, \quad \varphi(x_3) = u_1^2u_3 \quad \text{ and }\varphi(y)= f(\uu).
\] 
Consider a hyperplane $H_{\lambda}=(x_1 - \lambda_2 x_2 - \lambda_3 x_3)$ where $(\lambda_2,\lambda_3) \neq (0,0)$, and note that  $\varphi(x_1 - \lambda_2 x_2 - \lambda_3 x_3) = u_1 U(\uu)$ where $U(0) \neq 0$. It follows that the restriction of $\varphi$ to $H_{\lambda}$ induces a morphism:
\[
\psi_{\lambda}: \frac{\C\{x_1,x_2,x_3,y\}}{(x_1 - \lambda_2 x_2 - \lambda_3 x_3)} \lgw 
\frac{\C\{u_1,u_2,u_3\}}{(u_1)} = \C\{u_2,u_3\}
\]
which is constant equal to zero, so that $\Gr(\psi_{\lambda}) =0$.

\item[(2)] (Gabrielov's trick). In order to solve the above issue, we perform a power substitution in the source (which preserves all ranks by Proposition \ref{rk:BasicPropertiesGabrielov2}). More precisely, we consider $x_1 = x_1^3$, so that we now have $\varphi(x_1) = u_1^3$. It now follows that:
\[
\varphi(x_1 - \lambda_2 x_2 - \lambda_3 x_3) = u_1^2 \left(u_1 - \lambda_2 u_2 - \lambda_3 u_3 \right)
\] 
and the restriction of $\varphi$ to $H_{\lambda}$ induces a morphism:
\[
\psi_{\lambda}: \frac{\C\{x_1,x_2,x_3,y\}}{(x_1 - \lambda_2 x_2 - \lambda_3 x_3)} \lgw \frac{\C\{u_1,u_2,u_3\}}{(u_1 - \lambda_2 u_2 - \lambda_3 u_3)}
\]
and we can easily verify that $\Gr(\psi_{\lambda}) = 2 = \Gr(\varphi)-1$.
\end{itemize}
\end{example}

\subsection{Proof of Formal Bertini Theorem}\label{ssec:LocalBertini}
Before giving the proof of the theorem we make the following remarks:
\begin{remark}[On the Formal Bertini Theorem]\hfill
\begin{enumerate}
\item A stronger version of the above result was originally stated in \cite{C} (where $A$ is assumed to be finite), but we were not able to verify the proof. We follow the same strategy as Chow to prove the above result.
\item Tougeron proposes an alternative proof of the Formal Bertini Theorem in \cite[page 349]{To} via a ``Lefschetz type"  Theorem. The proof is geometric and does not adapt in a trivial way to the formal case.
\item (Counterexample of formal Bertini for $n=2$). Consider the irreducible polynomial $P(\x,y) = y^2 - (x_1^2+x_2^2)$. For every $\lambda \in \C$, we have that:
\[
P(x,\lambda x,y) = y^2 - (1+\lambda^2)x^2 = (y-x\sqrt{1+\lambda^2})(y+x\sqrt{1+\lambda^2})
\]
is a reducible polynomial. Therefore, there is no Formal Bertini Theorem for $n=2$.
\end{enumerate}
\end{remark}

For $c\in \k$ we set 
$$
R(c):=\frac{\k\lb \x\rb[z]}{(zx_3-(x_1+cx_2))}=\k\lb \x\rb\left[\frac{x_1+cx_2}{x_3}\right].
$$
 The ideal generated by the $x_i$ is a prime ideal of $R(c)$, denoted by $\p(c)$, and the completion of $R(c)_{\p(c)}$ is $\k(z)\lb x_2,  \ldots, x_n\rb$.
 
We begin by giving the proof of the following Proposition, given as a lemma in \cite{C}:
\begin{proposition}\label{bertini_prop}
Let $\k$ be an uncountable field and $n\geq 3$. 
Let $P(y)\in\k\lb \x\rb[y]$ be an irreducible  monic polynomial. Then there is a countable subset $A\subset \k$ such that, for all $c\in\k\setminus A$, $P(y)$ is irreducible in $\wdh R(c)_{\p(c)}[y]$.
\end{proposition}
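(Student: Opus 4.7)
The plan is to argue by contradiction. Assume $P$ is reducible in $\wdh R(c)_{\p(c)}[y]=\k(z)\lb x_2,\ldots,x_n\rb[y]$ for every $c$ in some uncountable subset $T\subset \k$. For each such $c$, write $P=Q_cR_c$ with $Q_c,R_c\in \k(z)\lb x_2,\ldots,x_n\rb[y]$ monic of positive degrees; since $\deg_y P$ is bounded, a first pigeonhole yields an uncountable $T_1\subset T$ on which $p:=\deg_y Q_c$ is fixed. Set $K:=\Frac(\k\lb\x\rb)$ and let $\alpha_1,\ldots,\alpha_d$ denote the roots of $P$ in a fixed algebraic closure $\ovl K$; since $P$ is irreducible over $K$, the Galois group acts transitively on the $\alpha_i$. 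For each $c\in T_1$, extend the embedding $K\hookrightarrow K_c:=\Frac(\k(z)\lb x_2,\ldots,x_n\rb)$ to an embedding $\ovl K\hookrightarrow \ovl{K_c}$; the factorization $P=Q_cR_c$ then corresponds to a partition of $\{\alpha_1,\ldots,\alpha_d\}$ into two parts of sizes $p$ and $d-p$. A second pigeonhole (finitely many such subsets) produces an uncountable $T_2\subset T_1$ on which the subset $S\subset \{1,\ldots,d\}$ of size $p$ is constant.

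Let $Q_S(y):=\prod_{i\in S}(y-\alpha_i)\in \ovl K[y]$ and let $L\subset\ovl K$ be the finite extension of $K$ generated by its coefficients. If $L=K$ then $Q_S\in K[y]$ divides $P$, contradicting the irreducibility of $P$; hence $e:=[L:K]\geq 2$. Because $P$ is monic, the $\alpha_i$ are integral over $R:=\k\lb\x\rb$, so $L$ is integral over $R$. Pick a primitive element $\theta$ of $L/K$; its minimal polynomial $M(y)\in K[y]$ is monic of degree $e$, and $\theta$ being integral over $R$ forces $M\in R[y]$ via Gauss's lemma. For each $c\in T_2$, the $K$-embedding $L\hookrightarrow K_c$ extending $K\hookrightarrow K_c$ sends $\theta$ to a root $\theta_c$ of $M$ in $K_c$; as $\k(z)\lb x_2,\ldots,x_n\rb$ is a UFD and $M$ is monic, $\theta_c$ actually lies in $\k(z)\lb x_2,\ldots,x_n\rb$. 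It therefore suffices to prove the sub-claim: \emph{if $M\in \k\lb\x\rb[y]$ is irreducible monic of degree $\geq 2$, then the set of $c\in \k$ for which $M(zx_3-cx_2,x_2,\ldots,x_n,y)$ admits a root in $\k(z)\lb x_2,\ldots,x_n\rb$ is at most countable.}

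The proof of the sub-claim is the main obstacle, and is where I would follow Chow's strategy most carefully. The first step is easy: for any such root $\theta_c$, the value $\lambda_c:=\theta_c(z,0,\ldots,0)$ lies in $\k(z)$, is algebraic over $\k$ (being a root of $M(\mathbf 0,y)\in\k[y]$), and hence lies in $\k$ (since the algebraic closure of $\k$ inside $\k(z)$ is $\k$). Pigeonhole restricts us to a fixed $\lambda\in \k$ that is a root of $M(\mathbf 0,y)$. If $\lambda$ were a simple root, Hensel's lemma in $\k\lb\x\rb$ would produce a root of $M$ in $\k\lb\x\rb$ lifting $\lambda$, contradicting irreducibility of $M$; so $\lambda$ is necessarily a multiple root. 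After the translation $y\mapsto y+\lambda$ and the Weierstrass preparation theorem, we may assume $M$ is a Weierstrass polynomial in $y$ with $M(\mathbf 0,y)=y^k$ for some $k\geq 2$. From this point I would run a Newton--Puiseux analysis of $M$ in the style of Section \ref{section:AbhyankarJung}, constrained to the hyperplane $x_1=zx_3-cx_2$: each successive edge of the Newton polyhedron of $M(zx_3-cx_2,x_2,\ldots,x_n,y)$ produces a polynomial identity in $c$ (and $z$) that any admissible $\theta_c$ must satisfy, and the irreducibility of $M$ over $\k\lb\x\rb$ guarantees that at some stage one of these identities is a nonzero polynomial equation in $c$. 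The set of solutions to the accumulated countable list of such equations is countable, contradicting the uncountability of $T_2$. The hardest part is precisely this last step --- showing that irreducibility of $M$ propagates through the Newton--Puiseux iteration to force a nontrivial constraint on $c$; this is the technical heart of Chow's argument and the place where the hypothesis $n\geq 3$ (needed so that the direction $zx_3-cx_2$ is genuinely two-dimensional, as illustrated by the counterexample for $n=2$) is used in an essential way.
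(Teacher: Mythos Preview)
Your reduction in the first half is valid: pigeonholing on the degree and then on the subset $S$ of roots is fine, and the passage to a primitive element $\theta$ with minimal polynomial $M\in\k\lb\x\rb[y]$ is correct. The difficulty is that your sub-claim is essentially the original proposition in the special case ``$M$ has a linear factor'', and you do not prove it. The Newton--Puiseux sketch you offer is not workable as stated: $M$ lives over $n\geq 3$ variables, the Newton--Puiseux/Abhyankar--Jung machinery only gives power-series roots under a quasi-ordinary hypothesis on the discriminant which you have no reason to expect, and it is unclear how the ``successive edge equations'' would produce a \emph{countable} family of polynomial constraints in $c$ rather than, say, constraints involving $z$ as well. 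You correctly identify this as the technical heart, but you have not supplied it.

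The paper's proof avoids this entirely and uses a genuinely different idea. Rather than fixing one $c$ and analysing roots, it works with \emph{two} parameters $c_1,c_2$ at once: the joint completion $\wdh R(c_1,c_2)_\p$ is identified with $\k(z_1,z_2)\lb x_3,\ldots,x_n\rb$, and there is an explicit degree criterion (the ``key trick'') for an element of this ring to lie in $\k\lb\x\rb$. All the $\wdh R(c_i)_{\p(c_i)}$ embed in this common ring, so the finitely-many-factorizations pigeonhole happens there. Membership in $\wdh R(c_1)$ and in $\wdh R(c_2)$ separately bounds $\deg_{z_1}$ and $\deg_{z_2}$ of each homogeneous coefficient; a third $c_3$, chosen outside a countable set to avoid degree cancellations under the linear substitution relating $z_2$ and $z_3$, upgrades these to a bound on the \emph{total} degree, forcing the factor's coefficients into $\k\lb\x\rb$. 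This three-parameter degree argument is precisely where $n\geq 3$ enters (one needs $x_3$ to define the $z_i$), and it sidesteps any Newton polyhedron analysis.
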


\begin{proof}
More generally, for $c_1$,   \ldots, $c_s\in\k$, $s$ distinct elements with $s\geq 2$, we set 
$$R(c_1,  \ldots, c_s):=\k\lb \x\rb[z_1,  \ldots, z_s]/(z_1x_3-(x_1+c_1x_2),  \ldots, z_sx_3-(x_1+c_sx_2)).$$
The ideal generated by the $x_i$ is a prime ideal of $R(c_1,  \ldots, c_s)$, denoted by $\p$, and the completion of $R(c_1,  \ldots, c_s)_\p$ is isomorphic to $\k(z_1,  \ldots, z_s)\lb x_3,  \ldots, x_s\rb$, where the $z_i$ satisfy the following relations over $\k$ (for every $i$, $j$, $k$, $\ell$ with $i\neq j$ and $k\neq\ell$):
\begin{equation}\label{lin_rel}\frac{c_iz_j-c_jz_i}{c_i-c_j}=\frac{c_kz_\ell-c_\ell z_k}{c_k-c_\ell};\  \frac{z_i-z_j}{c_i-c_j}=\frac{z_\ell-z_k}{c_\ell-c_k}.\end{equation}
Therefore  the completion of $R(c_1,  \ldots, c_s)_\p$ is isomorphic to $\k(z_1, z_2)\lb x_3,  \ldots, x_s\rb$, and $z_1$ and $z_2$ are algebraically independent over $\k$ (we may replace $z_1$ and $z_2$ by any other $z_i$, $z_j$).
Let us fix $s=2$. We have, in $\wdh R(c_1,c_2)_{\p}$:
$$z_1x_3=x_1+c_1x_2,\ z_2x_3=x_1+c_2x_2,\ x_1=\frac{c_2z_1-c_1z_2}{c_2-c_1}x_3,\ \ x_2=\frac{z_2-z_1}{c_2-c_1}x_3.$$
Therefore, for an element $f\in \wdh R(c_1,c_2)_{\p}$, we have $f\in R$ if and only if
\begin{equation}\label{key_trick}
f=\sum_{\a\in\N^n}f_\a z_1^{\a_1}z_2^{\a_2}x_3^{\a_3}\cdots x_n^{\a_n}, \quad \text{ where } \quad \left[\a_1>\a_3 \text{ or } \a_2>\a_3\Longrightarrow f_\a=0\right]
\end{equation}

Let $P(y)\in \k\lb \x\rb[y]$ be irreducible and assume that there exist an uncountable set $I$ and distinct elements $c_i\in \k$, $i\in I$, such that $P(y)$ is reducible in $\wdh R(c_i)_{\p(c_i)}[y]$. We fix two such $c_i$ that we denote by $c_1$ and $c_2$.
By the previous discussion, we may assume that the $\wdh R(c_i)_{\p(c_i)}$ are all embedded in $\k(z_1,z_2)\lb x_3,  \ldots, x_n\rb$. Since there is only finitely many ways of splitting a monic polynomial into the product of two monic polynomials, we may assume that we have the same factorization $P(y)=P_1(y)P_2(y)$ in all the $\wdh R(c_i)_{\p(c_i)}[y]$.

Let $f$ be a coefficient of $P_1(y)$ or $P_2(y)$ in $\wdh R(c_1,c_2)_{\p(c_i)}$, that we write
$$f=\sum_{\b\in\N^{n-2}}f_\b x_3^{\b_3}\cdots x_n^{\b_n}$$
where the $f_\b\in \k(z_1,z_2)$. Since $f\in \wdh R(c_1)_{\p(c_1)}$ and
$$\wdh R(c_1)_{\p(c_1)}\simeq \k(z_1)\lb x_2,  \ldots, x_n\rb\simeq \k(z_1)\lb z_2x_3,x_3,  \ldots, x_n\rb$$
we have that $f_\b\in\k(z_1)[z_2]$ for all $\beta$, and $\deg_{z_2}(f_\b)\leq \b_3$ for all $\beta$ such that $f_\b\neq 0$. By symmetry, we have that $f_\b\in\k[z_1,z_2]$ and $\deg_{z_1}(f_\b)\leq \b_3$ for all $\beta$ such that $f_\b\neq 0$. Now let us choose one more $c_i$, that we denote by $c_3$. By \eqref{lin_rel}, we have
$$z_2=\frac{c_3-c_2}{c_3-c_1}z_1+\frac{c_1-c_2}{c_1-c_3}z_3.$$
By replacing $z_2$ by $\frac{c_3-c_2}{c_3-c_1}z_1+\frac{c_1-c_2}{c_1-c_3}z_3$ in the $f_\b$, we obtain the coefficients $g_\b$ of the expansion
 $$f=\sum_{\b\in\N^{n-2}}g_\b x_3^{\b_3}\cdots x_n^{\b_n}$$
  as an element of $\k(z_1,z_3)\lb x_3,  \ldots, x_n\rb$. In particular, we have that $g_\b\in \k[z_1,z_3]$ and $\deg_{z_i}(g_\b)\leq \b_3$ for $i=1$ or $3$, for all $\b$ such that $g_\b\neq 0$. For a given $\b$, we have 
  $$\deg_{z_1}(g_\b)\leq \deg_{z_1,z_2}(f_\b)$$
  where $\deg_{z_1,z_2}$ denotes the total degree in $z_1$, $z_2$. This inequality may be strict, as some cancellations may occur. But, for a given $\b$, there is a finite set $A_\b\subset \k$ (possibly empty) such that 
  $$\frac{c_3-c_2}{c_3-c_1}\notin A_\b\Longrightarrow \deg_{z_1}(g_\b)= \deg_{z_1,z_2}(f_\b).$$
We remark that the map $H:c_3\in\k\lgm \frac{c_3-c_2}{c_3-c_1}$ is injective. Therefore the set
$$A:=\bigcup_{\b\in\N^{n-2}}H^{-1}(A_\b)$$
is at most countable. Since $I$ is uncountable, we may choose $c_3\notin A$. Therefore for such a $c_3$, we have 
$$\forall \b,\ \ \ \deg_{z_1}(g_\b)= \deg_{z_1,z_2}(f_\b).$$
Now, by the previous discussion done for the $f_\b$, we also have $\deg_{z_1}(g_\b)\leq \b_3$. Therefore, for every $\b$, we have $\deg_{z_1,z_2}(f_\b)\leq \b_3$. Thus, by \eqref{key_trick}, we see that $f\in R$.\\
This argument applies to any coefficient of $P_1(y)$ or $P_2(y)$. Hence, we have that $P_1(y)$, $P_2(y)\in R[y]$. This contradicts the assumption that $P(y)$ is irreducible in $R[y]$.
\end{proof}

Now we can give the proof of Theorem \ref{thm:LocalBertini}:

\begin{proof}[Proof of Theorem \ref{thm:LocalBertini}]
We apply Proposition \ref{bertini_prop} to $P(y)$. Let $c\notin A$. By construction, the image of $P(y)$ in $S(c)[y]$ is
$$P(cx_2+zx_3,x_2,  \ldots, x_n,y)$$
where $S(c)[y]$ is identified with $\k(z)\lb x_2,  \ldots, x_n\rb[y]$. In fact, we have
$$P(cx_2+zx_3,x_2,  \ldots, x_n,y)\in \k_{P,c}(z)\lb x_2,  \ldots, x_n\rb[y].$$
Therefore, if we replace $z$ by any element of $\k$ that is transcendental over $\k_{P,c}$, we have that $P(cx_2+zx_3,x_2,  \ldots, x_n,y)$ is irreducible.
\end{proof}


\section{Proof of the low-dimensional Gabrielov Theorem}\label{sec:OverviewLow}

\subsection{Geometrical framework}
In order to prove Theorem \ref{th:MainLowDimI}, we will use geometric arguments involving transcendental tools. In particular, the article changes pace and we use essentially geometric language instead of algebraic. We start by fixing the notation.

Given a point $\pa \in \C^n$, when $n$ is clear from the context we write $\mathcal{O}_{\pa}$ for the ring of analytic germs $\O(\C^n)_\pa$, and $\widehat{\mathcal{O}}_{\pa}$ for its completion. Given an analytic germ $f \in \mathcal{O}_{\pa}$, we denote by $\widehat{f}_{\pa}$ the Taylor series associated to $f$ at the point $\pa$. In particular, the Taylor mapping is the morphism of local rings:
\[
\begin{matrix}
T_a : & \mathcal{O}_a  &\lgw&  \widehat{\mathcal{O}}_{\pa}\\
 & f & \lgm &  \widehat{f}_{\pa}
\end{matrix}
\]
A coordinate system centered at $\pa$ is a collection of functions $\x=(x_1,  \ldots,x_n)$ which generate the maximal ideal $\mathfrak{m}_{\pa}$ of $\mathcal{O}_{\pa}$, in which case we recall that $\mathcal{O}_{\pa}$ is isomorphic to $\C\{\x\}$. We define, similarly, coordinate systems $\widehat{\x}=(\widehat{x}_1,  \ldots,\widehat{x}_n)$ of $\widehat{\mathcal{O}}_{\pa}$, and we note that $\widehat{\mathcal{O}}_{\pa}$ is isomorphic to $\C\llbracket \widehat{\x} \rrbracket$. We say that a coordinate system $\widehat{\x}$ of $\widehat{\mathcal{O}}_{\pa}$ is \emph{convergent} if there exists a coordinate system $\x$ of $\mathcal{O}_{\pa}$ such that $\widehat{x}_i = T_{\pa}(x_i)$ for $i=1,  \ldots,n$. Whenever $\widehat{\x}$ is a convergent coordinate system, we abuse notation and we identify it with $\x$.

Let $\Phi: Y\lgw X$ be a complex analytic map between two smooth analytic spaces. Given $\pb\in Y$ and $\pa\in X$ where $\pa=\Phi(\pb)$, we will denote by $\Phi_\pb$ the associated analytic map germ $\Phi_\pb: (Y,\pb)\lgw (X,\pa)$. We denote by $\Phi^*_\pb : \O_\pa\lgw \O_\pb$ the morphism of local rings defined by
$$
\forall f\in\O_\pa,\ \ \Phi^*_\pb(f):=f\circ\Phi_\pb.
$$
Then we denote by $\wdh\Phi^*_\pb : \wdh\O_\pa\lgw \wdh\O_\pb$ the completion morphism of $\Phi^*_\pb$. Following the notation given in the introduction, we note that $(\Phi^{\ast}_\pb)^a$ is the localization of the morphism $\Phi$ to $\pb$, that is, $(\Phi^{\ast}_\pb)^a: (Y,\pb) \lgw (X,\pa)$.

When $\phi : A\lgw B$ is a morphism of local rings, and $P\in A[y]$ is a polynomial with coefficients in $A$, $P=p_0+p_1y+\cdots+p_dy^d$, we will use the following abuse of notation:
$$
\phi(P)=\phi(p_0)+\phi(p_1)y+\cdots+\phi(p_d)y^d\in B[y].
$$
Concretely, we will use this notation with $A = \mathcal{O}_{\pa}$ or $\widehat{\mathcal{O}}_{\pa}$, and $B = \mathcal{O}_{\pb}$ or $\widehat{\mathcal{O}}_{\pb}$.

\subsection{Geometrical formulation of low-dimension results}

We now re-phrase Theorem \ref{th:MainLowDimI} and Proposition \ref{prop:ConvergentSolutionsTheOrigin} in the geometrical context. Instead of a morphism $\varphi: \C\{x_1,x_2,y\} \lgw \C\{u_1,u_2\}$ such that $\Gr(\phi) =2$ and $\Fr(\phi) = 2$, we work with a morphism of germs $\Phi:(\C^2,\pa) \lgw (\C^3,\Phi(\pa))$ such that $\Phi^{\ast} = \varphi$, $\Gr(\Phi^{\ast}) =2$ and $\Fr(\Phi^{\ast}) = 2$. In order to simplify the notation, we always assume that $\Phi(\pa) \in \pa \times \C$ (which is always possible up to a translation in the target) and that there exists a formal polynomial $P \in \widehat{\mathcal{O}}_{\pa}[y]$ such that $\widehat{\Phi}^{\ast}(P) = 0$, that is, $\Ker(\widehat{\Phi}^{\ast}) = (P)$ (which we can always suppose by Weierstrass preparation). 

We introduce the following definition:

\begin{definition}[Convergent factor]
Let $P \in \widehat{\mathcal{O}}_{{\pa}}[y]$ be a non-constant monic polynomial. We say that $P$ admits a convergent factor (at $\pa$) if there exists an analytic non-constant monic polynomial $q \in \mathcal{O}_{\pa}[y]$ such that $T_{\pa}(q) = \widehat{q}_{\pa}$ divides $P$.
\end{definition}

We are ready to reformulate Theorem \ref{th:MainLowDimI}:

\begin{theorem}[Low-dimension Gabrielov II]\label{th:MainLowDim'}
Let $P \in \widehat{\mathcal{O}}_{{\pa}}[y]$ be a monic polynomial, where $\pa \in \C^2$. Suppose that there exists an analytic morphism $\Phi:(\C^{2},\pa) \lgw (\C^3,\Phi(\pa))$, generically of rank $2$, such that $\Phi(\pa) \in \{\pa\} \times \C$ and  
\[
\wdh\Phi^*_{\pa}(P)  =  0.
\]
Then $P$ admits a convergent factor. In particular, if $P$ is a formally irreducible polynomial, then $P$ is an analytic polynomial.
\end{theorem}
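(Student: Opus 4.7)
The plan is to reduce, via resolution of singularities of the discriminant $\Delta_P$, to the quasi-ordinary situation handled by Proposition \ref{prop:ConvergentSolutionsTheOrigin}, and then to descend the resulting convergent factor back to $\pa$. To avoid the notational clash in the statement, I write $\pb$ for the source point of $\Phi$ in $\C^2$, so that $\Phi:(\C^2,\pb)\lgw(\C^3,\Phi(\pb))$ with $\Phi(\pb)\in\{\pa\}\times\C$.

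First, I would apply Hironaka resolution to obtain a proper bimeromorphic morphism $\pi:\tilde X\lgw X:=(\C^2,\pa)$ such that $\pi^{*}(\Delta_P)$ is a normal crossings divisor on $\tilde X$. Since $\Delta_P$ is only formal a priori, this has to be carried out in the formal category, or, alternatively, by invoking Artin approximation to replace $\Delta_P$ by an analytic representative agreeing to sufficiently high order (normal crossings only sees finite-order data along each component). Extend $\pi$ to the ambient three-space by multiplication with $\C$. Using properness and the generic rank-two hypothesis on $\Phi$, after possibly blowing up in the source I lift $\Phi$ to a morphism $\tilde\Phi:(\tilde Y,\tilde\pb)\lgw\tilde X\times\C$ of generic rank two, sending $\tilde\pb$ to some $(\tilde\pa,y_0)\in\pi^{-1}(\pa)\times\C$. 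At any such $\tilde\pa$ the pullback $\pi^{*}P\in\wdh{\mathcal O}_{\tilde\pa}[y]$ has discriminant equal to a monomial times a unit, so Proposition \ref{prop:ConvergentSolutionsTheOrigin} produces a nontrivial monic convergent factor of $\pi^{*}P$ at $\tilde\pa$.

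The second step is the globalization: assemble these pointwise convergent factors into a single monic polynomial whose coefficients are holomorphic on a neighborhood of the compact exceptional locus $\pi^{-1}(\pa)\subset\tilde X$. The strategy is to work with the irreducible factorization of $\pi^{*}P$ over the formal completion of $\mathcal O_{\tilde X}$ along $\pi^{-1}(\pa)$, to select an irreducible factor which is convergent at at least one fibre point hit by $\tilde\Phi$, and then to use coherence on the compact exceptional curve (via Grauert's direct image theorem or a direct Cartan-type argument) to promote stalkwise convergence to analyticity in a neighborhood of $\pi^{-1}(\pa)$. The coefficients of the resulting factor, being monic polynomial-in-$y$ coefficients holomorphic on a neighborhood of $\pi^{-1}(\pa)$, descend via the proper mapping theorem and the identity $\pi_{*}\mathcal O_{\tilde X}=\mathcal O_X$ to analytic functions on a neighborhood of $\pa$, producing the required convergent factor of $P$.

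\textbf{Main obstacle.} The globalization step is the hard part: local convergent factors at individual $\tilde\pa$'s need not come from restricting a single global analytic factor of $\pi^{*}P$, because the irreducible factorization type can jump along $\pi^{-1}(\pa)$. Controlling this variation, and upgrading stalkwise convergence to analyticity in a uniform neighborhood of the exceptional curve, is precisely where the formal-geometric machinery foreshadowed in the introduction --- projective rings, a Newton--Puiseux--Eisenstein theorem, and formal approximation of factors --- is needed, and where the inductive scheme of Proposition \ref{cl:Main} is presumably invoked. In particular, one has to establish that convergence of a chosen factor at a single well-chosen point of the exceptional fibre propagates to convergence of the same factor along the whole fibre, a statement that is by no means a formal consequence of the pointwise result.
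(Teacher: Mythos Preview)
Your outline matches the paper's approach: resolve the discriminant, lift $\Phi$, apply the quasi-ordinary case at a point of the exceptional fibre, then descend via the inductive scheme of Proposition~\ref{cl:Main}. You also correctly identify the globalization along the exceptional curve as the genuine difficulty, and correctly point to Proposition~\ref{cl:Main} (and behind it the projective-ring machinery of \S\ref{sec:SemiGlobal}) as the place where it is resolved.

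There is, however, a concrete gap in your first step. After blowing up, the pullback of $\Delta_P$ is only \emph{formally} monomial at a general point of the exceptional fibre, not analytically monomial: the strict transforms of the irreducible components of $\Delta_P$ are still divergent curves in general (see Example~\ref{rk:convergentMonomial} and the Remark following Proposition~\ref{cl:Main}). Proposition~\ref{prop:ConvergentSolutionsTheOrigin} requires the discriminant to be monomial in \emph{convergent} coordinates, so it cannot be invoked directly at the image of $\tilde\Phi$. Your Artin-approximation suggestion does not fix this: replacing $\Delta_P$ by a convergent approximant changes $P$, and the approximation does not make the original strict transform analytic. The paper handles this (Case~II of \S\ref{ssec:ReductionMain}) by choosing an auxiliary \emph{analytic} curve $\gamma$ in the target, obtained as the image of a curve in the source missing the critical locus of $\tilde\Phi$; since $\gamma$ is convergent it cannot have flat contact with the divergent branch of $\Delta_P$, so finitely many further point blowing ups separate them, and at the resulting point the discriminant is analytically monomial --- now Proposition~\ref{prop:ConvergentSolutionsTheOrigin'} applies.

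On the descent, your coherence/Grauert heuristic is in the right spirit but is not how the paper proceeds, and it would not work as stated: the irreducible factorization of $P_\pb$ can genuinely change type along $F_r^{(1)}$, so there is no sheaf of ``the convergent factor'' to push forward. The paper instead proves a semi-global extension result (Proposition~\ref{prop:SemiGlobalLessPrecise}) that produces a single analytic polynomial along all of $F_r^{(1)}$, and then runs an induction on the history $(r,k)$ of the exceptional divisors (Proposition~\ref{cl:Main}), using Proposition~\ref{prop:FormalContinuationConvergentFactor} to propagate convergent factors along fibres and finally the Riemann extension theorem to push down across one blow-up at a time.
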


{The condition $\Phi(\pa)\in\{\pa\}\times \C$ is only stated for convenience. In practise we can always assume that this condition holds by a translation in the range of $\Phi$.} Note that Theorem \ref{th:MainLowDim'} immediately implies Theorem \ref{th:MainLowDimI}.

{It is easy to see that, in Theorem \ref{th:MainLowDim'}, we can always suppose that the polynomial $P \in \widehat{\mathcal{O}}_{{\pa}}[y]$ is reduced. In particular, the discriminant of $P$  (with respect to the projection $(\x,y) \lgm (\x)$) is a nonzero formal power series which we denote by $\Delta_P \in \C\lb \x\rb$.} {As said before,} in what follows, quasi-ordinary singularities (that is, when $\Delta_P$ is monomial) will play an important role. We start by distinguishing two cases of quasi-ordinary singularities:

\begin{definition}[Monomial discriminant]
Let $P\in\widehat{\mathcal{O}}_{{\pa}}[y]$ be a reduced monic non-constant polynomial. We say that the discriminant $\Delta_P$ is \emph{formally monomial} if there exists a coordinate system $\x=(x_1,x_2)$ of $\widehat{\mathcal{O}}_{{\pa}}$ such that:
\[
\Delta_P =\x^{\alpha} \cdot W(\x) = x_1^{\alpha_1} \cdot x_2^{\alpha_2} \cdot  W(\x)
\]
where $W(\x)\in\widehat{\mathcal{O}}_{{\pa}}$ is a unit. We say that the discriminant $\Delta_P$ is \emph{analytically monomial} if $\x=(x_1,x_2)$ is a coordinate system of $\mathcal{O}_{\pa}$.
\end{definition}

\begin{example}[Formally vs analytically monomial]\label{rk:convergentMonomial}
Suppose that:
\[
\Delta_P = x_1 \cdot \left(x_2 - \sum_{n=1}^{\infty} n! \cdot x_1^n \right).
\]
On the one hand, there is no (formal) unit $U(x_1,x_2)$ such that $U(x_1,x_2)(x_2 - \sum_{n=1}^{\infty} n! \cdot x_1^n)$ is convergent. Indeed, by the unicity of the preparation given by the Weierstrass preparation Theorem, this would imply that $U$ and $x_2 - \sum_{n=1}^{\infty} n! \cdot x_1^n$ are convergent power series, which is not the case. Therefore $\Delta_P$ is \emph{not} analytically monomial. On the other hand, after the formal change of coordinates:
\[
\widehat{x}_1 = x_1, \quad \widehat{x}_2 = x_2 - \sum_{n=1}^{\infty} n! \cdot x_1^n.
\]
we conclude that $\Delta_P(\widehat{\x}) = \widehat{x}_1 \cdot \widehat{x}_2$, which is formally monomial.
\end{example}

We are ready to reformulate Proposition \ref{prop:ConvergentSolutionsTheOrigin}:

\begin{proposition}[Final case]\label{prop:ConvergentSolutionsTheOrigin'}
Let $P \in \widehat{\mathcal{O}}_{{\pa}}[y]$ be a reduced monic polynomial, where $\pa \in \C^2$, whose discriminant $\Delta_P$ is analytically monomial. Suppose that there exists an analytic morphism $\Phi:(\C^{2},\pa) \lgw (\C^3,\Phi(\pa))$, generically of rank $2$, such that $\Phi(\pa) \in \{\pa\} \times \C$ and  
\[
\wdh\Phi^*_{\pa}(P)  =  0,
\]
then $P$ admits a convergent factor.
\end{proposition}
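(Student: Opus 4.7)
The proof mirrors the algebraic argument of Proposition \ref{prop:ConvergentSolutionsTheOrigin}, translated into the geometric setting. The strategy is to establish that $\Ar(\Phi^*_\pa) = 2$; once this is proved, Proposition \ref{rk:BasicPropertiesGabrielov1}(3) gives $\Ker(\Phi^*_\pa) \cdot \wdh\O_\pa = \Ker(\wdh\Phi^*_\pa) = (P)$, and any generator of the height-one prime $\Ker(\Phi^*_\pa)$ is, via the Weierstrass preparation Theorem, (up to a unit) a convergent monic divisor of $P$. Since $\Delta_P$ is analytically monomial, the coordinates $\x = (x_1,x_2)$ witnessing $\Delta_P = \x^\alpha W(\x)$ form a convergent system of $\O_\pa$. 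Applying the Abhyankar-Jung Theorem, I would produce an integer $e \in \N^*$ and formal Puiseux series $\wdh\xi_1,\ldots,\wdh\xi_d \in \C\lb x_1^{1/e},x_2^{1/e}\rb$ such that
\[
P = \prod_{i=1}^d \bigl(y - \wdh\xi_i(x_1^{1/e},x_2^{1/e})\bigr).
\]

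Next, I would prepare $\Phi$ by applying Lemma \ref{lemma:preparationphi}(iv) to the auxiliary two-variable morphism $(u_1,u_2) \lgm (\Phi^*_\pa(x_1), \Phi^*_\pa(x_2))$, which has generic rank $2$ since $\Gr(\Phi) = 2$. Combined with Proposition \ref{rk:BasicPropertiesGabrielov2}, the lemma produces a new morphism $\Phi'$ having the same three ranks as $\Phi$, and such that $(\Phi')^*_\pa(x_1) = u_1$ and $(\Phi')^*_\pa(x_2) = u_1^a u_2^b$ with $a \geq 0$ and $b > 0$. The polynomial $P$ is correspondingly transformed into a polynomial $P' \in \wdh\O_\pa[y]$ with Puiseux roots $\wdh\xi'_i$, and $\wdh{(\Phi')^*_\pa}(P') = 0$. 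Since all three ranks are preserved, it suffices to prove $\Ar((\Phi')^*_\pa) = 2$.

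I would then extend $(\Phi')^*_\pa$ to the $e$-th ramified cover by $x_1^{1/e} \lgm u_1^{1/e}$ and $x_2^{1/e} \lgm u_1^{a/e} u_2^{b/e}$. Because $\C\lb u_1,u_2\rb$ is an integral domain, the identity $\wdh{(\Phi')^*_\pa}(P') = 0$ factorises as
\[
\prod_{i=1}^d \bigl((\Phi')^*_\pa(y) - \wdh\xi'_i(u_1,u_1^a u_2^b)\bigr) = 0,
\]
yielding an index $i_0$ such that $(\Phi')^*_\pa(y) = \wdh\xi'_{i_0}(u_1,u_1^a u_2^b)$. As the left-hand side is an analytic germ, the right-hand side is convergent in $(u_1^{1/e}, u_2^{1/e})$. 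At the level of $1/e$-th roots, the monomial substitution $(x_1,x_2) \lgm (u_1,u_1^a u_2^b)$ corresponds to an exponent matrix of nonzero determinant $b$, and Lemma \ref{lemma:Convergencemodif} forces $\wdh\xi'_{i_0} \in \C\{x_1^{1/e},x_2^{1/e}\}$.

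Finally, the Galois orbit of $\wdh\xi'_{i_0}$ under the natural action of $(\Z/e\Z)^2$ consists of convergent Puiseux series; by Galois invariance, their elementary symmetric functions lie in $\C\{x_1,x_2\}$, producing a non-trivial monic factor $q'(y) \in \O_\pa[y]$ of $P'$. Since $(y - \wdh\xi'_{i_0})$ divides $q'(y)$ in the ramified cover, $\wdh{(\Phi')^*_\pa}(q'(y)) = 0$, so $q' \in \Ker((\Phi')^*_\pa)$, forcing $\Ar((\Phi')^*_\pa) = 2$. The main obstacle is the convergence step: locating a Puiseux root of $P'$ whose monomial substitution equals the analytic germ $(\Phi')^*_\pa(y)$, and then invoking Lemma \ref{lemma:Convergencemodif} to transfer this convergence to the root itself. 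The preparation in Lemma \ref{lemma:preparationphi}(iv) is essential here, since it is precisely what makes the exponent matrix of the monomial substitution invertible; without it, the substitution could be degenerate and Lemma \ref{lemma:Convergencemodif} would fail to apply.
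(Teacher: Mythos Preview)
Your argument is essentially the paper's own proof: the paper states that Proposition~\ref{prop:ConvergentSolutionsTheOrigin'} follows immediately from Proposition~\ref{prop:ConvergentSolutionsTheOrigin}, and you have faithfully reproduced that proposition's proof (Abhyankar--Jung expansion, preparation via Lemma~\ref{lemma:preparationphi}(iv), extension to the ramified cover, and the convergence transfer via Lemma~\ref{lemma:Convergencemodif}) in the geometric language. One small slip: the equality $\Ker(\wdh\Phi^*_\pa)=(P)$ need not hold since $P$ is only reduced, not irreducible --- but your subsequent reasoning only uses that the generator of $\Ker(\Phi^*_\pa)$ divides $P$, which is correct.
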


Note that Proposition \ref{prop:ConvergentSolutionsTheOrigin} immediately implies Proposition \ref{prop:ConvergentSolutionsTheOrigin'}.

We remark that Proposition \ref{prop:ConvergentSolutionsTheOrigin'} is a particular case of Theorem \ref{th:MainLowDim'}. The proof of Theorem \ref{th:MainLowDim'} consists in reducing to the quasi-ordinary case via \emph{blowing ups} in the target of $\Phi$. As we have remarked in \ref{rk:QaudraticBad}, a blowing up does not preserve the formal and analytic ranks of the analytic germ. We must make global arguments over the blown-up space, as we discuss in the next section.

\subsection{Blowing ups and the inductive scheme} 
We consider an analytic manifold $N$ (which is eventually assumed of dimension $2$) and a simple normal crossing divisor $F$ over $N$. An \emph{admissible blowing up} (for the couple $(N,F)$) is a blowing up:
\[
\tg : (\widetilde{N},\widetilde{F}) \lgw (N,F)
\]
whose center $\mathcal{C}$ is connected and has normal crossings with $F$, that is, at every point $\pa \in \mathcal{C}$, there exists a coordinate system $x$ of $\mathcal{O}_{\pa}$ and $t\in\{1,  \ldots,n\}$ such that $\mathcal{C} = (x_1=\cdots=x_t=0)$ and $F$ is locally given as a finite union of hypersurfaces $(x_i=0)$. In particular, note that if $\mathcal{C}$ is a point, then the blowing up is admissible.

A \emph{sequence of admissible blowing ups} is a finite sequence of morphisms:
$$
\xymatrix{ (\C^2,\pa)=(N_0,\pa)  & (N_1,F_1)  \ar[l]^{\qquad\  \tg_1} & \cdots \ar[l]^{\qquad \tg_2}& (N_r,F_r)  \ar[l]^{\tg_r}}
$$
and we fix the convention that $\tg: (N_r,F_r) \lgw (N_0,F_0)$ denotes the composition of the sequence.

The proof of Theorem \ref{th:MainLowDim'} demands an argument in terms of the history of the exceptional divisors, so it is convenient to introduce notation to keep track of the history explicitly. More precisely, consider a sequence of admissible blowing ups $(\tg_1,  \ldots,\tg_r)$. Note that, for every $j \in \{1,  \ldots, r\}$, the exceptional divisor $F_j$ is a simple normal crossing divisor which can be decomposed as follows:
\[
F_j = F_j^{(0)} \cup F_j^{(1)} \cup \cdots \cup F_j^{(j)}, \quad \forall \, j=1,  \ldots,r
\]
where $F_j^{(0)}$ is the strict transform of $F_0$, and for every $k \in \{1,  \ldots,j\}$, the divisor $F_j^{(k)}$ is an irreducible and connected component of $F_j$ which is uniquely defined via the following recursive convention:
\begin{itemize}
\item If $k=j$, then $F^{(j)}_{j}$ stands for the exceptional divisor introduced by $\tg_j$;
\item If $k< j$, the divisor $F_{j}^{(k)}$ is the strict transform of $F_{j-1}^{(k)}$ by $\tg_j$.
\end{itemize}

The proof of Theorem \ref{th:MainLowDim'} will follow from combining Proposition \ref{prop:ConvergentSolutionsTheOrigin'} with the following result, as we show in $\S\S$\ref{ssec:ReductionMain} below:

\begin{proposition}[Inductive scheme]\label{cl:Main}
Let $\pa \in \C^2$ and consider a non-constant reduced monic polynomial $P \in \widehat{\mathcal{O}}_{{\pa}}[y]$. Consider a sequence of admissible blowing ups
$$
\xymatrix{ (\C^2,\pa)=(N_0,\pa)  & (N_1,F_1)  \ar[l]^{\qquad\  \tg_1} & \cdots \ar[l]^{\qquad \tg_2}& (N_r,F_r)  \ar[l]^{\tg_r}}
$$
we set $\s:=\s_1\circ\cdots\circ\s_r$, and we assume  that:
\begin{enumerate}
\item[i)] $\forall \pb \in \tg^{-1}(\pa)$, we have that $\widehat{\tg}_{\pb}^*(\Delta_P)$ is formally monomial,
\item[ii)] $\exists k\in\{1,\ldots, r\}$, $\exists\pb \in F_{r}^{(k)}$ such that $P_{\pb} = \widehat{\tg}^*_{\pb}(P)$ has a convergent factor. 
\end{enumerate}
Then $P$ admits a convergent factor.
\end{proposition}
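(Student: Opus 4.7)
The strategy is to propagate the convergent factor from $\pb$ along the entire exceptional fiber $\sigma^{-1}(\pa)$, assemble the local data into a single polynomial with analytic coefficients on an open neighborhood of this fiber, and then descend it to $\pa$ using the properness of $\sigma$.

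First, I would unpack hypothesis (i) pointwise. At each $\pb' \in \sigma^{-1}(\pa)$ there is a formal coordinate system adapted to the local equations of the normal crossings divisor $F_r$ in which $\widehat{\sigma}^*_{\pb'}(\Delta_P)$ is monomial times a unit; the Abhyankar--Jung Theorem then provides explicit formal Puiseux expansions of the roots of $P_{\pb'}:=\widehat{\sigma}^*_{\pb'}(P)$, so that the irreducible formal factorization of $P_{\pb'}$ is controlled by Galois orbits of these roots under the ramification action. Since the centers of the blowing ups are points in a surface, each $F_r^{(k)}$ is a smooth rational curve, and one of the two local coordinates varies as a genuinely convergent parameter along it.

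Second, I would propagate and globalize. The factor $q_\pb$ selects a Galois-stable subset of the roots of $P_\pb$; moving to a nearby $\pb' \in F_r^{(k)}$ along the convergent parameter, this selection extends canonically to a compatible formal factor $Q_{\pb'}$ of $P_{\pb'}$, and the construction propagates across each node of $F_r$ to every point of the connected exceptional fiber $\sigma^{-1}(\pa)$. The real work is to promote this pointwise formal family into a single monic polynomial $Q$ whose coefficients are \emph{genuinely analytic} on an open neighborhood of $\sigma^{-1}(\pa)$; this uses the initial convergence at $\pb$, an application of Proposition \ref{prop:ConvergentSolutionsTheOrigin'} at points where the discriminant becomes analytically monomial (the exceptional divisor itself supplies convergent coordinates), and the projective-ring and formal-approximation-of-factors machinery announced in $\S$\ref{sec:OverviewLow} and developed in $\S$\ref{sec:SemiGlobal}.

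Third, I would descend $Q$ through $\sigma$. Since $\sigma$ is a proper modification that is biholomorphic on the complement of the exceptional fiber, the coefficients of $Q$ push forward to analytic functions on a punctured neighborhood $V \setminus \{\pa\}$ of $\pa$ in $\C^2$, and they extend across the isolated point $\pa$ by the two-variable Hartogs (or Riemann removable-singularity) Theorem. The resulting polynomial $q \in \mathcal{O}_\pa[y]$ is the sought convergent factor: by construction its Taylor series at $\pa$ coincides (up to a unit) with the formal factor determined by $q_\pb$, which divides $P$ by uniqueness of the formal irreducible factorization together with Weierstrass preparation. The chief obstacle is the globalization in Step 2: producing genuinely convergent coefficients on a full neighborhood of the projective curve $F_r^{(k)}$---rather than only at each point separately---is precisely what the semi-global framework of $\S$\ref{sec:SemiGlobal} is designed to achieve, while Steps 1 and 3 are comparatively routine once that framework is available.
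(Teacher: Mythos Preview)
Your broad outline---propagate, globalize, descend via Hartogs---is the right picture, but there is a genuine structural gap in Step~2 that the paper resolves not by a direct globalization but by an \emph{induction on the lexicographic pair $(r,k)$}.

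The issue is that the semi-global machinery of \S\ref{sec:SemiGlobal} (specifically Proposition~\ref{prop:SemiGlobalLessPrecise}) does \emph{not} produce an analytic $Q$ on a neighborhood of the entire exceptional fiber $\sigma^{-1}(\pa)=F_r$; it only produces $q$ on a neighborhood of the single component $F_r^{(1)}$. This is intrinsic to the construction: the projective rings $\PP_h\lb\x\rb$ are built from homogeneous data in the \emph{original} coordinates at $\pa$, and they localize well only along the first exceptional divisor (cf.\ Remark~\ref{rk:GeometricalFormalProjectiveRing}(2)). Your claim that ``the construction propagates across each node of $F_r$'' to give analytic coefficients everywhere is precisely what is not available in one shot.

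The paper's proof therefore proceeds differently. When $k=1$, one applies Proposition~\ref{prop:SemiGlobalLessPrecise} to get $q$ near $F_r^{(1)}$, then descends only to $N_1$: away from the finitely many later centers $\pa_1,\dots,\pa_l\in F_1$ the map $\sigma_2\circ\cdots\circ\sigma_r$ is an isomorphism, so $q$ yields an analytic $\tilde p$ on $F_1\setminus\{\pa_1,\dots,\pa_l\}$. At each $\pa_j$ one invokes the \emph{inductive hypothesis} (with strictly smaller $r$) on $P_j:=(\widehat\sigma_1)^*_{\pa_j}(P)$ to get a convergent factor $p_j$ near $\pa_j$; the maximality clause in Proposition~\ref{prop:SemiGlobalLessPrecise} forces $\tilde p$ and $p_j$ to have the same degree, hence to agree, so $\tilde p$ extends across all $\pa_j$ and one then descends from $N_1$ to $\pa$ by Riemann extension. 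When $k>1$, one first reduces to a smaller pair by passing to $P_{k-1}$ at the center $\pa_{k-1}$ of $\sigma_k$ (with the shorter tower $\sigma_k,\dots,\sigma_r$), obtaining a convergent factor at $\pa_{k-1}$; propagation along the fiber (Proposition~\ref{prop:FormalContinuationConvergentFactor}) then furnishes a convergent factor on some $F_{k-1}^{(j)}$ with $j<k$, and one applies induction again. Without this two-layer inductive descent---one layer to move from $F_r^{(k)}$ down to $F_r^{(1)}$, another to patch across the missing points of $F_1$---your Step~2 has no mechanism to produce the global analytic $Q$ that Step~3 requires.
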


\begin{remark}
In Example \ref{rk:convergentMonomial}, we have illustrated that the discriminant of $P$ is not analytically monomial in general. In fact, even after a sequence of blowing ups, $\D_P$ is not analytically monomial in general. Indeed, let us choose the polynomial $P$ given in Example \ref{rk:convergentMonomial} {which is formally monomial}. If we consider the quadratic transformation $(x_1,x_2)= (z_1,z_1z_2)$ we get
$$
\wdh\s^*(\Delta_P)=z_1^2\left((z_2-1)-\sum_{n\geq 1}n!z_1^{n-1}\right).
$$
Therefore, $\wdh\s^*(\Delta_P)$ is {formally monomial at the point of coordinates $(z_1,z_2)=(0,1)$ but it} is not analytically monomial at this point  as shown in Example \ref{rk:convergentMonomial}. A straightforward induction shows that this is again the case after finitely many blowing ups. Therefore we need to be careful when we reduce the proof of Theorem \ref{th:MainLowDim'} to Propositions  \ref{prop:ConvergentSolutionsTheOrigin'} and \ref{cl:Main} (c.f. Case II in $\S\S$\ref{ssec:ReductionMain}).
\end{remark}

The proof of Proposition \ref{cl:Main} is given in $\S\S$\ref{ssec:Inductive} below. The crucial technical point to prove it, is the following extension result:

\begin{proposition}[Semi-Global extension]\label{prop:SemiGlobalLessPrecise}
Let $\pa \in \C^2$ and consider a non-constant reduced monic polynomial $P \in \widehat{\mathcal{O}}_{{\pa}}[y]$. Consider a sequence of admissible blowing ups
$$
\xymatrix{ (\C^2,\pa)=(N_0,\pa)  & (N_1,F_1)  \ar[l]^{\qquad\  \tg_1} & \cdots \ar[l]^{\qquad \tg_2}& (N_r,F_r)  \ar[l]^{\tg_r}}
$$
we set $\s:=\s_1\circ\cdots\circ\s_r$, and we assume  that:
\begin{enumerate}
\item[i)] $\forall \pb \in \tg^{-1}(\pa)$, $\widehat{\tg}_{\pb}^*(\Delta_P)$ is formally monomial,
\item[ii)] $\exists\pb \in F_{r}^{(1)}$ such that $P_{\pb} = \widehat{\tg}^*_{\pb}(P)$ has a convergent factor.
\end{enumerate}
Then, there exists an open neighbourhood $U_r^{(1)}$ of $F^{(1)}_r$, and a convergent non-constant polynomial $q \in \mathcal{O}_{U_r^{(1)}} [y]$ such that, at every point $\pc \in F_{r}^{(1)}$, the polynomial $\widehat{q}_{\pc}$ divides $P_{\pc}:= \widehat{\tg}_{\pc}^*(P)$ and either $P_{\pc}=\widehat{q}_{\pc}$ or the quotient $P_{\pc}/\widehat{q}_{\pc}$ does not admit a convergent factor.
\end{proposition}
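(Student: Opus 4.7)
The plan is to propagate the convergent factor from $\pb$ to a globally defined analytic polynomial in a neighborhood of the compact projective curve $F := F_r^{(1)}$. Throughout, set $D := \s^{-1}(\pa)$, a simple normal crossings divisor containing $F$, and $\Sigma := F \cap (D \setminus F)$, the finite set of intersection points of $F$ with the other irreducible components of $F_r$.

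First I would establish the local picture. At each $\pc \in F$, hypothesis~(i) combined with the Abhyankar--Jung Theorem (cf.\ $\S$\ref{section:AbhyankarJung}) yields a formal coordinate system at $\pc$ in which $P_\pc$ splits as $\prod_{i=1}^d(y - \xi_i^{(\pc)})$ with roots $\xi_i^{(\pc)} \in \C\lb x_1^{1/e}, x_2^{1/e}\rb$ for some $e \geq 1$. By Lemma~\ref{lemma:Convergencemodif} applied to the power substitution $x_i \lgm t_i^e$, a monic factor of $P_\pc$ admits a representative in $\mathcal{O}_\pc[y]$ if and only if each of its Puiseux roots is convergent. Define $q_\pc^{\max} \in \mathcal{O}_\pc[y]$ as the product of all irreducible formal factors of $P_\pc$ whose roots are convergent Puiseux series: the \emph{maximal} convergent factor of $P_\pc$. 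Hypothesis~(ii) ensures that $q_\pb^{\max}$ is non-constant at $\pb$.

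Next I would globalize. Consider the analytic hypersurface $X := \{P=0\}$ inside a tubular neighborhood of $F \times \C$ in $N_r \times \C$. Outside $D$, $X \to N_r$ is an étale cover of degree $d := \deg_y P$, whose branches extend through $F$ as Puiseux series by Abhyankar--Jung at each point of $F$. At each $\pc \in F \setminus \Sigma$, the branches of $X$ analytic near $\pc$ correspond precisely to the roots of $q_\pc^{\max}$; since the domain of analyticity of a branch is open in $N_r$, the set of convergent branches is locally constant as $\pc$ varies in $F \setminus \Sigma$. Together with the connectedness of $F\setminus\Sigma$ and the fact that analytic continuation along paths in $N_r\setminus D$ close to $F$ preserves analyticity, this singles out a globally defined sub-covering $Y \subset X$ over a neighborhood of $F \setminus \Sigma$. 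Its elementary symmetric functions define a monic polynomial $q$ with analytic coefficients on a neighborhood of $F \setminus \Sigma$. These coefficients remain locally bounded near $\Sigma$, since the corresponding Puiseux roots are bounded there, so by Riemann's extension theorem they extend analytically across the discrete set $\Sigma$, producing $q \in \mathcal{O}_{U_r^{(1)}}[y]$ on an open neighborhood $U_r^{(1)}$ of $F$. By construction $\wdh q_\pc = q_\pc^{\max}$ at every $\pc \in F$, so $\wdh q_\pc$ divides $P_\pc$ and the quotient $P_\pc / \wdh q_\pc$ (being the product of the divergent irreducible formal factors of $P_\pc$) admits no convergent factor.

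The main obstacle lies at the crossing points $\pc \in \Sigma$, where the cover $X \to N_r$ is ramified in two directions and the Puiseux structure becomes genuinely two-dimensional. One must verify that the analytic extension across $\Sigma$ produced by Riemann's theorem actually coincides with the locally defined maximal convergent factor $q_\pc^{\max}$ at each such $\pc$---equivalently, that the ``global convergent sub-covering'' obtained from $F \setminus \Sigma$ captures exactly the locally convergent branches at each $\pc \in \Sigma$, neither more nor less. This compatibility between the semi-global and local pictures at crossing points is where the formal-geometric machinery developed in $\S$\ref{sec:SemiGlobal} (projective rings, the Newton--Puiseux--Eisenstein Theorem, formal approximation of factors) is presumably invoked.
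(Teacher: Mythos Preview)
Your outline has a genuine gap. You treat $X := \{P=0\}$ as an analytic hypersurface admitting an \'etale cover outside $D$ and supporting analytic continuation along paths, but $P$ has \emph{formal} coefficients in $\widehat{\mathcal{O}}_\pa$, so no such analytic object exists; at each $\pc$ you only have a formal germ $P_\pc$. More importantly, the obstacle you relegate to $\Sigma$ is not localized there: you must show that the degree of the maximal convergent factor $q_\pc^{\max}$ is \emph{constant} along all of $F$. Proposition~\ref{prop:FormalContinuationConvergentFactor} only tells you that a convergent factor at $\pc$ remains one nearby, so $\deg q_\pc^{\max}$ is lower semicontinuous in $\pc$ --- nothing rules out upward jumps, and if one occurs the local maximal factors do not glue and your Riemann extension step fails (there is no reason the coefficients are bounded near such a point). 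Establishing this constancy is essentially the whole content of the proposition.

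The paper's approach avoids gluing local factors altogether. It introduces semi-global \emph{projective rings} $\PP_h\lb\x\rb \supset \PP_h\{\x\}$ and factors $P = \prod Q_i$ into irreducibles in $\PP_h\lb\x\rb[y]$ (the Newton--Puiseux--Eisenstein Theorem~\ref{thm:NewtonPuiseux}); each $Q_i$ then makes sense simultaneously at every point of $F_r^{(1)}$. Theorem~\ref{thm:SemiGlobalFormal} (via Artin approximation reducing to the analytic case of Proposition~\ref{prop:SemiGlobalFormalAn}) shows each $Q_i$ extends formally everywhere on $F_r^{(1)}$, and the crucial Theorem~\ref{thm:IzumiWalsh} shows that if $\widehat\s_\pb^*(Q_i)$ is convergent at a single point then $Q_i \in \PP_h\{\x\}[y]$, hence convergent everywhere by Lemma~\ref{ext}. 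The proof of Theorem~\ref{thm:IzumiWalsh} relies on a transcendental estimate (Lemma~\ref{lemma:IzumiMajorationPol}, via Green's functions on an auxiliary compact Riemann surface) comparing sup-norms of polynomials on different compact pieces of an irreducible algebraic curve: this forces all roots conjugate over $\C(\x)$ to be simultaneously convergent or divergent, which is exactly what rules out degree jumps. This global transcendental step has no counterpart in your outline.
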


In Corollary \ref{cor:SemiGlobal} below, we prove a more precise version of the above Proposition. Indeed, $\S$\ref{sec:SemiGlobal} is entirely dedicated to the proof of Proposition \ref{prop:SemiGlobalLessPrecise}, and it includes three Theorems which are of independent interest.

Finally, we need one more ingredient before proving Proposition \ref{cl:Main}. First, note that if $P  \in {\mathcal{O}_{\pa}}[y]$ is convergent, then it is clear that a convergent factor $q$ at a point $\pa$, is also a convergent factor of $P$ on a neighborhood of $\pa$. When $P\in \widehat{\mathcal{O}}_{{\pa}}[y]$ is divergent, then this property still holds over ``fibers", that is:

\begin{proposition}[Convergent factor along fibers]\label{prop:FormalContinuationConvergentFactor} Let $\pa \in \C^n$ and $\Phi: M \lgw \C^n$ be an analytic map, generically of maximal rank, where $M$ is smooth. Let $P \in \widehat{\mathcal{O}}_{{\pa}}[y]$ be a non-constant monic polynomial and suppose that there exists $\pb \in \Phi^{-1}(\pa)$ such that $P_{\pb} = \Phi_{\pb}^{\ast}(P)$ admits a convergent factor $q$ at $\pb$. Then, there exists a neighbourhood $U$ of $\pb$ such that, for every point $\pc \in U\cap \Phi^{-1}(\pa)$, the polynomial $q$ is a convergent factor of $P_{\pc}$.
\end{proposition}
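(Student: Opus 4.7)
The approach is to perform a single Euclidean division that simultaneously controls the divisibility at every nearby fiber point, organising the formal divisibility at $\pb$ as the pointwise shadow of a global statement. Shrinking $M$ to a Stein neighborhood $V$ of $\pb$, I may assume that $q \in \mathcal{O}_V[y]$ is monic of some degree $d$ and that $\mathcal{F} := \Phi^{-1}(\pa) \cap V$ is cut out by the coherent ideal $I = (\Phi_1 - a_1,\ldots,\Phi_n - a_n) \subset \mathcal{O}_V$, where $\pa = (a_1,\ldots,a_n)$. Writing $P = \sum_i p_i y^i$ with $p_i \in \wdh\mathcal{O}_\pa$, I take polynomial truncations $P^{(N)} = \sum_i p_i^{(N)} y^i \in \mathcal{O}_\pa[y]$ with $p_i - p_i^{(N)} \in \m_\pa^{N+1}$; then $P^{(N)}\circ\Phi$ is analytic on $V$, and Euclidean division by the monic polynomial $q$ yields $P^{(N)}\circ\Phi = q\cdot h^{(N)} + \rho^{(N)}$ in $\mathcal{O}_V[y]$ with $\deg \rho^{(N)} < d$.

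Since $P^{(N+1)} - P^{(N)} \in \m_\pa^{N+1}\mathcal{O}_\pa[y]$, its pullback under $\Phi$ lies in $I^{N+1}\mathcal{O}_V[y]$ globally on $V$; because Euclidean division by a monic polynomial preserves the $I$-adic filtration, one has $\rho^{(N+1)} - \rho^{(N)} \in I^{N+1}\mathcal{O}_V[y]$, so that $(\rho^{(N)})_N$ is Cauchy in the $I$-adic topology. For each $\pc \in \mathcal{F}$ the Taylor expansions $\wdh{\rho^{(N)}}_\pc$ converge in the $I_\pc$-adic topology of $\wdh\mathcal{O}_\pc[y]$ to the Euclidean remainder $R_\pc$ of $\wdh\Phi^*_\pc(P)$ by $q_\pc$. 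At the distinguished point $\pb$ the hypothesis $\wdh{q}_\pb\mid P_\pb$ combined with uniqueness of Euclidean division in $\wdh\mathcal{O}_\pb[y]$ forces $R_\pb = 0$, so $\wdh{\rho^{(N)}}_\pb \in I_\pb^{N+1}\wdh\mathcal{O}_\pb[y]$ for every $N$; by faithful flatness of $\mathcal{O}_\pb \hookrightarrow \wdh\mathcal{O}_\pb$ each $\rho^{(N)}$ in fact lies in $I_\pb^{N+1}\mathcal{O}_\pb[y]$ as an analytic germ at $\pb$.

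The main obstacle is to upgrade this pointwise vanishing at $\pb$ to a uniform vanishing on a fixed neighborhood of $\pb$ in $\mathcal{F}$: the germ condition $\rho^{(N)} \in I^{N+1}$ a priori holds on a neighborhood $V_N$ which could shrink with $N$, and the naive intersection $\bigcap_N V_N$ may collapse to $\{\pb\}$. To resolve this, I repackage the $\rho^{(N)}$ as a single element $\mathbf{R} \in B[y]$ inside the ring $B := \varprojlim_N \Gamma(V, \mathcal{O}_V/I^N)$, where the $I$-adic Cauchy sequence actually converges; in local coordinates where $\Phi$ is a submersion (which one may arrange along a dense open part of $\mathcal{F}$), $B$ has the concrete description $B \cong \mathcal{O}_{\mathcal{F},\pb}\lb t_1,\ldots, t_n\rb$ (convergent along $\mathcal{F}$, formal in the transverse coordinates $t_j$). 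The crucial point is that the Taylor-evaluation map $B \lgw \wdh\mathcal{O}_\pb$ is injective: in the submersive model this amounts to the identity principle for analytic functions on $\mathcal{F}$ in the fiber direction combined with Krull's intersection theorem in the transverse formal direction. Consequently $R_\pb = 0$ upgrades to $\mathbf{R} = 0$ in $B[y]$, and evaluating the Taylor map at any $\pc$ in a neighborhood $U$ of $\pb$ in $\mathcal{F}$ yields $R_\pc = 0$, i.e.\ $\wdh{q}_\pc$ divides $\wdh\Phi^*_\pc(P)$ in $\wdh\mathcal{O}_\pc[y]$ as required.
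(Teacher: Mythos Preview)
Your overall strategy—performing Euclidean division of polynomial truncations and packaging the remainders into an element $\mathbf{R}$ of the $I$-adic completion $B=\varprojlim_N\Gamma(V,\mathcal{O}_V/I^N)$—is a genuinely different route from the paper's. The paper never introduces $B$: instead it first resolves the fiber $\Sigma=\Phi^{-1}(\pa)$ to an SNC divisor (Step~II), and then in local coordinates $(v,\w)$ with $(v=0)\subset E$ shows by an explicit expansion that every coefficient of $P_\pb$ lands in the concrete ring $\mathcal{O}_{D^{n-1}}\lb v\rb$; a single Euclidean division in that ring finishes (Step~I). Your approach avoids resolution entirely, which is appealing.

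However, there is a real gap at the injectivity of $B\to\wdh{\mathcal{O}}_\pb$. You justify it ``in the submersive model'', writing $B\cong\mathcal{O}_{\mathcal{F},\pb}\lb t_1,\ldots,t_n\rb$; but this description requires $\Phi$ to be a submersion \emph{at $\pb$}, and in the paper's applications (points on an exceptional divisor) it never is—indeed the parenthetical ``along a dense open part of $\mathcal{F}$'' concedes this. Since the whole argument hinges on evaluating at $\pb$, you cannot borrow the description from nearby submersive points. The claim is nonetheless true, but needs a different justification: after shrinking $V$ so that $\mathcal{F}\cap V$ is connected, the map $\Gamma(V,\mathcal{O}_V/I^N)\to\mathcal{O}_\pb/I_\pb^N$ is injective because the support of a section of a coherent sheaf is closed, so the vanishing set is clopen in the connected support; and the $I_\pb$-adic completion of $\mathcal{O}_\pb$ injects into $\wdh{\mathcal{O}}_\pb$ since it is a Noetherian local ring with maximal ideal $\m_\pb\wdh{\mathcal{O}}_\pb^{I}$, so Krull's intersection theorem applies. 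With this correction your argument goes through, and it is arguably cleaner than the paper's two-step reduction; but as written the key step is asserted in a setting where your stated model does not apply.
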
 

We prove Proposition \ref{prop:FormalContinuationConvergentFactor} in $\S\S$ \ref{sec:ProofFormalContinuation} below.


\subsection{Proof of Gabrielov's low dimension Theorem (Reduction of Theorem \ref{th:MainLowDim'} to Proposition \ref{cl:Main})}\label{ssec:ReductionMain}

The discriminant $\Delta_P$ is a formal curve in $(\C^2,\pa)$, so it admits a resolution of singularities via blowing ups of points which are always convergent centres. In other words, there exists a sequence of (analytic) point blowing ups:
$$
\xymatrix{ (\C^2,\pa)=(N_0,\pa)  & (N_1,F_1)  \ar[l]^{\qquad\  \tg_1} & \cdots \ar[l]^{\qquad \tg_2}& (N_s,F_s)  \ar[l]^{\tg_s}}
$$
such that for every $\pb \in \tg^{-1}(\pa)$ {(here $\s:=\s_1\circ\cdots\circ\s_s$)}, the pulled-back discriminant $\widehat{\tg}^*_{\pb}(\Delta_P)$ is formally monomial (but not necessarily analytically monomial). Note that any further sequence of point blowing ups preserve this property, and we may compose this sequence with further blowing ups of points if necessary. Now, let 
$$
\xymatrix{ (\C^2,\pa)\times\C=(M_0,\pa\times \C)  & (M_1,E_1)  \ar[l]^{\qquad \qquad \tg_1\times\Id} & \cdots \ar[l]^{\quad\  \tg_2\times\Id}& (M_s,E_s)  \ar[l]^{\tg_s\times\Id\quad}}
$$
be the associated sequence of admissible blowing ups over $\C^3$. We can now show that there exists a sequence of point blowing ups in the source of the morphism $\Phi$:
$$
\xymatrix{ (\C^2,\pa)=(L_0,\pa)  & (L_1,D_1)  \ar[l]^{\qquad\  \lag_1} & \cdots \ar[l]^{\qquad \lag_2}& (L_s,D_s)  \ar[l]^{\lag_s}}
$$
where $\lag_i$ denotes a finite sequence of blowing ups, including length zero (so, the identity); and mappings $\Phi_i: (L_i,D_i) \lgw (M_i,E_i)$ for $i=1,  \ldots, s$, such that the following diagram commutes:
$$
\xymatrix{ L_0 \ar[d]^{\Phi_0} && L_1 \ar[d]^{\Phi_1} \ar[ll]^{\lag_1} && \cdots \ar[ll]^{\lag_2} && L_{s-1}\ar[d]^{\Phi_{s-1}} \ar[ll]^{\lag_{s-1}} && L_s \ar[d]^{\Phi_s} \ar[ll]^{\lag_s}\\
M_0 && M_1 \ar[ll]^{\tg_1\times\Id} && \cdots \ar[ll]^{\tg_2\times\Id} && M_{s-1} \ar[ll]^{\tg_{s-1}\times\Id} && M_s \ar[ll]^{\tg_s\times\Id}
}$$
where $\Phi_0=\Phi$. Indeed, this result follows from usual resolution of indeterminacy of maps: let $\mathcal{I}_{0}$ be the reduced ideal sheaf whose support is the first center of blowing up $\mathcal{C}_0$ in $M_0$, and consider its pullback $\mathcal{J}_0 = \Phi_0^{\ast}(\mathcal{I}_0)$. Let $\lambda_1:L_1 \lgw L_0$ denote the sequence of point blowing ups that principalizes $\mathcal{J}_0$; we conclude by the universal property of blowing ups the existence of the morphism $\Phi_1:L_1 \lgw M_1$. It is enough to repeat this argument for the entire sequence.

Now, since $\Phi_0$ is generically of maximal rank and the $\lag_i$ are sequences of point blowing ups, we conclude that $\Phi_s$ is generically of maximal rank. We denote by $\lag$ the composition of the $\lag_i$. Let $\pc \in \lag^{-1}(\pa)$ and denote by $\Phi_s(\pc) = (\pb,b)$ its image. Note that:
$$\wdh{\Phi}_{s\,\pc}^*(P_\pb)=\wdh{\Phi}_{s\, \pc}^*\circ\wdh\sg_\pb^*(P)=\wdh\lag_\pc^*\circ\wdh{\Phi}^*_{0\,\pa}(P)$$
by hypothesis. 

We now consider the  two following cases:

\begin{figure}
\includegraphics[width=0.65\linewidth]{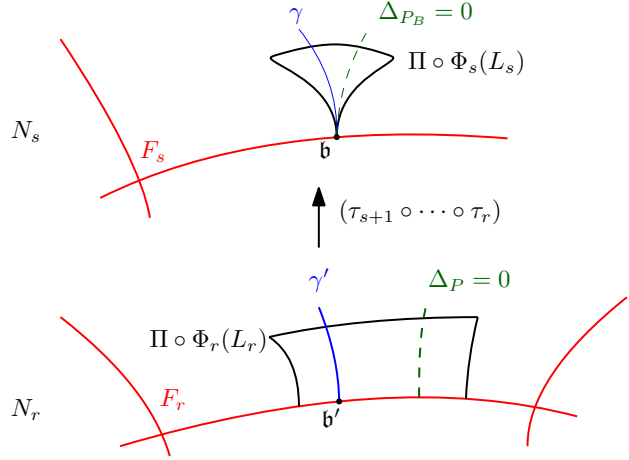}
\caption{Proof of Theorem \ref{th:MainLowDim'}: case II.}
\label{fig:LowGabrielovCaseII}
\end{figure}

\medskip
\noindent
\emph{Case I:} Suppose that $\Delta_{P_{\pb}}$ is analytically monomial. In this case we do not need to make any other subsequent blowing ups (and $r=s$ when we apply Proposition \ref{cl:Main}). Indeed, all hypothesis of Proposition \ref{prop:ConvergentSolutionsTheOrigin'} are satisfied, so we conclude that $P_{\pb}$ admits a convergent factor. This implies that all hypothesis of Proposition \ref{cl:Main} are satisfied, so we conclude that $P$ admits a convergent factor, as we wanted to prove. 

\medskip
\noindent
\emph{Case II:} Suppose that $\Delta_{P_{\pb}}$ is not analytically monomial, but only formally monomial. In this case, we make further blowing ups in order to reduce to Case $I$ (and $r>s$ when we apply Proposition \ref{cl:Main}).

Let $\Sigma \subset L_s$ be the analytic subset of $L_s$ where $\Phi_s$ is not of maximal rank (note that this set is the support of the ideal generated by the determinant of the two-by-two minors of the Jacobian of $\Phi_s$). Since $\Sigma$ is an analytic curve, there exists {an analytic} curve $\tilde{\gamma}:(\C,0) \lgw (L_s,\pc)$ which intersects $\Sigma\cup\Phi^{-1}(E_s)$ only at $\pc$. We set $\Pi:M_s\lgw N_s$ the canonical projection. Then, taking $\gamma = \Pi\circ\Phi_s \circ \tilde{\gamma}$, we obtain an analytic curve on $(N_s,\pb)$ which intersects the exceptional divisor $F_s$ only at $\pb$. {We claim} that $\gamma$ and $(\Delta_{P_{\pb}}=0)$ can not have flat contact. {Indeed, if they had flat contact, then $(\Delta_{P_{\pb}}=0)$ would have a  convergent factor $h$ that is coprime with $u$ where $F_s=(u=0)$. Thus $\D_{{P_{\pb}}}=h^au^b\times \unit$ and $(\D_{{P_{\pb}}}=0)$ would be convergent which contradicts the assumption.}
This implies that by a sequence of point blowing ups we can separate the strict transform of $\gamma$ and $(\Delta_{P_{\pb}}=0)$. We are now in Case $I$ when we center at the point $\pb'$ given by the strict transform of $\gamma$. Indeed, at $\pb'$, the germ defined by $\Delta_{P_{\pb'}}$ is equal to the germ defined by one irreducible component of the exceptional divisor.

\subsection{The induction Scheme (Reduction of Proposition \ref{cl:Main} to Proposition 
\ref{prop:SemiGlobalLessPrecise} and \ref{prop:FormalContinuationConvergentFactor})}\label{ssec:Inductive}
The proof of Proposition \ref{cl:Main} follows by induction on the lexicographical order of $(r,k)$. Note that the first step of the induction, that is when $r=k=0$, is tautological. We now fix $(r,k)$ and we assume that the Proposition is true whenever $(r',k')<(r,k)$. We divide the proof in two parts, depending if $k=1$ or $k>1$:

\smallskip
\noindent
\textbf{Case I: $k=1$.} By Proposition \ref{prop:SemiGlobalLessPrecise}, there exists an open neighbourhood $U^{(1)}_r$ of $F^{(1)}_r$, and a convergent polynomial $q \in \mathcal{O}_{U^{(1)}_r} [y]$ such that, at every point $\pc \in F_{r}^{(1)}$, the polynomial $\widehat{q}_{\pc}$ divides $P_{\pc}= \wdh\s_\pc^*(P) $ and, furthermore, either $P_{\pc}=\widehat{q}_{\pc}$ or the quotient $P_{\pc}/\widehat{q}_{\pc}$ does not admit a convergent factor. Note that, since $F_r$ is connected and $U_r^{(1)}$ is open, by Proposition \ref{prop:FormalContinuationConvergentFactor}, at each connected component of $F_r \setminus F_r^{(1)}$, there exists a point in this component, say $\pc_j$, where $\wdh\s_\pb^*(P) $ admits a convergent factor.

We now consider the geometrical picture after only the first blowing up $\tg_1$, which has exceptional divisor $F_1= F_1^{(1)}$. Let $\{\pa_1,  \ldots,\pa_l\}$ be all the points in $F_{1}$ which are centres of subsequent blowing ups. Note that $\tg_2 \circ \cdots \circ \tg_r: (N_r,F_r) \lgw (N_1,F_1)$ is an isomorphism at every point of $F_1 \setminus \{\pa_1,  \ldots,\pa_l\}$. There exists, therefore, an open neighbourhood $V_1$ of $F_1\setminus \{\pa_1,  \ldots,\pa_l\}$, and an analytic polynomial $\widetilde{p} \in \mathcal{O}_{V_1}[y]$ such that:
\[
\tg_r^{\ast} \circ \cdots \circ  \tg_1^{\ast}(\widetilde{p}) = q
\]
Now, let us denote by $P_j:=(\wdh \s_{1})^*_{\pa_j}(P)$ for $j=1,  \ldots,l$, which is a non-constant monic polynomial in $\widehat{\mathcal{O}}_{\pa_{j}}[y]$. We consider the sequence of blowing ups $\tg_{(2)}:=\tg_2 \circ \cdots \circ \tg_r$. Note that the pull-back of the discriminant $\Delta_{P_{j}}$ is everywhere formally monomial, since it coincides with the pull-back of $\Delta_P$ by the entire sequence $\tg$. Furthermore, since $P$ admits a convergent factor at a point in every connected component of $F_r \setminus F_r^{(1)}$, we conclude that $P_j$ admits a convergent factor (at some point, say $\pc_j$) after its composition with $\tg_{(2)}$. It follows that $P_j$ satisfies all conditions of Proposition \ref{cl:Main} with $(r',k')$ such that $r'\leq r-1$. By induction, $P_j$ admits a convergent factor of maximal degree $p_j$, defined in a neighborhood $W_j$ of $\pa_j$. Furthermore, the degree of $p_j$ must be the same as the degree of $\widetilde{p}$, since they coincide after pull-back by $\tg_{(2)}$ at a point of $F_r \setminus F_r^{(1)}$, by the inductive assumption.

Finally, since $p_j$ is convergent in a neighborhood $W_j$, there exists a point $\pb_{j} \in W_j \cap F_1^{(1)}$ where $p_j$ and $\widetilde{p}$ are well-defined. Since these polynomials are convergent, have the same degree, and $\widetilde{p}$ is the convergent factor of maximal degree of $P$, we conclude that $\widetilde{p} = p_j$ at $\pb_j$. It follows that $\widetilde{p}$ extends in a neighbourhood $U_1:=V_1 \cup_{j=1}^l W_j$ of $F_1$, and it formally divides $\wdh\s_1^*(P)$ everywhere in $F_1$. {We claim that $\widetilde{p} = \s_1^*(p)$ pour some $p\in \mathcal O_\pa[y]$. Indeed, we can define the coefficients of $p$ on an open punctured disc of $\C^2$ centered at the origin by  $\widetilde p=\s_1^*(p)$, since $\s_1$ is biholomorphic outside the origin. Then we conclude with the Riemann extension theorem (see e.g. \cite[Theorem 4.1.24]{dJP}). Since $\widetilde p$ formally divides $\wdh\s_1^*(P)$, we have that $p$ formally divides $P$, as we wanted to prove.}

\begin{figure}
\includegraphics[width=\linewidth]{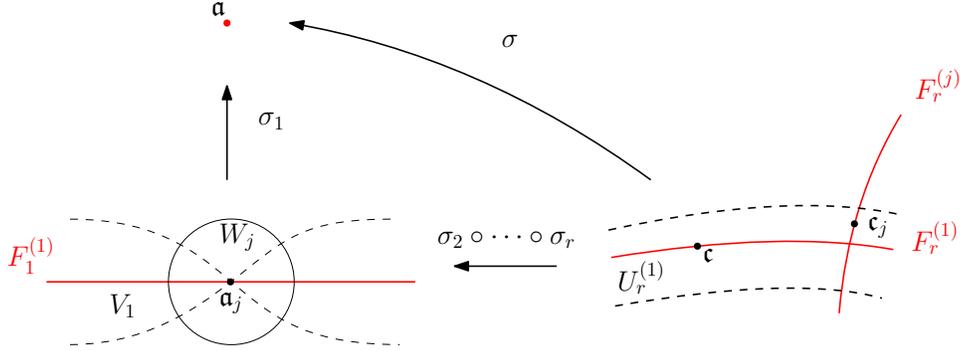}
\caption{Proof of Proposition \ref{cl:Main}: case I.}
\label{fig:CaseI}
\end{figure}

\smallskip
\noindent
\textbf{Case II: $k>1$.} Let $\pa_{k-1} $ denote the center of the blowing up $\tg_k$, and consider: 
\[
P_{k-1}:=\widehat{\left(\tg_{1}\circ\cdots\circ \tg_{k-1}\right)^*}_{\pa_{k-1}}(P),
\] 
and note that $P_{k-1}$ is a non-constant monic polynomial which belongs to $\widehat{\mathcal{O}}_{\pa_{k-1}}[y]$. We consider the sequence of blowing ups $\tg_{(k)}:=\tg_k \circ \cdots \circ \tg_r$. Note that the pull-back of the discriminant $\Delta_{P_{k-1}}$ is everywhere formally monomial, since it coincides with the pull-back of $\Delta_P$ by the entire sequence $\tg$. Furthermore, since $P$ admits a convergent factor over the exceptional divisor created by the blowing up $\sg_k$, we conclude that $P_k$ admits a convergent factor after its composition with $\tg_{(k)}$ at some point $\pc \in F_r^{(k)}$. It follows that $P_k$ satisfies all hypothesis of Proposition \ref{cl:Main} with $(r',k')$ such that $r'\leq r-k<r-1$. By induction, we conclude that $P_{k-1}$ admits a convergent factor $p_{k-1} \in \mathcal{O}_{\pa_{k-1}}[y]$, defined in some neighborhood $W_{k-1}$ of $\pa_{k-1}$. Therefore, by Proposition \ref{prop:FormalContinuationConvergentFactor}, there exists a point $\pb \in F_{k-1}^{(j)}$, for some $j\leq k-1$, where $\widehat{\sg}_{\pb}^*(P)$ admits a convergent factor. We conclude by induction.

\begin{figure}
\includegraphics[width=\linewidth]{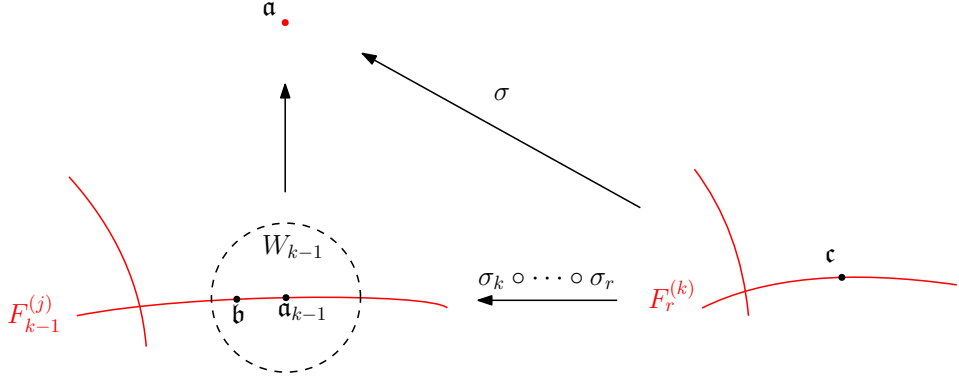}
\caption{Proof of Proposition \ref{cl:Main}: case II.}
\label{fig:CaseII}
\end{figure}

\subsection{Convergence of factors along fibers (Proof of Proposition \ref{prop:FormalContinuationConvergentFactor})} \label{sec:ProofFormalContinuation}

The proof is divided in two steps, depending on the nature of $\Phi^{-1}(\pa)$:

\medskip
\noindent
\emph{Step I:} Suppose that $\Phi^{-1}(\pa) = E$ is a SNC divisor. Let $(v,\w) = (v,w_2,  \ldots,w_n)$ be a coordinate system centered at $\pb$ such that $(v=0) \subset E$. We can write:
\[
x_i = v^{\alpha_i} \Psi_i(v,\w), \quad i=1,  \ldots, n
\]
where $\Psi = (\Psi_1,  \ldots, \Psi_n)$ is an analytic morphism defined in some open neighborhood $U$ of $\pb$, and $\Psi_i(0,\w) \neq 0$ for all $i =1,  \ldots,n$. Without loss of generality, we can suppose that there exists a disc $D \subset \C$ such that $U = D^n$ and that the coefficients of the polynomial $q$ are convergent over $U$. Next, we set $\alpha = (\alpha_1,  \ldots,\alpha_n)$, and let $A \in\widehat{\mathcal{O}}_{\pa}$ be a fixed function. We consider the expansion of $A$ in terms of $\alpha$-homogeneous polynomials:
\[
A = \sum_{i=0}^{\infty} A_i(\x) \quad \text{ where } A_i(\x^{\alpha}) \text{ is an homogenous polynomial of degree }i
\]
This implies that:
\[
\Phi_{\pb}^*(A) = \sum_{i=0}^{\infty} v^i \psi(A_i)= \sum_{i=0}^{\infty} v^i  \sum_{j=0}^{\infty} v^j a_{ij}(\w) = \sum_{k=0}^{\infty} v^k \sum_{i+j=k} a_{ij}(\w) = \sum_{k=0}^{\infty} v^k b_{k}(\w)
\]
where, since $A_i$ are polynomials and $\Psi$ is convergent in $U= D^n$, we conclude that $a_{ij}(\w)$ are analytic functions defined on $D^{n-1}$. Moreover, since for every $k$, the function $b_k(\w)$ is a finite sum of functions $a_{ij}(\w)$, we conclude that $b_k(\w)$ are also analytic functions defined over $D^{n-1}$. Denoting by $\mathcal{O}_{D^{n-1}}$ the ring of analytic functions defined over $D^{n-1}$, we conclude that $b_k(\w) \in \mathcal{O}_{D^{n-1}}$ for every $k \in \mathbb{N}$, so that $\Phi^*_{\pb}(A) \in \mathcal{O}_{D^{n-1}} \lb v \rb$. Since the choice of $A \in\widehat{\mathcal{O}}_{\pa}$ was arbitrary, we conclude that $P_{\pb} \in \mathcal{O}_{D^{n-1}} \lb v \rb [y]$. In particular, for every $\pc \in (v=0) \cap D^n$, we get that $P_{\pc} = P_{\pb}$ as elements of $\mathcal{O}_{D^{n-1}} \lb v \rb [y]$. Furthermore, the factor $q$ also belongs to $\O_{D^{n-1}} \lb v \rb [y]$, and it follows from the Euclidean division that $P_{\pb} = q \cdot Q + R$, where $R \in \mathcal{O}_{D^{n-1}} \lb v \rb [y]$ has an identically zero formal expansion at $\pb$. Since the ring $\mathcal{O}_{D^{n-1}}$ is of convergent series, this implies that $R \equiv 0$. It follows that $q$ divide $P_{\pc}$, at every point $\pc \in (v=0) \cap U$. We conclude the Proposition by remarking that $E$ is a SNC divisor and the choice of the hypersurface $(v=0) \subset E$ was arbitrary.

\medskip
\noindent
\emph{Step II:} Let $\Sigma:=\Phi^{-1}(\pa)$; since the morphism $\Phi$ is generically of maximal rank, we conclude that $\Sigma$ is a proper analytic subvariety of $M$. Consider a resolution of singularities of $\Sigma$, that is, an analytic morphism $\sg: (M',E') \lgw (M,\Sigma)$ of maximal rank such that $\sg^{-1}(\Sigma) = E'$ is a SNC divisor. From Step I, at every point $\pb' \in \sg^{-1}(\pb)$, there exists a neighbourhood $U_{\pb'}$ where $\s_{\pb'}(q)$ is a convergent factor of $P_{\pb'}$. Since $\Sigma$ is an analytic subvariety of $M$, there exists an open neighbourhood $U$ of $\pb$ where $U \cap \Sigma$ is connected. Since $\sg$ is proper, furthermore, up to shrinking $U$ we can suppose that 
\begin{equation}\label{eq:Basic1}
\sg^{-1}(U) \subset \bigcup_{\pb' \in \sg^{-1}(\pb)} U_{\pb'}.
\end{equation}
Now given a point $\pc \in \Sigma \cap U$, suppose by contradiction that $q$ is not a factor of $P_{\pc}$, that is, the formal division $P_{\pc} = q_{\pc} \cdot Q + R$ has a nonzero remainder $R \in \widehat{\mathcal{O}}_{\pc}[y]$. It follows that, at every point ${\pc'} \in \sg^{-1}(\pc)$ we have that $\sg_{{\pc'}}(R):=R_{\pc'} \neq 0$, which implies that $q_{{\pc'}}$ does not divide $P_{{\pc'}}$. But ${\pc'} \in E \cap \sg^{-1}(U)$, leading to a contradiction with \eqref{eq:Basic1}. It follows that $q$ formally divides $P_{\pc}$ at every point $\pc \in \Sigma \cap U$, as we wanted to prove.

\section{Semi-Global extension of convergent factors}\label{sec:SemiGlobal}


\subsection{Semi-global extension overview (Proof of Proposition \ref{prop:SemiGlobalLessPrecise})}

This subsection contains the full strategy to prove Proposition \ref{prop:SemiGlobalLessPrecise}. In order to motivate each object, we leave the proofs and development of the necessary supporting techniques, namely Theorems \ref{thm:NewtonPuiseux}, \ref{thm:SemiGlobalFormal}, \ref{thm:IzumiWalsh} and Proposition \ref{representative}, to subsections \ref{ssec:Newton-Puiseux-Eisenstein}, \ref{ssec:OnConvProjRings}, \ref{ssec:SemiGlobalFormalExtention} and \ref{ssec:LocalToSemiGlobal}.

We start by providing the adequate algebraic setting for the discussion. More precisely, following Tougeron \cite{To2}, we build up a subring of the algebraic closure of $\C(\x)$  {(See Definition \ref{def:projectiveRings} and Theorem \ref{thm:NewtonPuiseux})} which captures geometrical properties necessary to address Proposition \ref{prop:SemiGlobalLessPrecise}. Our presentation is at first general (that is, $ n \in \mathbb{N}$), and we specialize it to the case $n=2$ when it becomes convenient for the presentation.

\subsubsection{Preliminaries on valuation rings} 
We consider the ring of power series $\C\lb \x \rb$ where $\x=(x_1,  \ldots, x_n)$ and we denote by $\C(\!(\x)\!)$ its field of fractions. We denote by $\nu$ the $(\x)$-adic valuation on $\C\lb \x\rb $. The valuation $\nu$ extends to $\C(\!(\x)\!)$ by defining $\nu(f/g)=\nu(f)-\nu(g)$ for every $f$, $g\in\C\lb \x\rb$, $g\neq 0$.
We consider the valuation ring $V_\nu$ associated to it, that is
$$V_\nu:=\{f/g\mid f,g\in\C\lb \x\rb ,\ \nu(f)\geq \nu(g)\}=\{F\in\C(\!(\x)\!)\mid \nu(F)\geq 0\}.$$
 We denote by $\wdh V_\nu$ its completion. Classically elements of $\wdh V_\nu$ can be represented as formal series $A=\sum_{k\in\N} A_k$ where the $A_k$ are in $V_\nu$ and, if $A_k\neq 0$, $\nu(A_k)= k$. For $B\in V_\nu$ we have $B=f/g$ where $f$, $g\in\C\lb \x\rb$. We expand $f=\sum_{k\geq k_1} f_{k}$, $g=\sum_{k\geq k_2} g_{k}$ where the $f_{k}$ and $g_{k}$ are homogeneous polynomials of degree $k$ and $f_{k_1},g_{k_2}\neq 0$. Therefore we have
 $$
 B=
 \frac{f_{k_1}}{g_{k_2}}\left(1+\sum_{k>k_1}\frac{f_k}{f_{k_1}}\right)\left(1+\sum_{k>k_2}\frac{g_k}{g_{k_2}}\right)^{-1}
 $$
 Therefore  the elements $A \in \wdh V_\nu$ are of the form:
\[
A=\sum_{k\in\N}\frac{a_k(\x)}{b_k(\x)},
\]
where $a_k$ and $b_k$ are homogeneous polynomials such that $\deg(a_k)-\deg(b_k)=k$.

\begin{definition}[Weighted-homogenous polynomial]
Let $z_1$,   \ldots, $z_r$ be indeterminates, and let $\omega_1$,   \ldots, $\omega_r\in\Q_{\geq 0}$. We say that a polynomial $\Gamma(\x,\z)\in\C[\x,\z]$ is \emph{$(\omega_1,  \ldots, \omega_r)$-weighted homogeneous} if $\Gamma(\x,z_1^{\omega_1},  \ldots, z_r^{\omega_r})$ is homogeneous.
\end{definition}


\begin{definition}[Homogeneous elements]
A \emph{homogeneous element} $\g$ is an element of an algebraic closure of $\C(\x)$, satisfying a relation of the form $\Gamma(\x,\g)=0$ for some $\omega$-weighted homogeneous polynomial $\Gamma(\x,z)$, where $\omega \in \Q_{\geq 0}$. Furthermore, if $\Gamma(\x,z)$ is monic in $z$, we say that $\g$ is an \emph{integral homogeneous element}. In this case, $\omega$ is called the \emph{degree} of $\g$.  {It is straightforward to check that for $\x$ outside the discriminant locus of $\Gamma(\x,z)$ and $\la\in\R_+^*$, we have $\g(\la \x)=\la^\omega\g(\x)$. Thus the degree of $\g$ is well defined.}{This degree is well-defined. Indeed, the minimal polynomial of $\gamma$ is $\omega$-weighted as a factor of an $\omega$-weighted homogeneous polynomial, and conversely any polynomial which is weighted homogeneous and a multiple of a $\omega$-weighted homogeneous polynomial is $\omega$-weighted homogeneous.} 
\end{definition}

{\begin{remark}
 In order to give a general overview of the strategy of the proof we postpone the presentation of  the main properties of homogeneous elements  (cf. Lemma \ref{weighted}, Corollary \ref{cor_weighted} and Lemma \ref{PET}).
\end{remark}}

Given an integral homogeneous element $\gamma$ of degree $\omega$, there exists an extension of the valuation $\nu$, still denoted by $\nu$, to the field $\C(\x)[\g]$, defined by
$$
\nu\left(\sum_{k=0}^{d-1}a_{k}(\x)\g^{k}\right)=\min\{\nu(a_{k})+k\omega\}.
$$
where $d$ is the degree of the field extension $\C(\x) \lgw \C(\x)[\g]$. In particular, the above property justifies our use of the valuation $\nu$ (instead of naively using the notion of order, which would not extend to integral homogeneous elements).

More generally, given homogeneous elements $\gam =(\g_1$,   \ldots, $\g_r)$ of degrees $\oo = (\omega_1$,   \ldots, $\omega_r)$, there exists an extension of the valuation $\nu$, still denoted by $\nu$, to the field $\C(\x)[\gam]$, defined by
$$
\nu\left(\sum_{k_1,  \ldots, k_r}a_{\bold k}(\x)\gam^{\bold k}\right)=\min\{\nu(a_{\bold  k})+k_1\omega_1+\cdots+ k_r \omega_r\}
$$
where the indices $k_j$ run over $\{0,  \ldots, d_j-1\}$, where $d_j$ is the degree of the field extension 
$$
\C(\x)[\g_1,  \ldots, \g_{j-1}]\lgw \C(\x)[\g_1,  \ldots, \g_j].
$$
We denote by $V_{\nu,\gam}$ the valuation ring of $\nu$ defined on $\C(\!(\x)\!)[\gam]$. We note that $V_{\nu,\gam}$ is a local ring, and we denote by $\wdh V_{\nu,\gam}$ its completion. We remark that the image of $V_{\nu,\gam}$ or of $\wdh V_{\nu,\gam}$ under the valuation $\nu$ is the subgroup $\Gamma_{\nu,\gam}$ of $\Q$ generated by $1$, $\omega_1$,   \ldots, $\omega_r$. This group being a finitely generated $\Z$-module, it is a discrete group, therefore $V_{\nu,\gam}$ and $\wdh V_{\nu,\gam}$ are  discrete valuation rings.\\
Note that all elements of $\wdh V_{\nu,\gam}$ are written as finite sums:
$$
\sum_{k_1,  \ldots, k_r} a_{\bold{k}}(\x)\gam^{\bold k}, \quad \text{ where }a_{\bold k}(\x) \in  \Frac(\wdh V_\nu) \text{ and }\nu(a_{\bold{k}}(\x)\gam^{\bold{k}})\geq 0.
$$
\begin{definition}[Initial term]\label{def:Initial}
For a nonzero element $\xi\in\wdh V_{\nu,\gam}$, we can write 
\[
\xi=\sum_{k\in\Q_{\geq 0}\cap \Gamma_{\nu,\gam}} \xi_k, \quad \text{ where }\xi_k\in\C(\!(\x)\!)[\gam] \text{ are $\nu$-homogenous terms of degree }k
\]
The initial term of $\xi$, denoted by $\ini_{\nu}(\xi)$, is defined as $\xi_{k_0}$ where $$k_0=\min\{k\in\Q_{\geq 0}\cap \Gamma_{\nu,\gam}\mid \xi_k \neq 0\},$$
which is well-defined because $\Gamma_{\nu,\gam}$ is a finitely generated subgroup of $\Q$.
\end{definition}

%
\subsubsection{Projective rings and a Newton-Puiseux-Eisenstein Theorem}

We are now ready to define the rings of functions which we are interested in:

\begin{definition}[Projective rings]\label{def:projectiveRings}
Let $h$ be a homogeneous polynomial. We denote by $\PP_h (\!(\x)\!)$ the subring of $\Frac(\wdh{V_{\nu}})$ characterized by the following property: $A \in \PP_h (\!(\x)\!)$ if there exists $k_0 \in \Z$, $\a$, $\b\in\N$ and $a_k(\x)$ homogeneous polynomials for $k\geq k_0$ so that:
\[
A = \sum_{k\geq k_0} \frac{a_k(\x)}{h^{\a k+\b}}, \quad \text{ where } \nu(a_k)-(\a k+\b)\nu(h)=k,\, \forall\,k\geq k_0
\]
We denote by $\PP_h\lb \x\rb $ the subring of $\PP_h (\!(\x)\!)$ of elements $A$ whose initial term $k_0$ belongs to $\mathbb{Z}_{\geq 0}$. When $\g$ is an integral homogeneous element, we denote by $\PP_{h}\lb \x,\g \rb$ the subring of $\wdh  V_{\nu,\g}$, whose elements $\xi$ are of the form:
\[
\xi=\sum_{k=0}^{d-1} A_k(\x) \gamma^k, \quad \text{ where } A_k \in \PP_h (\!(\x)\!) \text{ and } \nu(A_{k}(\x)\g^{k})\geq 0, \, \, \, k=0,  \ldots,d-1.
\]
\end{definition}

\begin{remark}
Let us remark that the family $(\PP_h\lb \x\rb)_h$ is a directed set, since, for two homogeneous polynomials $h_1$ and $h_2$, we have
$$\PP_{h_i}\lb \x\rb\subset \PP_{h_1h_2}\lb \x\rb \text{ for } i=1,2.$$
\end{remark}

\begin{remark}[Geometrical properties of projective rings]\label{rk:GeometricalFormalProjectiveRing} \hfill
\begin{enumerate}
\item (Invariance by linear coordinate changes). Let $h$ be a homogenous polynomial, $\g$ be an homogenous integral element and $\Phi : (\C^n,0) \lgw (\C^n,0)$ be a linear coordinate change. Note that $\widetilde{h}:= h \circ \Phi$ is an homogenous polynomial and that $\PP_h\lb \x \rb$ (respectively $\PP_h (\!(\x)\!)$ and $\PP_{h}\lb \x,\g \rb$) is isomorphic to $\PP_{\widetilde{h}}\lb \x \rb$ (respectively $\PP_{\widetilde{h}} (\!(\x)\!)$ and $\PP_{\widetilde{h}}\lb \x,\g \rb$). 

\item (Blowing ups) We specialize to the case $n=2$. Let $\sg: (N,F)=(N_r,F_r) \lgw (\C^2,0)$ be a sequence of point blowing ups and let $\pb \in F_r^{(1)}$ be a point. We can find two different normal forms, depending on the nature of $\pb$:

\begin{enumerate}
\item Suppose that $\pb$ does not belong to the intersection of two exceptional divisors. Then, up to a linear change of coordinates in $x$, there exists a system of local coordinates $(v,w)$ centered at $\pb$ such that:
\[
x_1= v, \quad x_2 = vw 
\]
and, if $A \in\PP_h \lb \x\rb$, we obtain:
\[
A_{\pb}:= \wdh\s^*_{\pb}(A) := \sum_{k\geq 0} \frac{a_k(1,w)}{h(1,w)^{\a k +\b}} v^k,
\]
so that $\wdh\sigma^*_{\pb}: \PP_h \lb \x\rb \lgw \C (w)\lb v \rb $ is a well-defined morphism. In particular, if $\pb$ does not belong to the strict transform of $(h=0)$ (so $h(1,0)\neq 0$) then $A_{\pb} \in \widehat{\mathcal{O}}_{\pb}$ and $\wdh\sigma^*_{\pb}: \PP_h \lb \x\rb[y] \lgw \widehat{\mathcal{O}}_{\pb}$ is well-defined.

\item Suppose that $\pb$ belongs to the intersection of two components of the exceptional divisor. Then, up to a linear change of coordinates in $x$, there exists local coordinate system $(v,w)$ centered at $\pb$ and $c \in \mathbb{Z}_{> 0}$ such that:
\[
x_1= vw^c, \quad x_2 = v w^{c+1},
\]
and, if $A \in\PP_h \lb \x\rb$, we obtain:
\[
A_{\pb}:= \wdh\s^*_{\pb}(A) := \sum_{k\geq 0} \frac{a_k(1,w)}{h(1,w)^{\a k +\b}} (w^cv)^k.
\]
\end{enumerate}
\end{enumerate}
\end{remark}

Our interest in these rings is justified by the following version of Newton-Puiseux-Eisenstein Theorem, proved in subsection \ref{ssec:Newton-Puiseux-Eisenstein}:

\begin{theorem}[Newton-Puiseux-Eisenstein]\label{thm:NewtonPuiseux}
Let $P(\x,y) \in \C \lb \x\rb [y]$ be a monic polynomial. There exists an integral homogeneous element $\g$, and a homogeneous polynomial $h(\x)$, such that $P(\x,y)$ factors as a product of degree $1$ monic polynomials in $y$ with coefficients in $\PP_h\lb \x, \g\rb$.
\end{theorem}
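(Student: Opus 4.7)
The plan is to combine a Newton polygon analysis of $P$ with a Hensel-lifting argument in a suitable completion of $V_{\nu,\gam}$, then iterate. Write $P(\x,y)=y^d+a_1(\x)y^{d-1}+\cdots+a_d(\x)$ and form the Newton polygon of $P$ with respect to $\nu$: the lower convex hull in $\R^2$ of the points $(i,\nu(a_i))$, together with $(0,0)$. Its edges have slopes $-\omega_1<\cdots<-\omega_s$ with $\omega_j\in\Q_{\geq 0}$, and each slope $\omega_j$ corresponds, heuristically, to a batch of roots of $P$ of $\nu$-value $\omega_j$. For each edge I form the $(1,\omega_j)$-weighted initial part $P^{(j)}(\x,y)$ by selecting the terms $a_i(\x)_{\min}\,y^{d-i}$ realizing the edge: this is a $(1,\omega_j)$-weighted homogeneous polynomial in $(\x,y)$ whose coefficients are genuine homogeneous polynomials in $\x$.

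Next, over an algebraic closure of $\C(\x)$ I factor each $P^{(j)}$ into irreducible $(1,\omega_j)$-weighted-homogeneous factors. Each monic irreducible factor is by construction the minimal polynomial of an integral homogeneous element of degree $\omega_j$. Collect all the roots across all edges into a tuple $\gam=(\g_1,\ldots,\g_r)$, and work inside the discrete valuation ring $\wdh V_{\nu,\gam}$, which contains these roots in its residue field. Grouping the weighted-homogeneous factors by edge produces a factorization of the initial form of $P$ into pairwise coprime pieces (the valuations of the roots differ, so the factors are coprime modulo the maximal ideal after the appropriate twist). Because $P$ agrees with this initial form modulo strictly higher $\nu$-value terms, Hensel's lemma in the complete discrete valuation ring $\wdh V_{\nu,\gam}$ lifts the initial coprime factorization to a factorization of $P$ whose factors, after shifting $y\mapsto y-\g_k$, each have strictly smaller $y$-degree. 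Iterating this process on each factor is a finite procedure: the $y$-degrees strictly decrease at every step that splits off a linear factor, and we stop when all factors have degree $1$.

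The remaining task is to verify that the lifted degree-$1$ factors have the precise form required by the projective ring $\PP_h\lb \x,\gam\rb$ of Definition \ref{def:projectiveRings}. For this I would track the denominators introduced by the Hensel--Newton iteration: each iterate only inverts the leading weighted-homogeneous coefficient of the factor and the resultant separating coprime pieces, both of which are homogeneous polynomials in $\x$ (possibly after multiplying by a fixed power to clear denominators from the $\gam$). Let $h(\x)$ be the product of all these homogeneous polynomials that arise along the (finite) iteration. Then the $\nu$-adic expansion of each lifted root decomposes level-by-level into terms of the form $a_k(\x)\gam^{\bold k}/h(\x)^{\alpha k+\beta}$ with $a_k(\x)$ homogeneous of the prescribed degree, exactly because the Hensel recursion is a $\nu$-homogeneous Newton iteration that at step $k$ only involves homogeneous data of total $\nu$-value $k$ divided by bounded powers of $h$.

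The main obstacle I anticipate is precisely this bookkeeping step: showing that the Hensel iterates land in the arithmetically rigid ring $\PP_h\lb \x,\gam\rb$, rather than merely in the completion $\wdh V_{\nu,\gam}$. Abstract completeness gives convergence of the iteration, but to obtain the explicit expansion $\sum_k a_k(\x)/h^{\alpha k+\beta}$ with $a_k$ homogeneous of degree exactly $\alpha k\,\nu(h)+\beta\nu(h)+k$, one needs to check inductively that at each Newton step the correction term is itself of that form, and that the exponents $\alpha,\beta$ can be chosen uniformly for all $k$. This is a careful but essentially combinatorial verification using the homogeneity of the initial forms and of the resultants/leading coefficients used to invert the Hensel denominators, combined with the invariance of $\PP_h$ under the Tschirnhaus shifts $y\mapsto y-\g_k$.
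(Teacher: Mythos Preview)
Your Newton-polygon/Hensel scheme for producing roots in some $\wdh V_{\nu,\gam}$ is essentially the paper's Proposition~\ref{prop:NP2}, but your termination argument has a hole. If the Newton polygon has a single edge and the weighted initial form equals $(y-\gamma)^d$ for one homogeneous $\gamma$, there is only one ``coprime piece'' and Hensel yields no split; the shift $y\mapsto y-\gamma$ merely increases the slope, and nothing you wrote forces this to stop. The paper handles this by first performing the Tschirnhaus substitution $y\mapsto y-a_1/d$: with $b_1=0$ the reduced polynomial $\bar Q\in\C[y]$ cannot be $(y-c)^d$ (else $c=0$ and all $b_k$ vanish, contradicting $b_{k_0}\notin\m$), so every pass strictly lowers $\deg_y$.

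Where your route genuinely diverges from the paper is in reaching $\PP_h$. The paper does \emph{not} chase denominators through the Hensel recursion. It first factors $P$ completely in $\wdh V_{\nu,\gamma}$ and then, in an independent step (Proposition~\ref{prop:NP1}), observes that each root $\xi=\sum_j A_j\gamma^j$ is algebraic over $\C\lb\x\rb$; a Vandermonde in the conjugates of $\gamma$ then exhibits each $A_j\in\wdh V_\nu$ as algebraic over $\C(\!(\x)\!)$, and a Tougeron-style implicit-function argument proves that any such algebraic element already lies in some $\PP_h\lb\x\rb$. This completely sidesteps the bookkeeping you flag as the main obstacle, at the price of one extra lemma.

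Your direct approach can also be made to work, and is arguably more elementary. The point to make explicit is that after the rescaling $y=\gamma_1 y'$ the initial factors of $\bar Q$ lie in $\C[y']$, so the Bezout data driving Hensel are \emph{scalars} and contribute no denominator; all denominators arise from writing $\gamma_1^{-i}$ via $\gamma_1^{k_0}=\ini_\nu(a_{k_0})$, i.e.\ bounded powers of the single homogeneous polynomial $\ini_\nu(a_{k_0})$. A short induction on the $\nu$-degree then bounds the denominator of the degree-$m$ piece of each lifted coefficient by $\ini_\nu(a_{k_0})^{O(m)}$, which is exactly the $\PP_h$-shape. Finally, remember to invoke the primitive-element Lemma~\ref{PET} at the end to collapse the tuple $\gam$ accumulated along the outer recursion into a single integral homogeneous $\gamma$, as the statement demands.
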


The following result is an easy, but convenient, reformulation of Theorem \ref{thm:NewtonPuiseux}:
\begin{corollary}[Newton-Puiseux-Eisenstein factorization]\label{cr:NewtonPuiseux}
Let $P\in\C\lb \x\rb[y]$ be a monic polynomial. Then, there is a homogenous polynomial $h$ and  integral homogenous elements $\g_{i,j}$, such that $P$ can be written as 
\begin{equation}\label{key_factorization}
P(\x,y)=\prod\limits_{i=1}^s Q_i, \quad \text{ and } \quad  Q_i=\prod\limits_{j=1}^{r_i} (y-\xi_i(\x,\g_{i,j}))
\end{equation}
where 
\begin{enumerate}
\item[(i)] the $Q_i\in \PP_h\lb \x\rb [y]$ are irreducible in $\wdh V_\nu[y]$, 
\item[(ii)] for every $i$, there are $A_{i,k}(\x)\in\PP_h(\!(\x)\!)$, for $0\leq k\leq k_i$ such that
$$\xi_i(\x,\g_{i,j})=\sum_{k=0}^{k_i}A_{i,k}(\x)\g_{i,j}^k$$
\item[(iii)]
for every $i$, the $\g_{i,j}$ are distinct conjugates of an homogeneous element $\g_i$, that is, roots of its minimal polynomial $\Gamma_i$ over $\C(\x)$.
\end{enumerate}
\end{corollary}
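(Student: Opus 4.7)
The plan is to deduce the corollary from Theorem~\ref{thm:NewtonPuiseux} by repackaging the linear factorization according to Galois orbits over $\Frac(\wdh V_\nu)$.

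First, I would apply Theorem~\ref{thm:NewtonPuiseux} to obtain a homogeneous polynomial $h$, an integral homogeneous element $\g$, and a factorization $P(\x,y)=\prod_{\ell=1}^{d}(y-\eta_\ell)$ with $\eta_\ell\in\PP_h\lb\x,\g\rb$. Because $\Gamma_\nu=\Z$, the ring $\wdh V_\nu$ is a complete discrete valuation ring, hence a normal Noetherian domain, and $P$ admits a unique factorization into monic irreducibles in $\wdh V_\nu[y]$. I would then partition the multiset $\{\eta_1,\ldots,\eta_d\}$ using the equivalence relation ``$\eta_\ell\sim\eta_{\ell'}$ iff they share the same minimal polynomial over $\Frac(\wdh V_\nu)$''. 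Taking $Q_i$ to be the product $\prod_{\eta_\ell\text{ in the }i\text{-th class}}(y-\eta_\ell)$ (with the multiplicity with which this orbit occurs in the multiset of roots of $P$, a multiplicity constant across an orbit by Galois invariance) yields $P=\prod_{i=1}^s Q_i$ with each $Q_i$ irreducible in $\wdh V_\nu[y]$, establishing~(i) up to the coefficient claim.

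Second, for each orbit I would fix a representative $\eta$ and take for $\g_i$ a primitive integral homogeneous element of the extension $\Frac(\wdh V_\nu)(\eta)/\Frac(\wdh V_\nu)$. A natural choice is $\g_i:=\eta$ itself, which is a homogeneous element because its leading $\nu$-term $\ini_\nu(\eta)$ lies in $\C(\x)[\g]$ and is $\nu$-homogeneous in the sense of Definition~\ref{def:Initial}; when $\eta$ fails to generate the extension, a generic $\C$-linear combination of the powers $\g^k$ occurring in $\eta$ suffices. Expanding $\eta=\sum_{k=0}^{k_i} A_{i,k}(\x)\g_i^k$ with $A_{i,k}\in\PP_h(\!(\x)\!)$ and setting $\xi_i(\x,z):=\sum_k A_{i,k}(\x)z^k$, the other members of the orbit are exactly $\xi_i(\x,\g_{i,j})$ as $\g_{i,j}$ runs over the conjugates of $\g_i$ over $\Frac(\wdh V_\nu)$. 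These conjugates are again integral homogeneous of the same degree, being roots of the same monic weighted-homogeneous minimal polynomial (by the properties of weighted-homogeneous polynomials recalled before Corollary~\ref{cor_weighted}); this proves~(ii) and~(iii).

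Finally, the coefficients of $Q_i=\prod_j (y-\xi_i(\x,\g_{i,j}))$ are the elementary symmetric polynomials in the $\xi_i(\x,\g_{i,j})$, hence symmetric in the $\g_{i,j}$; they can be expressed as $\Z$-polynomials in the $A_{i,k}(\x)\in\PP_h(\!(\x)\!)$ and in the elementary symmetric functions of the $\g_{i,j}$, which are the (homogeneous polynomial) coefficients of the minimal polynomial of $\g_i$. Thus the coefficients of $Q_i$ lie in $\PP_h(\!(\x)\!)$. Since each $Q_i$ divides $P\in\wdh V_\nu[y]$, its coefficients have non-negative $\nu$-valuation, so they belong to $\PP_h(\!(\x)\!)\cap\wdh V_\nu=\PP_h\lb\x\rb$, completing~(i). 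The main obstacle is the primitive-element step: one must ensure that the chosen $\g_i$ is genuinely an integral homogeneous element and that its conjugates inherit the same weighted-homogeneity data, which is exactly where the earlier structural lemmas on homogeneous elements are used.
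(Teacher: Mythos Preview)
Your approach has a genuine gap in the second step. You propose to take $\g_i:=\eta$ (a root of $Q_i$) as a ``homogeneous element'', justifying this by the fact that $\ini_\nu(\eta)$ is $\nu$-homogeneous. But in the paper's terminology, a \emph{homogeneous element} is an element of an algebraic closure of $\C(\x)$ that satisfies a weighted-homogeneous polynomial over $\C[\x]$; it is in particular algebraic over $\C(\x)$. A root $\eta\in\PP_h\lb\x,\g\rb$ of $P$ is in general an infinite series with unbounded support, not algebraic over $\C(\x)$, and having a $\nu$-homogeneous initial term is a much weaker condition. So $\eta$ cannot serve as $\g_i$, and the ``primitive element'' construction you sketch does not produce an integral homogeneous element in the required sense. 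Relatedly, (iii) asks for conjugates over $\C(\x)$, not over $\Frac(\wdh V_\nu)$, and your argument does not bridge that gap.

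The paper's proof avoids this entirely: it keeps the \emph{single} integral homogeneous element $\g$ furnished by Theorem~\ref{thm:NewtonPuiseux}, and uses Galois theory over $\Frac(\PP_h\lb\x\rb)$ rather than over $\Frac(\wdh V_\nu)$. Since $Q_i$ is irreducible with roots in $\Frac(\PP_h\lb\x\rb)[\g]$, any two roots differ by an automorphism $\tau$ of a normal closure fixing $\Frac(\PP_h\lb\x\rb)$. Writing one root as $\xi_i(\x,\g)=\sum_k A_{i,k}(\x)\g^k$ with $A_{i,k}\in\PP_h(\!(\x)\!)\subset\Frac(\PP_h\lb\x\rb)$, the coefficients are fixed by $\tau$, so the other root is $\xi_i(\x,\tau(\g))$. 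Because $\C(\x)\subset\Frac(\PP_h\lb\x\rb)$, the image $\tau(\g)$ is automatically a root of the minimal polynomial of $\g$ over $\C(\x)$, yielding (iii). Thus the $\g_{i,j}$ are simply certain $\C(\x)$-conjugates of the original $\g$, and no new primitive element is needed.
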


\begin{proof} The first equality is a direct consequence of Theorem \ref{thm:NewtonPuiseux}. Fix $Q$ an irreducible factor of $P$ in $\PP_h\lb \x\rb [y]$ and let $\Frac(\PP_h\lb \x\rb) \hookrightarrow \K$ be a normal field extension containing all the roots of $Q$. Because $Q$ is irreducible, for every $j$, there is $\t_j\in \Aut\left(\K/\Frac(\PP_h\lb \x\rb)\right)$ such that $\t_j(\xi_1)=\xi_j$.  Now, seeing $\xi_1$ as an element of $\Frac(\PP_h\lb \x \rb)[\gamma]$, this gives $\t_j(\xi_1(\x,\g))=\xi_1(\x,\t_j(\g))$. But since $\C(\x)\subset \Frac(\PP_h\lb \x \rb)$, $\t_j(\g)$ is also a root of the minimal polynomial of $\g$ over $\C(\x)$.
\end{proof}

\subsubsection{Projective Convergent rings}

In order to prove Proposition \ref{prop:SemiGlobalLessPrecise}, we will show that if $P$ admits a convergent factor after a sequence of blowing ups, then a certain number of the polynomials $Q_i$ in equation \eqref{key_factorization} are ``convergent''. We start by making the latter notion precise, that is, we introduce a subring $\PP_h\{ \x\} $ of $\PP_h\lb \x\rb$ which formalizes the notion of convergence in $\PP_h\lb \x\rb$. More precisely:

\begin{definition}[Projective convergent rings]
Let $h$ be a homogeneous polynomial. We denote by $\PP_h\{\x\}$, the subring of $\PP_h\lb \x\rb$ characterized by elements $A\in \PP_h\lb \x\rb$ such that 
\begin{equation}\label{eq:ElementConvRing}
\sum_{k \geq 0} a_k(\x) \in \C \{ \x\}, \quad \text{ where } \quad A =\sum_{ k\geq 0} \frac{a_k(\x)}{h^{\a k+\b}}  .
\end{equation}
\end{definition}

{Note that it is not clear that $\PP_h\{ \x\}$ is well-defined, since the characterization of its elements seems to depend on the choice of the representation of $A\in \PP_h\lb \x\rb$ in power series, which is not unique. Proposition \ref{representative}, whose proof we postpone to subsection \ref{ssec:OnConvProjRings}, addresses this point and shows that $\PP_h\{ \x\}$ is well-defined.}

\begin{remark}\label{def:convergent}
We recall that a power series $f = \sum_{\alpha \in \mathbb{N}^n} f_{\alpha} \x^{\alpha}$,
is convergent, if and only if there exists $A,\,B>0$ such that the following inequalities hold:
\[
\forall \alpha \in \mathbb{N}^n,\ \ \ |f_{\alpha}| \leq A \cdot B^{|\alpha|}.
\]
Moreover, if we expand $f$ as 
$$f:=\sum\limits_{\k\in \N} f_k(\x)
$$
where $f_{k}(\x)$ are homogeneous polynomials of degree $k$, then $f \in \C\{\x\}$ if, and only if, there exists a compact neighbourhood $K$ of the origin and constants $A,\, B>0$ such that:
$$
\sup\limits_{\z\in K} |f_k(\z)|\leqslant AB^{|k|} , \forall k \in \mathbb{N}.
$$
\end{remark}

\begin{definition}
For $f$ analytic on some compact set $K\subset \C^n$, we set
$$\|f\|_K:=\sup_{\z\in K}|f(\z)|.$$
\end{definition}

\begin{proposition}[Independence of the representative]\label{representative}
Let $h_1$, $h_2$ be homogeneous polynomials and consider elements {$A_1=\sum_{k\geq 0}\frac{a_{1,k}(\x)}{h_1^{\a_1k+\b_1}} \in \PP_{h_1}\lb \x\rb$} and {$A_2=\sum_{k\geq 0}\frac{a_{2,k}(\x)}{h_2^{\a_2k+\b_2}} \in \PP_{h_2}\lb \x\rb$} such that $A_1 = A_2$ when they are considered as elements of $\widehat{V}_{\nu}$. Then {$\sum_{k\geq 0}a_{1,k}(\x)\in\C\{\x\}$ if and only if $\sum_{k\geq 0}a_{2,k}(\x)\in\C\{\x\}$.} In particular:
$$\PP_h\{\x\}\cap\C\lb \x\rb =\C\{\x\}.$$
\end{proposition}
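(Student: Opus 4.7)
The plan is to extract from the hypothesis $A_1 = A_2$ a concrete polynomial identity between the numerators $a_{i,k}$, and then deduce convergence of $g := \sum_k a_{2,k}$ from that of $f := \sum_k a_{1,k}$ via complex-analytic estimates. First, I would observe that each summand $a_{i,k}/h_i^{\alpha_i k + \beta_i}$ has $\nu$-value exactly $k$, and that any element of $\widehat{V}_\nu$ admits a unique expansion of the form $\sum_k b_k$ with $\nu(b_k) = k$ or $b_k = 0$ (uniqueness follows by inspecting the minimal-$\nu$-value term in a hypothetical nonzero difference of two such expansions). Consequently, $A_1 = A_2$ forces $a_{1,k}/h_1^{\alpha_1 k + \beta_1} = a_{2,k}/h_2^{\alpha_2 k + \beta_2}$ in $\Frac(\C\lb \x \rb)$, hence the polynomial identity
\[
a_{1,k}(\x) \cdot h_2(\x)^{\alpha_2 k + \beta_2} = a_{2,k}(\x) \cdot h_1(\x)^{\alpha_1 k + \beta_1}, \quad \forall k \geq 0.
\]

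Next, assume $f \in \C\{\x\}$. Setting $\mu_i := 1 + \alpha_i \nu(h_i) \geq 1$, the polynomial $a_{i,k}$ is homogeneous of degree $d_{i,k} = \mu_i k + \beta_i \nu(h_i)$, and these degrees are pairwise distinct in $k$. Thus $a_{1,k}$ coincides with the degree-$d_{1,k}$ homogeneous component of $f$, and Remark \ref{def:convergent} yields a closed polydisc $\bar{D}_\rho$ around the origin and constants $A, B > 0$ with $\|a_{1,k}\|_{\bar{D}_\rho} \leq A B^k$. Since $h_1 h_2$ is a nonzero polynomial, I pick $\z \in \bar{D}_\rho \setminus \{0\}$ with $h_1(\z) h_2(\z) \neq 0$ and a small closed polydisc $\bar{D}_\eta(\z) \subset \bar{D}_\rho$ on which $|h_1|$ is bounded below by a positive constant. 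Inserting the bound on $a_{1,k}$ together with uniform bounds on $h_1, h_2$ into the polynomial identity yields $\|a_{2,k}\|_{\bar{D}_\eta(\z)} \leq A'(B')^k$ for some $A', B' > 0$.

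The main obstacle is to transfer these bounds from the polydisc $\bar{D}_\eta(\z)$, which avoids $\{h_1 = 0\}$, to a polydisc around the origin, where $h_1$ vanishes. For this I would exploit that $a_{2,k}$ is a polynomial of degree $e_k := \mu_2 k + \beta_2 \nu(h_2)$, linear in $k$. Expanding $a_{2,k}(\z + \w) = \sum_{|\beta| \leq e_k} c^{(k)}_\beta \w^\beta$ and applying Cauchy's inequality on $\bar{D}_\eta$ gives $|c^{(k)}_\beta| \leq A'(B')^k/\eta^{|\beta|}$. Summing these bounds over $|\beta| \leq e_k$ and evaluating on a polydisc $\bar{D}_{\rho'}(0)$ around the origin produces a Bernstein--Walsh-type estimate $\|a_{2,k}\|_{\bar{D}_{\rho'}(0)} \leq \widetilde{A}\widetilde{B}^k$: the inflation factor $(\text{radius ratio})^{e_k}$ remains geometric in $k$ because $e_k$ is linear, and the combinatorial factor $\binom{e_k+n}{n}$ is absorbed by enlarging $\widetilde{B}$. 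Invoking Remark \ref{def:convergent} once more yields $g \in \C\{\x\}$, and the reverse implication is symmetric. The ``in particular'' statement follows by specializing the main claim to $h_2 = 1$, for which $\PP_{h_2}\lb \x\rb = \C\lb \x\rb$ and $\PP_{h_2}\{\x\} = \C\{\x\}$.
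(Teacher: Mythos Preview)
Your proof is correct and takes a genuinely different route from the paper's.

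The paper proceeds via an $\ell^1$-type norm $|f|_\rho=\sum_\alpha|f_\alpha|\rho^{|\alpha|}$ and a ``reverse multiplicativity'' inequality (Corollary~\ref{cor_inequalities}): for a fixed homogeneous $h$ there is $K>0$ with $|h|_1|b|_1\leq K|hb|_1$ for every homogeneous $b$. This is proved by a Weierstrass-division argument and is presented as a result of independent interest, related to Gel'fond--Mahler inequalities. After reducing both representations to a common denominator, the relation $a_{1,k}=a_{2,k}h_1^{\alpha k+\beta}$ together with $(\alpha k+\beta)$ applications of the corollary gives $|a_{2,k}|_1\leq (K/|h_1|_1)^{\alpha k+\beta}|a_{1,k}|_1$, which immediately yields convergence.

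Your argument bypasses this inequality entirely: you read off the termwise identity $a_{1,k}h_2^{\alpha_2 k+\beta_2}=a_{2,k}h_1^{\alpha_1 k+\beta_1}$ from uniqueness of the graded expansion in $\widehat V_\nu$, restrict to a small polydisc away from $(h_1=0)$ to obtain geometric sup-norm bounds on $a_{2,k}$ there, and then push these bounds back to a neighbourhood of the origin by Cauchy estimates, using only that $\deg a_{2,k}$ is affine in $k$. This is more elementary --- it needs nothing beyond standard Cauchy bounds --- and the Bernstein--Walsh-type step is a nice geometric replacement for Corollary~\ref{cor_inequalities}. What the paper's approach buys in exchange is a cleaner, coordinate-free estimate with an explicit constant depending only on $h_1$, together with the corollary itself, which the authors highlight as a result in its own right.
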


The algebraic definition of $\PP_{h}\{\x\}$ captures a crucial geometrical property for this work, namely the ``generic'' convergence of elements of $A \in \PP_{h}\{\x\}$ after a point blowing up. More precisely:

\begin{lemma}[Geometrical characterization of $\mathbb{P}_h\{\x\}$]\label{max} Let $A \in\PP_h \lb \x\rb$ and consider a sequence of point blowing ups $\s : (N,F) \lgw (\mathbb{C}^n,0)$. Let $\pb$ be a point of $F_r^{(1)}$ which is not on the strict transform of $(h=0)$, nor on any other component of $F$. Then $A \in\PP_h \{ \x\}$, if and only if $A_{\pb} := \widehat{\s}^*_{\pb}(A)$ is a convergent power series.
\end{lemma}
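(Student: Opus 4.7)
My plan is to reduce to the single blow-up of the origin and explicitly compute the pullback $A_\pb$. Since $\pb \in F_r^{(1)}$ lies on no other component of $F$, the composition $\sg_2\circ\cdots\circ\sg_r$ is a local isomorphism near $\pb$, so we can reduce to the case where $\sg=\sg_1$ is the single blowing up of the origin. After a linear change of coordinates on $\x$ (which preserves $\PP_h\lb\x\rb$ by Remark \ref{rk:GeometricalFormalProjectiveRing}(1)), we may assume $\pb$ lies in the chart $x_1 = v$, $x_j = v w_j$ for $j\geq 2$, corresponding to $(v,\w)=(0,\w_0)$ with $\w:=(w_2,\ldots,w_n)$. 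Using $\deg a_k - (\alpha k + \beta)\deg h = k$, a direct generalization of Remark \ref{rk:GeometricalFormalProjectiveRing}(2)(a) to arbitrary $n$ gives
\[
A_\pb = \sum_{k\geq 0} v^k\, c_k(\w), \qquad c_k(\w) := \frac{a_k(1,\w)}{h(1,\w)^{\alpha k+\beta}},
\]
and since $\pb$ avoids the strict transform of $(h=0)$, we have $h(1,\w_0)\neq 0$, so every $c_k$ is holomorphic on a common polydisc around $\w_0$.

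For the direct implication, I would begin by supposing $f := \sum_k a_k(\x)\in\C\{\x\}$ converges on $B_{r'}$. The degrees $d_k := \deg a_k = k(1+\alpha\deg h)+\beta\deg h$ are strictly increasing in $k$, so the $a_k$'s are exactly the nonzero homogeneous components of $f$; applying Cauchy's formula to $\lambda\mapsto f(\lambda\x)$ yields $\|a_k\|_{B_1}\leq (r')^{-d_k}\|f\|_{B_{r'}}$, and hence by homogeneity $|a_k(1,\w)|\leq \|f\|_{B_{r'}}(\|(1,\w)\|/r')^{d_k}$. Since $|h(1,\w)|$ is bounded away from $0$ on a neighborhood of $\w_0$, this will give $|c_k(\w)|\leq M\cdot T^k$ for some constants $M,T$, and hence convergence of $A_\pb$ on a polydisc around $(0,\w_0)$.

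The reverse implication will be the main difficulty, since one must lift a bound on the affine chart $\{x_1=1\}$ to a bound on a full neighborhood of the origin in $\C^n$. If $A_\pb$ converges on $\{|v|\leq r_0\}\times D(\w_0,r_1)$, Cauchy's inequality in $v$ first gives $|c_k(\w)|\leq M_0 r_0^{-k}$ on $D(\w_0,r_1)$, and multiplying by $|h(1,\w)|^{\alpha k + \beta}$ (bounded on this polydisc) yields $|F_k(\w)|\leq M_1 S^k$ where $F_k(\w):=a_k(1,\w)$. Next, a Taylor expansion of the polynomial $F_k$ (of degree $\leq d_k$) at $\w_0$ will extend this to $|F_k(\w)|\leq M_1 S^k \lambda^{d_k}$ on the unit polydisc $D(0,1)$ for some $\lambda>1$; a second application of Cauchy will then bound the coefficients of $F_k$ by $C_0 T_0^k$ uniformly. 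The crucial identification is that these coefficients coincide with those of the homogeneous polynomial $a_k$, via $a_k(\x)=x_1^{d_k}F_k(\x'/x_1)=\sum_\alpha c_\alpha^{(k)} x_1^{d_k-|\alpha|}(\x')^\alpha$ with $\x':=(x_2,\ldots,x_n)$. Summing over at most $(d_k+n)^{n-1}$ monomials finally gives $\|a_k\|_{B_\rho}\leq C_0 T_0^k (d_k+n)^{n-1}\rho^{d_k}$, which is geometrically summable for $\rho$ small enough (since $d_k$ is linear in $k$); therefore $f:=\sum a_k\in\C\{\x\}$, and Proposition \ref{representative} ensures that $A\in\PP_h\{\x\}$ independently of the chosen representation.
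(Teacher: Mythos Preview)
Your proposal is correct and follows the same overall architecture as the paper's proof: reduce to the single blow-up via the local isomorphism outside the later centres, pass to coordinates $(v,\w)$ in the standard chart, and compute $A_\pb=\sum_k v^k\,a_k(1,\w)/h(1,\w)^{\alpha k+\beta}$. Your forward implication is essentially the same as the paper's.

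The reverse implication is where the approaches differ. You proceed by direct analytic estimates: Cauchy in $v$ bounds $a_k(1,\w)/h(1,\w)^{\alpha k+\beta}$ on a small polydisc around $\w_0$; multiplying by the bounded $h(1,\w)^{\alpha k+\beta}$ bounds $F_k(\w)=a_k(1,\w)$; a Taylor/Cauchy argument propagates this to a coefficient bound on the polynomial $F_k$; and homogeneity identifies those coefficients with those of $a_k$. This is valid, though one remark: your intermediate bound $|F_k(\w)|\le M_1 S^k\lambda^{d_k}$ on $D(0,1)$ should a priori carry a polynomial factor in $d_k$ coming from the number of monomials in the Taylor expansion, but since $d_k$ is affine in $k$ this is absorbed into the constant $\lambda$. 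The paper instead uses a short algebraic trick: it observes that
\[
B(z,\w):=\sum_{k}a_k(1,\w)z^k=h(1,\w)^{\beta}\,A_\pb\bigl(h(1,\w)^{\alpha}z,\w\bigr)
\]
is convergent (since $h(1,\w_0)\neq 0$), and that $\widehat\sigma_\pb^*(G)=v^b B(v^a,\w)$ where $G=\sum_k a_k(\x)$ and $d_k=ak+b$. Then Lemma \ref{lemma:Convergencemodif} (convergence reflects along injective monomial maps) gives $G\in\C\{\x\}$ immediately. The paper's route is slicker and avoids the polynomial-propagation step entirely; your route is more elementary and self-contained, relying only on Cauchy estimates and homogeneity rather than on Lemma \ref{lemma:Convergencemodif}.
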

\begin{proof}
We start by noting that the definition of $\mathbb{P}_h\{\x\}$ is invariant by linear changes of coordinates in $\x$, c.f. Remark \ref{rk:GeometricalFormalProjectiveRing}(1). {Therefore, since $\pb$ is only on $F_r^{(1)}$, as in Remark \ref{rk:GeometricalFormalProjectiveRing}(2.a), there exists a system of local coordinates $(v,w_2,\ldots, w_n)$ centered at $\pb$ such that:
\[
x_1= v, \quad x_i = vw_i \text{ for } i>1.
\]
and, if $A \in\PP_h \lb \x\rb$, we obtain:
\[
A_{\pb}:= \wdh\s^*_{\pb}(A) := \sum_{k\geq 0} \frac{a_k(1,\w)}{h(1,\w)^{\a k +\b}} v^k.
\]
 In particular, as $\pb$ does not belong to the strict transform of $(h=0)$, $A_{\pb} \in \widehat{\mathcal{O}}_{\pb}$. }\\
 Assume that $A \in \PP_h\{\x\}$ as in \eqref{eq:ElementConvRing}. The degree of $a_k(\x)$ is linear in $k$, say $ak +b$ with $a,\,b\in \mathbb{N}$. Since $\sum a_k(\x)\in \C\{\x\}$, there exists a compact neighbourhood $K$ of $\pb$ and a constant $C>0$ such that $\|a_k(1,\w)\|_K \leq C^k$. Since $\pb$ is not on the strict transform of $(h=0)$, $h(1,\w)\neq 0$, and, up to shrinking $K$, we can suppose that $\inf_{(v,\w) \in K} \|h(1,\w)\| = h_0 >0$, yielding:
\[
\left\|\frac{a_k(1,\w)}{h(1,\w)^{\a k +\b}} \right\|_K \leq \frac{1}{h_0^{\b}} \left(\frac{C}{h_0^{\a}}\right)^k
\]
and we easily conclude that $A_{\pb}$ is a convergent power series.

Next, suppose that $A_{\pb}$ is convergent and let us prove that the formal power series $G:= \sum_{k\geq k_0} a_k(\x) \in \C \lb \x \rb$ is convergent. Indeed, we note that:
\[
B(z,\w) := \sum_{k\geq 0}a_k(1,\w)z^k= h^{\b}(1,\w)\cdot A_{\pb}(h^{\a}(1,\w) \cdot z, \w)\in\C\{z,\w\}.
\]
Now, by definition, the degree of the polynomials $a_k(\x)$ is an affine function in $k$, say $ak+b$ where $a$, $b\in\N$. It therefore follows that $\widehat{\s}_{\pb}^*(G)= z^b\cdot B(z^a, \w)$ is a convergent power series. It now follows from Lemma \ref{lemma:Convergencemodif} that $G$ is convergent, finishing the proof.
\end{proof}

\subsubsection{Formal extensions}
We now turn our attention to the study of the behaviour after blowing up of $A \in \mathbb{P}_h\{\x\}$ at a point $\pb$ in the strict transform of $(h=0)$. We start by studying it formally:

\begin{definition}[Formal extension]
Let $A \in \PP_h \lb \x \rb$ and consider a sequence of point blowing ups $\sg: (N,F) \lgw (\mathbb{C}^n,0)$. Given a point $\pb \in F_r^{(1)}$, we say that $A $ \emph{extends formally} at $\pb$ if the composition $A_{\pb}:= \widehat{\s}^*_{\pb}(A)$ belongs to $\widehat{\mathcal{O}}_{{\pb}}$. Moreover, we say that $A$ \emph{extends analytically} at $\pb$ if $A_{\pb}$ belongs to $\mathcal{O}_{\pb}$.
\end{definition}

Given $A \in \PP_h \lb \x \rb$, we know by Lemma \ref{max} that $A$ extends to every point $\pb \in F_r^{(1)}$ which does not belong to the strict transform of $(h=0)$ or to the intersection of two divisors. But, under the hypothesis of Proposition \ref{prop:SemiGlobalLessPrecise}, the results hold true over every point $\pb \in F_r^{(1)}$, that is:


\begin{theorem}[Semi-global formal extension]\label{thm:SemiGlobalFormal}
Let $P(\x,y)\in\C \lb \x \rb [y]$ be a monic reduced polynomial, and let $P= \prod_{i=1}^s Q_i$ be the factorization of $P$ as a product of irreducible monic polynomials of $\PP_h\lb \x\rb[y]$ given by Theorem \ref{thm:NewtonPuiseux}. Suppose that $n=2$ and let $\sg: (N,F) \lgw (\C^2,0)$ be a sequence of point blowing ups so that, at every point $\pb \in F_r^{(1)}$, the pulled-back discriminant $\widehat{\s}_{\pb}^*(\Delta_P) $ is formally monomial. Then, for every point $\pb  \in F_{r}^{(1)}$, the polynomials $Q_i$ extend formally at $\pb$ to a polynomial which we denote by $\widehat{\s}_{\pb}^*(Q_i)$. Furthermore, the extension is compatible with the factorization of $P$, that is $\prod_{i=1}^r \wdh \s_{\pb}^*(Q_i) = \wdh\s_{\pb}^*(P)$.
\end{theorem}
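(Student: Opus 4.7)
My plan is to combine the Abhyankar-Jung Theorem with the Newton-Puiseux-Eisenstein factorization of Corollary \ref{cr:NewtonPuiseux}. Fix a point $\pb \in F_r^{(1)}$. Since $\widehat{\sigma}^*_\pb(\Delta_P)$ is formally monomial by hypothesis, $\widehat{\sigma}^*_\pb(P) \in \widehat{\mathcal{O}}_\pb[y]$ is quasi-ordinary, so the Abhyankar-Jung Theorem yields a formal coordinate system $(u,v)$ at $\pb$, a positive integer $e$, and Puiseux roots $\eta_1,\dots,\eta_d \in \C\lb u^{1/e},v^{1/e}\rb$ with $\widehat{\sigma}^*_\pb(P)=\prod_k(y-\eta_k)$, where $d=\deg_y P$. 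The unique factorization of $\widehat{\sigma}^*_\pb(P)$ into monic irreducibles $R_1,\dots,R_m \in \widehat{\mathcal{O}}_\pb[y]$ is then obtained by grouping the $\eta_k$'s into the orbits of the natural $(\Z/e\Z)^2$-action permuting the $e$-th roots of $u$ and $v$.

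Next, I would match these local Puiseux roots with the Newton-Puiseux-Eisenstein roots $\xi_i(\x,\gamma_{i,j})$ by first working at a generic point $\pb_0 \in F_r^{(1)}$ lying off the strict transform of $(h=0)$ and off all intersections of exceptional divisors. At such $\pb_0$, the morphism $\widehat{\sigma}^*_{\pb_0}\colon\PP_h\lb\x\rb\to\widehat{\mathcal{O}}_{\pb_0}$ is directly defined via Remark \ref{rk:GeometricalFormalProjectiveRing}(2.a), and it extends to $\PP_h\lb\x,\gamma_i\rb$ by sending $\gamma_i$ to any root of the pullback of its weighted-homogeneous minimal polynomial. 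This provides well-defined pullbacks $\widehat{\sigma}^*_{\pb_0}(Q_i)\in\widehat{\mathcal{O}}_{\pb_0}[y]$ satisfying $\widehat{\sigma}^*_{\pb_0}(P)=\prod_i\widehat{\sigma}^*_{\pb_0}(Q_i)$. To transfer this conclusion to $\pb$, I would use formal continuation along a path in $F_r^{(1)}$ avoiding the finitely many intersection points of $F$: because $\widehat{\sigma}^*_\pc(\Delta_P)$ is formally monomial at \emph{every} $\pc\in F_r^{(1)}$, no new ramification of roots appears along the path, so the bijection between the $\xi_i(\x,\gamma_{i,j})$ and the local Puiseux roots extends consistently to $\pb$. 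The coefficients of $\widehat{\sigma}^*_\pb(Q_i)$ are then elementary symmetric functions of a subset of $\{\eta_k\}$ which, by the compatibility of the matching with the two Galois actions, is a union of $(\Z/e\Z)^2$-orbits, hence lies in $\widehat{\mathcal{O}}_\pb$; the product identity $\prod_i\widehat{\sigma}^*_\pb(Q_i)=\widehat{\sigma}^*_\pb(P)$ then follows by inspection.

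The main obstacle I anticipate is rigorously justifying the existence of the pullback $\widehat{\sigma}^*_\pb(Q_i)$ when $\pb$ lies on the strict transform of $(h=0)$: the individual terms $\frac{a_k(\x)}{h^{\alpha k+\beta}}$ appearing in its coefficients do \emph{not} pull back term-by-term to $\widehat{\mathcal{O}}_\pb$, so one must prove that the apparent poles cancel globally across the summation. The cleanest route is likely to characterize $\widehat{\sigma}^*_\pb(Q_i)$ intrinsically as the product of those irreducible factors $R_k$ whose roots correspond, via the matching, to the $i$-th Galois orbit over $\C(\x)$, and then to check algebraically that this product agrees with the expression coming from $\PP_h\lb\x\rb[y]$. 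Verifying this compatibility between the Galois action over $\C(\x)$ permuting the $\gamma_{i,j}$ and the $(\Z/e\Z)^2$-action permuting the Puiseux roots at $\pb$ is the technical heart of the argument, and will likely require a careful analysis of how the $\nu$-adic valuation degenerates along the exceptional divisor.
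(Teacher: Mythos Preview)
Your plan has a genuine gap at the ``formal continuation along a path'' step. There is no analogue of analytic continuation for formal power series: the formal factorizations $\widehat\sigma^*_{\pc}(P)=\prod_k R_k^{(\pc)}$ at distinct points $\pc\in F_r^{(1)}$ live in unrelated formal completions $\widehat{\mathcal O}_{\pc}[y]$, and there is no mechanism to track which local irreducible factor $R_k^{(\pc)}$ ``comes from'' a given $Q_i$ as $\pc$ varies. At a generic $\pb_0$ this is not an issue, because $\widehat\sigma^*_{\pb_0}$ is honestly defined on $\PP_h\lb\x\rb$ and you can simply apply it to each $Q_i$. But at a point $\pb$ on the strict transform of $(h=0)$ the individual coefficients of $Q_i$ do not pull back, and your proposed remedy---matching Galois orbits by continuation from $\pb_0$---requires exactly the continuation principle that is unavailable in the formal category. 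The paper itself flags this obstacle explicitly just before introducing its proof.

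The paper's solution is to avoid the formal continuation problem altogether by reducing to the convergent case via Artin approximation. One first proves the analytic version (Proposition~\ref{prop:SemiGlobalFormalAn}), where $P\in\C\{\x\}[y]$: there the discriminant is \emph{analytically} monomial after blowup, Abhyankar--Jung gives convergent Puiseux roots, and genuine analytic continuation along $F_r^{(1)}$ is available to transport the factorization. For formal $P$, one applies Artin approximation to the equation $\Delta_d(y_0,\dots,y_{d-1})=z_1^{k_1}\cdots z_e^{k_e}$ expressing the monomial structure of $\Delta_P$, producing convergent polynomials $P^{(\iota)}$ with $P^{(\iota)}\equiv P\bmod(\x)^\iota$ and discriminant of the same monomial shape. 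A factor-approximation lemma (Proposition~\ref{perturbation}) shows that the irreducible factorization of $P^{(\iota)}$ in $\widehat V_\nu[y]$ converges $\nu$-adically to that of $P$; since each $Q_i^{(\iota)}$ extends formally at $\pb$ by the analytic case, passing to the limit gives the extension of $Q_i$. The missing idea in your proposal is precisely this approximation step, which replaces the nonexistent formal continuation by a limit of honest analytic continuations.
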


{The proof of Theorem \ref{thm:SemiGlobalFormal} is postponed to Section \ref{ssec:SemiGlobalFormalExtention}.}

The formal extension property can be combined with analytic continuation type arguments in order to obtain:

\begin{lemma}[Analytic continuation of formal extensions]\label{ext} Let $A\in\PP_h\{\x\}$ and $\sg: (N,F) \lgw (\C^2,0)$ be a sequence of point blowing ups. Suppose that $A$ formally extends at $\pb \in F^{(1)}_r$. Then $A$ extends analytically at $\pb$.
\end{lemma}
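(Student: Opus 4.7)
My plan is to reduce Lemma \ref{ext} to Lemma \ref{max} via further point blowings up above $\pb$ and then descend with a Hartogs-type extension across $\pb$. First, I would construct a sequence of point blowings up $\tau:(N',F')\lgw(N,F)$ centered above $\pb$ so that the unique point $\pb'\in\tau^{-1}(\pb)$ lying on the strict transform of $F_r^{(1)}$ under $\tau$ does \emph{not} lie on the strict transform of $(h=0)$. Uniqueness of this $\pb'$ is routine: $F_r^{(1)}$ is smooth at $\pb$, and after each point blowing up the strict transform of $F_r^{(1)}$ meets the exceptional divisor at a single point; the ``endpoint'' of this chain is our $\pb'$. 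The separation from the strict transform of $(h=0)$ is achieved by blowing up once more whenever the two curves still meet at the current distinguished point, which terminates in finitely many steps. At this $\pb'$, the local coordinates have the corner normal form $(v,w)$ of Remark \ref{rk:GeometricalFormalProjectiveRing}(2.b), that is $x_1=vw^c$ and $x_2=vw^{c+1}$ after a linear change of coordinates in $\x$, and $h(1,0)\neq 0$.

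Next I would observe that the proof of Lemma \ref{max} adapts verbatim to this corner situation. Indeed, the same calculation gives
\[
(\s\circ\tau)^*(A)=\sum_{k\geq 0}(vw^c)^k\cdot\frac{a_k(1,w)}{h(1,w)^{\a k+\b}},
\]
and since $A\in\PP_h\{\x\}$ provides the polynomial estimate $\|a_k(1,\cdot)\|_K\leq C\cdot\mathrm{poly}(k)$ on a compact neighbourhood $K$ of $w=0$ (via the convergence of $\sum_k a_k(\x)\in\C\{\x\}$) and $|h(1,w)|\geq h_0>0$ on $K$, each term is dominated by $\frac{1}{h_0^\b}(C|v||w|^c/h_0^\a)^k\cdot\mathrm{poly}(k)$, which is summable for $|v|$ small. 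Hence $(\s\circ\tau)^*(A)$ converges to an analytic function $\widetilde A$ on a neighbourhood $W'$ of $\pb'$ in $N'$, and by construction its formal Taylor series at $\pb'$ coincides with $\widehat{(\s\circ\tau)}^*_{\pb'}(A)=\widehat\tau^*_{\pb'}(\widehat\s^*_\pb(A))$.

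To descend, note that $\tau$ is proper and a biholomorphism outside $\tau^{-1}(\pb)$, so $\tau(W')$ is open in $N$ and contains $\pb$, hence contains an open neighbourhood of $\pb$; the function $\widetilde A$ descends via $\tau^{-1}$ to an analytic function $\widehat A$ on $\tau(W')\setminus\{\pb\}$, i.e.\ on a punctured neighbourhood of $\pb$. Since $N$ is of complex dimension $2$ and $\{\pb\}$ has codimension two, Riemann's second extension theorem extends $\widehat A$ to an analytic function $\overline A$ on an open neighbourhood of $\pb$ in $N$.

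Finally, I would identify $\widehat{\overline A}_\pb$ with $\widehat\s^*_\pb(A)$. By construction $\tau^*(\overline A)$ and $\widetilde A$ coincide outside $\tau^{-1}(\pb)\cap W'$ and are both analytic on $W'$, so they coincide on all of $W'$; in particular $\widehat\tau^*_{\pb'}(\widehat{\overline A}_\pb)=\widehat\tau^*_{\pb'}(\widehat\s^*_\pb(A))$. Because $\tau$ is a proper birational morphism of smooth surfaces and $\pb'$ lies over $\pb$, the formal pull-back $\widehat\tau^*_{\pb'}:\widehat\O_\pb\lgw\widehat\O_{\pb'}$ is injective, and therefore $\widehat{\overline A}_\pb=\widehat\s^*_\pb(A)\in\O_\pb$, which is the desired analytic extension. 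The main delicate point in this plan is the choice of $\tau$ together with the corner-case verification of Lemma \ref{max} at $\pb'$; once those are in hand, the descent to $\pb$ is a formal application of Hartogs' extension in codimension two.
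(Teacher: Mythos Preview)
Your plan has a genuine gap at the very first step, and the error propagates. You claim that after enough point blowings up $\tau$ centered at the successive corner points on the strict transform of $F_r^{(1)}$, the distinguished point $\pb'$ will satisfy $h(1,0)\neq 0$. This is false. In both normal forms of Remark~\ref{rk:GeometricalFormalProjectiveRing}(2) one has $w=x_2/x_1$, so $h(1,w)$ is literally the dehomogenization of $h$ and depends only on the direction $[1:w]\in\mathbb{P}^1$ that the point on $F_r^{(1)}$ represents. Your blowings up fix that direction: if $\pb$ is in form (2.a) with coordinates $(v_0,w_0)$ and you blow up $\pb$, then in the chart containing the strict transform of $F_r^{(1)}$ one has $v_0=v_1w_1$, $w_0=w_1$, so $x_1=v_1w_1$, $x_2=v_1w_1^2$ and $h(1,w_1)=h(1,w_0)$ identically. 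Iterating only raises $c$ and never changes $h(1,0)$. You have conflated ``the strict transform of $(h=0)$ avoids $\pb'$'' (which \emph{is} achievable: it means the reduced factor $u(w)$ of $h(1,w)=w^du(w)$ satisfies $u(0)\neq 0$) with ``$h(1,0)\neq 0$'' (which is never achieved this way once it fails at $\pb$). Since your convergence estimate at $\pb'$ uses $|h(1,w)|\geq h_0>0$, it breaks down precisely at the points the lemma is about.

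There is a second, independent gap in the descent. Even granting an analytic $\widetilde A$ on a neighbourhood $W'$ of $\pb'$, the set $\tau(W')$ is \emph{not} open in $N$: the fiber $\tau^{-1}(\pb)$ is a connected chain of rational curves and $W'$ meets it only near $\pb'$, so $\tau(W')\setminus\{\pb\}$ is merely a wedge at $\pb$, not a punctured neighbourhood, and Riemann's second extension theorem does not apply. (If you ever do have convergence of $\widehat\tau^*_{\pb'}(A_\pb)$ with $A_\pb\in\widehat\O_\pb$, the correct descent is Lemma~\ref{lemma:Convergencemodif}, since $\tau$ is monomial at $\pb'$; no Hartogs-type argument is needed.)

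The paper's proof addresses exactly the obstruction you tried to blow away: it keeps the factor $w^d$ in $h(1,w)=w^du(w)$, uses the formal-extension hypothesis to write $b_k(w)\,h(1,w)^{\alpha k+\beta}=w^{ck}a_k(1,w)$ with $b_k$ polynomial, and then bounds $\|b_k\|$ on a small disc by iterating the elementary inequality $\|b\|\leq (Cr^d)^{-1}\|b\cdot h(1,w)\|$ (a maximum-principle trick to peel off each factor $h(1,w)$). The exponential loss $(Cr^d)^{-(\alpha k+\beta)}$ is harmless against the exponential bound on $\|a_k(1,w)\|$. In short, the divisibility encoded in ``$A$ formally extends at $\pb$'' is essential input, and cannot be replaced by a geometric separation step.
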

\begin{proof}
The result easily follows from Lemma \ref{max} whenever $\pb$ is not in the strict transform of $(h=0)$ or in the intersection of more than one exceptional divisor. In the general case, using the normal forms of Remark \ref{rk:GeometricalFormalProjectiveRing}(2), we can suppose that there exists a coordinate system $(v,w)$ centered at $\pb$ and $c \in \mathbb{Z}_{\geq 0}$ such that:
\[
x_1 = v \cdot w^c, \quad x_2 = v\cdot w^{c+1}
\]
it follows from the hypothesis that there exists polynomials $b_k(w)$ for $k \geq k_0$ such that
\[
A_{\pb}  = \sum_{k \geq k_0} v^k w^{c k} \frac{a_k(1,w)}{h(1,w)^{\a k + \b}} = \sum_{k \geq k_0} v^k \cdot b_k(w) =: B(v,w)
\]
that is, $b_k \cdot h(1,w)^{\a k + \b} = w^{c  k} \cdot a_k(1,w)$. We claim that  $B$ is a convergent power series. Indeed, let us denote by $h(1,w) = w^d u(w)$, where $u(0) \neq 0$, and consider a closed ball $B(\pb,r)$ of radius $1\geq r>0$ where $\inf_{w\in B(\pb,r)} |u(w)| > C$ for some positive $C$.
 
For every polynomial $b(w)$, by the maximum principle, we have
$$\|b(w)\|_{B(\pb,r)}=|b(z)|=\frac{1}{r^d}|b(z)z^d|\leq \frac{1}{r^d}\|b(w)w^d\|_{B(\pb,r)}$$
for some $z\in \C$, $|z|=r$.

Therefore, for every polynomial $b(w)$, we get that:
\[
\begin{aligned}
\| b\|_{B(\pb,r)} \leq \| b \cdot u\|_{B(\pb,r)}\|  u^{-1}\|_{B(\pb,r)} & \leq \frac{1}{r^d} \| b \cdot h(1,w)\|_{B(\pb,r)}\|  u^{-1}\|_{B(\pb,r)}\\
& {\leq \frac{1}{Cr^d}  \| b \cdot h(1,w)\|_{B(\pb,r)}}
\end{aligned}
\]
We apply this inequality $\a k+\b$ times and we combine it with Remark \ref{def:convergent}: there is  a constant $D>0$ such that $\|a_k(1,w)\|_{B(\pb,r)} \leq D^k$ for every $k$. This shows that 
\[
\| b_k \|_{B(\pb,r)} \leq (C r^d)^{-(\alpha k + \beta)} D^{k}, \quad \forall k \geq 0.
\]
\end{proof}

\subsubsection{Local-to-Semi-global convergence of factors}
As a consequence of the previous discussion, if $P \in \mathbb{C} \lb \x \rb[y]$ is a monic polynomial and $Q \in \mathbb{P}_h\{\x\}[y]$ is a factor of $P$ then, under the conditions of Proposition \ref{prop:SemiGlobalLessPrecise}, $Q_{\pb}:= \widehat{\s}_{\pb}^*(Q)$ is a convergent factor of $P_{\pb}:= \widehat{\s}_{\pb}^*(P)$ at every point $\pb \in F^{(1)}_r$. It remains to show that there exists such a factor $Q$. This is the subject of the next result:

\begin{theorem}[Local-to-Semi-global convergence of factors]\label{thm:IzumiWalsh}
Let $P \in \mathbb{C} \lb \x \rb[y]$ be a monic reduced polynomial, and $h$ be a homogeneous polynomial as in Theorem \ref{thm:NewtonPuiseux}. Suppose that $n=2$ and let $\sg: (N,F) \lgw (\C^2,0)$ be a sequence of point blowing ups. Suppose that at every point $\pb \in F_r^{(1)}$, the pulled-back discriminant $\widehat{\s}_{\pb}^*(\Delta_P) $ is formally monomial. Suppose that there exists a point $\pb \in F_r^{(1)}$ such that $P_{\pb} := \widehat{\s}_{\pb}^*(P)$ admits a convergent factor. Then $P$ admits a non-constant factor $Q \in \PP_h\{\x\}[y]$ such that either $P/Q$ is constant, or $\widehat{\s}_{\pb}^*(P/Q)$ admits no convergent factor at every point $\pb \in F_r^{(1)}$. 
\end{theorem}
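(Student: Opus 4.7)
The plan is to identify $Q$ as a sub-product of the Newton--Puiseux--Eisenstein factorization of $P$. By Corollary~\ref{cr:NewtonPuiseux},
\[ P = \prod_{i=1}^s Q_i, \qquad Q_i = \prod_{j=1}^{r_i} \bigl(y - \xi_i(\x, \gamma_{i,j})\bigr) \in \PP_h\lb \x \rb[y], \]
with each $Q_i$ irreducible in $\widehat{V}_\nu[y]$, $\xi_i(\x, \gamma_{i,j}) = \sum_k A_{i,k}(\x) \gamma_{i,j}^k$, $A_{i,k} \in \PP_h(\!(\x)\!)$, and the $\gamma_{i,j}$ Galois conjugates. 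By Theorem~\ref{thm:SemiGlobalFormal}, each $Q_i$ extends formally at every point $\pc \in F_r^{(1)}$, and the factorization lifts as $\widehat{\sigma}^{\,*}_\pc(P) = \prod_i \widehat{\sigma}^{\,*}_\pc(Q_i)$. I would then set
\[ I := \{\, i : Q_i \in \PP_h\{\x\}[y]\,\}, \quad Q := \prod_{i \in I} Q_i, \quad R := \prod_{i \notin I} Q_i. \]
Since $\PP_h\{\x\}$ is a subring of $\PP_h\lb \x \rb$, one has $Q \in \PP_h\{\x\}[y]$, and Lemma~\ref{ext} ensures that $\widehat{\sigma}^{\,*}_\pc(Q) \in \O_\pc[y]$ is a convergent factor of $P_\pc$ at every $\pc \in F_r^{(1)}$.

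The theorem then reduces to proving the following \emph{dichotomy}: for each $i$, either $Q_i \in \PP_h\{\x\}[y]$, or no $\widehat{\sigma}^{\,*}_\pc(Q_i)$ admits a convergent factor at any $\pc \in F_r^{(1)}$. Granted the dichotomy, the formally monomial hypothesis on $\widehat{\sigma}^{\,*}_\pc(\Delta_P)$ yields pairwise coprimality of the $\widehat{\sigma}^{\,*}_\pc(Q_i)$ in $\widehat{\O}_\pc[y]$, since their pairwise resultants divide $\widehat{\sigma}^{\,*}_\pc(\Delta_P)$. Given any convergent factor $q$ of $\widehat{\sigma}^{\,*}_\pc(P)$, standard lifting of coprime monic factorizations from the completion $\widehat{\O}_\pc[y]$ back to the Henselian local ring $\O_\pc[y]$ produces a decomposition $q = \prod q_i$ with $q_i \in \O_\pc[y]$ monic and $q_i \mid \widehat{\sigma}^{\,*}_\pc(Q_i)$ in $\widehat{\O}_\pc[y]$. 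Non-triviality of some $q_i$ forces $\widehat{\sigma}^{\,*}_\pc(Q_i)$ to admit a convergent factor, and the dichotomy places $i \in I$. Applied to the hypothesis, this shows $Q$ is non-constant; applied to any hypothetical convergent factor of $\widehat{\sigma}^{\,*}_\pc(R)$, it produces some $j \notin I$ with $Q_j \in \PP_h\{\x\}[y]$, a contradiction.

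The main obstacle is establishing the dichotomy. I would assume $\widehat{\sigma}^{\,*}_{\pc_0}(Q_i)$ admits a convergent factor at some $\pc_0 \in F_r^{(1)}$, propagate via Proposition~\ref{prop:FormalContinuationConvergentFactor} to a neighborhood in $F_r^{(1)}$, and select a generic $\pc' \in F_r^{(1)}$ off the strict transform of $(h=0)$ and off all other exceptional components; at such $\pc'$, Remark~\ref{rk:GeometricalFormalProjectiveRing}(2.a) supplies an explicit formula for $\widehat{\sigma}^{\,*}_{\pc'}: \PP_h\lb \x \rb \to \widehat{\O}_{\pc'}$. Irreducibility of $Q_i$ over $\widehat{V}_\nu$ makes the roots $\xi_i(\x, \gamma_{i,j})$ a single orbit under $\mathrm{Gal}(\widehat{V}_\nu[\gamma_i]/\widehat{V}_\nu)$; combining the persistent convergent factor at $\pc'$ with a monodromy argument for the cover $Q_i(y) = 0$ over $F_r^{(1)}$ should show that the only Galois-stable sub-product of roots that is convergent is the full product, so that $\widehat{\sigma}^{\,*}_{\pc'}(Q_i)$ itself has convergent coefficients. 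Applying Lemma~\ref{max} coefficient-by-coefficient then gives $A_{i,k} \in \PP_h\{\x\}$ for every $k$, hence $Q_i \in \PP_h\{\x\}[y]$. The delicate point is precisely the monodromy/Galois argument, which requires carefully tracking the ramified $\gamma$-extensions and reconciling the local Galois action at the generic $\pc'$ with the global one over $\widehat{V}_\nu$.
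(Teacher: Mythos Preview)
Your overall framework is exactly the paper's: reduce to the factorization $P=\prod Q_i$ in $\PP_h\lb\x\rb[y]$, invoke Theorem~\ref{thm:SemiGlobalFormal} to extend each $Q_i$ formally along $F_r^{(1)}$, move via Proposition~\ref{prop:FormalContinuationConvergentFactor} to a generic $\pc'$ off the strict transform of $(h=0)$, and then argue that if the pullback of some $Q_i$ has a convergent factor there, then in fact all of $\wdh\s^*_{\pc'}(Q_i)$ is convergent, whence $Q_i\in\PP_h\{\x\}[y]$ by Lemma~\ref{max}. Your reduction of the maximality statement to the dichotomy via coprimality of the $\wdh\s^*_\pc(Q_i)$ is also correct.

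The genuine gap is the step you flag yourself: passing from ``one root $\xi_i(\x,\g_{i,1})$ has convergent pullback'' to ``all conjugates $\xi_i(\x,\g_{i,j})$ do.'' A monodromy argument as you sketch it does not suffice: analytic continuation of a convergent germ along a loop in $F_r^{(1)}$ does permute the roots, but gives you no \emph{uniform} growth control at the endpoint, and convergence is a quantitative condition, not merely a Galois-stable one. The paper's device is different and transcendental. One writes, at the generic $\pc'$ with coordinates $(v,w)$,
\[
\wdh\s^*_{\pc'}\bigl(\xi_i(\x,\g_{i,j})\bigr)=\sum_{\ell\ge 0}\frac{v^{\ell/e}}{h(1,w)^{\d_\ell}}\,P_\ell(w,\ovl\g_j),
\]
where the $P_\ell\in\C[w,z]$ are \emph{the same} polynomials for all $j$, of degree growing at most linearly in $\ell$, and $\ovl\g_j$ runs over the points of the plane algebraic curve $\mathcal C=(\ovl\Gamma(w,z)=0)$ above $w$. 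Convergence of the $j$-th root is then equivalent to $\|P_\ell\|_{D_j}\le C^\ell$ on a small disc $D_j\subset\mathcal C$. The transfer from $D_1$ to $D_j$ is achieved by Lemma~\ref{lemma:IzumiMajorationPol}: for two compacta $D_1,D_j$ on an \emph{irreducible} algebraic curve, $\|P\|_{D_j}\le M^{\deg P}\|P\|_{D_1}$ for all polynomials $P$, proved via Green's functions on the normalized compactification. This is the missing ingredient; without it (or an equivalent Bernstein--Walsh/Izumi-type inequality) the Galois symmetry alone does not close the argument. A secondary point you would also need is that while $\ovl\Gamma$ may be reducible in $\C[w,z]$, its irreducible components are exchanged by $z\mapsto\eta z$ for $e$-th roots of unity $\eta$ (because $\ovl\Gamma$ is irreducible in $\C[w,z^e]$), which handles conjugates lying on different components.
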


{The proof of Theorem \ref{thm:IzumiWalsh} is postponed to Section \ref{ssec:LocalToSemiGlobal}.}

\subsubsection{Proof of Proposition \ref{prop:SemiGlobalLessPrecise}}
We have now collected all necessary ingredients in order to prove the following result, which immediately yields Proposition \ref{prop:SemiGlobalLessPrecise}:

\begin{corollary}[Semi-global extension of convergent factors]\label{cor:SemiGlobal}
Let $P(\x,y)\in \C\lb \x \rb [y]$ be a monic reduced polynomial, and let $P= \prod_{i=1}^s Q_i$ be the factorization of $P$ as a product of irreducible monic polynomials of $\PP_h\lb \x\rb[y]$ given in Theorem \ref{thm:NewtonPuiseux}. Let $\sg: (N,F) \lgw (\C^2,0)$ be a sequence of point blowing ups, and suppose that:
\begin{itemize}
\item[(i)] At every point $\pb \in F_r^{(1)}$, the pulled-back discriminant $\wdh \s_\pb^*(\Delta_P)$ is formally monomial.
\item[(ii)] There exists $\pb_0 \in F_r^{(1)}$ where $P_{\pb_0}:= \widehat{\s}_{\pb_0}^*(P)$ admits a convergent factor.
\end{itemize}
Then, up to re-indexing the polynomials $Q_j$, there exists an index $t\geq 1$, a neighbourhood $U^{(1)}_r$ of $F^{(1)}_r$ and analytic polynomials $\widetilde{q}_j \in \mathcal{O}_{U^{(1)}_r} [y]$ for $j=1,  \ldots,t$ such that, for every point $\pb \in F_{r}^{(1)}$, we have that
\[
T_{\pb}(\widetilde{q}_j) =  \widehat{\s}_{\pb}^*(Q_j).
\]
Finally, denote by $\widetilde{q} = \prod_{i=1}^t \widetilde{q}_i$. Then, at every point $\pb \in F^{(1)}_r$, the quotient polynomial $P_{\pb}/T_{\pb}(\widetilde{q})$ is either constant, or does not admit a convergent factor.
\end{corollary}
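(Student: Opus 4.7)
The plan is to combine Theorem \ref{thm:IzumiWalsh} with the formal-to-analytic extension principle encoded in Theorem \ref{thm:SemiGlobalFormal} and Lemma \ref{ext}, the delicate extra step being to show that the ``maximal convergent factor'' provided by Theorem \ref{thm:IzumiWalsh} splits compatibly with the Newton--Puiseux--Eisenstein factorization of $P$.

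First, I would apply Theorem \ref{thm:IzumiWalsh} under hypotheses (i) and (ii) to obtain a non-constant factor $Q \in \PP_h\{\x\}[y]$ of $P$ such that either $P/Q$ is constant or $\wdh\s_\pb^*(P/Q)$ has no convergent factor at any $\pb \in F_r^{(1)}$. Since the $Q_i$ are pairwise distinct irreducible factors of $P$ in $\wdh V_\nu[y]$ by Corollary \ref{cr:NewtonPuiseux}, and $Q$ divides $P$ in this unique factorization domain, we obtain $Q = \prod_{j=1}^t Q_j$ for some $t \geq 1$, up to reindexing the $Q_i$.

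The second and main step is to establish that each individual $Q_j$ already lies in $\PP_h\{\x\}[y]$. I would pick a generic point $\pb^{\star} \in F_r^{(1)}$, i.e.\ one which avoids the strict transform of $(h=0)$, the intersections with other components of $F$, and the exceptional-divisor locus where any of the (nonzero) pairwise resultants $\mathrm{Res}_y(Q_i, Q_j) \in \PP_h\lb \x\rb$ vanishes identically; such a point exists because the $Q_i$ are pairwise coprime in $\wdh V_\nu[y]$ and hypothesis (i) forces the relevant resultants to specialize non-trivially to $F_r^{(1)}$. At $\pb^{\star}$, each coefficient of $Q$ extends to a convergent germ in $\O_{\pb^{\star}}$ by Lemma \ref{max}, and the formal factors $\wdh\s_{\pb^{\star}}^*(Q_j)$ are monic and pairwise coprime in $\wdh\O_{\pb^{\star}}[y]$. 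Since the analytic ring $\O_{\pb^{\star}} = \C\{v,w\}$ is Henselian, the coprime factorization $\wdh\s_{\pb^{\star}}^*(Q) = \prod_j \wdh\s_{\pb^{\star}}^*(Q_j)$ lifts uniquely from the completion to a convergent factorization in $\O_{\pb^{\star}}[y]$; in particular each $\wdh\s_{\pb^{\star}}^*(Q_j)$ is convergent at $\pb^{\star}$. Applying Lemma \ref{max} in the reverse direction to each coefficient of $Q_j$ now yields $Q_j \in \PP_h\{\x\}[y]$.

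Third, I would globalize each $Q_j$ analytically. Since $Q_j \in \PP_h\{\x\}[y]$ and Theorem \ref{thm:SemiGlobalFormal} guarantees a formal extension at every $\pb \in F_r^{(1)}$, Lemma \ref{ext} upgrades this to an analytic extension at every such $\pb$. These local extensions patch together on a common open neighborhood $U_r^{(1)}$ of $F_r^{(1)}$ into an analytic polynomial $\widetilde{q}_j \in \O_{U_r^{(1)}}[y]$ satisfying $T_\pb(\widetilde{q}_j) = \wdh\s_\pb^*(Q_j)$ for every $\pb \in F_r^{(1)}$; the patching is unproblematic because the local analytic representatives all come from the same global element of $\PP_h\{\x\}$ and $F_r^{(1)}$ is a compact connected analytic curve. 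Setting $\widetilde{q} := \prod_{j=1}^t \widetilde{q}_j$, we obtain $T_\pb(\widetilde{q}) = \wdh\s_\pb^*(Q)$ at every $\pb \in F_r^{(1)}$, so $P_\pb/T_\pb(\widetilde{q}) = \wdh\s_\pb^*(P/Q)$ inherits the quotient property from Step 1.

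The principal obstacle I anticipate is precisely the Henselian splitting argument in Step 2: one must verify, using hypothesis (i) on the monomial discriminant together with the irreducibility data from Corollary \ref{cr:NewtonPuiseux}, that the pairwise resultants $\mathrm{Res}_y(Q_i, Q_j)$ really are non-vanishing at a sufficiently generic $\pb^{\star} \in F_r^{(1)}$, and then invoke the Henselian property of $\C\{v,w\}$ to deduce convergence of each individual factor. Everything else is a reasonably direct synthesis of results already established in the excerpt.
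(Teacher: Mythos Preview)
Your proof is correct, but it takes a slightly different route from the paper's, and the difference is worth noting.

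The paper's proof simply cites Theorem~\ref{thm:IzumiWalsh} to conclude directly that, up to reindexing, $Q_1,\ldots,Q_t\in\PP_h\{\x\}[y]$. This works because the \emph{proof} of Theorem~\ref{thm:IzumiWalsh} (not just its statement) actually establishes convergence of each irreducible factor $Q_i$ one at a time: it shows that if $\widehat\s_{\pb_0}^*(Q_i)$ has any convergent factor, then $\widehat\s_{\pb_0}^*(Q_i)$ is itself convergent, hence $Q_i\in\PP_h\{\x\}[y]$ by Lemma~\ref{max}. So the paper is implicitly reading off a stronger conclusion from the internals of that proof.

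You instead treat Theorem~\ref{thm:IzumiWalsh} as a black box producing only the composite factor $Q$, and then insert a splitting step at a generic point $\pb^\star\in F_r^{(1)}$. This is a legitimate alternative and arguably more self-contained. Two remarks on your Step~2: first, the resultant and coprimeness conditions you impose are unnecessary. The fact you really need is that any monic factor in $\C\lb v,w\rb[y]$ of a monic polynomial in $\C\{v,w\}[y]$ already lies in $\C\{v,w\}[y]$; this holds for \emph{any} monic factor, coprime or not, and follows for instance from Artin approximation (a formal factorization $Q=gh$ with $g,h$ monic can be approximated by a convergent one, and the approximating factors must coincide with $g,h$ by uniqueness of factorization into irreducibles). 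Second, the word ``Henselian'' is slightly misleading here, since the usual Henselian property lifts factorizations from the residue field, whereas you are descending from the completion; the correct keyword is excellence of $\C\{v,w\}$, or simply Artin approximation as just indicated.

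In summary: your argument is sound, buys you independence from the internal structure of the proof of Theorem~\ref{thm:IzumiWalsh}, and the only cost is one extra (and slightly overengineered) splitting step that the paper avoids by reading more out of that proof than its stated conclusion.
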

\begin{proof}
Consider the factorization \eqref{key_factorization} given by Corollary \ref{cr:NewtonPuiseux}:
\[
P(\x,y) = \prod_{i=1}^s Q_i(\x,y), \quad \text{ where }Q_i(\x,y) \in \PP_h\lb \x\rb[y], \, i=1,  \ldots, s
\] 
From Theorem \ref{thm:SemiGlobalFormal}, we know that $Q_i$ extends formally to every point $\pb$ of $F^{(1)}_{r}$ for every $i=1,  \ldots,s$. It follows from Theorem \ref{thm:IzumiWalsh} that there exists $t\leq s$ such that $Q_i \in \PP_h\{\x\}[y]$ for every $1\leq i \leq t$. Now, by Lemma \ref{ext}, $Q_i$ extends analytically at every point $\pb \in F^{(1)}_r$ for every $i\leq t$. This implies that there exists a polynomial $\widetilde{q}_i$ defined in a neighbourhood $U_r^{(1)}$ of $F_r^{(1)}$ which formally coincides with $ \widehat{\s}_{\pb}^* (Q_i)$ at every point $\pb \in F_r^{(1)}$. Finally, it is immediate from the above construction that $t$ is constant along $F_r^{(1)}$, which implies that $P / \prod_{i=1}^t Q_i$ admits no further convergent factors, finishing the proof.
\end{proof}

\subsection{Newton-Puiseux-Eisenstein Theorem}\label{ssec:Newton-Puiseux-Eisenstein}
The proof of Theorem \ref{thm:NewtonPuiseux} is done via an induction argument in terms of the degree of $P$. Note that the case of $\deg(P)=1$ is trivial (with $h=1$ and $\g =1$), while $\mbox{degree}(P)=2$ still admits an elementary proof:

\begin{remark}[Elementary proof when $\deg(P)=2$]\label{rk:ThmNewtonDeg2}
If $\deg(P)=2$, then we can write $P(\x,y) = P_0(\x) + P_1(\x) y + y^2$ and obtain an explicit formula for their roots. More precisely, the discriminant $\D_P \in \C \lb \x\rb$, and can be written as:
\[
\D_P:= \sum_{k \geq k_0} \delta_{k}(\x)
\]
where $\delta_{k_0}= \ini(\D_P)$ and $\delta_k(\x)$ are homogeneous elements for every $k\geq k_0$. It follows that the roots of $P(\x,y)=0$ are given by:
\[
\xi_{\pm} = - \frac{P_1}{2} \pm \frac{\sqrt{\D_P}}{4} = - \frac{P_1}{2} \pm \frac{\sqrt{\delta_{k_0}(\x)}}{4} \cdot \sqrt{1 +\sum_{k > k_0} \frac{\delta_{k}(\x)}{\delta_{k_0}(\x)} }
\]
and we can easily verify that $\xi_{\pm} \in \PP_h\lb \x,\g \rb$, where $h := \ini(\D_P) = \delta_{k_0}(\x)$ and $\g := \sqrt{\ini(\D_P)} = \sqrt{\delta_{k_0}(\x)}$.

In general, it is not possible to choose $h=\ini(\D_P)$ as claimed (but not proven) in \cite{To2}.
\end{remark}

As we will see, a proof by induction on the degree of $P$ demands manipulations of several integral elements at the same time. We start by a couple of useful results about homogeneous elements:

\begin{lemma}[Degree compatibility under extensions]\label{weighted}
Let $P(\x,z_1,  \ldots, z_{k+1})\in\C[\x,z_1,  \ldots, z_{k+1}]$ be a $(\omega_1,  \ldots, \omega_{k+1})$-weighted homogeneous polynomial. Let $\g_i$ be  homogeneous elements of degree $\omega_i$, for $i\leq k$, such that 
\[
\deg_{z_{k+1}}(P(\x,\g_1,  \ldots, \g_k, z_{k+1}))\geq 1.
\]
Then any element $\g_{k+1}$ of an algebraic closure of $\C(\x)$ such that
$$
P(\x,\g_1,  \ldots, \g_k,\g_{k+1})=0
$$
is a homogeneous element of degree $\omega_{k+1}$.
\end{lemma}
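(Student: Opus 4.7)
The plan is to produce a weighted homogeneous polynomial in $\C[\x,z_{k+1}]$ of positive degree in $z_{k+1}$ vanishing at $\gamma_{k+1}$. First, substitute $z_i\mapsto \gamma_i$ into $P$ for $i\leq k$ and set
\[
R(\x,z_{k+1}) := P(\x,\gamma_1,\ldots,\gamma_k,z_{k+1}) \in \C(\x)[\gamma_1,\ldots,\gamma_k][z_{k+1}].
\]
By the remark following the definition of homogeneous elements, $\gamma_i(\lambda\x) = \lambda^{\omega_i}\gamma_i(\x)$ for generic $\lambda$, and by the $(\omega_1,\ldots,\omega_{k+1})$-weighted homogeneity of $P$ there exists $d$ such that $P(\lambda\x,\lambda^{\omega_1}z_1,\ldots,\lambda^{\omega_{k+1}}z_{k+1})=\lambda^d P(\x,z_1,\ldots,z_{k+1})$. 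Combining these gives
\[
R(\lambda\x,\lambda^{\omega_{k+1}}z_{k+1}) = \lambda^d R(\x,z_{k+1}),
\]
so each coefficient of $z_{k+1}^j$ in $R$ is a homogeneous element (in the sense that it transforms by $\lambda^{d-j\omega_{k+1}}$ under the scaling $\x \mapsto \lambda\x$). By hypothesis $R(\x,\gamma_{k+1})=0$ and $\deg_{z_{k+1}} R \geq 1$.

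Next, I would pass to a polynomial with coefficients in $\C(\x)$ by taking the norm. Let $\K$ be a normal field extension of $\C(\x)$ containing $\gamma_1,\ldots,\gamma_{k+1}$, and define
\[
Q(\x,z_{k+1}) := \prod_{\sigma\in\Aut(\K/\C(\x))}\sigma(R)(\x,z_{k+1}).
\]
Each $\sigma(\gamma_i)$ is a root of the minimal polynomial of $\gamma_i$ over $\C(\x)$, which is $\omega_i$-weighted homogeneous (as a factor of the weighted homogeneous polynomial defining $\gamma_i$); hence $\sigma(\gamma_i)$ is itself homogeneous of degree $\omega_i$. Consequently each $\sigma(R)$ satisfies the same scaling identity as $R$, and so does their product. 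Thus $Q\in\C(\x)[z_{k+1}]$ satisfies $Q(\lambda\x,\lambda^{\omega_{k+1}}z_{k+1}) = \lambda^D Q(\x,z_{k+1})$ for some $D$, has $\gamma_{k+1}$ as a root (since $R$ appears as one of the factors), and $\deg_{z_{k+1}} Q \geq 1$.

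Finally, I would clear denominators to arrive at a polynomial in $\C[\x,z_{k+1}]$. Writing $Q = \sum_j a_j(\x) z_{k+1}^j$ with $a_j\in\C(\x)$, the scaling identity forces each nonzero $a_j$ to be a homogeneous rational function of degree $D-j\omega_{k+1}$; by unique factorization in $\C[\x]$, we can write each $a_j = p_j/q_j$ with $p_j,q_j\in\C[\x]$ both homogeneous. Multiplying through by a common homogeneous denominator yields $\widetilde Q(\x,z_{k+1})\in\C[\x,z_{k+1}]$, still $(1,\omega_{k+1})$-weighted homogeneous, still of positive $z_{k+1}$-degree, and with $\widetilde Q(\x,\gamma_{k+1})=0$. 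This exhibits $\gamma_{k+1}$ as a homogeneous element of degree $\omega_{k+1}$. The only mildly technical step is the last one—checking that clearing denominators preserves weighted homogeneity—which is precisely where the homogeneity of numerators and denominators of the $a_j$ is needed; the rest is essentially bookkeeping around the scaling action and the Galois conjugation.
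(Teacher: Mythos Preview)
Your argument is correct and takes a genuinely different route from the paper's. The paper proceeds by induction on $k$: at each step it eliminates $\g_k$ by taking the resultant $\operatorname{Res}_{z_k}(P_k,P)$ with the irreducible $\omega_k$-weighted homogeneous polynomial $P_k$ annihilating $\g_k$, checks that this resultant is again weighted homogeneous and remains nonzero in $z_{k+1}$ after substituting $\g_1,\ldots,\g_{k-1}$, and then invokes the inductive hypothesis. Your approach eliminates all the $\g_i$ at once via the norm $Q=\prod_\sigma \sigma(R)$, and establishes the weighted homogeneity of $Q$ through the scaling relation $\g_i(\lambda\x)=\lambda^{\omega_i}\g_i(\x)$ rather than through the formal properties of resultants. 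The two are closely related underneath: over a finite Galois extension the norm of $R$ is a power of the iterated resultant the paper computes, so both constructions produce essentially the same polynomial in $\C(\x)[z_{k+1}]$. What your approach buys is directness (no induction), at the cost of invoking the analytic interpretation of homogeneous elements; the paper's resultant argument is purely algebraic and never needs to treat the $\g_i$ as functions of $\x$. One small point worth making explicit in your write-up: you should take $\K$ finite (e.g.\ the Galois closure of $\C(\x)(\g_1,\ldots,\g_k)$) so that the product defining $Q$ is finite, and you might note that the leading $z_{k+1}$-coefficient of each $\sigma(R)$ is nonzero because $\sigma$ is a field automorphism, which is what guarantees $\deg_{z_{k+1}}Q\geq 1$.
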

\begin{proof}
The proof is made by induction on $k$. For $k=0$, the result follows directly from the definition. Let now $k\geqslant 1,$ and assume that the result is proved for $k-1$. Let $P_k(\x,z_k)$ be a nonzero irreducible $\omega_k$-weighted homogeneous polynomial such that $P_k(\x,\g_k)=0$. We set
\[
\wdt P(\x,z_1,  \ldots, z_{k-1},z_{k+1}):=\operatorname{Res}_{z_k}(P_k(\x,z_k),P(\x,z_1,  \ldots, z_{k+1}))
\]
where $\operatorname{Res}_{z_k}$ denotes the resultant of two polynomials seen as polynomials in the indeterminate $z_k$. Then $\wdt P(\x,z_1,  \ldots, z_{k-1},z_{k+1})$ is a $(\omega_1,  \ldots, \omega_{k-1},\omega_{k+1})$-weighted homogeneous polynomial, and we have  $\wdt P(\x,\g_1,  \ldots, \g_{k-1},\g_{k+1})=0$. Because $P_k(\x,z_k)$ is irreducible, $\gcd(P_k(\x,z_k),P(\x,\g_1,  \ldots, \g_{k-1}, z_k, z_{k+1}))$ is either $1$ or $P_k(\x,z_k)$, but
\[
\deg_{z_{k+1}}(P(\x,\g_1,  \ldots, \g_k, z_{k+1}))\geq 1,
\] thus $P_k(\x,z_k)$ does not divide $P(\x,\g_1,  \ldots, \g_{k-1}, z_k, z_{k+1})$, implying that  $P_k(\x,z_k)$ and $P(\x,\g_1,  \ldots,  \g_{k-1}, z_k, z_{k+1})$ are coprime and $\wdt P(\x,\g_1,  \ldots, \g_{k-1},z_{k+1})$ is nonzero. We conclude by induction.
\end{proof}

\begin{corollary}[Compatibility between homogenous elements]\label{cor_weighted}
If $\g_1$ and $\g_2$ are homogeneous elements, respectively of degrees $\omega_1$ and $\omega_2$, then
\begin{itemize}
\item[i)] for every $q\in\Q_{> 0}$, $\g_1^q$ is  homogeneous  of degree $q\omega_1$,
\item[ii)] $\g_1\g_2$ is  homogeneous  of degree $\omega_1+\omega_2$,
\item[iii)] if $\omega_1=\omega_2$, $\g_1+\g_2$ is  homogeneous  of degree $\omega_1$.
\end{itemize}
\end{corollary}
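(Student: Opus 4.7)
My plan is to prove each of the three items by applying Lemma~\ref{weighted} to a carefully chosen weighted-homogeneous polynomial. The key observation is that in each case the target element $\gamma_1^q$, $\gamma_1\gamma_2$, or $\gamma_1+\gamma_2$ satisfies an obvious algebraic relation whose ``defining polynomial'' is weighted-homogeneous for the claimed weight. The verification that the relevant polynomial has positive degree in the last variable (so Lemma~\ref{weighted} applies) is immediate in all three cases.

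For (i), writing $q=p/r$ with $p,r\in\mathbb{Z}_{>0}$, set
\[
P(\x,z_1,z_2):=z_2^r-z_1^p.
\]
If we assign the weight $\omega_1$ to $z_1$ and the weight $q\omega_1=p\omega_1/r$ to $z_2$, both monomials have weighted degree $p\omega_1$, so $P$ is $(\omega_1,q\omega_1)$-weighted homogeneous. Since $P(\x,\gamma_1,\gamma_1^q)=0$ and $\deg_{z_2}P(\x,\gamma_1,z_2)=r\geq 1$, Lemma~\ref{weighted} yields that $\gamma_1^q$ is homogeneous of degree $q\omega_1$.

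For (ii), set $P(\x,z_1,z_2,z_3):=z_3-z_1z_2$. With weights $\omega_1,\omega_2,\omega_1+\omega_2$ on $z_1,z_2,z_3$ respectively, both monomials have weighted degree $\omega_1+\omega_2$, so $P$ is weighted-homogeneous. Because $P(\x,\gamma_1,\gamma_2,\gamma_1\gamma_2)=0$ and $\deg_{z_3}P(\x,\gamma_1,\gamma_2,z_3)=1$, Lemma~\ref{weighted} gives that $\gamma_1\gamma_2$ is homogeneous of degree $\omega_1+\omega_2$. For (iii), assume $\gamma_1+\gamma_2\neq 0$ (otherwise the claim is vacuous) and use instead $P(\x,z_1,z_2,z_3):=z_3-z_1-z_2$, which is $(\omega_1,\omega_1,\omega_1)$-weighted homogeneous precisely because $\omega_1=\omega_2$. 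Again Lemma~\ref{weighted} applies and yields the claim.

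There is essentially no obstacle: the entire content of the corollary is already packaged in Lemma~\ref{weighted}, and the only task is to exhibit the correct weighted-homogeneous polynomial in each case and check the trivial degree condition on the last variable. The only minor subtlety is the possibility that an algebraic combination equals zero (relevant only in (iii)), which is easily handled by noting that $0$ is a homogeneous element of any prescribed degree.
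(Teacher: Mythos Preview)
Your proof is correct and follows essentially the same approach as the paper: in each case you apply Lemma~\ref{weighted} to the natural weighted-homogeneous polynomial ($z_2^r-z_1^p$, $z_3-z_1z_2$, and $z_3-z_1-z_2$ respectively), exactly as the authors do. Your added remark about the trivial case $\gamma_1+\gamma_2=0$ in (iii) is a small refinement not explicitly mentioned in the paper.
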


\begin{proof}
For $i=1$ and $2$, let us denote by $P_i(\x,z_i)$ a  nonzero $\omega_i$-weighted homogeneous polynomial with $P_i(\x,\g_i)=0$. In order to prove the corollary, we apply Lemma \ref{weighted} to $P(\x,z_1,z_2)=z_1^a-z_2^b$ if $q=a/b$ in case i), 
$P(\x,z_1,z_2,z_3)=z_3-z_1z_2$ in case ii), and  $P(\x,z_1,z_2,z_3)=z_3-z_1-z_2$ in case iii).
\end{proof}

We now turn to the proof of a key result in order to reduce the study from multiple homogeneous elements to a single one:

\begin{lemma}[Existence of a primitive integral homogeneous element]\label{PET}
Let $\gam =(\g_1$,   \ldots, $\g_r)$ be homogeneous elements. Then there exists an integral homogeneous element $\g_0$ such that
$$\wdh V_{\nu,\gam}\subset\wdh V_{\nu,\g_0}.$$
\end{lemma}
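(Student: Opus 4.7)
Plan: I will combine the classical primitive element theorem with a careful choice of an auxiliary element of small degree to respect the weighted-homogeneity constraint. The argument proceeds in three reductions followed by a direct construction.

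First (\emph{Reduction to integral elements}), I may assume each $\gamma_i$ is integral homogeneous. Indeed, if $\Gamma_i(\mathbf{x}, z) = \sum_k a_{i,k}(\mathbf{x}) z^k$ is an $\omega_i$-weighted homogeneous polynomial vanishing at $\gamma_i$, its leading coefficient $a_{i,d_i}$ is a homogeneous polynomial in $\mathbb{C}[\mathbf{x}]$, and the standard trick shows that $a_{i,d_i}(\mathbf{x})\,\gamma_i$ is integral homogeneous and generates the same field extension of $K := \mathbb{C}(\mathbf{x})$; hence $\widehat V_{\nu,\gamma_i}$ is unchanged when passing to this new generator.

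Second (\emph{Reduction to $r=2$}), I induct on $r$. The base $r = 1$ is trivial. For the inductive step, applying the hypothesis to $(\gamma_1, \ldots, \gamma_{r-1})$ yields $\gamma_0'$ with $\widehat V_{\nu, (\gamma_1, \ldots, \gamma_{r-1})} \subseteq \widehat V_{\nu, \gamma_0'}$; the $r=2$ case applied to $(\gamma_0', \gamma_r)$ then concludes.

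Third, and the main step (\emph{the case $r=2$}), write $\omega_i = p_i/D$ with a common positive denominator $D$. The chief obstacle is that $\omega_1$ and $\omega_2$ may differ by a non-integer, preventing any equalization of their degrees by multiplication by monomials in $\mathbb{C}[\mathbf{x}]$. To bypass this, I introduce the auxiliary integral homogeneous element $\tau := x_1^{1/D}$ (a root of $z^D - x_1$) of degree $1/D$, and pick an integer $N > \max(p_1, p_2)$ with $\gcd(N, D) = 1$. I then set
\[
\delta_1 := \tau^{N - p_1} \gamma_1, \quad \delta_2 := \tau^{N - p_2} \gamma_2, \quad \delta_3 := \tau^N,
\]
all integral homogeneous elements of common degree $N/D$ (by Corollary \ref{cor_weighted}). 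Because $\gcd(N, D) = 1$, the minimal polynomial of $\tau^N$ over $K$ is $z^D - x_1^N$, of degree $D$, so $K[\delta_3] = K[\tau^N] = K[\tau]$; consequently $K[\delta_1, \delta_2, \delta_3] = K[\tau, \gamma_1, \gamma_2]$. Since the characteristic is zero the extension $K[\tau, \gamma_1, \gamma_2]/K$ is separable, and the primitive element theorem provides a generic $(c_1, c_2, c_3) \in \mathbb{C}^3$ for which $\gamma_0 := c_1 \delta_1 + c_2 \delta_2 + c_3 \delta_3$ satisfies $K[\gamma_0] = K[\tau, \gamma_1, \gamma_2]$; a $\mathbb{C}$-valued choice suffices because a nonzero polynomial with $K$-coefficients does not vanish identically on $\mathbb{C}^3$.

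To conclude, $\gamma_0$ is homogeneous of degree $N/D$ by Corollary \ref{cor_weighted}(iii), and is integral over $\mathbb{C}[\mathbf{x}]$ as a sum of products of integral homogeneous elements; its minimal polynomial over $K$ is therefore monic and weighted homogeneous, so $\gamma_0$ is an integral homogeneous element in the required sense. Since $\gamma_1, \gamma_2 \in K[\gamma_0]$, we obtain $\gamma_i \in \mathbb{C}(\!(\mathbf{x})\!)[\gamma_0]$ and hence $\mathbb{C}(\!(\mathbf{x})\!)[\gamma_1, \gamma_2] \subseteq \mathbb{C}(\!(\mathbf{x})\!)[\gamma_0]$; the desired inclusion $\widehat V_{\nu, \gam} \subseteq \widehat V_{\nu, \gamma_0}$ follows from the natural nesting of valuation rings and their completions.
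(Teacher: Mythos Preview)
Your proof is correct and follows the same overall strategy as the paper: reduce to two generators, equalize their homogeneous degrees, apply the primitive element theorem with scalars in $\C$, and then verify that the resulting generator is integral homogeneous. The one genuine difference lies in \emph{how} you equalize degrees. The paper replaces each $\gamma_i$ by a suitable fractional root $\gamma_i^{1/m_i}$ (invoking Corollary~\ref{cor_weighted}(i)) so that both have degree $1/(b_1b_2)$, applies the primitive element theorem directly to this pair, and only at the very end clears the leading coefficient to obtain integrality. You instead keep $\gamma_1,\gamma_2$ intact, introduce the auxiliary $\tau=x_1^{1/D}$, and multiply by powers of $\tau$ to reach a common degree; the extra generator $\delta_3=\tau^N$ (with $\gcd(N,D)=1$, so that $K[\tau^N]=K[\tau]$) is needed precisely to recover $\gamma_1,\gamma_2$ from the $\delta_i$. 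Your route buys you the minor convenience of never taking roots of the $\gamma_i$ (so the edge case $\omega_i=0$ causes no division by zero), at the cost of carrying a third element through the primitive element theorem. Both arguments rely on the same final passage from the field inclusion $\C(\!(\x)\!)[\gam]\subseteq\C(\!(\x)\!)[\gamma_0]$ to the inclusion of completed valuation rings, treated at the same level of detail.
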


\begin{proof}
For simplicity we assume $r=2$. The general case is proven in a similar way.\\
Let us denote by $\omega_i$ the degree of $\g_i$, and write $\omega_i=\frac{a_i}{b_i}$ where $a_i$, $b_i\in \N$. We set $\g'_1:=\g_1^{1/a_1b_2}$ and $\g'_2:=\g_2^{1/a_2b_1}$. Therefore $\g_1'$ and $\g_2'$ are homogeneous elements of the same degree $\omega=\frac{1}{b_1b_2}$. By the Primitive Element Theorem, there exists $c\in\C$ such that
$$
\C(\x)(\g_1',\g_2')=\C(\x)(\g_1'+c\g_2').
$$
Therefore $\wdh V_{\nu,\g_1',\g_2'}=\wdh V_{\nu,\g'_1+c\g'_2}$. Since $\wdh V_{\nu,\g_1,\g_2}\subset \wdh V_{\nu,\g_1',\g_2'}$, this proves the existence of a homogeneous element $\g$ such that
$$\wdh V_{\nu,\g_1,\g_2}\subset\wdh V_{\nu,\g_0}.$$
Thus, we only need to prove that $\g_0$ may be chosen to be an integral homogeneous element. Let us assume that 
\begin{equation}\label{eq_lem}c_0(\x)\g_0^d+c_1(\x)\g_0^{d-1}+\cdots+c_d(\x)=0\end{equation}
where the $c_k(\x)$ are homogeneous polynomials of degree $\omega k+p$, where $\omega\in\Q_{\geq 0}$ and $p\in\N$. Let $\g'_0:=c_0(\x)\g_0$. Then, by multiplying \eqref{eq_lem} by $c_0(\x)^{d-1}$, we have
$${\g'_0}^d+c_1(\x){\g'_0}^{d-1}+\cdots+ c_k(\x)c_0(\x)^{k-1}{\g'_0}^{d-k}+\cdots+ c_0(\x)^{d-1}c_d(\x)=0.$$
This shows that $\g'_0$ is a integral homogeneous element of degree $p+\omega$. This proves the lemma since $\wdh V_{\nu,\g_0}=\wdh V_{\nu,\g'_0}$.
\end{proof}

Finally, we are ready to prove Theorem \ref{thm:NewtonPuiseux}. We divide the proof in two main steps (which combined immediately yield Theorem \ref{thm:NewtonPuiseux}), which are interesting on their own. The first step shows that we can concentrate our discussion to the rings $\widehat{V}_{\nu,\g}$ instead of $\PP_{h} \lb \x,\g \rb$. The proof of this first step follows from arguments in the spirit of Tougeron's implicit Function theorem \cite[Chapter 3, Theorem 3.2]{To}:

\begin{proposition}[Newton-Puiseux-Eisenstein: Step I]\label{prop:NP1}
Let $P(\x,y) \in \C \lb \x\rb [y]$ be a monic polynomial. Suppose that there exists an integral homogeneous element $\g$ such that $P(\x,y)$ factors as a product of degree $1$ polynomials in $y$ with coefficients in $\widehat{V}_{\nu,\gamma}$. Then there exists an homogeneous polynomial $h$ such that $P(\x,y)$ factors as a product of degree $1$ polynomials in $y$ with coefficients in $\PP_h\lb \x, \g\rb $.
\end{proposition}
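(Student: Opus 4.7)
My approach would use a Hensel-Newton iteration in the spirit of Tougeron's implicit function theorem. First, I would reduce to the case where $P$ is squarefree: if $P = \prod_i Q_i^{m_i}$ is its squarefree decomposition, the linear factors of $P$ are those of the $Q_i$ with appropriate multiplicities, so it suffices to find an $h_i$ for each squarefree $Q_i$ and set $h = \prod h_i$. Assume henceforth that $P$ is reduced, with pairwise distinct roots $\xi_1,\ldots,\xi_d\in\wdh V_{\nu,\g}$; then the discriminant $\Delta_P = \pm\prod_{i<j}(\xi_i-\xi_j)^2\in\C\lb\x\rb$ is nonzero, and I would take $h := \ini_{\nu}(\Delta_P)\in\C[\x]$, which is a nonzero homogeneous polynomial.

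The key algebraic identity is $\pm\Delta_P = \prod_i P'(\x,\xi_i)$. Setting $\delta_i := \ini_\nu(P'(\x,\xi_i))$ and using that $\wdh V_{\nu,\g}$ is an integral domain (so valuations of nonzero elements add), one obtains $\prod_i \delta_i = \pm h$, hence $\delta_i^{-1} = \pm h^{-1}\prod_{j\neq i}\delta_j$. Although each $\delta_i$ lives only in the extension $\C(\x)[\g]$, its inverse can be rewritten with the polynomial denominator $h$ by absorbing the other $\delta_j$'s into the numerator. This uniform control by a single $h$ is precisely what the definition of $\PP_h\lb\x,\g\rb$ demands.

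For each root $\xi_i$, I would choose a $\nu$-graded truncation $\eta_i^{(0)}\in\C(\x)[\g]$ of $\xi_i$ satisfying $\nu(\xi_i-\eta_i^{(0)})>2\,\nu(P'(\x,\xi_i))$, using that $\xi_i$ is the $\nu$-adic limit of its $\nu$-homogeneous partial sums. Then I would run the Newton iteration $\eta_i^{(n+1)} := \eta_i^{(n)} - P(\x,\eta_i^{(n)})/P'(\x,\eta_i^{(n)})$, whose quadratic convergence $\eta_i^{(n)}\to\xi_i$ in the $\nu$-adic topology is a standard Tougeron-style estimate. The heart of the argument is tracking denominators: $P'(\x,\eta_i^{(n)})$ has initial form $\delta_i$, so its inverse, obtained as a geometric series in a small perturbation, introduces only powers of $\delta_i$, which by the identity above translate into powers of $h$ times numerators in $\C[\x][\g]$. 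An inductive bookkeeping then places each $\eta_i^{(n)}$ in $\PP_h\lb\x,\g\rb$, with the exponent of $h$ in the denominator growing linearly in the $\nu$-precision.

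The main obstacle I anticipate is this last bookkeeping step, ensuring that the power of $h$ grows at most linearly in the $\nu$-precision $k$ (as demanded by the definition of $\PP_h$), rather than exponentially, as a naive count of Newton steps might suggest. I would handle this via a joint induction on the precision $k$ and the exponent $\a k+\b$ controlling the allowed denominator, exploiting that each Newton step doubles the $\nu$-precision while the geometric-series inversion of $P'$ contributes only a bounded, precision-independent number of extra $\delta_i$-factors per step. Once $\xi_i\in\PP_h\lb\x,\g\rb$ is established for each $i$, the factorization $P(\x,y)=\prod_i(y-\xi_i)$ is the required decomposition into linear monic factors over $\PP_h\lb\x,\g\rb$.
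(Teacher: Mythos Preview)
Your overall shape---reduce to the squarefree case, then run a Tougeron/Newton iteration and track denominators---is in the same spirit as the paper, but there is a genuine gap in the denominator bookkeeping that breaks the argument as written.

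The problem is your a priori choice $h=\ini_\nu(\Delta_P)$. Your truncation $\eta_i^{(0)}$ is a finite sum of $\nu$-homogeneous pieces of $\xi_i$, and each such piece lives only in $\C(\x)[\g]$: it is of the form $\sum_k \tfrac{a_k(\x)}{b_k(\x)}\g^k$ with homogeneous $a_k,b_k\in\C[\x]$, and nothing forces the $b_k$ to divide a power of $\ini_\nu(\Delta_P)$. These uncontrolled denominators propagate through every Newton step, since $P(\x,\eta^{(n)})$ and $P'(\x,\eta^{(n)})$ are polynomial in $\eta^{(n)}$. The same issue hits your key identity: the $\delta_j=\ini_\nu(P'(\x,\xi_j))$ themselves lie only in $\C(\x)[\g]$, so $\prod_{j\neq i}\delta_j$ need not have polynomial numerator over $\C[\x][\g]$, and $\delta_i^{-1}=\pm h^{-1}\prod_{j\neq i}\delta_j$ does not place $\delta_i^{-1}$ in a ring with denominator a power of $h$ alone. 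In fact the paper explicitly notes (Remark~5.14) that $h=\ini(\Delta_P)$ does \emph{not} suffice in general, correcting an unproven claim in Tougeron's paper.

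The paper's proof sidesteps this circularity by changing the target of the iteration. It writes each root as $\xi=\sum_{i=1}^{d}A_i\g^i$ with $A_i\in\Frac(\wdh V_\nu)$, uses the Vandermonde matrix in the conjugates $\g_1,\ldots,\g_d$ to show that each $A_i$ is algebraic over $\C(\!(\x)\!)$, and then proves the stand-alone claim: any $A\in\wdh V_\nu$ algebraic over $\C(\!(\x)\!)$ lies in some $\PP_h(\!(\x)\!)$. For that claim one substitutes $x_i\mapsto tx_i$, truncates $A$ to some $\ovl B$, and applies the implicit function theorem; crucially, the resulting $h$ is taken to be the product of a homogeneous polynomial $g$ produced by the IFT \emph{and} the finitely many denominators $b_i$ appearing in the truncation $\ovl B$. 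So the paper does not fix $h$ in advance: it lets the truncation's denominators into $h$. If you want to salvage your direct approach, you would need to do the same---enlarge $h$ to include the denominators of $\eta_i^{(0)}$ (and of the $\delta_j$)---at which point the neat identity $\prod_i\delta_i=\pm h$ is lost and the argument essentially collapses back to the paper's.
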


\begin{proof}
Let $\xi$ be a root of the polynomial $P(\x,y)$; by hypothesis $\xi \in \widehat{V}_{\nu,\gamma}$, so it has the following form:
 $$
\xi=\sum_{i=1}^d A_i \g^i, \text{ where } A_i \in \Frac\left(\widehat{V}_{\nu}\right)\text{ and } \nu(A_i\g^i)\geqslant 0.
$$
Let us denote by $\g=:\g_1, \cdots, \g_d$ the conjugates of $\g$ over $\C(\x)$. Then $d$ is the degree of the minimal polynomial of $\g$ over $\C(\x)$. We denote by $M$ the Vandermonde matrix whose coefficients are the $\g_i^j$, for $1\leq i,j\leq d$. Note that:
\[
\xi_j:= \sum_{i=1}^d A_i \g^i_j, \quad j=1,  \ldots,d
\]
are also roots of $P(\x,y)$. We now have that: 
$$
M \cdot \begin{bmatrix} A_1\\ \vdots \\ A_d\end{bmatrix}=\begin{bmatrix}\xi_1\\ \vdots \\ \xi_{d}\end{bmatrix},
\text{ therefore } 
\begin{bmatrix} A_1\\ \vdots \\ A_d\end{bmatrix}=M^{-1}\begin{bmatrix}\xi_1\\ \vdots \\ \xi_{d}\end{bmatrix}.
$$
Because the entries of $M$ are algebraic over $\C(\x)$, the entries of $M^{-1}$ are also algebraic over $\C(\x)$. Thus the $A_i$ are algebraic over $\C(\!(\x)\!)$. Now, we claim that if $A\in \wdh V_\nu$ is algebraic over $\C(\!(\x)\!)$, then there exists $h$ (depending on $A$) such that $A\in \PP_{h}(\!( \x)\!)$. The Proposition easily follows from the Claim, by using the fact that:
\[
\PP_{h_1}\lb \x, \g \rb \cup \PP_{h_2}\lb \x, \g \rb  \subset \PP_{h}\lb \x, \g \rb \text{ where } h=\lcm(h_1,h_2),
\]
and noting that there are a finite number of roots $\xi$ of $P$, each one of them with a finite number of coefficients $A_i$. We now turn to the proof of the Claim. 

Let $A\in \Frac\left(\wdh V_\nu\right)$ be algebraic over $\C(\!(\x)\!)$. Note that if $x_1 A\in\PP_h(\!( \x)\!) $, then $A\in\PP_{x_1 h}(\!( \x)\!)$ and, if $A$ is algebraic, then $x_1A$ is algebraic too. So, up to multiplying $A$ by a large power of $x_1$, we may assume that $\nu(A)>0$. We write $A=\sum_{i>0} \frac{a_i(\x)}{b_i(\x)}$, where $a_i,b_i$ are homogeneous polynomials such that $\nu(a_i)-\nu(b_i)=i$, and we denote by $P(\x,z)$ the minimal polynomial of $A$. Now, we replace $x_i$ by $tx_i$ for every $i$. Therefore, $Q(z):=P(t\x,z)$ (where we leave the dependency in $\x$ and $t$ implicit) is separable and $Q(B)=0$, where $B:=\sum_{i>0} A_i(\x)t^i$. Note that $Q$ may not be irreducible over $\C(\!(t,\x)\!)[z]$ but it is separable because $P$ is, as an irreducible polynomial over a field of characteristic zero. We set $e:=\ord_t\left(\frac{\partial Q}{\partial z}(B)\right)>0$, where $\ord_t(\cdot)$ denotes the order of a series with respect to the variable $t$. Note that $\frac{\partial Q}{\partial z}(B)\neq 0$, since $Q$ is separable. We set $\ovl B:=\sum_{i\leq 2e+1} A_i(\x)t^i$ and $y=t^{e+1}v$. By Taylor expansions in $z$ centered at $\overline{B}$ and $B$, we have
$$
Q(t^{e+1}v+\ovl B)=Q(\ovl B)+\frac{\partial Q}{\partial z}(\ovl B) t^{e+1}v+\widetilde{Q_1}(t,\x,v)t^{2e+2}v^2,
$$ and $$
Q(t^{e+1}v+ B)=\frac{\partial Q}{\partial z}(B) t^{e+1}v+\widetilde{Q_2}(t,\x,v)t^{2e+2}v^2,
$$ where $\widetilde{Q_1},\widetilde{Q_2}\in \C(\!(\x,t)\!)[v]$.
Writing $B=\ovl B + t^{e+1}\sum\limits_{i>2e+1}\frac{a_i(\x)}{b_I(\x)}t^{i-e-1}$, this gives
$$
\ord_t\left(\frac{\partial Q}{\partial z}(B)\right)=\ord_t\left(\frac{\partial Q}{\partial z}(\ovl B)\right)=e,
$$ and $\ord_t(Q(\ovl B))\geqslant 2e+1$.

Now, note that every term of $\ovl B$ is of the form $\frac{a_i}{b_i}t^i$, where $a_i(\x),b_i(\x)$ are homogeneous polynomials and $\nu(a_i)-\nu(b_i)=i$. Furthermore, since $Q\in \C(\!(t\x)\!)[z]$, every term of $Q(\ovl B)$ is the form $\frac{f_i}{g_i}t^i$, where, again, $f_i(\x),g_i(\x)$ are homogeneous polynomials and $\nu(f_i)-\nu(g_i)=i$. Finally, since $\overline{B}$ is a finite sum with homogeneous denominators and $Q\in \C(\!(\x t)\!)[z]$, there exists an homogeneous polynomial $b(\x)$ such that $b(\x)Q(t^{e+1}v+\ovl B)\in \C\lb \x,t\rb [v]$. 
Therefore, dividing the equation $b(\x)Q(t^{e+1}v+\ovl B)=0$ by $t^{2e+1}$, we obtain
\begin{equation}\label{Tay}
R_0(t,\x)+R_1(t,\x)v+\widetilde{R}(t,\x,v)v^2=0,
\end{equation}
with $\ord_t(R_1)=0$, so we can write $R_1(t,\x)= g(\x) +t\widetilde{R}_1(t,\x)$ where $g(\x)$ is a nonzero homogeneous polynomial of degree $\geqslant 2e+1$. Moreover
$$
v=\sum_{i>2e+1}A_i(\x)t^{i-e-1}
$$
is a solution of \eqref{Tay} and, thus, $t$ divides $R_0(t,\x)$, that is $
R_0(t,\x)=t \widetilde{R}_0(t,\x)$. We set
$$
t=g(\x)^2s \text{ and } v=g(\x) w.
$$
Then, dividing \eqref{Tay} by $g(\x)^2$ we get
\begin{equation}\label{Tay2}
s\widetilde{R}_0(g(\x)^2s,\x)+\left[1+g(\x) s\widetilde{R}_1(g(\x)^2s,\x)\right]w+\widetilde{R}(g(\x)^2s,\x,g(\x) w)w^2=0.
\end{equation}
By the implicit function Theorem, \eqref{Tay2} has a unique solution $w(s,\x)\in\C\lb s,\x\rb $, $w(s,\x)=\sum_{i\in\N} w_i(\x) s^i$. Therefore
$$
\sum_{i\in\N}\frac{a_i(\x)}{b_i(\x)} t^i=\sum_{i\leq 2e+1} \frac{a_i(\x)}{b_i(\x)} t^i+\sum_{i\in\N}  \frac{w_i(\x)g(\x)}{g(\x)^{2i}}t^{i+e+1}.
$$
Hence, if $h$ denotes the product of $g^2$ with the $b_i$ for $i\leq 2e+1$, we have that $A(\x)\in\PP_h\lb \x\rb $. This finishes the proof.
\end{proof}

We now turn to the second step of the proof of Theorem \ref{thm:NewtonPuiseux}, which is actually a more general statement than Theorem \ref{thm:NewtonPuiseux}, and easier to prove by induction:

\begin{proposition}[Newton-Puiseux-Eisenstein: Step II]\label{prop:NP2}
Let $\g$ be an integral homogeneous element and $P(\x,y) \in \wdh V_{\nu,\g} [y]$ be a monic polynomial. Then there exists an integral homogeneous element $\g'$ such that $P(\x,y)$ factors as a product of degree $1$ polynomials in $y$ with coefficients in $\wdh V_{\nu,\g'}$.
\end{proposition}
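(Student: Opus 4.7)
The plan is to prove the proposition by induction on $d = \deg_y P$, with the trivial base case $d=1$ (the unique root $-a_1$ lies in $\wdh V_{\nu,\g}$, so $\g' = \g$ works). For the inductive step, it suffices to exhibit a single integral homogeneous $\g_1$ extending $\g$ together with a factorization $P = F_1 F_2$ in $\wdh V_{\nu,\g_1}[y]$ into monic polynomials of degrees strictly less than $d$: applying the induction hypothesis to each factor and amalgamating the resulting homogeneous elements through Lemma \ref{PET} then yields the desired $\g'$.

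Write $P = y^d + a_1 y^{d-1} + \cdots + a_d$. Since we work in characteristic zero, the Tschirnhaus substitution $y \mapsto y - a_1/d$ preserves $\wdh V_{\nu,\g}[y]$ and allows us to assume $a_1 = 0$. Consider the Newton polygon of $P$ with respect to $\nu$, namely the lower convex hull of $\{(i,\nu(a_i))\}$; all of its slopes are nonpositive because $P$ is monic with $a_i \in \wdh V_{\nu,\g}$. Pick any slope $-\omega$ and let $u$ be an integral homogeneous element with $\nu(u) = \omega$, which exists because $\omega$ lies in the finitely generated group $\Gamma_{\nu,\g}$. After the substitution $y = uz$ and division by $u^d$, the rescaled polynomial $Q(z) := u^{-d}P(\x, uz)$ lies in $\wdh V_{\nu,\g,u}[z]$ and reduces modulo the maximal ideal to the \emph{characteristic polynomial} $P_\omega(z)$ associated with the slope $-\omega$. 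By Lemma \ref{weighted} the roots of $P_\omega$ are homogeneous elements, and by Corollary \ref{cor_weighted} and Lemma \ref{PET} we can enlarge $\g$ to a single integral homogeneous element $\g_1$ whose associated ring $\wdh V_{\nu,\g_1}$ contains $u$ and all these roots; in particular $P_\omega$ splits completely in the residue field of $\wdh V_{\nu,\g_1}$. The key claim is that $P_\omega$ admits at least two distinct roots, which gives a coprime factorization $P_\omega = \bar F_1 \bar F_2$ in this residue field; since $\wdh V_{\nu,\g_1}$ is a complete discrete valuation ring, Hensel's lemma lifts this coprime factorization to a factorization $Q = F_1 F_2$ in $\wdh V_{\nu,\g_1}[z]$, from which we recover the desired factorization of $P$ with both factors of degree $<d$.

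It remains to justify the key claim. If the Newton polygon has at least two slopes, then the characteristic polynomial associated with any internal slope automatically has roots on that edge and ``virtual'' roots (of valuation in a different range), hence at least two distinct roots after scaling appropriately; more directly, one selects $\omega$ to be the smallest slope and observes that $P_\omega$ is not a power of $z$ (since $\nu(a_d) = d\omega$ forces the constant term to survive) while also being of degree $<d$ in $z$ (since a higher vertex exists), producing the coprime splitting. If the Newton polygon has a single slope $-\omega$, the characteristic polynomial $P_\omega$ is monic of degree $d$ with vanishing $z^{d-1}$-coefficient (because $a_1 = 0$); were $P_\omega = (z-\xi_0)^d$, the $z^{d-1}$-coefficient $-d\xi_0$ would force $\xi_0 = 0$, contradicting $\nu(a_d) = d\omega$. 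The main obstacle in executing this program is the careful bookkeeping of the algebraic extensions: at each stage we must ensure that the chosen $u$, the characteristic polynomial's roots, and the ambient ring $\wdh V_{\nu,\g_1}$ are all compatible, and this is precisely the role of Lemmas \ref{weighted}, \ref{PET} and Corollary \ref{cor_weighted}, which guarantee that a finite collection of homogeneous elements can always be absorbed into a single integral homogeneous element.
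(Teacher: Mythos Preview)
Your approach is essentially the paper's, recast in Newton-polygon language: the paper chooses $k_0$ minimizing $\nu(a_k)/k$, takes $\g_1$ with $\g_1^{k_0}=\ini_\nu(a_{k_0})$, rescales $y\mapsto \g_1 y$, observes that the reduction $\bar Q$ has no $y^{d-1}$-term yet $\bar b_{k_0}\neq 0$ so $\bar Q\neq(y-c)^d$, and lifts a coprime factorization via Hensel. Your $u$ and $\omega$ play the role of $\g_1$ and $\nu(a_{k_0})/k_0$.

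There is one genuine slip. The assertion that for \emph{any} slope $-\omega$ the rescaled polynomial $Q(z)=u^{-d}P(\x,uz)$ lies in $\wdh V_{\nu,\g,u}[z]$ is false: one needs $\nu(a_i)\ge i\omega$ for all $i$, which forces $\omega=\min_{i\ge 2}\nu(a_i)/i$ (the slope of the edge adjacent to the leading term). Once you make this specific choice, your single-slope argument already handles every case uniformly: there is some $k_0\ge 2$ with $\bar b_{k_0}\neq 0$, while $\bar b_1=0$, so $\bar Q\neq (z-c)^d$. The separate two-slope discussion is then unnecessary, and as written it is garbled (for instance, $\nu(a_d)=d\omega$ only holds when the polygon has a single edge, contrary to your hypothesis there). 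Also, your appeal to Lemma~\ref{weighted} for the roots of $P_\omega$ is not literally applicable, since the coefficients of $P_\omega$ are homogeneous elements of degree~$0$ rather than polynomials in $\C[\x]$; the paper sidesteps this by its specific choice of $\g_1$, which makes the relevant residue coefficient equal to~$1$, but either way the fix is routine via Lemma~\ref{PET}.
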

\begin{proof}
We prove the Proposition by induction on the degree $\mbox{deg}_y(P)= d$. Fixed $\g$, note that the result is trivial when $d=1$, and suppose that the Proposition is proved for every homogeneous element $\g$ and every polynomial in $ \wdh V_{\nu,\g} [y]$ with degree $d'<d$. We fix an homogeneous element $\g$, and let $P(\x,y) \in \wdh V_{\nu,\g} [y]$ be a monic polynomial of degree $d$. Let us write 
$$
P(y)=y^d+a_1y^{d-1}+\cdots+a_d, \quad a_i \in \wdh V_{\nu,\g}, i=1,  \ldots,d.
$$
Up to replacing $y$ by $y-a_1/d$, we can assume that $a_1=0$. Let $k_0\in\{2,  \ldots,d\}$ be such that
\begin{equation}\label{inequality}\frac{\nu(a_{k_0})}{k_0}\leq \frac{\nu(a_k)}{k} \ \ \forall k\in\{2,  \ldots, d\}.\end{equation}
Let $\g_1$ be such that $\g_1^{k_0}-\ini_\nu(a_{k_0})=0$. By Corollary \ref{cor_weighted}, $\g_1$ is a homogeneous element. By \eqref{inequality}, we can write $a_k=b_k\g^k_1$ for every $k\in\{2,  \ldots,d\}$, where $b_k\in \wdh V_{\nu,\g,\g_1}$.  We remark that
$$P(\g_1y)=\g_1^d(y^d+b_2y^{d-2}+\cdots+b_d).$$
 We set $Q(y)=y^d+b_2y^{d-2}+\cdots+b_d$. We denote by $\m$, the maximal ideal of $\wdh V_{\nu,\g,\g_1}$, i.e. the set of elements $a$ such that $\nu(a)>0$, and we denote by $\ovl Q(y)$ the image of $Q(y)$ in $\wdh V_{\nu,\g,\g_1}/\m[y]\simeq\C[y]$. Then $\ovl Q(y)$ is not of the form $(y-c)^d$ since $b_1=0$ and $b_{k_0}\notin \m$. Therefore we may factor $\ovl Q(y)$ as a product $R_1(y)R_2(y)$, where the $R_i(y)$ are monic polynomials with complex coefficients, and $R_1(y)$ and $R_2(y)$ are coprime. By Hensel's Lemma, this factorization of $\ovl Q(y)$ lifts as a factorization of $Q(y)$: $Q(y)=Q_1(y)Q_2(y)$ where $Q_i(y)\mod \m= R_i(y)$ for $i=1$, 2. Now, by Lemma \ref{PET}, we know that there exists an homogeneous element $\g'$ such that $Q_1(y)$ and $Q_2(y)$ are polynomials with coefficients in $\widehat{V}_{\nu,\g'}$, and each one of them has degree $d_i<d$ for $i=1,2$. We conclude by induction and by Lemma \ref{PET} once again.
\end{proof}

\begin{remark}
The classical Newton-Puiseux Theorem asserts that a monic polynomial $P(y)$ with coefficients in $\k\lb \x\rb$, where $x$ is a single indeterminate and $\k$ is a characteristic zero field, has its roots in $\k'\lb \x^{1/d}\rb$ for some $d\in\N^*$ and $\k\lgw \k'$ a finite field extension.

Fix $i\in\{1,  \ldots, n\}$ and let $a(\x)$, $b(\x)$ be nonzero homogeneous polynomials. We have
$$
\frac{a(\x)}{b(\x)}=x_i^{\deg(a)-\deg(b)}\frac{a\left(\frac{x_1}{x_i},  \ldots, \frac{x_n}{x_i}\right)}{b\left(\frac{x_1}{x_i},  \ldots, \frac{x_n}{x_i}\right)}.
$$
This proves that $\wdh V_\nu$ is isomorphic to $\K\lb x_i\rb $ where  $\K\simeq \C\left(\frac{x_1}{x_i},  \ldots, \frac{x_n}{x_i}\right)$.

 Moreover, if $\g$ is an integral homogeneous element, we can consider the coefficients of its minimal polynomial as elements of $\K\lb x_1\rb $ by the previous remark. In this case, it is straightforward to check that $\g$ is identified with $cx_1^{1/d}u(x_1)$ where $c$ is algebraic over $\K$, $d\in\N^*$ and $U(x_1)$ is a unit of $\K\lb x_1\rb$.
Therefore, Proposition \ref{prop:NP2} is an extension of the classical Newton-Puiseux Theorem for univariate power series since $\PP_h\lb \x\rb\subset \wdh V_\nu$.

On the other hand, the classical Eisenstein Theorem \cite{Ei} is the following statement:
\begin{theorem*}[{Eisenstein Theorem \cite{Ei}}] Given  a univariate power series $f(\x)=\sum_{k\in\N}f_k\x^k\in\Q\lb \x\rb$  that is algebraic over $\Z[\x]$, there exists a nonzero natural number $b$ such that
$$
\forall k\in\N,\ \ b^{k+1}f_k\in \Z.
$$
\end{theorem*}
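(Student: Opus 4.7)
The plan is to adapt the Tougeron-style implicit function argument used in the proof of Proposition \ref{prop:NP1}, combined with a careful rescaling of variables, to produce the required integrality statement over $\Z$.

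First, by clearing denominators in a minimal polynomial of $f$ over $\Q(x)$, one reduces to the case where $f$ satisfies $P(x, f(x)) = 0$ for some $P \in \Z[x,y]$ that is irreducible in $\Q(x)[y]$. Being irreducible in characteristic zero, $P$ is separable in $y$, so $P_y(x, f(x)) \in \Q\lb x \rb$ is nonzero; let $e := \ord_x P_y(x, f(x))$. Let $\bar f \in \Q[x]$ be the truncation of $f$ at order $2e+1$, and write uniquely $f = \bar f + x^{e+1} w$ with $w \in \Q\lb x \rb$. A second-order Taylor expansion of $P(x, f) = 0$ yields
\[P(x, \bar f) + x^{e+1} w\, P_y(x, \bar f) + x^{2e+2} w^2\, R(x, \bar f, x^{e+1} w) = 0,\]
where $R \in \Z[x,y,t]$ is the Taylor remainder polynomial. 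Since $\bar f \equiv f \pmod{x^{2e+1}}$, one has $P_y(x, \bar f) = x^e u(x)$ with $u(0) \neq 0$, and $\ord_x P(x, \bar f) \geq 3e + 1$. Dividing by $x^{2e+1}$ reduces the equation to
\[s(x) + u(x) w + x w^2 \tilde R(x, w) = 0, \qquad s, u \in \Q[x],\ \tilde R \in \Q[x, w],\ s(0) = 0,\ u(0) \neq 0.\]

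Next, choose a positive integer $b$ divisible by the denominator of $u(0)^{-1}$ and by every denominator appearing in the coefficients of $\bar f, s, u, \tilde R$. Under the substitution $x = b^2 t$ and $w = b z$, after division by $b \cdot u(0)$, the equation becomes $\Phi(t, z) = 0$ with $\Phi \in \Z[t, z]$, $\Phi(0,0) = 0$, and $\partial_z \Phi(0, 0) = 1$. The factor $b^2$ in $x = b^2 t$ (rather than $bt$) is essential: the $w^2$-term contributes $b^4$, which is enough to absorb both the subsequent division by $b$ and the denominators of $\tilde R$. By the formal implicit function theorem over $\Z$ (solve order by order, using that $\partial_z \Phi(0, 0)$ is a unit in $\Z$), there is a unique $z(t) \in \Z\lb t \rb$ with $z(0) = 0$ and $\Phi(t, z(t)) = 0$. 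Therefore
\[b f(b^2 t) \;=\; b \bar f(b^2 t) + b^{2e+4} t^{e+1} z(t) \;\in\; \Z\lb t \rb,\]
and reading off coefficients of $t^k$ gives $b^{2k+1} f_k \in \Z$ for every $k \in \N$. Setting $B := b^2$ then produces $B^{k+1} f_k \in \Z$, as required.

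The main obstacle is the careful bookkeeping of denominators in the rescaling step, verifying that the specific substitution $x = b^2 t,\ w = bz$ indeed produces an equation over $\Z[t, z]$ with unit derivative in $z$ at the origin. This mirrors exactly the substitution $t = g(\x)^2 s,\ v = g(\x) w$ appearing in the proof of Proposition \ref{prop:NP1}; the square on $b$ (respectively on $g$) is precisely what one needs to compensate for the quadratic Taylor remainder and keep the implicit function theorem applicable in the integral setting.
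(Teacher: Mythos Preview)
The paper does not actually prove this statement: Eisenstein's Theorem appears only inside the Remark following Proposition \ref{prop:NP2}, where it is quoted (with the citation \cite{Ei}) as the classical result that motivates the ``Eisenstein'' half of the name of Theorem \ref{thm:NewtonPuiseux}. There is therefore no proof in the paper to compare against.

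Your argument is essentially correct and is a faithful arithmetic transcription of the proof of Proposition \ref{prop:NP1}, with $\Z$ playing the role of $\C[\x]$ and the substitution $(x,w)=(b^2t,bz)$ playing the role of $(t,v)=(g^2s,gw)$. Two small points of bookkeeping deserve tightening. First, if ``truncation at order $2e+1$'' is read as $\bar f\equiv f\pmod{x^{2e+1}}$, then for $e=0$ one only gets $\ord_xP(x,\bar f)\geq 1$, and $s(0)=0$ can fail (try $P=2y-1-x$); you want $\bar f\equiv f\pmod{x^{2e+2}}$, which gives $\ord_xP(x,\bar f)\geq 3e+2$ and hence $\ord_x s\geq e+1\geq 1$ in all cases. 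Second, your divisibility conditions on $b$ (namely $p\mid b$ and $d\mid b$ for every denominator $d$, where $u(0)=p/q$) give $dp\mid b^2$ but not $dp\mid b$; this is exactly what is needed for the $k\geq 2$ coefficients of $s(b^2t)/(b\,u(0))$ and for the $\tilde R$-term, but the $k=1$ coefficient of the $s$-term needs $dp\mid b$. Taking $b$ to be a common \emph{multiple} of $p\cdot D$ (with $D$ a common denominator of all coefficients of $\bar f,s,u,\tilde R$), rather than merely divisible by each factor separately, fixes this at once; the rest of the computation then goes through verbatim, and the conclusion $B^{k+1}f_k\in\Z$ with $B=b^2$ is correct.
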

This shows that the $f_k$ can be written as $\frac{a_k}{b^{k+1}}$ where $a_k$ are integers. Therefore, Proposition \ref{prop:NP2} is a natural extension of Eisenstein Theorem to the situation where $\Z$ is replaced by $\C\lb \x\rb$.
\end{remark}

\subsection{On convergent projective rings}\label{ssec:OnConvProjRings} In this subsection we prove Proposition \ref{representative}. For this we need a complementary inequality to the following  inequality:
$$
\forall p, q\in\C[\x],\ \ |pq|\leq |p| |q|
$$
where $|p|$ is a well chosen norm. The investigation for such complementary inequalities is an old problem that goes back to K. Mahler \cite{Ma}. The complementary inequality that we need is given in Corollary \ref{cor_inequalities} given below. Such an inequality follows from results proven in this field. But for the sake of completeness we chose to include a proof of this inequality. Moreover our proof seems to be new and may be of independent interest. It is based on the Weierstrass division Theorem that is well know in local analytic geometry. 
Therefore we begin by defining the needed norms:

\begin{definition}\label{lp}Let $\rho>0$. We denote by $\C\{\x\}_{\rho}$ the subring of $\C\{\x\}$ of series $f(\x)=\sum_{\a\in\N^n}f_\a \x^\a$ such that
$$|f|_\rho:=\sum_{\a\in\N^n}|f_\a|\rho^{|\a|}<\infty.$$
The map $f\in\C\{\x\}_{\rho}\lgw |f|_{\rho}$ is an absolute value that makes $\C\{\x\}_{\rho}$ a Banach algebra.
\end{definition}

\begin{proposition}
Let $h(\x)\in\C\{\x\}$. Then there is $\rho>0$  and $C>0$ such that for every 
$ a(\x)\in\C\{\x\}_{\rho}$
$$ a(\x)\in h(\x)\C\lb \x\rb \Longrightarrow\left[
\dfrac{a(\x)}{h(\x)}\in\C\{\x\}_{\rho}\text{ and }
\left|\dfrac{a(\x)}{h(\x)}\right|_{\rho}\leq C|a(\x)|_{\rho}\right]
$$
\end{proposition}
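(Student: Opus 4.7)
The plan is to reduce to the case where $h$ is a Weierstrass polynomial and then invoke a version of the Weierstrass division theorem with uniform control in the Banach norms $|\cdot|_\rho$. If $h(0)\neq 0$, then $h$ is already invertible in $\C\{\x\}_\rho$ for $\rho$ small with $|h^{-1}|_\rho$ bounded, and the conclusion is immediate. Otherwise, after a generic linear change of coordinates in $\x$ (which only replaces $|\cdot|_\rho$ by an equivalent norm at the cost of shrinking the radius by a constant factor depending on the change of basis matrix), we may assume that $h$ is $x_n$-regular of some order $d\geq 1$. The Weierstrass preparation theorem then yields $h = u\cdot P$ with $u\in\C\{\x\}$ a unit and $P = x_n^d + \sum_{i=0}^{d-1} a_i(\x')\,x_n^i$ a Weierstrass polynomial (so $a_i(0)=0$, $\x' = (x_1,\ldots,x_{n-1})$). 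For $\rho$ sufficiently small, both $u$ and $u^{-1}$ lie in $\C\{\x\}_\rho$ with bounded norm, so it is enough to prove the analogous estimate with $h$ replaced by $P$.

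The key ingredient is then the following \emph{Weierstrass division with estimates}: there exist $\rho_0>0$ and $C_1>0$ such that, for every $\rho\leq \rho_0$ and every $f\in\C\{\x\}_\rho$, there are unique $Q\in\C\{\x\}_\rho$ and $R\in\C\{\x'\}_\rho[x_n]$ of $x_n$-degree $<d$ with $f = P\cdot Q + R$, satisfying $|Q|_\rho + |R|_\rho \leq C_1\,|f|_\rho$. Granting this, I apply it to $f := a/u \in\C\{\x\}_\rho$ whenever $a$ is assumed to be in $h\C\lb\x\rb$: if $g\in\C\lb\x\rb$ satisfies $a = hg$, then $a/u = P\cdot g$ in $\C\lb\x\rb$, and the uniqueness of formal Weierstrass division forces $Q = g$ and $R = 0$. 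Hence $a/h = g = Q \in\C\{\x\}_\rho$ and $|a/h|_\rho = |Q|_\rho \leq C_1 |a/u|_\rho \leq C_1\, |1/u|_\rho\cdot|a|_\rho$, giving the conclusion with $C := C_1\,|1/u|_\rho$.

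The main obstacle is to establish Weierstrass division with such estimates in the single-radius Banach algebra $\C\{\x\}_\rho$. The standard recipe is to formulate it as a fixed-point equation: setting $A := P - x_n^d$ and defining the continuous operator $\Pi\colon\C\{\x\}_\rho \to \C\{\x\}_\rho$ by projecting a series onto its monomials of $x_n$-degree $\geq d$ and then dividing by $x_n^d$, the relation $f = PQ + R$ rewrites as $Q = \Pi(f) - \Pi(A\cdot Q)$ (with $R$ recovered as $f - PQ$), and one invokes the Banach fixed-point theorem. The delicate point is that the naive bound $|\Pi(A\cdot Q)|_\rho \leq \rho^{-d}|A|_\rho|Q|_\rho$, combined with the estimate $|A|_\rho = O(\rho)$ coming from $a_i(0)=0$, is not by itself strong enough to yield a contraction when $d\geq 2$. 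To circumvent this, one first works with a bidisk norm $|\cdot|_{r',r_n}$ in which $r'$ may be chosen much smaller than $r_n$; there the contraction is automatic because $|a_i(\x')|_{r'}$ can be made arbitrarily small independently of $r_n$. The resulting estimate is then transferred to the single-radius algebra by choosing $\rho$ small compared to both $r'$ and $r_n$, which produces the required $\rho_0$ and $C_1$. Equivalently, one can represent $Q$ by the Weierstrass integral $Q(\x) = \frac{1}{2\pi i}\oint_{|\zeta|=r_n} \frac{f(\x',\zeta)}{P(\x',\zeta)}\cdot\frac{d\zeta}{\zeta - x_n}$ on a bipolydisk where $P$ has no zero on the contour, and read off the estimate from standard Cauchy bounds.
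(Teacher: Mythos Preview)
Your overall plan (reduce to a Weierstrass polynomial and set up Weierstrass division as a Banach fixed point) is exactly the paper's strategy: the paper skips the preparation step and works directly with $h$ after a linear change, writing $L_1(q,r)=qh+r$, $L_2(q,r)=qx_n^d+r$, and trying to show $\|L_3L_2^{-1}\|<1$; but the mechanism is the same as your $Q\mapsto \Pi(f)-\Pi(AQ)$. You are also right that the naive single-radius contraction constant $\rho^{-d}|x_n^d-P|_\rho$ need not be $<1$.

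The gap is in your transfer step, and it is fatal. After you obtain the contraction (or the Cauchy-integral bound) in a bidisk norm $|\cdot|_{r',r_n}$ with $r'\ll r_n$, you cannot ``transfer to the single-radius algebra by choosing $\rho$ small'': if $\rho<r_n$, a generic $a\in\C\{\x\}_\rho$ need not lie in $\C\{\x\}_{r',r_n}$, so the bidisk estimate simply does not apply to it; and if $\rho\ge r_n$, you control $|Q|_{r',r_n}$ but not $|Q|_\rho$. In fact the statement you are trying to prove is \emph{false} in the single-radius norm. For $n=2$ and $h=x_1+x_2$, set
\[
q_e=\sum_{j=0}^{e}(-1)^{j}x_1^{\,e-j}x_2^{\,j},\qquad a_e=hq_e=x_1^{\,e+1}+(-1)^{e}x_2^{\,e+1}.
\]
Then $|q_e|_\rho=(e+1)\rho^{e}$ while $|a_e|_\rho=2\rho^{\,e+1}$, so $|q_e|_\rho/|a_e|_\rho=(e+1)/(2\rho)\to\infty$; no pair $(\rho,C)$ works. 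The paper's own argument is wrong at the same place: the asserted lower bound $|qx_n^d+r|_\rho\ge\min\{\rho,1\}\max\{|q|_\rho,|r|_\rho\}$ should read $\min\{\rho^{d},1\}$, after which the contraction constant becomes $|x_n^d-h_d|_1+O(\rho)$ and need not be $<1$; and the claim that an analytic diffeomorphism sends $\C\{\x\}_\rho$ onto some $\C\{\x\}_{\rho'}$ (used in the $\mathrm{ord}(h)=1$ case) is also false for non-diagonal linear parts. What \emph{is} true, and what the paper actually needs downstream, is a bound of Gel'fond--Mahler type: from $a_{1,k}=a_{2,k}\,h_1^{\alpha k+\beta}$ one gets, applying $|p|_1|q|_1\le 2^{\deg p+\deg q}|pq|_1$ once with $p=h_1^{\alpha k+\beta}$ and using $|h_1^{N}|_1\ge\|h_1\|_{\infty}^{N}$, an exponential-in-$k$ bound $|a_{2,k}|_1\le D^{k}|a_{1,k}|_1$, which suffices for the intended application (Proposition~5.10).
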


\begin{proof}
The proof is essentially the proof given in \cite{To} of the Weierstrass division Theorem. First we fix $\rho>0$ small enough to ensure that $h(\x)\in\C\{\x\}_{\rho}$. If $h(\x)$ is a unit, up to shrinking $\rho$, the result is straightforward.

If $\ord(h(\x))=1$, there is an analytic diffeomorphism $\phi:\C\{\x\}\lgw\C\{\x\}$ sending $h(\x)$ onto $x_1$.  For every $\rho$, such a diffeomorphism sends $\C\{\x\}_\rho$ onto  $\C\{\x\}_{\rho'}$ for some $\rho'>0$. Therefore we may assume that $h(\x)=x_1$, and in this case the result easily follows, with $C=\rho^{-1}$. 

Therefore, we assume that $\ord(h(\x))\geq 2$. Up to a linear change of coordinates, we may assume that $h(\x)$ is $x_n$-regular of order $d\geq 2$, that is, $h(0,x_n)=x_n^du(x_n)$ with $u(0)\neq 0$. Note that we may identify the $\C$-vector space of polynomials $r(\x)\in\C\{\x'\}_{\rho}[x_n]$ of degree at most $d$ in $x_n$ with $(\C\{\x'\}_{\rho})^d$. We now define the following $\C$-linear maps:
$$
L_1, L_2: \C\{\x\}_{\rho}\times(\C\{\x'\}_{\rho})^d\lgw \C\{\x\}_{\rho}
$$
by
$$
L_1(q,r)=qh+r\text{ and } L_2(q,r)=qx_n^d+r.
$$
Set $L_3=L_2-L_1$. We remark that $L_2$ is a linear isomorphism (every series $a(\x)$ can be written in a unique way as $a(\x)=x_n^da_1(\x)+a_2(\x)$ where $a_2(\x)\in\C\{\x'\}_{\rho}[x_n]$ has degree $<d$ in $x_n$). We claim that $L_2$ and $L_2^{-1}$ are continuous. Indeed, since $qx_n^d$ and $r$ formal expansion have disjoint support, we have
$$
|L_2(q,r)|_{\rho}=|qx_n^d+r|_{\rho} = |q|_{\rho}\rho^d+|r|_{\rho}\leq C_1(\rho)\max\{|q|_{\rho},|r|_{\rho}\},$$
$$|qx_n^d+r|_{\rho}= |q|_{\rho}\rho^d+|r|_{\rho}\geq \min\{\rho, 1\}\max\{|q|_{\rho},|r|_{\rho}\}
$$
for some positive constant $C_1(\rho)$ depending on $\rho$. 

Now, we claim that $L_3L_2^{-1}$ is a continuous linear map of norm $<1$ for $\rho$ chosen small enough. Indeed, we have that
$L_3(q,r)=q(x_n^d-h)$ and, therefore
$$|L_3(q,r)|_{\rho}\leq |x_n^d-h|_{\rho}|q|_{\rho},$$
hence, for every $a(\x)\in\C\{\x\}_{\rho}$ we have
$$|L_3L_2^{-1}(a(\x))|_{\rho}\leq  \frac{|x_n^d-h|_{\rho}}{\min\{\rho, 1\}} |a(\x)|_{\rho}.$$
But  for $\rho$ small enough, we have $|x_n^d-h|_{\rho}\leq c\rho^2$ since $\ord(x_n^d-h(\x))\geq 2$, for some constant $c>0$. Therefore, the linear map
$$1\!\!1-L_3L_2^{-1}:\C\{\x\}_{\rho}\lgw\C\{\x\}_{\rho}$$
is an isomorphism of Banach algebras. Thus 
$$L_1=(1\!\!1-L_3L_2^{-1})L_2$$
is an isomorphism of Banach algebra if $\rho$ is chosen small enough.
 Therefore, there is a constant $C$, such that for every $a(\x)\in\C\{\x\}_{\rho}$, there is a unique couple $(q,r)\in \C\{\x\}_{\rho}\times(\C\{\x'\}_{\rho})^d$, such that
 $$a(\x)=q(\x)h(\x)+r(\x)
\text{ and }\max\{|q|_{\rho},|r|_{\rho}\}\leq C |a(\x)|_{\rho}.$$
Now, the fact that $a(\x)\in h(\x)\C\lb \x\rb$ is equivalent to $r=0$, and then $a/h=q$, whose norm is bounded by $C|a(\x)|_\rho$.
\end{proof}

\begin{corollary}\label{cor_inequalities}
Let $h(\x)\in\C[\x]$ be a homogeneous polynomial. Then, there is a constant $K>0$ such that, for every homogeneous polynomial $b(\x)\in\C[\x]$:
$$|h|_1|b|_1\leq K|hb|_1.$$
\end{corollary}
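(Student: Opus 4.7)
The plan is to derive the inequality directly from the preceding proposition, which already provides a bound of the form $|a/h|_\rho \leq C|a|_\rho$ for $a$ in $h\cdot \C\lb \x\rb$. The only genuine work is to pass from the weighted norm $|\cdot|_\rho$ (for some small $\rho$ adapted to $h$) to the norm $|\cdot|_1$, and this is where homogeneity of $b$ is crucial.

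First, I would fix $\rho>0$ and $C>0$ provided by the previous proposition applied to $h$. Given an arbitrary homogeneous polynomial $b(\x)\in\C[\x]$, the product $hb$ is a polynomial, hence trivially an element of $\C\{\x\}_\rho$, and moreover $hb\in h\,\C\lb \x\rb$. Applying the proposition to $a:=hb$ yields
\[
|b|_\rho \;=\; \left|\tfrac{hb}{h}\right|_\rho \;\leq\; C\,|hb|_\rho.
\]

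Next, I would use the elementary observation that for any polynomial $f(\x)$ which is homogeneous of degree $k$, one has $|f|_\rho = \rho^k\,|f|_1$, since every monomial appearing in $f$ has total degree exactly $k$. Applying this to $b$ (of degree $d:=\deg b$) and to $hb$ (of degree $d+\deg h$) and substituting into the previous inequality gives $\rho^{d}|b|_1 \leq C\,\rho^{d+\deg h}\,|hb|_1$, and dividing by $\rho^d>0$ yields
\[
|b|_1 \;\leq\; C\,\rho^{\deg h}\,|hb|_1.
\]
Multiplying both sides by the fixed constant $|h|_1$ and setting $K := C\,\rho^{\deg h}\,|h|_1$ (which depends only on $h$, not on $b$) produces the claimed bound $|h|_1\,|b|_1 \leq K\,|hb|_1$.

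There is essentially no serious obstacle here: the heavy lifting was done in the previous proposition (via a Banach-space version of Weierstrass division), and the passage from $|\cdot|_\rho$ to $|\cdot|_1$ is trivial precisely because $b$ and $hb$ are homogeneous, so the two norms differ only by a clean power of $\rho$. The only thing worth double-checking is that the constant obtained is uniform in $b$, which it is, since $\rho$, $C$, $\deg h$ and $|h|_1$ all depend only on $h$.
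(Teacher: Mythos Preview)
Your proof is correct and essentially identical to the paper's own argument: apply the preceding proposition to $a=hb$ to obtain $|b|_\rho\leq C|hb|_\rho$, then use homogeneity to rewrite $|\cdot|_\rho$ as $\rho^{\deg(\cdot)}|\cdot|_1$ and cancel $\rho^{\deg b}$, finally multiply through by $|h|_1$ to get the constant $K=C\rho^{\deg h}|h|_1$.
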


\begin{proof}
Let $\rho$ satisfy the previous proposition for $h(\x)$.  Then, with $a(\x)=b(\x)h(\x)$, we get
$$ \left|b(\x)\right|_{\rho}\leq C|h(\x)b(\x)|_{\rho}$$
Since $h$ and $b$ are homogeneous, this gives
$$\rho^{\deg(b)} \left|b(\x)\right|_1\leq C\rho^{\deg(b)+\deg(h)}|h(\x)b(\x)|_1,$$ and 
$$|h(\x)|_1 \left|b(\x)\right|_1\leq C|h(\x)|_1\rho^{\deg(h)}|h(\x)b(\x)|_1.$$
\end{proof}

We are ready to complete the goal of this section:

\begin{proof}[Proof of Proposition \ref{representative}]
First, let us remark that $\sum_{k\in\N}a_k\in\C\{\x\}$, where the $a_k$ are homogeneous polynomials of degree $k$, if and only if $\sum_ka_k\in \C\{\x\}_\rho$, for some $\rho>0$, that is, $\sum_{k\in\N} |a_k|_\rho<\infty$.

Let $h_1$ and $h_2$ be two homogeneous elements such that 
$$A:=\sum_{k\in\N}\frac{a_{1,k}}{h_1^{\a_1k+\b_1}}=\sum_{k\in\N}\frac{a_{2,k}}{h_2^{\a_2 k+\b_2}}$$
and assume that $\sum_ka_{1,k}\in\C\{\x\}$. 
We have 
$$\sum_k \frac{a_{1,k}}{h_1^{\a_1k+\b_1}}=\sum_k\frac{a_{1,k}h_2^{\a_1k+\b_1}}{(h_1h_2)^{\a_1k+\b_1}}.$$
Let $\rho>0$. We have
$$|a_{1,k}h_2^{\a_1k+\b_1}|_\rho\leq |a_{1,k}|_\rho |h_2|^{\a_1k+\b_1}_\rho=|a_{1,k}|_1|h_2|^{\a_1k+\b_1}_1 \rho^{\deg(a_{1,k})+\deg(h_2)(\a_1k+\b_1)}$$
But $\deg(a_{1,k})\leq \a k+\b$ for some $\a$, $\b\in\N$. Thus we have
$$|a_{1,k}h_2^{\a_1k+\b_1}|_\rho\leq |a_{1,k}|_1 (\rho^{\a+\deg(h_2)\a_1}|h_2|_1^{\a_1})^k\rho^\b C$$
for some constant $C>0$. Assume that $\sum_ka_{1,k}\in\C\{\x\}_{\rho'}$ for $\rho'>0$. Then, for $\rho>0$ such that $\rho^{\a+\deg(h_2)\a_1}<|h_2|^{-\a_1}\rho'$, we have
$$
\sum_{k\in\N}|a_{1,k}h_2^{\a_1k+\b_1}|_{\rho}\leq \rho^\beta C\sum_{k\in\N} |a_{1,k}|_1{\rho'}^k=\rho^\beta C\sum_{k}|a_{1,k}|_{\rho'}<\infty.
$$
In the same way, since
$$\frac{a_{1,k}h_2^{\a_1k+\b_1}}{(h_1h_2)^{\a_1k+\b_1}}=\frac{a_{1,k}h_2^{\a_1k+\b_1}(h_1h_2)^{\max\{0,\a_2-\a_1\}k+\max\{0,\b_2-\b_1\}}}{(h_1h_2)^{\max\{\a_1,\a_2\}k+\max\{\b_1,\b_2\}}}$$
we may assume that 
$$A=\sum_{k\in\N}\frac{a_{1,k}}{(h_1h_2)^{\a k+\b}}=\sum_{k\in\N}\frac{a_{2,k}}{h_2^{\a k+\b}}$$
and $\sum_ka_{1,k}\in\C\{\x\}$. Hence for every $k\in\N$, we have
$$a_{1,k}=a_{2,k}h_1^{\a k+\b}.$$
Thus, by Corollary \ref{cor_inequalities}, there is a constant $K>0$ such that
$$\forall k\in\N,\  |a_{2,k}h_1^{\a k+\b-1}|_1\leq \frac{K}{|h_1|_1}|a_{1,k}|_1,$$
and, by induction, 
$$\forall k\in\N,\   |a_{2,k}|_1\leq \frac{K^{\a k+\b}}{|h_1|^{\a k+\b}_1}|a_{1,k}|_1.$$
Therefore, if $\sum_ka_{1,k}\in\C\{\x\}_{\rho'}$ for $\rho'>0$, we have that $\sum_k a_{2,k}\in \C\{\x\}_\rho$ for every $\rho>0$ such that
$\rho<\dfrac{|h_1|_1^\a\rho'}{K^\a}.$
\end{proof}

\begin{remark}
In \cite{Ma}, Mahler proved the following inequality:
$$\forall h, b\in\C[\x], |hb|_1\leq 2^{\deg(h)+\deg(b)}|p|_1|q|_1.$$
Such an inequality has been first proven by Gel'fond for polynomials in one indeterminate \cite{Ge}. We could have used this inequality in order to prove Proposition \ref{representative}.

The inequality of Mahler has the advantage of being effective and uniform in both $h$ and $b$. On the other hand, the inequality given in Corollary \ref{cor_inequalities} shows that the  factor {$K$} can be chosen to be independent of $b$  when $h$ is fixed.
\end{remark}



\subsection{Semi-global formal extension}\label{ssec:SemiGlobalFormalExtention}

The proof of Theorem \ref{thm:SemiGlobalFormal} is done in two steps. First, assuming that $P \in \C\{\x\}[y]$ is a convergent polynomial, we can provide an elementary proof based on analytic continuation and on the classical Abhyankar-Jung Theorem. More precisely:

\begin{proposition}[Semi-global formal extension: the analytic case]\label{prop:SemiGlobalFormalAn}
Let $P(\x,y)\in\C \{ \x \} [y]$ be a monic reduced polynomial, and let $P= \prod_{i=1}^s Q_i$ be the factorization of $P$  given in Corollary \ref{cr:NewtonPuiseux}. Suppose that $n=2$ and let $\s: (N,F) \lgw (\C^2,0)$ be a sequence of point blowing ups such that, at every point $\pb \in F_r^{(1)}$, the pulled-back discriminant $\s_{\pb}^*(\Delta_P) $ is analytically monomial. Then, for every point $\pb  \in F_{r}^{(1)}$, each polynomial $Q_i$ extends formally at $\pb$. Furthermore, the extension is compatible with the factorization of $P$, that is $\prod_{i=1}^r \s_{\pb}^*(Q_i) = \s_{\pb}^*(P)$.
\end{proposition}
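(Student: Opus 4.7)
The plan is to exploit the \emph{analytic} Abhyankar--Jung Theorem (recalled in Section \ref{section:AbhyankarJung}) in order to compare two factorizations of $\s_\pb^*(P)$ at each point $\pb \in F_r^{(1)}$: the global factorization $\prod_i \s_\pb^*(Q_i)$ coming from the Newton--Puiseux--Eisenstein decomposition of $P$ in $\PP_h\lb \x \rb[y]$, and the total local splitting of $\s_\pb^*(P)$ into convergent Puiseux factors. Since $P$ is convergent, $\s_\pb^*(P) \in \O_\pb[y]$, and by hypothesis $\s_\pb^*(\Delta_P)$ is analytically monomial; applying the analytic Abhyankar--Jung Theorem thus yields local analytic coordinates $(u,v)$ at $\pb$, an integer $e\geq 1$, and convergent Puiseux series $\eta_1^\pb, \ldots, \eta_d^\pb \in \C\{u^{1/e}, v^{1/e}\}$ such that $\s_\pb^*(P)(y) = \prod_{k=1}^d (y - \eta_k^\pb)$.

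Next I would fix a generic point $\pc_0 \in F_r^{(1)}$, chosen so as to avoid intersections of $F_r^{(1)}$ with other components of $F_r$, the strict transform of $(h=0)$, and the non-exceptional branches of $\Delta_P$. By Remark \ref{rk:GeometricalFormalProjectiveRing}(2.a) combined with Lemma \ref{max}, each $Q_i$ extends analytically at $\pc_0$, so that $\prod_i \s_{\pc_0}^*(Q_i) = \s_{\pc_0}^*(P)$ is a genuine factorization inside $\O_{\pc_0}[y]$. The roots of each factor $\s_{\pc_0}^*(Q_i)$ form a subset $S_i$ of the set of Abhyankar--Jung roots $\{\eta_k^{\pc_0}\}$, and the subsets $S_i$ partition this set. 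This partition is intrinsic in the algebraic sense: two Puiseux roots belong to the same $S_i$ if and only if they are conjugate over $\Frac(\wdh V_\nu)$, because $Q_i$ is irreducible in $\wdh V_\nu[y]$ by Corollary \ref{cr:NewtonPuiseux}.

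The remaining step is to transport the partition from $\pc_0$ to any $\pb \in F_r^{(1)}$ via analytic continuation along $F_r^{(1)}$. Monodromy along a path from $\pc_0$ to $\pb$ permutes the Puiseux roots at $\pb$, but the permutation must preserve each $S_i^\pb$, because each monodromy automorphism is realized by a local Galois automorphism which in particular fixes $\Frac(\wdh V_\nu) \supset \C(\x)$, and hence cannot merge conjugate classes of distinct globally irreducible factors $Q_i, Q_j$. It follows that the polynomial $\prod_{\eta \in S_i^\pb}(y - \eta) \in \C\{u^{1/e},v^{1/e}\}[y]$ has coefficients invariant under the local Galois group of $\C\{u^{1/e},v^{1/e}\}$ over $\O_\pb$, and by Galois descent these coefficients lie in $\O_\pb$. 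This polynomial is the sought analytic (hence formal) extension $\wdh\s_\pb^*(Q_i)$, and the compatibility $\prod_i \wdh\s_\pb^*(Q_i) = \s_\pb^*(P)$ is immediate from the partition.

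The hard part will be making the monodromy invariance of the partition rigorous at those points of $F_r^{(1)}$ where the local Galois group at $\pb$ is strictly larger than the generic one (typically the points on the strict transform of $(h=0)$ or at intersections of exceptional components). The point to verify is that the irreducibility of each $Q_i$ over $\wdh V_\nu$ forces the absence of any local algebraic identification between Puiseux roots belonging to distinct $Q_i$ and $Q_j$. This can be ensured by carefully tracking the integral homogeneous element $\g$ provided by Theorem \ref{thm:NewtonPuiseux}, together with its specialization at $\pb$, and comparing the decomposition groups for the generic and special fibers along $F_r^{(1)}$.
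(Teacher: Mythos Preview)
Your overall plan---Abhyankar--Jung at each point of $F_r^{(1)}$, analyticity of the $Q_i$ at a generic point via Lemma~\ref{max}, and then analytic continuation along $F_r^{(1)}$---is exactly the strategy the paper uses. The gap is in your step~6, precisely where you yourself flag the ``hard part''.

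The assertion that ``each monodromy automorphism is realized by a local Galois automorphism which in particular fixes $\Frac(\wdh V_\nu)$'' is not well-posed. The local Galois group at a special point $\pb$ is $\mathrm{Gal}\bigl(\C\{u^{1/e},v^{1/e}\}/\O_\pb\bigr)$; it fixes $\O_\pb$, but there is no embedding of $\Frac(\wdh V_\nu)$ into the local picture at $\pb$ that would allow you to say this group ``fixes $\Frac(\wdh V_\nu)$''. The map $\wdh\s_\pb^*$ is a specialization, not a field embedding, and at the special points it is exactly the existence of a well-behaved target for this map that is in question. So your argument that the partition $\{S_i^\pb\}$ is preserved by the local Galois action is circular: you are invoking the $\wdh V_\nu$-irreducibility of $Q_i$ to constrain a group that does not act over $\wdh V_\nu$.

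The paper closes this gap by a different mechanism that avoids Galois bookkeeping altogether. Once one knows (from the generic points) that $Q_i\in\PP_h\{\x\}[y]$, the coefficients $A$ of $Q_i$ have an explicit pullback $A_\pb=\sum_k \frac{a_k(1,w)}{h(1,w)^{\alpha k+\beta}}(w^cv)^k$ lying in $\C(w)\lb v\rb$: the point is that this expression involves only \emph{integer} powers of the local coordinates $(\tilde v,\tilde w)$, hence is automatically invariant under the root-of-unity action $(\tilde v,\tilde w)\mapsto(\zeta_1\tilde v,\zeta_2\tilde w)$. On a punctured disc around $\pb$ the analytic continuation identifies each coefficient of $\prod_{j\in J_i}(y-\xi_j)$ with the corresponding $A_\pb$; transferring the root-of-unity invariance from $A_\pb$ to the Puiseux side then forces $\prod_{j\in J_i}(y-\xi_j)\in\C\{\tilde v,\tilde w\}[y]$. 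In other words, the descent from $\C\{u^{1/e},v^{1/e}\}$ to $\O_\pb$ comes not from an abstract Galois argument but from the concrete structural fact that elements of $\PP_h\lb\x\rb$ pull back to series in integer exponents. Your gesture toward ``tracking the integral homogeneous element $\g$'' does not supply this; what is actually needed is the shape of the coefficients of $Q_i$ in $\PP_h\lb\x\rb$, not the shape of the roots.
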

\begin{proof}
Given a point $\pb \in F^{(1)}_r$ which is not in the strict transform of $(h=0)$, nor the intersection of exceptional divisors, then each $\s_{\pb}^*(Q_i)$ is a monic formal factors of the convergent monic polynomial $\s_{\pb}^*(P)$. We conclude therefore that each $Q_i$ is convergent. In particular, note that $Q_i \in \PP_h\{\x\}[y]$ by Lemma \ref{max}. We now study the points $\pb\in F^{(1)}_r$ which are either in the strict transform of $(h=0)$, or in an intersection of exceptional divisors.

By combining the normal forms given in Remark \ref{rk:GeometricalFormalProjectiveRing}(2), up to a linear change of coordinates, there exists a coordinate system $(v,w)$ centered at $\pb$ and $c \in \mathbb{Z}_{\geq 0}$ such that:
\[
x_1 = v \cdot w^c, \quad x_2 = v\cdot w^{c+1}
\]

Now, by hypothesis, we know that $\Delta_{P_{\pb}}:=\s_{\pb}^*(\Delta_{P})$ is a analytically monomial (adapted to the exceptional divisor). More precisely, up to an analytic change of coordinates adapted to the exceptional divisor, that is:
\[
v= \widetilde{v}, \quad w = \varphi(\widetilde{v},\widetilde{w})
\] 
we can assume that $\Delta_{P_{\pb}} = \widetilde{v}^k\widetilde{w}^l U(v,w)$, where $U(v,w)$ is a unit (note that this is always the case if $\pb$ is in the intersection of two exceptional divisors, when we necessarily have $w = \widetilde{w}$). Then, by Abhyankar-Jung Theorem, we may write
$$
P_{\pb}=\prod_{i=1}^d(y-\xi_i(\widetilde{v}^{\frac{1}{d!}},\widetilde{w}^{\frac{1}{d!}}))
$$
where the $\xi_i(\widetilde{v}^{\frac{1}{d!}},\widetilde{w}^{\frac{1}{d!}}) \in \C\{ \widetilde{v}^{\frac{1}{d!}},\widetilde{w}^{\frac{1}{d!}}\}$. Now, there exists a connected punctured disc $D \subset F_r^{(1)}$ centred at $\pb$ (in particular, $\pb \notin D$) such that every point $\pc \in D$ does not belong to the strict tranform of $(h = 0)$, nor to the intersection of exceptional divisors. It follows that $\s_{\pc}^*(Q_i)$ is a well-defined analytic function. Furthermore, up to shrinking $D$, for every $i = 1,   \ldots, r$, there exists a subset $J_i(\pc) \subset \{1,   \ldots , d\}$ such that:
\[
\s_{\pc}^*(Q_i)=\prod_{j\in J_i(\pc)}(y-\xi_j(\widetilde{v}^{\frac{1}{d!}},(\widetilde{w}-w_0)^{\frac{1}{d!}})),
\]
where $\pb =(0,w_0)$. Since $D$ is a connected set, we conclude that $J_i(\pc)$ is independent of the point $\pc \in D$, and we denote it by $J_i$. There exists, therefore, an element $q\in\C\{ \widetilde{v}^{1/d!}, \widetilde{w}^{1/d!}\}[y]$ which is equal to $\s_{\pc}^*(Q_i)$ at every point $\pc \in D$. On the one hand, it follows, from analytic continuation, that $\s_{\pb}^*(Q_i)$ is equal to $q$ as a power series in $\C\{ \widetilde{v}^{1/d!}, \widetilde{w}^{1/d!}\}[y]$. Now, on the other hand, if $A \in \PP_h \lb \x \rb$ is a coefficient of $Q_i$ then:
\[
A_{\pb}:=\s_{\pb}^*(A) = \sum_{k\geq 0}\frac{a_k(1,\varphi(\widetilde{v},\widetilde{w}))}{h(1,\varphi(\widetilde{v},\widetilde{w}))^{\a k+\b}}\widetilde{v}^{k}\varphi(\widetilde{v},\widetilde{w})^{c k},
\]
and $A_{\pb}$ is invariant by the action of the $(d!)$-th roots of unity $(\zeta_1,\zeta_2)\lgm (\zeta_1 \widetilde{v} ,\zeta_2 \widetilde{w})$, which implies that $q$ is also invariant by these actions. We conclude that $q =: \s_{\pb}^*(Q_i)$ is a power series in $\C\{\widetilde{v},\widetilde{w}\}[y]$, finishing the proof.
\end{proof}

Unfortunately, there is no notion of analytic continuation in the case of formal power series, so the proof does not extend in a trivial way to the formal case. Following Gabrielov's original idea \cite{Ga2}, we address this extra technical issue via the Artin Approximation Theorem:

\begin{theorem}[Artin Approximation Theorem \cite{Artin}]
Let $G(\x,\y)$ be a nonzero function in $\C\{ \x \} [\y]$, where $\x = (x_1,  \ldots,x_n)$ and $\y = (y_1,  \ldots,y_m)$, and consider the equation $G(\x,\y)=0$. Suppose that there exists formal power series $\hat{\y}(\x) =(\hat{y}_1(\x),   \ldots,\hat{y}_m(\x)) \in \left(\C \lb \x \rb\right)^m$ such that:
\[
\widehat{G}(\x,\hat{\y}(\x)) \equiv 0
\]
Then there exist convergent power series $
\y^{(\iota)}(\x) = (y_1^{(\iota)}(\x),   \ldots,y_m^{(\iota)}(\x)) \in \left(\C \{ \x \}\right)^m
$ for every $\iota \in \mathbb{N}$, such that:
\[
G(\x,\y^{(\iota)}(\x)) \equiv 0 \text{ and for every }k,~ \hat{y}_k(\x) - y_k^{(\iota)}(\x)  \in (\x)^{\iota}.
\]
\end{theorem}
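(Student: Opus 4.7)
The plan is to follow Artin's original 1968 argument and prove the theorem by induction on $n$, the number of variables in $\x$, using as main tools the Weierstrass preparation theorem for $\C\{\x\}$, the fact that $\C\{\x\}$ is a Henselian local ring, and careful bookkeeping under successive reductions of the unknown vector $\y$. Throughout, one views the hypothesis $\wdh G(\x,\hat\y(\x))=0$ as a single polynomial equation (the general case of a finite system reduces to this by concatenation, or is treated component-by-component with the same ideas).

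The base cases are $n=0$, which is tautological since the completion of $\C$ equals $\C$, and $n=1$. For $n=1$, one first performs a finite sequence of Weierstrass preparation steps on the components of $\hat\y$ to reduce to the situation where the Jacobian of $G$ with respect to $\y$ has maximal rank at $\hat\y(0)$; then, because $\C\{x\}$ is a Henselian discrete valuation ring, the implicit function theorem / Hensel's lemma produces a convergent root $\y^{(\iota)}$ that agrees with $\hat\y$ to arbitrary prescribed order $\iota$. For the inductive step, given $\hat\y(\x)$ with $\wdh G(\x,\hat\y(\x))=0$, the strategy is to isolate one component, say $\hat y_m(\x)$, and, after a generic linear change of coordinates in $\x$, arrange that $\hat y_m$ is $x_n$-regular of some order $d$. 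Weierstrass preparation then writes $\hat y_m = \hat P(\x',x_n)\cdot\unit(\x)$, where $\hat P$ is distinguished in $x_n$ with formal coefficients $\hat a_1(\x'),\ldots,\hat a_d(\x')$ in $\C\lb\x'\rb$, where $\x'=(x_1,\ldots,x_{n-1})$. Substituting this form into $G(\x,\y)=0$, treating the $\hat a_i$ (together with the other $\hat y_j$ and the formal unit) as new formal unknowns over $\C\{\x'\}$, and reducing modulo the distinguished polynomial, converts the problem into an approximation problem in $n-1$ variables, to which the inductive hypothesis applies; the convergent approximations of the new unknowns are then reassembled via Weierstrass into the desired convergent $\y^{(\iota)}$.

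The main obstacle is that the Jacobian of $G$ with respect to $\y$ need not have maximal rank at $\hat\y(0)$, so Hensel's lemma does not apply directly, and after one reduction step the same difficulty can reappear. Resolving this requires the subtle device of enlarging the list of unknowns to include enough partial derivatives (or equivalently, applying Weierstrass preparation to a suitable maximal minor of the Jacobian matrix) so that, after the reduction, at least one new unknown appears with an invertible linear coefficient in one of the equations. This is the technical heart of Artin's proof and must be carried out uniformly in the approximation order $\iota$, so that when the convergent solutions produced by the induction are transported back through the Weierstrass change of variables, one controls exactly how many terms of $\hat\y$ are reproduced. The fact that the argument works is, in modern language, a reflection of the approximation property of the excellent Henselian ring $\C\{\x\}$.
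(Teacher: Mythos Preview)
The paper does not give its own proof of this theorem: it is stated with a citation to Artin's 1968 paper and then used as a black box in the proof of Theorem~\ref{thm:SemiGlobalFormal}. There is therefore nothing in the paper to compare your proposal against.

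That said, your sketch is a reasonable outline of Artin's original argument, with one caveat. You single out $n=1$ as a separate base case and suggest handling it by reducing to a situation where the Jacobian has maximal rank and then invoking Hensel's lemma. But the Jacobian rank issue is exactly the whole difficulty, and it does not disappear when $n=1$; there is no preliminary reduction that makes the Jacobian invertible without already doing the hard work. In Artin's proof the induction starts at $n=0$ (trivial) and the passage from $n-1$ to $n$ is uniform for all $n\geq 1$: one performs Weierstrass division of the equations by a suitable nonzero Jacobian minor (not of the unknowns $\hat y_j$ themselves), which produces a new system in $n-1$ variables where the number of equations and unknowns has grown but to which the inductive hypothesis applies. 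Your description of the inductive step ---preparing one of the $\hat y_j$ in $x_n$ and substituting--- is not quite how the reduction goes; the preparation is applied to a minor of the Jacobian of the system, and the remainders of the Weierstrass division of the original equations by that prepared minor furnish the new system over $\C\{\x'\}$. The ``subtle device'' you allude to in your last paragraph is precisely this, and it is the entire inductive step, not an add-on to fix a degenerate case.
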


Before turning to the proof of Theorem \ref{thm:SemiGlobalFormal}, we need to study conditions under which a well-chosen approximation of a polynomial yields an approximation of their roots and factors. We start by setting, once and for all, the extension of the valuation $\nu$ which we are interested in working with:

\begin{definition}
Let  $V$ be a  ring equipped with a valuation $\nu: V\setminus\{0\}\lgw \R$.
Let $P(y)\in V[y]$, $P(y)=\sum_{k=0}^d a_iy^i$, $a_i\in V$ for every $i$. We define
$$\nu(P(y))=\min_{{a_j\neq 0}}\{ \nu(a_0),  \ldots,\nu(a_d)\}.$$
\end{definition}

We now turn to the proof of two preliminary results:

\begin{lemma}[Root-approximation]\label{key_app}
Let $\K$ be an algebraically closed field equipped with a valuation $\nu:\K\setminus\{0\}\lgw \R$. We denote by $V$ its valuation ring.
Let $P(y)\in V[y]$ be a reduced monic polynomial of degree $d$ in $y$, and $\xi_1$,   \ldots, $\xi_d$ be its roots in $\K$. Let $Q(y)\in V[y]$ be a monic polynomial such that
$$
2\nu(P-Q)> d\cdot \nu(\Delta_P).
$$
Then, for every $i=1,  \ldots,d$, $Q(y)$ has a unique root $\xi'_i$ in $\K$ such that 
\[
\nu(\xi_i-\xi_i')\geq \frac{\nu(P-Q)}{d} .
\]
\end{lemma}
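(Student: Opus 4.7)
The plan is to combine a valuation computation for $Q(\xi_i)$ with a Krasner–type ultrametric argument, using the discriminant hypothesis to separate the roots of $P$ from one another.

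First I would note that, because $P$ and $Q$ are monic with coefficients in the valuation ring $V$, and any valuation ring is integrally closed in its field of fractions, all roots $\xi_i,\xi'_j$ lie in $V$; in particular $\nu(\xi_i),\nu(\xi'_j)\geq 0$. Since $P(\xi_i)=0$, we have $Q(\xi_i)=(Q-P)(\xi_i)$, and expanding coefficient by coefficient, together with $\nu(\xi_i)\geq 0$, yields $\nu(Q(\xi_i))\geq \nu(P-Q)$. Factoring $Q=\prod_{j=1}^d (y-\xi'_j)$ then gives
\[
\sum_{j=1}^d \nu(\xi_i-\xi'_j)=\nu(Q(\xi_i))\geq \nu(P-Q),
\]
so there exists some index $J(i)\in\{1,\ldots,d\}$ with $\nu(\xi_i-\xi'_{J(i)})\geq \nu(P-Q)/d$; this delivers existence.

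For uniqueness I would exploit the hypothesis $2\nu(P-Q)>d\,\nu(\Delta_P)$ through the identity $\Delta_P=\pm\prod_{k<\ell}(\xi_k-\xi_\ell)^2$. Since every $\nu(\xi_k-\xi_\ell)$ is non-negative, this gives
\[
\nu(\xi_i-\xi_k)\leq \tfrac12\nu(\Delta_P)<\nu(P-Q)/d,\qquad \forall\,i\neq k.
\]
Any function $J$ as constructed above is then forced to be injective: if $J(i_1)=J(i_2)=j$ with $i_1\neq i_2$, the ultrametric inequality would force $\nu(\xi_{i_1}-\xi_{i_2})\geq \nu(P-Q)/d$, contradicting the displayed bound. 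Being an injection of a $d$-element set to itself, $J$ is in fact a bijection.

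Finally I would rule out the possibility that for some fixed $i$ there are two distinct indices $j_1\neq j_2$ both satisfying $\nu(\xi_i-\xi'_{j_k})\geq \nu(P-Q)/d$: setting $J(i):=j_1$ and choosing any valid index $J(i'')$ for each $i''\neq i$, the previous step makes $J$ a bijection, so $j_2=J(i')$ for some $i'\neq i$; but then $\nu(\xi_{i'}-\xi'_{j_2})\geq \nu(P-Q)/d$ and the ultrametric inequality once more gives $\nu(\xi_i-\xi_{i'})\geq \nu(P-Q)/d$, contradicting the discriminant bound. Re-indexing $\xi'_i:=\xi'_{J(i)}$, each $\xi_i$ has a unique close root $\xi'_i$ of $Q$ as required. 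The only place where any real care is needed is keeping existence, injectivity and per-fibre uniqueness cleanly separated; the rest is a routine ultrametric manipulation, and I do not foresee a serious obstacle.
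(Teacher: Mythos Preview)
Your proof is correct and follows essentially the same route as the paper: evaluate $Q(\xi_i)=(Q-P)(\xi_i)$ to get $\sum_j\nu(\xi_i-\xi'_j)\geq\nu(P-Q)$, extract a close root by pigeonhole, and then use the discriminant bound $\nu(\xi_i-\xi_k)\leq\tfrac12\nu(\Delta_P)$ together with the ultrametric inequality to force uniqueness. The only small point you leave implicit, which the paper spells out, is that the hypothesis forces $\deg_y Q=d$ (otherwise $\nu(P-Q)=0$), justifying the factorisation $Q=\prod_{j=1}^d(y-\xi'_j)$.
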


Note that the unicity implies that, in these conditions, $Q$ is necessarily reduced.

\begin{proof}
We start by remarking that, if the degree $\deg_y(Q)$ is different from $d$, then $\nu(P-Q)=0$ since these are monic polynomials, and the hypothesis of the Lemma are not satisfied. We therefore have that $\deg_y(Q)=d$, and we denote by $\xi'_1$,   \ldots, $\xi'_d$ the roots of $Q(y)$ counted with multiplicity (hence we may have $\xi'_i=\xi'_j$ for some $i\neq j$). Since $P$ and $Q$ are monic polynomials, we have $\nu(\xi_i)$, $\nu(\xi'_j)\geq 0$ for all $i$ and $j$. We write
$$P(y)=y^d+a_1y^{d-1}+\cdots+a_d,\ Q(y)=y^d+b_1y^{d-1}+\cdots+b_d.$$
Let us fix $i\in\{1,  \ldots, d\}$. We have
$$
\prod_{j=1}^d(\xi_i-\xi'_j)=Q(\xi_i)=Q(\xi_i)-P(\xi_i)=\sum_{k=1}^d(b_k-a_k)\xi_i^{d-k},
$$
and, since $\nu(\xi_i) \geq 0$, we get:
\[
\nu\left(\prod_{j=1}^d(\xi_i-\xi'_j) \right) \geq \min_{k=1}^d\{\nu(a_k-b_k)\} =\nu(P-Q).
\]
Therefore, there is (at least) one integer $j(i)$ such that
\begin{equation}\label{ine_val}
\nu(\xi_i-\xi'_{j(i)})\geq \frac{1}{d}\nu(P(y)-Q(y))>\frac{1}{2}\nu(\Delta_P)\geq \max_{l\leq k}\{\nu(\xi_k-\xi_l)\}.
\end{equation}
Now let $m\in\{1,  \ldots, d\}$, $m\neq i$. Then, by \eqref{ine_val}, we get
$$
\nu(\xi_m-\xi'_{j(i)})=\nu(\xi_m-\xi_i+\xi_i-\xi'_{j(i)})=\nu(\xi_m-\xi_i)\leq \max_{l\leq k}\{\nu(\xi_k-\xi_l)\}.
$$
Therefore $j(m)\neq j(i)$ whenever $i\neq m$. This gives the unicity of $\xi_i'$ and concludes the proof.
\end{proof}


\begin{proposition}[Factor-approximation]\label{perturbation}
Let $P(y)\in\wdh V_\nu[y]$ be a reduced monic polynomial of degree $d$ in $y$. We write
$$
P(y)=P_1(y)\cdots P_s(y)
$$
where the $P_i(y)$ are irreducible monic polynomials of $\wdh V_\nu[y]$. Let $Q(y)\in\wdh V_\nu[y]$ be a monic polynomial such that
$$
2\nu(P(y)-Q(y))> d\nu(\Delta_P).
$$
Then $Q(y)$ can be written as a product of irreducible monic polynomials $Q_i[y]\in\wdh V_\nu[y]$:
$$Q(y)=Q_1(y)\cdots Q_s(y),$$
such that
$$
\nu(P_i-Q_i)\geq \frac{\nu(P-Q)}{d} \quad  i=1,  \ldots, s.
$$
\end{proposition}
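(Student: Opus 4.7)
The plan is to work inside an algebraic closure of $K=\Frac(\wdh V_\nu)$ and to lift Lemma~\ref{key_app} to a factor-level statement by exploiting the uniqueness of the matching of roots together with Galois invariance. Since $\wdh V_\nu$ is a complete discrete valuation ring, the valuation $\nu$ extends uniquely to any algebraic extension of $K$; in particular it extends uniquely to $\ovl K$, an algebraic closure of $K$. Consequently, every $K$-automorphism $\sg$ of $\ovl K$ preserves $\nu$, i.e.\ $\nu\circ\sg=\nu$.

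First, I would factor $P(y)=\prod_{k=1}^{d}(y-\xi_k)$ in $\ovl K[y]$, where the roots are grouped so that $P_i(y)=\prod_{k\in I_i}(y-\xi_k)$ for a partition $\{1,\dots,d\}=\bigsqcup_{i=1}^{s}I_i$. Because $P$ is monic with coefficients in $\wdh V_\nu$, each $\xi_k$ has $\nu(\xi_k)\geq 0$. Apply Lemma~\ref{key_app} in $\ovl K$ to obtain, for every $k$, a unique root $\xi'_k$ of $Q$ with
\[
\nu(\xi_k-\xi'_k)\ \geq\ \frac{\nu(P-Q)}{d}.
\]
The key observation is that the bijection $k\lgm \xi'_k$ is Galois equivariant. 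Indeed, for any $\sg\in\Aut(\ovl K/K)$, $\sg(\xi'_k)$ is a root of $Q$ (since $Q\in K[y]$) and
\[
\nu\bigl(\sg(\xi_k)-\sg(\xi'_k)\bigr)=\nu(\xi_k-\xi'_k)\geq \nu(P-Q)/d,
\]
so the uniqueness part of Lemma~\ref{key_app} (applied to the root $\sg(\xi_k)$ of $P$) forces $\sg(\xi'_k)$ to coincide with the unique root of $Q$ attached to $\sg(\xi_k)$.

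Next, since each $P_i$ is irreducible in $K[y]$, the set $\{\xi_k:k\in I_i\}$ is a single $\Aut(\ovl K/K)$-orbit. By Galois equivariance, $\{\xi'_k:k\in I_i\}$ is also a single Galois orbit, and by the uniqueness of the matching its elements are distinct. Define
\[
Q_i(y)\ :=\ \prod_{k\in I_i}(y-\xi'_k)\in\ovl K[y].
\]
Its coefficients are Galois invariant, hence lie in $K$; they have non-negative $\nu$-value because each $\xi'_k$ does, so in fact $Q_i\in\wdh V_\nu[y]$. Since the roots of $Q_i$ form a single Galois orbit in $\ovl K$, $Q_i$ is irreducible over $K$, and being monic with coefficients in $\wdh V_\nu$, it is irreducible in $\wdh V_\nu[y]$ as well (Gauss). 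By construction $Q=\prod_i Q_i$.

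It remains to estimate $\nu(P_i-Q_i)$. Writing $P_i-Q_i=\sum_{j\geq 0}(e_{|I_i|-j}(\xi_{I_i})-e_{|I_i|-j}(\xi'_{I_i}))(-y)^j$, where $e_m$ denotes the $m$-th elementary symmetric polynomial, and using the telescoping identity
\[
e_m(a_1,\dots,a_n)-e_m(b_1,\dots,b_n)=\sum_{l=1}^{n}(a_l-b_l)\cdot e_{m-1}(b_1,\dots,b_{l-1},a_{l+1},\dots,a_n),
\]
every coefficient of $P_i-Q_i$ is a $\Z$-linear combination of products of the form $(\xi_k-\xi'_k)\cdot(\text{polynomial in the }\xi_l,\xi'_l)$. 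As $\nu(\xi_l),\nu(\xi'_l)\geq 0$ and $\nu(\xi_k-\xi'_k)\geq \nu(P-Q)/d$, we obtain $\nu(P_i-Q_i)\geq \nu(P-Q)/d$, as desired.

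The main obstacle, and the only nontrivial conceptual point, is the Galois equivariance of the matching produced by Lemma~\ref{key_app}; everything else is routine once one works in $\ovl K$ with a uniquely extended valuation. The hypothesis $2\nu(P-Q)>d\,\nu(\Delta_P)$ is used only through Lemma~\ref{key_app}, which provides both the existence and, crucially, the uniqueness of the matching that makes the argument go through.
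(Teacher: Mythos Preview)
Your proof is correct and follows essentially the same approach as the paper's: both arguments apply Lemma~\ref{key_app} in an algebraic closure of $\Frac(\wdh V_\nu)$, then use that the resulting root-matching is equivariant under $\Frac(\wdh V_\nu)$-automorphisms (because the valuation is preserved by such automorphisms) to conclude that the conjugacy classes of roots of $P$ and $Q$ are in bijection. Your write-up is in fact a bit more explicit on two points the paper leaves implicit---why the extended valuation is Galois-invariant (unique extension over a complete DVR), and the telescoping estimate for $\nu(P_i-Q_i)$---but the underlying ideas are identical.
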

\begin{proof}
Again, if the degree $\deg_y(Q)$ is different from $d$, then $\nu(P-Q)=0$ since these are monic polynomials, and the hypothesis of the Proposition are not satisfied. We therefore have $\deg_y(Q)=d$. We denote by $\xi_1$,   \ldots, $\xi_d$ the distinct roots of $P(y)$, belonging to $\wdh V_\nu[\g]$ for some homogeneous element $\g$, and by $\xi'_1$,   \ldots, $\xi'_d$ the roots of $Q(y)$ counted with multiplicity (hence we may have $\xi'_i=\xi'_j$ for some $i\neq j$).

By Lemma \ref{key_app}, for every $i\in\{1,  \ldots, d\}$, there is a unique $j(i)$ such that
$$
\nu(\xi_i-\xi'_{j(i)})\geq \frac{\nu(P-Q)}{d},
$$
and, up to renumbering, we can suppose that $j(i)=i$. Now, fix $i\in \{1,  \ldots, d\}$. There exists $p \in \mathbb{N}$ (where $p$ depends on $i$) and $\s_1=\Id$, $\s_2$,   \ldots, $\s_p$ $\Frac(\wdh V_\nu)$-automorphisms of $\ovl \K_\nu$, such that
$\xi_i=\s_1(\xi_i)$,   \ldots, $\s_p(\xi_i)$ are the distinct conjugates of $\xi_i$ over $\wdh V_\nu$.  Then
$\prod_{k=1}^p(y-\s_k(\xi_i))$ is an irreducible factor of $P(y)$ in $\wdh V_\nu[y]$.
 
Now let $\s$ be any $\Frac(\wdh V_\nu)$-automorphism of $\ovl \K_\nu$. Because $\s(\xi_i)$ is a conjugate of $\xi_i$, there is an integer $l$ such that $\s(\xi_i)=\s_l(\xi_i)$. Moreover we have
$$
\nu(\s(\xi'_{i})-\s(\xi_i))=\nu(\xi'_{i}-\xi_i)\geq \frac{\nu(P-Q)}{d}.
$$
Therefore
\[
\begin{aligned}
\nu(\s(\xi'_{i})-\s_l(\xi_i))=\nu(\s(\xi'_{i})-\s(\xi_i))=&\nu(\xi'_{i}-\xi_i)=\\
= \nu(\s_l(\xi'_{i})-\s_l(\xi_i))&\geq  \frac{\nu(P-Q)}{d},
\end{aligned}
\]
and we conclude that $\s(\xi'_i) = \s_l(\xi_i')$.
This proves that $\s_1(\xi'_{i})$,   \ldots, $\s_p(\xi'_{i})$ are exactly the (distinct) conjugates of $\xi_{i}'$ over $\Frac(\wdh V_\nu)$. In particular the polynomial
$\prod_{k=1}^p(y-\s_k(\xi'_{i}))$
is an irreducible monic factor of $Q(y)$ in $\wdh V_\nu[y]$ such that
$$
\nu\left(\prod_{k=1}^p(y-\s_k(\xi_i))-\prod_{k=1}^p(y-\s_k(\xi'_{i}))\right)\geq  \frac{\nu(P-Q)}{d},
$$
as we wanted to prove.
\end{proof}

We are now ready to prove the main result of this section:

\begin{proof}[Proof of Theorem \ref{thm:SemiGlobalFormal}]
The proof is done by approximating the formal polynomial $P(\x,y)$ by a suitable sequence of analytic polynomials $(P^{(\iota)})_{\iota \in \mathbb{N}}$ satisfying all hypothesis of Proposition \ref{prop:SemiGlobalFormalAn}, and arguing via Proposition \ref{perturbation}. Indeed, we start writing:
\[
P = P_0(\x) +   \cdots + P_{d-1}(\x)y^{d-1} + y^d, \quad \Delta_{P} =\D_1^{k_1}\cdots\D_e^{k_e},
\]
where the $\D_i$ are distinct irreducible formal polynomials and $k_i>0$ for all $i=1  \ldots,e$. We consider the universal discriminant polynomial $\Delta_d$ of degree $d:= \mbox{deg}_y(P)$, that is, $\D_d(P_0,  \ldots,P_{d-1})$ is the discriminant of the polynomial $P_0 +   \cdots + P_{d-1}y^{d-1} + y^d$. We apply
the Artin Approximation Theorem to the equation:
\[
\D_d(y_0,  \ldots,y_{d-1}) = z_1^{k_1} \cdots z_e^{k_e}
\]
with respect to the formal solution $y_i = P_i(\x)$ and $z_i = \D_i(\x)$. Therefore, for every $\iota \in \mathbb{N}$, there exist $P^{(\iota)}_0(\x),  \ldots,P^{(\iota)}_{d-1}(\x) \in \C\{\x\}$ such that
\[
P^{(\iota)}_j(\x) - P_j(\x) \in (\x)^{\iota},\quad j=0,  \ldots,d-1
\]
and we consider the polynomial with analytic coefficients
\[
P^{(\iota)}(\x,y):= P^{(\iota)}_0(\x)+    \cdots +P^{(\iota)}_{d-1}(\x) y^{d-1} + y^d
\]
which has a discriminant, by construction, of the form
\[
\Delta_{P^{(\iota)}} =(\D_1^{(\iota)})^{k_1}\cdots(\D_e^{(\iota)})^{k_e}, \quad \text{ where } \quad \D_j^{(\iota)} - \D_j \in (\x)^{\iota}, j=1,  \ldots,e.
\]
Note that, by Theorem \ref{thm:NewtonPuiseux}, these polynomials admit a factorization $P^{(\iota)}= \prod_{i=1}^{s^{(\iota)}} Q_i^{(\iota)}$ in $\PP_{h^{(\iota)}} \lb \x\rb[y] \subset \widehat{V}_{\nu}[y]$. By Proposition \ref{perturbation}, for $\iota$ big enough, the number of factors $s^{(\iota)}$ is constant equal to $s$ and $P^{(\iota)}$ is a reduced in $\widehat{V}_{\nu}[y]$.

In order to apply Proposition \ref{prop:SemiGlobalFormalAn}, it remains to verify that there exists $\iota_0 \in \mathbb{N}$ such that, for every $\iota>\iota_0$ and for every $\pb \in F^{(1)}_r$, the discriminant $\s_{\pb}^*(\D_{P^{(\iota)}})$ is analytically monomial. Indeed, fix a point $\pb \in F_r^{(1)}$. By Remark \ref{rk:GeometricalFormalProjectiveRing}(2), there exists a coordinate system
$(v,w)$ centered at $\pb$ and adapted to $F$ given by:
\[
x_1 = v w^c, \quad x_2 = vw^{c+1}
\]
with $c\in \mathbb{Z}_{\geq 0}$. In particular, we note that:
\begin{equation}\label{eq:discriminantArtin}
\s_{\pb}^*(\D_j^{(\iota)}) - \widehat{\s}_{\pb}^*(\D_j) \in (vw^c)^{\iota}, \quad  j=1,  \ldots,e.
\end{equation}
We now divide in three cases in order to prove that $\s_{\pb}^*(\D_j^{(\iota)})$ is analytically monomial (for $\iota$ big enough) depending on the nature of $\pb$:

\medskip
\noindent
\emph{Case I:} Suppose that $\pb$ is in the intersection of two exceptional divisors. In this case, $c>0$ and $\widehat{\s}_{\pb}^*(\D_j) = v^{\alpha_j} w^{\b_j} \widehat{u}_j$, where $\alpha_j, \beta_j >0$ and $\widehat{u}_j(0)\neq 0$, for every $j=1,  \ldots,e$. It follows from \eqref{eq:discriminantArtin} that if $\iota> \max\{\a_j,\b_j\}$, then $\s_{\pb}^*(\D_j^{(\iota)})$ is analytically monomial. We easily conclude that $\s_{\pb}(\Delta_{P^{(\iota)}})$ is analytically monomial for $\iota > \max_{j=1}^e\{\a_j,\b_j\}$. 

\medskip
\noindent
\emph{Case II:} Suppose that $\pb$ is not in the intersection of two exceptional divisors, nor in the strict transform of $\D_P$. In this case, $c=0$ and $\D_j = v^{\a_j}\widehat{u}_j$, where $\a_j>0$ and $\widehat{u}_j(0)\neq 0$, for every $j=1,  \ldots,e$. It follows from \eqref{eq:discriminantArtin} that if $\iota> \a_j$, then $\s_{\pb}^*(\D_j^{(\iota)})$ is analytically monomial. We easily conclude that $\s_{\pb}^*(\Delta_{P^{(\iota)}})$ is analytically monomial for $\iota > \max_{j=1}^e\{\a_j\}$.

\medskip
\noindent
\emph{Case III:} Suppose that $\pb$ is in the strict transform of $\D_P$. In this case, $c=0$. We note that, since $\widehat{\s}_{\pb}^*(\Delta_P)$ is formally monomial (adapted to the exceptional divisor), then $\widehat{\s}_{\pb}^*(\Delta_P) = v^{\a} \cdot g^{\b} \widehat{u}$, where $\a>0$, $\widehat{u}(0)\neq 0$ and $g$ is a formal series with initial term $a v + bw$, with $b\neq 0$. Since each term $\D_j$ is irreducible and distinct, we conclude that $g$ can only divide one term $\D_j$, say $\D_1$. Therefore, we can write $\D_1 = v^{\a_1} g \cdot \widehat{u}_1$ and $\D_j = v^{\a_j}\widehat{u}_j$, where $\a_j >0$ and $\widehat{u}_j(0)\neq 0$, for every $j=1,  \ldots,e$, and $\widehat{u}_1(0) =1$. Now, it follows from \eqref{eq:discriminantArtin} that if $\iota> \a_j$, then $\s^*_{\pb}(\D_j^{(\iota)})$ is analytically  monomial for $j=2,  \ldots,e$. Next, it follows from \eqref{eq:discriminantArtin} that if $\iota > \a_1$, then $\s^*_{\pb}(\D_j^{(\iota)}) = v^{\a_1} g^{(\iota)}$, where $\partial_{w}g^{(\iota)}(0) = b \neq 0$. It follows from the implicit function Theorem, therefore, that $\s^*_{\pb}(\D_j^{(\iota)})$ is analytically  monomial and adapted to the exceptional divisor. We easily conclude that $\s_{\pb}^*(\Delta_{P^{(\iota)}})$ is analytically  monomial for $\iota > \a$.

\medskip
Therefore, since $F_r^{(1)}$ is compact and $\Delta_j^{(\iota)}$ are convergent power series for every $\iota$ and $j=1,  \ldots,e$, we conclude that there exists $\iota_0 \in \mathbb{N}$ such that, for every $\iota>\iota_0$ and for every $\pb \in F^{(1)}_r$, the discriminant $\s_{\pb}^*(\D_{P^{(\iota)}})$ is analytically  monomial. By Proposition \ref{prop:SemiGlobalFormalAn}, the factors $Q_i^{(\iota)}$ admits a formal extension to every point $\pb \in F_r^{(1)}$ (which are compatible with the factorization of $P^{(\iota)}_{\pb}$). Next, by Proposition \ref{perturbation}, the sequence $(Q_j^{(\iota)})_\iota$ converges to $Q_j$ with respect to the usual $\nu$-adic topology in $\widehat{V}_{\nu}$, for every $j=1,  \ldots,s$. 

Now, let $j \in \{1,  \ldots,s\}$ be fixed, $A$ be a coefficient of $Q_j$ and $A^{(\iota)}$ be the corresponding coefficient of $Q^{(\iota)}_j$. Then we have
$$
\s^*_{\pb}(A^{(\iota)})=\sum_{k\geq 0}\frac{a_{k,\iota}(1,w)}{h_{\iota}^{\a_{\iota} k+\b_{\iota}}(1,w)} (w^{c}v)^k
$$
and, since $A^{(\iota)}$ extends analytically at $\pb$, there exist polynomials $b_{k,\iota}(w)$ such that $a_{k,\iota}(1,w)=b_{k,\iota}(w)h_{\iota}(1,w)^{\a_{\iota} k+\b_{\iota}}$ for every $k\geq 0$. Since $A-A^{(\iota)}\in (\x)^{\iota}$, we have $\s_{\pb}(A)-\s_{\pb}(A^{(\iota)})\in (vw^c)^{\iota}$. Therefore, if we write 
\[
A_{\pb}=\sum_{k\geq 0}\frac{a_k(1,w)}{h(1,w)^{\a k + \b}}(w^cv)^k
\]
we have $a_k(1,w)=b_{k,\iota}(w)h(1,w)^{\a k+\b}$ for $k\leq  \iota$. Since this is true for every $\iota$, we have that $A_{\pb}$ extends formally to $\pb$, and so does $Q_i$. Finally, this extension is compatible with the factorization of $P$ because, as it is pointed out in Remark \ref{rk:GeometricalFormalProjectiveRing}(2), $\wdh\sigma^*_{\pb}: \PP_{h}\lb \x\rb [y] \lgw 
C(w)\lb v\rb$ is a well-defined morphism.
\end{proof}

\subsection{Local-to-Semi-global convergence of factors}\label{ssec:LocalToSemiGlobal}
 The goal of this section is to prove Theorem \ref{thm:IzumiWalsh}. Roughly speaking, one needs  to prove that if, for some root $\xi:=\sum_{i=0}^{d-1} A_i\g^i$ of $P$, where the $A_i$ are in $\PP_h\lb \x\rb$ and $\g$ is a homogeneous element, $\s_\pb^*(\xi)$ is convergent, then $\s_\pb^*\left(\sum_{i=0}^{d-1} A_i {\g'}^i\right)$ is again convergent for every conjugate $\g'$ of $\g$ over $\C(\x)$. The main difficulty here, is that, if $\Gamma(\x,z)$ denotes the minimal polynomial of $\g$ over $\C(\x)$, the transform $\s_\pb^*(\Gamma(\x,z))$ may not be locally irreducible anymore. For instance let $\Gamma(\x,z):=z^4-(x_1^2+x_2^2)$ and $\s$ is given by $\s_0^*(x_1)=u$ and $\s_0^*(x_2)=uv$. Then in this case we have locally at the origin:
  $$\s_0^*(\Gamma(\x,z))=(z^2-u\sqrt{1+v^2})(z^2+u\sqrt{1+v^2}).$$
Therefore one needs to argue globally along the exceptional divisor, since the transform of $\Gamma(\x,z)$ remains globally irreducible.
 
Our proof follows the strategy inspired from Tougeron's \cite[$\S$3.1]{To2}, which relies on the following Lemma (stated in \cite[Lemme 3.3]{To2}):


\begin{lemma}\label{lemma:IzumiMajorationPol}
Let $\mathcal{C}\subset \C^n$ be an irreducible algebraic curve, and $D_1$, $D_2$ be two compact {subsets} of $\mathcal{C}${, such that the interior of $D_1$ is nonempty}. Then $$\exists M >0, \forall P\in \C[z_1,\ldots,z_n], \ \ \|P\|_{D_2}\leqslant M^{deg(P)} \|P\|_{D_1},$$ where $\|P\|_D$ denotes $\max\limits_{z\in D} |P(z)|$.
\end{lemma}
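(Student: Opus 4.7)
My plan is to reduce the lemma to a Bernstein--Walsh type estimate for meromorphic functions with controlled pole order on a compact Riemann surface, which in turn can be handled via a finite meromorphic map to $\mathbb{P}^1$. First, I would let $\bar{\mathcal{C}}\subset\mathbb{P}^n$ denote the projective closure of $\mathcal{C}$ and $\pi\colon\tilde{\mathcal{C}}\to\bar{\mathcal{C}}$ its normalization, so that $\tilde{\mathcal{C}}$ is a compact connected Riemann surface and $\pi$ is a finite surjective holomorphic map. Set $E:=\pi^{-1}(\bar{\mathcal{C}}\setminus\mathcal{C})$ (a finite set) and $\tilde{D}_i:=\pi^{-1}(D_i)$, which are compact subsets of $\tilde{\mathcal{C}}\setminus E$ satisfying $\|P\|_{D_i}=\|\pi^{*}P\|_{\tilde{D}_i}$ by surjectivity. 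Since the singular locus of $\mathcal{C}$ is finite, the interior of $D_1$ contains a smooth point of $\mathcal{C}$ at which $\pi$ is biholomorphic, so $\tilde{D}_1$ has nonempty interior.

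\textbf{Pole-order bound and reduction.} Next, each pullback $\pi^{*}z_j$ is meromorphic on $\tilde{\mathcal{C}}$ with poles contained in $E$; I would let $K$ denote the maximum pole order of $\pi^{*}z_j$ over $j=1,\ldots,n$ and $p\in E$, a constant depending only on $\mathcal{C}$. Then for any polynomial $P$ of degree $d$, the function $f:=\pi^{*}P$ is meromorphic on $\tilde{\mathcal{C}}$, holomorphic outside $E$, with pole order at most $Kd$ at each point of $E$. The lemma will then follow, with $M=M_0^{K}$, from the following Bernstein--Walsh estimate on $\tilde{\mathcal{C}}$: there exists $M_0>0$ depending only on $\tilde{D}_1,\tilde{D}_2,E$ such that
$$\|f\|_{\tilde{D}_2}\leq M_0^{N}\,\|f\|_{\tilde{D}_1}$$
for every $N\geq 0$ and every meromorphic $f$ on $\tilde{\mathcal{C}}$ holomorphic outside $E$ with pole order $\leq N$ at each point of $E$.

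\textbf{Reduction to $\mathbb{P}^1$ and main obstacle.} To establish this Riemann-surface estimate, I would pick (via Riemann--Roch) a non-constant meromorphic function $\phi\colon\tilde{\mathcal{C}}\to\mathbb{P}^1$ of some degree $e$ with $\phi^{-1}(\infty)=E$, so that $f$ satisfies its characteristic polynomial over $\mathbb{C}(\phi)$,
$$f^{e}+a_1(\phi)\,f^{e-1}+\cdots+a_e(\phi)=0,$$
whose coefficients $a_i$ are polynomials in $\phi$ of degree $O(N)$ (with constants depending only on $\phi$ and $E$). Combining the classical Bernstein--Walsh inequality on $\mathbb{P}^1$ for each $a_i$ with Cauchy's root bound $|f(q)|\leq 2\max_i|a_i(\phi(q))|^{1/i}$ would then yield the desired exponential bound, provided one controls $\sup_{\phi(V)}|a_i|$ by $\|f\|_{\tilde{D}_1}^{i}$, where $V\subset\tilde{D}_1$ is a small closed disk on which $\phi$ is biholomorphic onto its image. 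The principal difficulty is exactly this last control: fibers of $\phi$ above $\phi(V)$ contain points on the other sheets of the branched covering, which may lie outside $\tilde{D}_1$. To bypass this, I would invoke the Sadullaev--Zeriahi theory of Siciak--Zakharyuta extremal functions on algebraic varieties: since $D_1$ has nonempty interior in the irreducible algebraic curve $\mathcal{C}$, it is not pluripolar, so its Siciak extremal function $\Phi_{D_1}$ is bounded on every compact subset of the regular locus of $\mathcal{C}$; taking $M=\sup_{D_2}\Phi_{D_1}$ then yields $\|P\|_{D_2}\leq M^{\deg P}\|P\|_{D_1}$ directly.
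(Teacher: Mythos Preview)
Your final invocation of Sadullaev's Bernstein--Walsh theorem on algebraic varieties is correct and does yield the lemma (with the small caveat that Sadullaev's local boundedness is stated on the regular locus, so singular points of $\mathcal C$ lying in $D_2$ should be handled by continuity of $P$, since they are limits of regular points). Note, however, that the bulk of your write-up is devoted to an approach you yourself abandon: the reduction to $\PP^1$ via a branched cover $\phi$ runs into precisely the obstacle you identify --- the coefficients $a_i$ are elementary symmetric functions of the values of $f$ on \emph{all} sheets of $\phi^{-1}(\phi(V))$, not just those lying in $\tilde D_1$ --- and this obstruction is genuine, not cosmetic. Your actual proof is the one-sentence citation at the end; the preceding scaffolding never gets used.

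The paper's route is different and more self-contained. After the same normalization step you perform, instead of citing pluripotential theory it builds the comparison function by hand on the compact Riemann surface $\tilde{\mathcal C}$: letting $\pq_1,\dots,\pq_k$ be the poles of the lifted coordinate functions (with maximal pole orders $m_i$) and $\pq_0$ an interior point of $\tilde D_1$, it takes Green's functions $G_i$ at each $\pq_i$, normalizes so that $G_i-G_0\geq 0$ off $\tilde D_1$, sets $h=\exp\bigl(\sum_i m_i(G_i-G_0)\bigr)$, and checks that $|\bar P/h^{\deg P}|^2$ extends subharmonically across the poles. The maximum principle on the closure of $\tilde{\mathcal C}\setminus\tilde D_1$ then gives the bound with $M=\sup_{\tilde D_2}h$. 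This amounts to constructing the Siciak extremal function of $\tilde D_1$ explicitly via one-variable potential theory on $\tilde{\mathcal C}$ --- so the paper proves from scratch the special case of the theorem you cite, using only Green's functions and the maximum principle rather than several-variables pluripotential machinery. Your approach is shorter if one is willing to import Sadullaev; the paper's is elementary and keeps the article self-contained.
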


The proof given by Tougeron in \cite[Lemme 3.3]{To2} is only valid if the compactification $\overline{\mathcal{C}}\subset \C\mathbb{P}^n$ of the curve $\mathcal{C}$ is isomorphic to $\mathbb{P}^1$. In order to treat the general case, we incorporate the idea of the proof of \cite[Theorem 3.1]{IzDuke}, involving Green's functions.

\begin{proof}
Let $\overline{\mathcal{C}}$ be the smooth compact curve obtained as the normalization of the closure of $\mathcal{C}$ in $\C\mathbb{P}^n$. Note that the irreducibility of $\mathcal{C}$ is equivalent to that of $\overline{\mathcal{C}}$. Then the coordinate functions $z_i$ and $P$ lift canonically to meromorphic functions $\overline z_i, \overline{P}$ on $\overline{\mathcal{C}}$. Furthermore, each of these functions has its poles outside of {$K_1\cup K_2$}, where {$K_i$ denotes the preimage of $D_i$ by the normalization}.

Denote by $\pq_1,  \ldots,\pq_k$ the distinct poles of the functions $\overline z_i$, let $\pq_0$ be a point in the interior of {$K_1$}, and  for $i=1,  \ldots, n$, let 
\[
m_i:=\max\limits_{j =1,  \ldots,k} \operatorname{mult}_{\pq_i}\overline z_j,\] where $\operatorname{mult}_{\pq}\overline{z}=m$
means that $\pq$ is a pole of multiplicity $m$ of $\overline{z}$. With these notations, if $\pq_i$ is a pole of $\overline{P}$, it is of multiplicity at most $m_i\cdot \deg(P)$. Furthermore, the only possible poles of $\overline{P}$ are $\{\pq_1,\cdots,\pq_k\}$.

Now, set $U:=\overline{\mathcal C}\setminus \{\pq_0,\pq_1,\ldots,\pq_k\}$. Then, following \cite[Chapter II, $\S$1]{LA}, for every $i\in \{0,\ldots,k\}$, there exists a Green's function for $\pq_i$, that is, a smooth function $G_i\colon\overline{\mathcal C}\setminus \pq_i\lgw \R$ such that, for every $i,j$, $\Delta G_i = \Delta G_j$  on $U$ and, on a neighbourhood of $\pq_i$, the function $G_i+\log |z-\pq_i|$ can be extended to $\pq_i$ as a smooth function. Note that, since $\lim\limits_{z\to \pq_i} \log |z-\pq_i|=-\infty$, this last condition implies that $G_i$ is positive on a neigbourhood of $\pq_i$. Finally, such functions are uniquely determined up to additive constants. We can therefore pick the functions $G_i$ so that, for every $i\in \{1,\ldots,k\}$, $G_i-G_0\geqslant 0$ on $\overline{U\setminus {K_1}}$, by compacity of $\overline{\mathcal C}$. Set 
\[
G:=\sum\limits_{i=1}^k m_i(G_i-G_0),
\]
which is harmonic on $U$, because every function $G_i-G_0$ is harmonic on $U$, and set $h=\exp(G)$. The function $|\overline{P}/h^{\deg(P)}|^2$ is subharmonic on $U$ because at every point of $U$, it can locally be seen as the square of the module of a holomorphic function. Indeed, since $G$ is harmonic on $U$, for every point $\pp$ of $U$, there is a neighbourhood $V$ of $\pp$ and a harmonic function $H$ on $V$ which is conjugate to $G$, that is, $H$ is such that $G+iH$ is holomorphic on $V$, and we have 
\[
\left|
\frac{\overline{P}}{h^{\deg(P)}} \right|=\left|\frac{\overline{P}}{\exp(G+iH)^{\deg(P)}}\right|,
\] 
where $\exp(G+iH)^{\deg(P)}$ is holomorphic. For $1\leqslant i \leqslant k$, the function $G_i$ can be written near the point $\pq_i$ as $G_i=\alpha_i-\log|z-\pq_i|$ for some smooth function $\alpha_i$. Furthemore, because of the bound on its multiplicity, $\overline{P}$ can be written near $\pq_i$ as $\beta_i/(z-\pq_i)^{m_i}$, for some analytic function $\beta_i$.Therefore, the function $|\overline{P}/h^{\deg(P)}|^2$ can be extended to a smooth function on $\overline{\overline {\mathcal C} \setminus {K_1}}$. Moreover, by continuity of the partial derivatives, the non negativity of the Laplacian of $|\overline{P}/h^{\deg(P)}|^2$ extends to every point $\pq_i$, for $1\leqslant i \leqslant k$, therefore $|\overline{P}/h^{\deg(P)}|^2$ extends to a subharmonic function on $\overline{\overline{{\mathcal C}}\setminus  {K_1}}$. Then, the maximum principle applied to this function yields 
\[
\|\overline{P}/h^{\deg(P)}\|_{{K_2}}\leqslant \|\overline{P}/h^{\deg(P)}\|_{\overline{\overline{{\mathcal C}}\setminus {K_1}}}\leqslant \|\overline{P}/h^{\deg(P)}\|_{\partial {K_1}}\leqslant \|\overline{P}\|_{\partial {K_1}} \leqslant \|\overline{P}\|_{{K_1}}, 
\]
where the last {but one} inequality comes from the fact that $h\geqslant 1$ on $\partial {K_1}$. Finally, denoting $M=\max\limits_{z\in {K_2}} h(z)$, we get $\|\overline{P}\|_{{K_2}}\leqslant M^{\deg(P)} \|\overline{P}\|_{{K_1}}$.
\end{proof}

We are now ready to turn to the proof of Theorem \ref{thm:IzumiWalsh}:

\begin{proof}[Proof of Theorem \ref{thm:IzumiWalsh}]
Let $P\in \C\lb \x\rb [y]$ be a monic, reduced polynomial, and let $\tg \colon(N,F)\lgw (\C^2,0)$ be a sequence of point blowing ups, such that the pulled-back discriminant $\widehat{\sigma}_\pb^*(\Delta_P)$ is formally monomial at every point $\pb$ of $F_r^{(1)}$. We consider the factorization of $P$ into irreducible polynomials of $\PP_h\lb \x\rb [y]$ given by Corollary \ref{cr:NewtonPuiseux}:
\[
P(\x,y)=\prod\limits_{i=1}^s Q_i,
\]
By hypothesis, there exists $\pb_0\in F_r^{(1)}$ such that $\widehat{\sigma}_{\pb_{{0}}}^{\ast}(P)=:P_{\pb_0}$ has a convergent factor. By Proposition \ref{prop:FormalContinuationConvergentFactor}, we can assume that $\pb_0$ is in no other component of the exceptional divisor nor on the strict transform of $\{h=0\}$. Therefore, by Theorem \ref{thm:SemiGlobalFormal}, we can write 
\[
P_{\pb_0}=\prod\limits_{i=1}^s \widehat{\sigma}^*_{\pb_0}(Q_i), \quad \text{ where } \quad \widehat{\sigma}^*_{\pb_0}(Q_i) \in \wdh{\mathcal{O}}_{\pb_0}[y].
\] 
Now, by hypothesis, there is an index $i$ such that $\widehat{\sigma}_{\pb_0}^*(Q_i)$ admits a convergent factor. The problem is now to prove that the polynomial $Q_{\pb_0}:=\widehat{\sigma}_{\pb_0}^*(Q_i)$ itself is convergent. For simplicity, in the sequel we denote $Q_i$ by $Q$, and {$\gamma_i$ by $\gamma$} (with the notation of Corollary \ref{cr:NewtonPuiseux}). We now use the second equality given by Corollary \ref{cr:NewtonPuiseux}. Let $\g$ be a homogeneous integral element of degree $\omega = p/e$ such that
\begin{equation}\label{eq:IzumiQbefore}
Q=\prod\limits_{i=1}^s\left(y-\xi(\x,\g_i)\right), \quad \text{ where } \quad \xi\in \PP_h\lb \x, \g\rb,
\end{equation}
where $\g_i$ are distinct roots of the minimal polynomial of $\g$:
\begin{equation}\label{eq:IzumiFbefore}
 \Gamma(\x,z)=z^d+\sum\limits_{i=1}^d f_{i}(\x)z^{d-i}.
\end{equation} 
which is a $\omega$-weighted irreducible homogeneous polynomial. In particular, $f_{i}(\x)$ is homogeneous of degree $\omega \cdot i$, $f_i=0$ if $e$ does not divide $i$, and $e$ divides $d$ (because $f_d\neq 0$; otherwise $z$ would divide $\Gamma$ which is irreducible). We conclude that $\Gamma \in\C[\x,z^e]$. Now, note that by the definition of $\PP_h\lb \x, \g\rb$, we can write:
\begin{equation}\label{eq:IzumiXibefore}
\xi(\x,\gamma_i)=\sum\limits_{j=1}^d A_j(\x)\cdot \g^j_i, \quad \text{ where } \quad A_j=\sum\limits_{k\geqslant k_j} \frac{a_{k,j}(\x)}{h^{\alpha_jk+\beta_j}(\x)},
\end{equation}
with $k_j+\omega j\geqslant 0$.

Denote by $\overline{\Gamma}$ the strict transform of $\Gamma$ by the weighted blowing up given by $x_1=v^e,x_2=v^ew,z=v^p z$ in the chart $v\neq 0$:
 $$
 \overline{\Gamma}(w,z)=z^d+\sum\limits_{i=1}^d f_{i}(1,w)z^{d-i}.
 $$
Since $\Gamma$ is irreducible in $\C[\x,z]$, the compact algebraic curve $\ovl{\mathcal{C}}$ corresponding to $\overline{\Gamma}$ in the weighted projective space $\C \mathbb{P}^2_{(e,e,p)}$ is irreducible. {In a more algebraic language, this means that one has the following:}\\
\textbf{Claim:} {The polynomial} $\overline \Gamma$ is irreducible as an element of $\C[w,z^e]$.

\begin{proof}[Proof of the claim]Indeed, assume that $\ovl \Gamma=GH$ where $G$, $H$ are non trivial polynomials of $\C[\x,z^e]$, monic in $z$. Let us write
$$
G=\sum_{i=1}^{d_1}g_i(w)z^{d_1-i}+z^{d_1}
$$
where $g_i(w)=0$ if $e$ does not divide $i$, and $e$ divides $d_1$. Then
$$v^{pd_1}G=\sum_{i=1}^{d_1}v^{pi}g_i(w)(v^pz)^{d_1-i}+(v^pz)^{d_1}.$$
Let $i$ be a multiple of $e$. Then a monomial of $v^{pi}g_i(w)$ has the form 
$$
v^{pi}w^k=v^{p\frac{i}ee}w^k=v^{\left(p\frac{i}{e}-k\right)e}(v^ew)^k=x_1^{p\frac{i}{e}-k}x_2^k.
$$
Thus $G=G'(\x,z)$ with $G'(\x,z)\in\C(\x)[z^e]$. The same being true for $H$ we have that $\Gamma$ factors as a product of two monic polynomials in $\C(\x)[z^e]$. Since $\Gamma$ is irreducible and $\C[\x]$ is a unique factorization domain, by Gauss's Lemma $\Gamma$ factors as a product of two monic polynomials of $\C[\x,z^e]$, which is not possible by assumption. This shows that $\ovl \Gamma$ is irreducible in $\C[w,z^e]$. 
\end{proof}

Therefore $\overline \Gamma$ may not be irreducible in $\C[w,z]$, but its irreducible factors can be obtained from one another by multiplying the second variable by a $e$-th root of unity. Denote $\mathcal{C}:=\{\ovl \Gamma=0\}\subset \C^2_{w,z}$.

 This implies that:
\[
 \overline{\Gamma}(w,z) = \prod_{i=1}^t \overline{\Gamma}_i(w,z),
\]
where $\mathcal{C}_i:=(\overline{\Gamma}_i(w,z)=0)$ are irreducible curves and each of the irreducible factors $\Gamma_i$ of $ \overline{\Gamma}(w,z)$ can be obtained from one another by multiplying the second variable by a $e$-th root of unity.

Now, let $(v,w)$ be coordinates at ${\pb_0}$ such that $\sigma_{\pb_0}$ is locally given by $(x_1,x_2)=(v,vw)$, as in Remark \ref{rk:GeometricalFormalProjectiveRing}(2a). It follows from equation \eqref{eq:IzumiQbefore} that:
\[
Q_{\pb_0}=\prod\limits_{i=1}^s\left(y-\xi_i(v,vw,\tilde{\gamma})\right)
\]
where, by equation \eqref{eq:IzumiFbefore}, $\tilde{\g}$ is a root of the polynomial:
\[
\widetilde{\Gamma}(v,w,z):=z^d+\sum\limits_{i=1}^d v^{\omega i}f_{i}(1,w)z^{d-i}.
\]
By comparing the expressions of $\widetilde{\Gamma}$ and $\overline{\Gamma}$, we conclude that $\tilde{\g}$ is a root of $\widetilde{\Gamma}(v,w,z)$ if and only if $\tilde{\g}=v^\omega \overline{\g}$, where $\overline{\g}$ is a root of $\overline{\Gamma}(w,z)$. 

We use again Proposition \ref{prop:FormalContinuationConvergentFactor} in order to assume that ${\pb_0}$ is not a zero of the discriminant of $\overline{\Gamma}(w,z)$ with respect to the projection {$\pi$ on} the $w$-axis. With this condition, the implicit function Theorem implies that there is a compact disc $D'\subset \{z=0\}$ centered at the origin of $\C^2_{v,w}$ such that, on each one of the connected components $D'_1,  \ldots,D'_d$ of $\pi^{-1}(D')\subset \mathcal{C}$, $z$ can be written as an analytic function in $w$. From now on, $\overline{\gamma}_{{i}}\in \C\{w\}$ denotes the solution of $\overline{\Gamma}(w,z)=0$ on $D'_{{i}}$.

{Note that for every $j$, $v^{\omega j}A_j(v,vw)\in \C\lb v^{1/e},w\rb$, therefore each $\widehat{\sigma}_{\pb_0}^*(\xi_i)$ is in $\C\lb v^{1/e},w\rb$.} Up to renumbering, since $Q_{\pb_0}$ has a convergent factor, we can assume that $\widehat{\sigma}^*_{\pb_0}(\xi_1)\in \C\{v^{1/e},w\}$. By equation \eqref{eq:IzumiXibefore}, we can write (c.f. the normal form given in Remark \ref{rk:GeometricalFormalProjectiveRing}(2a)):
$$
\widehat{\sigma}_{\pb_0}^*(\xi_i) =\sum\limits_{j=1}^d \widehat{\sigma}_{\pb_0}^*(A_j) \cdot v^{\omega j}\overline{\g}^j_i
=\sum\limits_{j=1}^d v^{\omega j}\overline{\g}^j_i\sum\limits_{k\geqslant k_j} v^k\frac{a_{k,j}(1,w)}{h(1,w)^{\alpha_jk+\beta_j}}.
$$
The coefficient of $v^{\ell/e}$, for $\ell\in\N$, in the previous sum is
$$
\sum_{j=1}^d\ovl \g^j\frac{a_{\ell/e-j\omega,j}(1,w)}{h(1,w)^{\a_j(\ell/e-j\omega)+\b_j}}.
$$
(where $a_{\ell/e-j\omega,j}(1,w)=0$ if $\ell/e-j\omega \notin \mathbb{N}$). Therefore we have 
$$
\widehat{\sigma}^*_{\pb_0}(\xi_i)=\sum\limits_{\ell\in \N} \frac{v^{\ell/e}}{h(1,w)^{\delta_\ell}} P_\ell(w,\ovl \gamma_i),
$$
where $\delta_\ell\in \N$ is bounded by an affine function in $\ell$, and the degree of $P_\ell\in \C[w,z]$ is bounded by an affine function in $\ell$. 

Now, note that if $\eta$ is a primitive $e$-th root of unity, then the set $\{\eta^l \ovl\g, 0\leqslant l \leqslant e-1\}$ contains a root of each irreducible factor of $\ovl \Gamma$. Furthermore, since $\gcd(p,e)=1$, given an integer $l$, $\exists \varepsilon\in \C$ such that $\varepsilon^p=\eta$ and $\varepsilon^e=1$. This implies that 
$$
\sum\limits_{j=1}^d \widehat{\sigma}^*_{\pb_0}(A_j) v^{\omega j}(\eta^l \overline{\g}_1)^j=\widehat{\sigma}_{\pb_0}^*(\xi_1)(\varepsilon v^{1/e},w) \in \C\{v^{1/e},w\}.
$$
which shows that $\widehat{\sigma}^*_{\pb_0}(\xi_i)$ is convergent whenever $\overline{\g}_i$ is conjugated to $\overline{\g}_1$ by a root of unity.

Now fix an arbitrary $\xi_i$. By the previous argument, up to renumbering, we may assume that $\overline{\g}_1$ and $\overline{\g}_i$ belong to the same irreducible component of $\mathcal{C}$, where $\widehat{\sigma}^*_{\pb_0}(\xi_1)\in \C\{v^{1/e},w\}$ is convergent, and let $D_i'$ and $D_1'$ be the discs corresponding to the roots $\overline{\g}_1$ and $\overline{\g}_i$.

The fact that $\widehat{\sigma}_{\pb_0}^*(\xi_1)\in \C\{v,w\}$ is equivalent to the existence of $A,B\geqslant 0$ and a compact disc $D$ in the $w$-axis, containing the origin in its interior, such that 
\[
\left\|\frac{P_\ell(w,\ovl\g_1)}{h(1,w)^{\d(\ell)}}\right\|_D\leqslant AB^\ell.
\]
Since $h(1,0)\neq 0$, we can assume that $D$ is such that 
\begin{equation}\label{ineq_izumi}
\exists C\geqslant 1, \forall w\in D, \frac 1C \leqslant |h(1,w)|\leqslant C.\end{equation}
In particular we have
$$
\left\|P_\ell(w,\ovl\g)\right\|_D\leqslant AB^\ell C^{\delta(\ell)}.
$$

Even if it means shrinking $D$, we may suppose furthermore that $D\subset D'$. Denote by $D_1\subset D_1',  \ldots,D_d\subset D_d'$ the connected components of $\pi^{-1}(D)$. Let $D_i$ be the connected component of $\pi^{-1}(D)$ such that $(w,\overline{\g}_i)\in D_i$. Then $\left\|P_\ell(w,\overline{\g}_i)\right\|_D=\left\|P_\ell(w,z)\right\|_{D_i}$. Since $D_1$ and $D_i$ are in the same irreducible component of $\mathcal{C}$, Lemma \ref{lemma:IzumiMajorationPol} states the existence of a constant $M\geqslant 1$ such that 
\[
\|P_n(w,z)\|_{D_i}\leqslant M^{\deg(P_n)} \|P_n(w,z)\|_{D_1}.
\] 
Finally, we conclude that 
$$
\|P_n(w,\overline{\g}_i)\|_D=\|P_n(w,z)\|_{D_i}\leqslant M^{\deg(P_\ell)}AB^\ell C^{\delta(\ell)},
$$ and, by \eqref{ineq_izumi},
$$
\left\|\frac{P_\ell(w,\g_i)}{h(1,w)^{\delta(\ell)}}\right\|_D\leqslant M^{\deg(P_\ell)}AB^\ell C^{2\delta(\ell)}.
$$ 
Therefore, because  $\deg(P_\ell)$ and $\delta(\ell)$ are bounded by affine functions, we conclude that $\widehat{\sigma}(\xi_i)\in \C\{v,w\}$, for any $i$. Since the choice of $\xi_i$ was arbitrary, we conclude that $Q_{\pb_0}\in \C\{v,w\}[y]$. Thanks to the assumption that $\pb_0$ is not on the strict transform of $\{h=0\}$, Lemma \ref{max} implies that $Q_i\in \PP_h\{\x\}[y]$.

Therefore, we can identify convergent factors of $P$ and of $P_\pb$, for every $\pb$ outside of a discrete subset of $F_r^{(1)}$. In other words, if $Q$ is the maximal convergent factor of $P$, then $Q_\pb$ is the maximal convergent factor of $P_\pb$, for every $\pb\in F_r^{(1)}$ outside of a discrete subset of $F_r^{(1)}$. We conclude easily via Proposition \ref{prop:FormalContinuationConvergentFactor}.
\end{proof}


\section{Applications and variations}\label{sec:Aplication}

In this section we prove the results announced in $\S\S$\ref{ssec:Application}.

\subsection{Proof of Theorem \ref{strong}}\label{ssec:ApliStrong}

 Let us assume that $\Gr(\phi)=\Fr(\phi)=\Ar(\phi)$. By replacing $A$ by $\frac{A}{\Ker(\phi)}$ we may assume that $\phi$ is injective. Thus $\Gr(\phi)=\Fr(\phi)=\Ar(\phi)=\dim(A)$. Let $f\in\wdh{\phi}(\wdh{A})\cap B$. We may assume that $f(0)=0$ by replacing $f$ by $f-f(0)$. We define a new morphism $\psi : \frac{\C\{\x,z\}}{I\C\{\x,z\}}\lgw B$, where $A=\frac{\C\{ \x\}}{I}$ and $z$ is a new indeterminate, by
 $$
 \psi(g(\x,z))=g(\phi(x_1),\ldots, \phi(x_n),f)) \text{ for any } g\in\frac{\C\{\x,z\}}{I\C\{ \x,z\}}.
 $$
Since $\psi_{|_A}=\phi$, we have $\Gr(\psi)\geqslant \Gr(\phi)$.  Since $f\in \wdh{\phi}(\wdh{A})$, there exist $h\in \wdh{A}$ such that $\wdh{\phi}(h)=f$. Thus $z-h\in \Ker(\wdh{\psi})$. In fact, by the Weierstrass division Theorem, every element of $\Ker(\wdh\psi)$ is equal to an element of $\wdh A$ modulo $(z-h)$. But, because $\Ker(\wdh{\psi})\cap \wdh A=\Ker(\wdh{\phi})=(0)$, we have that $\Ker(\wdh \psi)=(z-h)$. In particular the injection  $\wdh{A}\lgw \frac{\C\lb \x,z\rb}{I\C\lb  \x,z\rb+\Ker(\wdh{\psi})}$ is an isomorphism, thus $\Fr(\psi)=\Fr(\phi)=\Gr(\phi)$. Finally, since $\Fr(\psi)\geqslant \Gr(\psi)$, we get $\Fr(\psi)=\Gr(\psi)$, hence $\Fr(\psi)=\Ar(\psi)$, by Theorem \ref{thm:main}. Therefore, by Proposition \ref{rk:BasicPropertiesGabrielov1} we have that $\Ker(\psi).\wdh C=\Ker(\wdh\psi)$. Thus, if $f\in \C\{\x,z\}$ is a generator of $\Ker(\psi)$, there is a unit $U(\x,z)$ such that $f=U(\x,z)(z-h)$. But, by the unicity in the Weierstrass preparation Theorem, we have that $u$ and $z-h$ are convergent, hence $h\in A$. This proves that $ \phi$ is strongly injective. 

On the other hand, assume that $\phi$ strongly injective. There exists a finite injective morphism $\C\{\x\}\lgw A$ (by Noether normalization Lemma) and an injective morphism of maximal rank $B\lgw \C\{\y\}$ (by resolution of singularities). Hence, if we denote by $\psi$ the induced morphism $\C\{\x\}\lgw \C\{\y\}$, by Proposition \ref{rk:BasicPropertiesGabrielov2}, $\Gr(\psi)=\Gr(\phi)$ and $\Fr(\psi)=\Fr(\phi)$ and $\psi$ is strongly injective. Therefore we are exactly in the situation of \cite[Theorem 1.2]{EH} that asserts that $\Gr(\psi)=\Fr(\psi)=\Ar(\psi)$.

\subsection{Proof of Theorem \ref{thm:equivalentGabrielov}}\label{ssec:AplVariation}
In what follows, we prove that $(I) \implies (II) \implies (III) \implies (IV) \implies (I)$. The Theorem immediately follows because $(I)$ is Gabrielov's rank Theorem \ref{thm:main}.

\medskip
\noindent
$(I) \implies (II)$ By replacing $A$ by $\frac{A}{\Ker(\phi)}$ we may assume that $\phi$ is injective, thus $\Gr(\phi)=\dim(A)=n$ by Theorem \ref{strong}. By Lemma \ref{lem_1st_red}, there exists a finite injective morphism $\C\{\x\}\lgw A$ and an injective morphism of maximal rank $B\lgw \C\{\y\}$. The induced morphism $\C\{\x\}\lgw \C\{\y\}$  is strongly injective, since $\phi$ is strongly injective, and $f$ is integral over $\C\lb \x\rb$. Thus we may assume that $A=\C\{\x\}$ and $B=\C\{\y\}$.

Let $P(\x,z)\in\C\lb \x\rb[z]$ be the minimal polynomial of $f$ over $\C\lb \x\rb$. By replacing $f$ by $f-f(0)$ we may assume that $f\in( \y)\C\{\y\}$. Consider the morphism $\psi:\C\{\x,z\}\lgw \C\{\y\}$ defined by $$\psi(h( \x,z))=h(\phi(x_1),\cdots,\phi(x_n),f) \text{ for any }h\in \C\{\x,z\}.$$ Then $\Fr(\psi)=\Fr(\phi)=n$ since $P(z)\in \Ker(\wdh{\psi})$. Hence, because $\Fr(\psi)\geq \Gr(\psi)\geq \Gr(\phi)$, we have $\Gr(\psi)=n$. Thus, by $(I)$, $\Ar(\psi)=n$ so $\Ker(\psi)$ is a  height one prime ideal, thus a principal ideal by \cite[Theorem 20.1]{Mat}. Let $Q\in\C\{\x,z\}$ be a generator of $\Ker(\psi)$. Then $Q$ is a generator of $\Ker(\wdh{\psi})$ and $Q$ divides $P$ in $\C\lb \x,z\rb$. Moreover $P(0,z)\neq 0$ since $P( \x,z)$ is a monic polynomial in $z$. Thus $Q(0,z)\neq 0$ and, by the Weierstrass preparation Theorem, there exists a unit $U(\x,z)\in\C\{\x,z\}$ such that $UQ$ is a monic polynomial in $z$. But $UQ\in\Ker(\psi)$, i.e. $(UQ)(\phi(\x),f)=0$, thus $f$ is integral over $\C\{\x\}$.

\medskip
\noindent
$(II) \implies (III)$ First of all, we may assume that $f$ is irreducible. Then we apply the result to each irreducible divisor of $f$.
 Let $A=\C\{\x\}$ and $B=\C\{\x,t\}/(f)$. We have $fg=a_0(\x)+a_1(\x)t+\cdots+a_d(\x)t^d$ where the $a_i$ are in $\C\lb \x\rb$. There is a sequence of quadratic transforms $\t=\s_k\circ\cdots\circ\s_1:\C\{\x\}\lgw \C\{\x\}$ such that $\t(a_d(\x))=\x^\a U(\x)$ for some unit $U(\x)\in\C\lb \x\rb$. Thus
 $$\t(f)\t(g)=\t(a_0(\x))+\t(a_1(\x))t+\cdots+\x^\a U(\x)t^d.$$
Therefore
 \begin{equation}\label{int_dep}\begin{split}
\x^{(d-1)\a}U^{-1}&\t(f)\t(g)=\\
 y^d+U^{-1}&\t(a_{d-1})y^{d-1}+U^{-1}x^\a\t(a_{d-1})y^{d-2}+\cdots+U^{-1}x^{(d-1)\a}\t(a_0)
 \end{split}
 \end{equation}
 where $y:=\x^\a t$. Let $k\in\C\{\x,t\}$ be a prime divisor of $\t(f)$.  Then $y\in\C\{\x,t\}/(k)$ is integral over $\C\lb \x\rb$ by \eqref{int_dep}.
 But the composed morphism 
 $$
 \xymatrix{\C\{\x\}\ar[r] & \C\{\x,t\}/(f) \ar[r]^{\t} & \C\{\x,t\}(k)}
 $$ 
 has maximal rank, thus $y$ is integral over $\C\{ \x\}$ by $(II)$, and $t$ is algebraic over $\C\{\x\}$.

\medskip
\noindent
$(III) \implies (IV)$ Let $P(\x, t)\in\C\lb \x\rb[t]$ be a nonzero  polynomial in $t$ such that $P(\x,f)=0$ mod. $(x_1-x_2z)$. This means that there exists a formal power series $g\in\C\lb \x,z\rb$ such that
 $$P(\x,f(\x,z))+(x_1-x_2z)g(\x,z)=0.$$
 We remark that, because  $f$ is algebraic over $A$,  $f+\la z$ is algebraic over $A$ for any $\la\in\C$ and,  if $\deg_t(P)=d$, the polynomial 
 $$T(\x,t):=x_2^dP(\x,t-\la x_1/x_2)\in\C\lb \x\rb[t]$$
  is a vanishing polynomial of $f+\la z$.
  Let us choose $\la\in\C$ such that if 
  $$h:=f+\la z$$
   then $h(0,z)$ is a nonzero power series of order 1. We define $I:=(t-h(\x,z), x_1-x_2z)$ as an ideal of $\C\{\x,t,z\}$. The ideal $I$ is prime since $\C\{\x,t,z\}/I\simeq \C\{x_2,  \ldots, x_n,z\}$, and $\het(I)=2$ since it is generated by two coprime elements.
   
   By the Weierstrass division Theorem
   $$t-h(\x,z)=U(\x,z,t)(z+h'(\x,t))$$
   for some unit $U(\x,z,t)\in \C\{\x,z,t\}$ and $h'(\x,t)\in \C\{\x,t\}$. Thus 
   $$I=(z+h'(\x,t), x_1-x_2z).$$
   Since $z+h'(\x,t)$ and $x_1-x_2z$ are coprime polynomials, $I_1:=I\cap \C\{\x,t\}\neq (0)$. Moreover $I_1$ is a height one prime ideal so it is principal since $\C\{\x,t\}$ is a unique factorization domain. Let $Q(\x,t)$ denote a generator of $I_1$, $Q(\x,t)=x_1+x_2h'(\x,t)$. By the Weierstrass preparation Theorem we may assume that
   $$Q=x_1+k(x_2,  \ldots, x_n,t)$$
   for some $k\in \C\{x_2,  \ldots, x_n,t\}$. Moreover we can do the change of variables $x_1\lgm x_1+k(x_2,  \ldots, x_n,0)$ and assume that $t$ divides $k(x_2,  \ldots, x_n,t)$.
   
   On the other hand $\wdh I\cap\C\lb \x,t\rb$ is also a height one prime ideal as for $I_1$, and $\wdh I_1\subset \wdh I\cap\C\lb \x,t\rb$. But since $I_1$ is prime, $\wdh I_1$ is also prime and $\wdh I_1= \wdh I\cap\C\lb \x,t\rb$ because both have the same height. Hence $Q$ is a generator of $\wdh I\cap\C\lb \x,t\rb$.
   
Since $T(\x,h)=0$ modulo $(x_1-x_2z)$, $T(\x,t)\in \wdh I\cap\C\lb \x,t\rb$. Thus there is a formal power series $R(\x,t)$ such that
   $$R(\x,t)Q(\x,t)=T(\x,t)\in\C\lb \x\rb[t].$$
But $(III)$ allows us to  assume that $R(\x,t)\in \C\{x,t\}$ and $\wdt T(\x,t):=R(\x,t)Q(\x,t)\in \C\{\x\}[t]$. Therefore, $\wdt T(\x,f+\la x_1/x_2)=0$, and $f$ is algebraic over $\C\{\x\}$. This proves $(IV)$.

\medskip
\noindent
$(IV) \implies (I)$ We follow the beginning of the proof of Theorem \ref{thm:main}: we argue by contradiction and assume that $\phi:\C\{x_1,x_2,x_3\}\lgw \C\{u_1,u_2\}$ satisfies $\Gr(\phi)=\Fr(\phi)=2$ and $\Ar(\phi)=3$. We will replace, step by step, the morphism $\phi$ by another morphism $\phi'$ such that   $\Gr(\phi')=2$ and $ \Ker(\wdh{\phi'})$  is generated by a Weierstrass polynomial in $x_3$ and such that $\phi'$ has a  particularly simple form.

First, we use Lemma \ref{lemma:preparationphi}  to assume that $\phi(x_1)=u_1$ and  $\phi(x_2)=u_1^{\a}u_2^{\b}U( \uu)$ where $U( \uu)$ is a unit in $\C\{\uu\}$. Now let $\s':\C\{\uu\}\lgw \C\{\uu\}$ be defined by $\s'(u_1)=u_1^\b$ and $\s'(u_2)=u_1u_2^{\a+1}$. Then, we have
$$
\s'\circ\phi(x_1)=u_1^\b\text{ and } \s'\circ\phi(x_2)=(u_1u_2)^{\b(\a+1)}V( \uu)
$$
for some unit $V( \uu)$. Therefore, up to ramification, we may assume that
$$
\phi(x_1)=u_1\text{ and } \phi(x_2)=u_1u_2.
$$
Let $P$ be a generator of $\Ker(\wdh \phi)$. If we denote by $f(u_1,u_2)$ the image of $x_3$ by $\phi$, we have
$$
P(u_1,u_1u_2,f(u_1,u_2))=0
$$
or
$$
f^d(u_1,u_2)+a_1(u_1,u_1u_2)f^{d-1}(u_1,u_2)+\cdots+a_d(u_1,u_1u_2)=0
$$
or
$$
f^d(x_1,x_3)+a_1(x_1,x_2)f^{d-1}(x_1,x_3)+\cdots+a_d(x_1,x_2)=0 \text{ modulo } (x_2-x_1x_3).$$
Therefore, by $(IV)$, we may assume that the $a_i$ are in $\C\{x_1,x_2\}$. But this implies that $\Ker(\phi)\neq (0)$ and $\Ar(\phi)<3$ which is a contradiction. This proves $(I)$.

\subsection{Proof of Corollary \ref{strongly_inj2}}\label{ssec:AplStronglyInj2}
As in the proof of $(I) \implies (II)$ in Theorem \ref{thm:equivalentGabrielov}, we can assume that $\phi$ is injective, and $\phi\colon \C\{\x\}\to \C\{\y\}$. Since $f$ is algebraic, there is a nonzero polynomial $P(\x,z)\in\C\lb \x\rb[z]$  in  the kernel of $\wdh\psi$, where $\psi$ is given by
$$
\begin{array}{cccc}\psi : &\C\{ \x,z\}&\lgw &\C\{ \y\} \\
& h(\x,z) &\lgm & h(\phi(\x),f)\end{array}
$$
Once again as in the proof of $(I) \implies (II)$ in Theorem \ref{thm:equivalentGabrielov}, $\Ker(\psi)$ is a nonzero principal ideal. Let $Q\in\Ker(\psi)$. Since $Q\in\Ker(\wdh\psi)$, there is a formal power series $g$ such that $gQ=P\in\C\lb \x\rb[z]$. Then, by Theorem \ref{thm:equivalentGabrielov}$(III)$ there is $h\in \C\{x,z\}$ such that $hQ\in \C\{\x\}[z]$. Moreover $hQ\in \Ker(\psi)$, which proves that $f$ is integral over $\C\{\x\}$.

\subsection{Proof of Theorem \ref{thm:AplMacDonald}}\label{ssec:AplMacDonald}
If $f(x_1^{\frac{1}{d}},  \ldots ,x_n^{\frac{1}{d}})\in \C\{ \s\cap\frac{1}{d}\Z^n\}$ is a root of $P(\x,z)$, then $f(\x)$ is a root of $P(\x^d,z)$. Let $Q(z)$ be the monic irreducible factor of $P(\x^d,z)$ in $\C\lb \x \rb [z]$ having $f(\x)$ as a root. Since $\sigma$ is strongly convex, there exists an invertible linear map $L : \Z^n\lgw \Z^n$ with positive coefficients such that $L(\s)\subset \R_{\geq 0}^n$. Let $(l_{i,j})_{i,j}$ be the matrix of $L$. Then, since $L$ is the product of elementary matrices, the morphism $\t:\C\{\x\}\lgw \C\{\x\}$ defined by $\t(x_i):=x_1^{l_{1,i}}  \cdots x_n^{l_{n,i}}$ for $1\leq i\leq n$ is a composition of the map $\pi$ defined by $\pi(x_1)=x_1x_2$, $\pi(x_i)=x_i$ for $i\geq 2$, and of the maps permuting the variables $x_1$,  \ldots , $x_n$. Since $f(\x)$ is a root of $Q(z)$ then  $\t(f(\x))=f(\t(\x))$ is integral over $\t\left(\C\lb \x\rb\right)$. Thus, by Theorem \ref{thm:equivalentGabrielov}(II), $f(\t(\x))$ is integral over $\t(\C\{\x\})$ and the coefficients of $Q(z)$ are convergent power series.

Then the  factors of $P(\x^d,z)$ are the polynomials $Q(\xi_1x_1,  \ldots ,\xi_nx_n,z)$ for any $d$-th roots of the unity $\xi_1$,   \ldots , $\xi_n$ and they are in $\C\{ \x\}[z]$. Thus the coefficients of $P(\x^d,z)$ are convergent power series and the coefficients of $P( \x,z)$ also.



\section{Abhyankar-Jung Theorem}\label{section:AbhyankarJung}

\begin{thmAY}[{\cite{Jung,Ab2}}]
Let $\K$ be an algebraically closed field of characteristic zero.
Let $P(y)\in\K\lb \x\rb[y]$ be a monic polynomial in $y$ of degree $d$ such that $\D_P=\x^\a u(\x)$ where $u(0)\neq 0$. Then the roots of $P(y)$ are in $\K\lb \x^{1/d!}\rb$.
\end{thmAY}

In the case of holomorphic polynomials, the Theorem admits a very simple proof based on the local monodromy of solutions of the polynomial $P$ (this is in fact the original proof of Jung \cite{Jung}, even if he stated the theorem only for the ring of convergent power series in 2 indeterminates). The formal case is much more involved, since the same geometrical arguments are unavailable. The first proof of the general case is due to Abhyankar \cite{Ab2}. Kiyek and Vicente gave a modern proof of this result \cite{KV} and, recently, Parusi\'nski and the third author have provided a more direct proof reducing the general case to the complex case, via Lefschetz Principle \cite{P-R}. In this section, we provide a new and very short proof of Abhyankar-Jung Theorem, in all of its generality, following the techniques developed in $\S$ \ref{ssec:SemiGlobalFormalExtention} (and based on the Lefschetz Principle).

\begin{proof}[Proof of the Abhyankar-Jung Theorem] We will prove this result in three steps: first the case where $P(y)\in\C\{\x\}[y]$, then the case where $P(y)\in\C\lb \x\rb[y]$, and finally the general case.

\medskip
\noindent
\emph{Step I:} Assume that $P(y)\in\C\{\x\}[y]$. Let $\e>0$ be such that the coefficients of $P(y)$ are analytic on an open neighbourhood of the closure of $D_\e^n:=\{\x\in\C^n\mid |x_i|<\e, \ \forall i\}$. The projection map $\phi:V:=\{(\x,y)\in\C^{n+1}\mid P(\x,y)=0\}\lgw \C^n\times\{0\}$ is a branched covering, and its restriction over $U:=D_\e^n\backslash\{x_1\cdots x_n=0\}$ is a finite covering of degree $d$.

Let $\pa\in U$. We have $\pi_1(U,\pa)=\Z^n$. 
{Therefore 
$$(d!\Z)^n\subset \phi_*(\pi_1(V\cap\phi^{-1}(U), \pb)).$$
But the map $$
\rho:\x\in(D_{\e^{1/d!}}^*)^n\lgm \x^{d!}
$$
satisfies $\rho^*(\pi((D_{\e^{1/d!}}^*)^n))=(d!\Z)^n$.
Thus by pulling back the covering by $\rho$,
the cover has no monodromy, thus $P(x^{d!},y)$ factors into linear factors.
The polynomial $P(\x^{d!},y)$ being monic, its roots $\xi_i(\x)$ are bounded near the origin. Therefore, by Riemann Removable Singularity Theorem, they extend to analytic functions in a neighborhood of the origin.}

\medskip
\noindent
\emph{Step II:} Now suppose that $P(y)\in\C\lb \x\rb[y]$, and $\D_P=\x^\a u(\x)$. We denote by $\ovl \K$ an algebraic closure of the field $\C(\!(\x)\!)$. The valuation $\ord$ on $\C(\!(\x)\!)$ extends on $\ovl \K$, and this extension is again denoted by $\ord$. Denote by $\xi_1,  \ldots,\xi_d$ the roots of $P(y)$ in $\ovl \K$.

If $P(y)=y^d+a_1(\x)y^{d-1}+\cdots+a_d(\x)$, there is a universal polynomial $\D=\D(A_1,\ldots, A_d)\in\Q[A_1,  \ldots, A_d]$, such that
$\D_P=\D(a_1,  \ldots, a_d)$.

Now we apply Artin approximation Theorem: for every integer $c$, there are $a_{c,1}(\x)$,   \ldots, $a_{c,d}(\x)$, $u_c(\x)\in\C\{\x\}$ such that
$$\D(a_{c,1},  \ldots, a_{c,d})=\x^\a u_c(\x)$$
and $a_i-a_{c,i}$, $u-u_c\in (\x)^c$. In particular, for $c\geq 1$, we have $u_c(0)\neq 0$. We set $P_c(y)=y^d+\sum_{i=1}^da_{c,i}(\x) y^{d-i}$. By Step I, there exist $\xi_{c,1}$,   \ldots, $\xi_{c,d}\in\C\{\x^{1/d!}\}$ such that
$$
P_c(y)=\prod_{i=1}^d(y-\xi_{c,i}).
$$
By Lemma \ref{key_app}, after renumbering the $\xi_{c,i}$ we may assume that $\ord(\xi_{c,i}-\xi_c)$ goes to infinity. Therefore $\xi_c\in\C\lb \x^{1/d!}\rb$, and the result is proved.

\medskip
\noindent
\emph{Step III:} Finally, in the general case, we denote  by $\K_0$ the subfield of $\K$ generated by all the coefficients of the series defining $P(y)$. Such a field $\K_0$ can be embedded in $\C$, because $\Q\lgw \K_0$ is a field extension of finite or countable degree, while the degree of $\Q\lgw \C$ is uncountable and $\C$ is algebraically closed. We denote by $\iota$ this embedding. Then, by the previous case, the roots of $P(y)$ are in $\C\lb \x^{1/d!}\rb$. Let $\K_1$ be the subfield of $\C$ generated by $\iota(\K_0)$ and the coefficients of the roots of $P(y)$ in $\C\lb \x^{1/d!}\rb$. Then, there is an embedding of $\K_1$ in $\K$ whose restriction to $\iota(\K_0)$ is $\iota^{-1}$, because the degree of $\iota(\K_0)\lgw \K_1$ is 0 (by Lemma \ref{finite} given below),  and $\K$ is algebraically closed. 
\end{proof}

\begin{lemma}\label{finite}
Let $\K\subset \LL$ be two  fields. Let $f\in\LL\lb \x\rb$ be algebraic over $\K\lb \x\rb$. Let $\K_1$ be the field extension of $\K$ generated by the coefficients of $f$. Then $\K\lgw \K_1$ is an algebraic field extension.
\end{lemma}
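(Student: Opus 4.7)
The plan is to prove the lemma first in the one-variable case $n=1$ and then reduce the general multivariate case to $n=1$ via substitutions along generic lines through the origin.

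For $n=1$, given a nonzero polynomial relation $P(T) = \sum_i a_i(x) T^i \in \K\lb x \rb[T]$ with $P(f) = 0$, we use the DVR structure of $\K\lb x \rb$ to divide through by the largest common $x$-power of all the $a_i$, obtaining coefficients $b_i \in \K\lb x \rb$ with at least one $b_i(0) \neq 0$. Reducing modulo $(x)$ then yields a nontrivial polynomial equation over $\K$ satisfied by $f(0)$, so $f(0)$ is algebraic over $\K$. For the higher coefficients $c_k$ of $f = \sum_k c_k x^k$, we induct on $k$: if $c_0,\ldots,c_{k-1}$ are known to be algebraic over $\K$, then the series $h := x^{-k}\bigl(f - \sum_{i<k} c_i x^i\bigr) \in \LL\lb x\rb$ satisfies a nontrivial algebraic equation over $\K(c_0,\ldots,c_{k-1})\lb x \rb$, obtained by substituting $f = \sum_{i<k}c_i x^i + x^k h$ into the equation for $f$ (the resulting leading coefficient in $h$ is $a_d\, x^{kd} \neq 0$). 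Applying the constant-term step over $\K(c_0,\ldots,c_{k-1})$ gives $c_k = h(0)$ algebraic over that field, hence over $\K$.

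For $n \geq 2$, we may assume $\K$ is infinite (which is automatic in the paper's application, where $\K$ is always a subfield of $\C$). For $\mathbf{e} \in \K^n$, substituting $\x = t \mathbf{e}$ into the equation $\sum a_i(\x) f(\x)^i = 0$ gives $\sum a_i(t\mathbf{e}) f(t\mathbf{e})^i = 0$ in $\LL\lb t \rb$. For $\mathbf{e}$ outside a proper Zariski-closed subset of $\K^n$, not all $a_i(t\mathbf{e}) \in \K\lb t \rb$ vanish --- since a nonzero homogeneous component of $a_d$ does not vanish at generic $\mathbf{e}$ --- so the one-variable case applies and yields that each $t^k$-coefficient of $f(t\mathbf{e})$, namely $\sum_{|\alpha|=k} c_\alpha \mathbf{e}^\alpha$, is algebraic over $\K$. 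Letting $\mathbf{e}$ range over enough generic points $\mathbf{e}_1,\ldots,\mathbf{e}_N \in \K^n$ so that the matrix $(\mathbf{e}_j^\alpha)_{j,\,|\alpha|=k}$ is invertible --- possible since monomials of fixed degree are linearly independent as polynomial functions on $\K^n$ and $\K$ is infinite --- each individual $c_\alpha$ with $|\alpha|=k$ is recovered by a Vandermonde-type linear solve as a $\K$-linear combination of algebraic elements, hence is itself algebraic over $\K$.

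The main technical obstacle is ensuring that the restriction $\x = t\mathbf{e}$ preserves the nontriviality of the algebraic relation for enough $\mathbf{e}$; this hinges on the infiniteness of $\K$ and on the elementary fact that a nonzero element of $\K\lb \x \rb$ has nonzero restriction to generic lines through the origin. Everything else is a routine combination of the DVR normalization in one variable, an inductive Taylor-type extraction of coefficients, and standard Vandermonde linear algebra.
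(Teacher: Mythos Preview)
Your argument is correct under the hypothesis that $\K$ is infinite, and since in the paper $\K$ is always a subfield of $\C$ this suffices for the application. The route, however, differs from the paper's and is more elaborate than necessary.

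The paper does not reduce to the one-variable case. It fixes a monomial well-order on $\N^n$ (e.g.\ lexicographic) and argues with initial terms directly in $n$ variables: writing $P(f)=\sum_i a_i(\x)f^i=0$, one observes that $\ini(a_i f^i)=\ini(a_i)\ini(f)^i$ for each nonzero $a_i$, and since the sum vanishes, at least two indices $i$ must realise the minimal initial exponent. The corresponding coefficients give a nontrivial polynomial over $\K$ annihilating $f_{\exp(f)}$. One then subtracts the initial term of $f$, enlarges $\K$ by $f_{\exp(f)}$, and repeats (a transfinite recursion along the well-order). This is exactly your $n=1$ step, but with ``order of vanishing in $x$'' replaced by ``initial exponent for a monomial well-order on $\N^n$''.

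So the paper's approach is strictly simpler --- no line substitutions, no Vandermonde solve, no genericity conditions --- and it works for arbitrary $\K$, including finite fields. Your $n=1$ argument already contains the whole idea; promoting it to general $n$ via a well-order is both shorter and stronger than your detour through generic lines. What your approach does buy is a somewhat more hands-on feel (everything reduces to the DVR $\K\lb x\rb$), at the cost of the infinity hypothesis and the extra linear-algebra layer.
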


\begin{proof}
This result is well-known (see for instance \cite{G}). The proof goes as follows: we fix a monomial order on $\LL\lb \x\rb$ such that every subset of $\N^n$ has a minimal element. For $f:=\sum_{\a\in\N^n}f_\a\x^\a\in\LL\lb \x\rb$, we set
$$\exp(f):=\min\{\a\in\N^n\mid f_\a\neq 0\} \text{ and } \ini(f):=f_{\exp(f)}\x^{\exp(f)}.$$
Assume that $P(\x,f(\x))=0$ where $P(\x,y)\in\K\lb \x\rb[y]$ is nonzero. From this relation we obtain that $\ini(f)$ is algebraic over $\K\lb \x\rb$, therefore $f_{\exp(f)}$ is integral over $\K$. Then we replace $f$ by $f-\ini(f)$ and we replace $\K$ by $\K(f_{\exp(f)})$. The result follows by induction.
\end{proof}

\end{document}